\documentclass[11pt]{amsart}

\usepackage[english]{babel}

\usepackage{txfonts}

\usepackage{amsmath}
\usepackage{amssymb}
\usepackage{mathrsfs}

\usepackage{color}
\usepackage{esint}
\usepackage{enumerate}

\usepackage[colorlinks=true,
  linkcolor=blue,
  citecolor=red,
  urlcolor=magenta,
  backref=page]{hyperref}

\usepackage[T1]{fontenc}
\usepackage{lmodern}     

\usepackage{anysize}

\allowdisplaybreaks
\newtheorem{theorem}{Theorem}[section]
\newtheorem{proposition}[theorem]{Proposition}
\newtheorem{corollary}[theorem]{Corollary}
\newtheorem{lemma}[theorem]{Lemma}

\theoremstyle{definition}
\newtheorem{definition}[theorem]{Definition}
\newtheorem{example}[theorem]{Example}

\newtheorem{convention}[theorem]{Convention}

\theoremstyle{remark}
\newtheorem{remark}[theorem]{Remark}

\numberwithin{equation}{section}

\newcommand{\ave}[1]{\langle #1\rangle}

\newcommand{\abs}[1]{|#1|}
\newcommand{\Babs}[1]{\Big|#1\Big|}
\newcommand{\Norm}[2]{\|#1\|_{#2}}
\newcommand{\BNorm}[2]{\Big\|#1\Big\|_{#2}}
\def\gfz{\genfrac{}{}{0pt}{}}

\begin{document}

\title[Sobolev and BV spaces on metric measure spaces]
{Weak-type characterizations of Sobolev and bounded variation spaces on metric measure spaces}

\author[T. P. Hyt\"onen]{Tuomas P.\ Hyt\"onen$^*$}
\address{Department of Mathematics and Systems Analysis, Aalto University, P.O. Box 11100
(Otakaari~1), \mbox{FI-00076} Aalto, Finland}
\curraddr{}
\email{tuomas.hytonen@aalto.fi}

\thanks{$^*$Corresponding author.}

\author[D. Yang]{Dachun Yang}
\address{Laboratory of Mathematics and Complex Systems (Ministry of Education of China),
School of Mathematical Sciences, Institute for Advanced Study,
Beijing Normal University, Beijing 100875, The People's Republic of China}
\curraddr{}
\email{dcyang@bnu.edu.cn}

\author[W. Yuan]{Wen Yuan}
\address{Laboratory of Mathematics and Complex Systems (Ministry of Education of China),
School of Mathematical Sciences, Beijing Normal University, Beijing 100875, The People's
Republic of China}
\curraddr{}
\email{wenyuan@bnu.edu.cn}

\author[Y. Zhao]{Yirui Zhao}
\address{Laboratory of Mathematics and Complex Systems (Ministry of Education of China),
School of Mathematical Sciences, Beijing Normal University, Beijing 100875, The People's
Republic of China}
\curraddr{}
\email{yiruizhao@mail.bnu.edu.cn}

\thanks{This project was supported by
the National Natural Science Foundation of China
(Grant Nos. 12431006 and 12371093), the Beijing Natural Science
Foundation (Grant No. 1262011), the Fundamental Research Funds
for the Central Universities (Grant No. 2253200028),
and the Research Council of Finland (Grant Nos. 364208 and 371637).}

\date{\today.}

\subjclass[2020]{Primary 46E36; Secondary 26B30, 42B35.}
\renewcommand{\subjclassname}{\textup{2020} Mathematics Subject Classification}

\keywords{Sobolev space, bounded variation, mean oscillation, metric measure space,
Poincar\'e inequality, weak-type characterization.}

\begin{abstract}
Given a complete doubling metric measure space $(X,\rho,\mu)$ supporting a Poincar\'e inequality, 
we prove weak-type characterizations of the Sobolev space $\dot{W}^{1,p}(\mu)$ and the space of functions of bounded variation, achieving a full analogy in general Poincar\'e spaces with the Euclidean results of Brezis et al. [Anal. PDE 17 (2024), 943--979]. The main novelty is that the finiteness of a weak-type norm, which only refers to differences or mean oscillations of $f$ without assuming any smoothness a priori, already guarantees the membership of $f$ in the relevant Sobolev or BV space. This distinguishes our contribution from the recent work of F.~Dai et al. [Adv.\ Math.\ 502 (2026), Paper No.\ 111153], where the related norm-equivalence was obtained under the a priori Lipschitz assumption on $f$. A key intermediate step in our approach is a new localized Bourgain--Brezis--Mironescu type characterization.

More precisely, we prove that, if
$p\in(1,\infty)$ and $\gamma\in\mathbb R\setminus\{0\}$, then, for any $f\in
L^1_{\mathrm{loc}}(\mu)$,
\begin{equation*}\tag{$*$}
 \|f\|_{\dot W^{1,p}(\mu)}
 \sim
 \|\rho^{-1}\phi^{-\gamma}F\|_{L^{p,\infty}(\phi^{\gamma p}V^{-1})},
 \qquad F\in\{\Delta f,m_f\},\quad \phi\in\{\rho,V\},
\end{equation*}
where the homogeneous Sobolev space $\dot{W}^{1,p}(\mu)$ is defined by the
minimal $p$-weak upper gradient and,
for any $x,y\in X$, we denote $V(x,y):=\mu(B(x,\rho(x,y)))$ and
$\Delta f(x,y):=|f(x) - f(y)|$, and $m_f(x,y)$ is the mean
oscillation of $f$ on the ball $B(x,\rho(x,y))$.
For $p=1$, the equivalence $(*)$ holds after replacing $\|f\|_{\dot W^{1,1}(\mu)}$ by
a bounded variation norm and restricting the parameters to the optimal ranges
$\gamma\in(-\infty,-1)\cup(0,\infty)$ for $\phi=\rho$ or
$\gamma\in (-\infty,-\frac1d)\cup(0,\infty)$ for $\phi=V$,
where $d\in(0,\infty)$ is the lower dimension of~$X$.
\end{abstract}

\maketitle

\tableofcontents

\section{Introduction}

\subsection{Background}

One of the recurring themes in the theory of Sobolev spaces is to recover
first-order smoothness without explicitly differentiating the function.  Such
characterizations are especially useful on spaces where a linear structure,
convolution, or distributional derivatives are unavailable.

The use of difference quotients and moduli of smoothness
to describe first-order Sobolev smoothness has a long history; see, for
example, DeVore and Sharpley
\cite{DS84} and the references therein.  In the more specific setting of
nonlocal functionals, an influential breakthrough was due to Bourgain,
Brezis, and Mironescu \cite{BBM}, which is called the
Bourgain--Brezis--Mironescu formula or the BBM formula for short. Precisely speaking,
they proved that, for
$p\in(1,\infty)$ and
$f\in L^p(\mathbb R^d)$,
\begin{equation}\label{eq:intro-BBM}
 \lim_{s\nearrow 1}(1-s)
 \iint_{\mathbb R^d\times\mathbb R^d}
 \frac{|f(x)-f(y)|^p}{|x-y|^{d+sp}}\,dx\,dy
 = \kappa_{d,p}\|\nabla f\|_{L^p(\mathbb R^d)}^p,
\end{equation}
where, for any $e\in\mathbb S^{d-1}$,
\begin{equation*}
 \kappa_{d,p}:=\frac{1}{p}\int_{\mathbb S^{d-1}}
 |\omega\cdot e|^p\,d\sigma(\omega),
\end{equation*}
which is independent of the choice of $e$.  In the endpoint case $p=1$,
Bourgain et al. obtained the corresponding liminf and limsup
estimates, while D\'avila \cite{Davila02} proved the exact identity
\eqref{eq:intro-BBM} for any $f\in \operatorname{BV}(\mathbb R^d)$, with its
right-hand side replaced by $\kappa_{d,1}|Df|(\mathbb R^d)$.
This formula repairs the well-known defect that the Gagliardo seminorm at
$s=1$ is infinite unless $f$ is essentially constant.  It also initiated a
large literature on nonlocal approximations of Sobolev and BV energies.
Inspired by the BBM formula, Nguyen \cite{N:06} and Bourgain and Nguyen
\cite{BN06} established some characterizations of Sobolev spaces via limits of a
class of nonlocal nonconvex functionals.

A different resolution of the same endpoint defect of
the Gagliardo seminorm was discovered by Brezis,
Van Schaftingen, and Yung \cite{BVY21}.  Instead of renormalizing a family of
strong $L^p$ quantities, they replaced the Gagliardo seminorm at
$s=1$ by a weak $L^p$ norm and obtained a direct representation of
$\|\nabla f\|_{L^p(\mathbb R^d)}$.  Brezis et al.\ \cite{BSVY} then placed this phenomenon
in a one-parameter family and
obtained some new characterizations of Sobolev and BV spaces, which we call
the BSVY formulae for short.
In a normalization
adapted to the present paper, they considered the following functionals
\begin{equation*}
 \mathcal E_{\lambda,\gamma}(f)
 :=\lambda^p
 \iint_{\gfz{(x,y)\in\mathbb R^d\times\mathbb R^d}
 {|f(x)-f(y)|>\lambda|x-y|^{1+\gamma}}}
 |x-y|^{\gamma p-d}\,dx\,dy,
 \qquad \lambda\in(0,\infty),\ \gamma\in\mathbb R.
\end{equation*}
For any $p\in(1,\infty)$, $\gamma\in\mathbb R\setminus\{0\}$, and
$f\in L^1_{\mathrm{loc}}(\mathbb R^d)$, Brezis et al.\ \cite{BSVY} proved that
$f\in\dot W^{1,p}(\mathbb R^d)$
if and only if
$\sup_{\lambda\in(0,\infty)}\mathcal E_{\lambda,\gamma}(f)<\infty$.
Moreover,
\begin{equation}\label{eq:intro-BSVY}
 \sup_{\lambda\in(0,\infty)}\mathcal E_{\lambda,\gamma}(f)
 \sim\|\nabla f\|_{L^p(\mathbb R^d)}^p.
\end{equation}
When $p=1$ and $\gamma\in(-\infty,-1)\cup(0,\infty)$, the characterization
\eqref{eq:intro-BSVY} remains valid with
$\dot W^{1,1}(\mathbb R^d)$ and $\|\nabla f\|_{L^1(\mathbb R^d)}$ replaced by
$\dot{\rm BV}(\mathbb R^d)$ and $|Df|(\mathbb R^d)$, respectively.  The role
of $\gamma$ is transparent: when $\gamma=-1$, the functional $\mathcal E_{\lambda,\gamma}$
reduces to the
one considered by Bourgain and Nguyen \cite{BN06}, whereas, when $\gamma=\frac dp$, the
weight $|x-y|^{\gamma p-d}$ disappears and the functional $\mathcal E_{\lambda,\gamma}$
reduces to the one
considered in \cite{BVY21}.  The parameter $\gamma$ thus places the work of
Bourgain--Nguyen \cite{BN06} and the work of Brezis et al.\ \cite{BVY21}
into a uniform framework. In \cite{BSVY}, they also established
some exact limit formulae.  Both BBM and BSVY formulae have been extensively
developed.  For developments of the BBM formula, we
refer to \cite{BBM02,Davila02,DominguezMilman23,Ponce04} in the  Euclidean space;
to \cite{DrelichmanDuran22,Mohanta24} on domains; and to \cite{DMS,Han24}
on metric measure spaces.  We next highlight two lines of development
of the BSVY formula that are particularly relevant to the present paper.

The first line replaces the difference quotients in BSVY formulae by
quantities associated with other operators.  A particularly relevant example
is the mean oscillation characterization of Frank \cite{Frank:W1p},
motivated by the study of trace ideal properties of commutators.  For a ball
$B\subset\mathbb R^d$, write
\begin{equation*}
 \langle f\rangle_B:=\fint_B f(y)\,dy
 :=\frac{1}{|B|}\int_B f(y)\,dy.
\end{equation*}
For any $f\in L^1_{\rm loc}(\mathbb R^d)$, define
\begin{equation*}
 m_f(x,t):=\fint_{B(x,t)}|f(y)-\langle f\rangle_{B(x,t)}|\,dy,
\qquad x\in\mathbb R^d\ \text{and}\ t\in(0,\infty),
\end{equation*}
where $B(x,t):=\{y\in\mathbb R^d:\ |y-x|<t\}$.
Frank \cite{Frank:W1p} proved that, for $p\in(1,\infty)$,
\begin{equation}\label{eq:intro-Frank}
 \|m_f\|_{L^{p,\infty}(t^{-p-1}\,dt\,dx)}
 \sim \|\nabla f\|_{L^p(\mathbb R^d)}.
\end{equation}
Formula \eqref{eq:intro-Frank} may be viewed as the case $\gamma=-1$ of the
BSVY formula, with the difference $|f(x)-f(y)|$ replaced by the mean
oscillation $m_f(x,t)$; it also provides a new perspective on Sobolev theory
in metric measure spaces.
We refer to \cite{DominguezMilman22,DSSVY23,
DLYYZ:LiftedHL} for further developments along this
line.

The second line goes beyond Euclidean spaces to complete doubling metric measure
spaces supporting a Poincar\'e inequality.  In this setting, Di Marino and
Squassina \cite{DMS} established BBM-type and Nguyen-type characterizations.  Dai et
al.\ \cite{DLYYZ:JFA} obtained a volume version of the BSVY formula with all positive
$\gamma$ and also some negative $\gamma$ under some stronger assumptions, while
Hyt\"onen and Korte \cite{HK:W1p} extended Frank's mean oscillation
characterization.  More recently, Dai et al.\ \cite{DLYYZ} introduced families
of lifted sharp maximal operators that unify and extend \eqref{eq:intro-BSVY} and
\eqref{eq:intro-Frank} and further established both formulae
over the optimal ranges of $\gamma$, including the endpoint
$p=1$.  Other related extensions in weighted and metric settings can be found
in \cite{Munnier,DLYYZ:CVPDE,ZYY23,ZLYYZ24}.

The conclusions in \cite{DLYYZ}, however, are formulated as equivalent
seminorms on Lipschitz functions.  A norm comparison on a test class does
not by itself show that the finiteness of the nonlocal weak-type quantity
characterizes the entire Sobolev or BV space.  One of the main motivations of
the present paper is to remove this a priori Lipschitz assumption and obtain
full-space characterizations.

\subsection{Main results}

Throughout the introduction, $(X,\rho,\mu)$ is a metric measure space.
For any $x,y\in X$ and $f\in L^1_{\rm loc}(\mu)$, we
write
\begin{align*}
 V(x,r):=\mu(B(x,r)),\qquad
 V(x,y):=V(x,\rho(x,y)),
\end{align*}
$\Delta f(x,y):=|f(x)-f(y)|$ and, for any $r\in(0,\infty)$, let
\begin{align*}
\langle f\rangle_{B(x,r)}:=\fint_{B(x,r)}f\,d\mu:=\frac{1}{\mu(B(x,r))}\int_{B(x,r)}
 f\,d\mu,
\end{align*}
and
\begin{align}\label{eq-def:mf}
 m_f(x,r):=\fint_{B(x,r)}|f-\langle f\rangle_{B(x,r)}|\,d\mu,
 \qquad m_f(x,y):=m_f(x,\rho(x,y)).
\end{align}
The spaces $\dot W^{1,p}(\mu)$ and $\dot M^{1,p}(\mu)$ denote, respectively,
the homogeneous Sobolev space defined by the minimal $p$-weak upper gradient in the sense of
\cite{AGS:13} and the
homogeneous Haj\l{}asz--Sobolev space of \cite{Haj:96}.  Their precise definitions are
recalled
in Section~\ref{sec:preliminaries} below.  In the displayed weak norms, the
expression inside the parentheses specifies the underlying measure.  Thus,
for example,
\begin{equation*}
 \|F\|_{L^{p,\infty}(\rho^{p\gamma}V^{-1})}
 :=\sup_{\lambda\in(0,\infty)}\lambda
 \left[\iint_{\{|F(x,y)|>\lambda\}}
 \frac{\rho(x,y)^{p\gamma}}{V(x,y)}\,d\mu(y)\,d\mu(x)\right]^{\frac1p}.
\end{equation*}

Our first group of results gives the characterizations of Sobolev spaces.  The
following statements reorganize Theorems~\ref{thm:Frank-type},
\ref{thm:Nguyen-type}, \ref{thm-Nguyen-type-v2}, and \ref{thm:pd-Nguyen-type}.

\begin{theorem}[Sobolev characterizations]\label{thm:intro-Sobolev}
Let $(X,\rho,\mu)$ be complete and doubling, let $p\in(1,\infty)$ and
$\gamma\in\mathbb R\setminus\{0\}$, and let $f\in L^1_{\mathrm{loc}}(\mu)$.
\begin{enumerate}[\rm(i)]
\item For $F=\Delta f$ and any $\phi\in\{\rho,V\}$,
\begin{equation}\label{eq:intro-Sobolev-sandwich}
 \|f\|_{\dot W^{1,p}(\mu)}
 \lesssim
 \|\rho^{-1}\phi^{-\gamma}F\|_
 {L^{p,\infty}(\phi^{\gamma p}V^{-1})}
 \lesssim \|f\|_{\dot M^{1,p}(\mu)}.
 \end{equation}
\item If $X$ also supports a $p$-Poincar\'e inequality, then, for any
$F\in\{m_f,\Delta f\}$ and $\phi\in\{\rho,V\}$,
\begin{equation}\label{eq:intro-Sobolev-equivalence}
 \|f\|_{\dot W^{1,p}(\mu)}
 \sim
 \|\rho^{-1}\phi^{-\gamma}F\|_
 {L^{p,\infty}(\phi^{\gamma p}V^{-1})}.
\end{equation}
Consequently, the weak-type quantity in
\eqref{eq:intro-Sobolev-equivalence} is finite if and only if $f\in
\dot W^{1,p}(\mu)$.
\end{enumerate}
\end{theorem}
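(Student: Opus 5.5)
The plan has three parts: the upper bound in \eqref{eq:intro-Sobolev-sandwich}, the lower bound (the genuinely new direction), and then part (ii).

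\textbf{Upper bound.} For the right-hand inequality I use a Haj\l{}asz gradient $g$ of $f$, so that $\Delta f(x,y)\le\rho(x,y)[g(x)+g(y)]$ for almost all $x,y$. Then
\[
\{\rho^{-1}\phi^{-\gamma}\Delta f>\lambda\}\subset\{\phi^{-\gamma}g(x)>\lambda/2\}\cup\{\phi^{-\gamma}g(y)>\lambda/2\}.
\]
By the symmetry $V(x,y)\sim V(y,x)$ from doubling, it suffices to bound
\[
\int_X\int_{\{y:\,\phi(x,y)^{-\gamma}g(x)>\lambda/2\}}\frac{\phi(x,y)^{\gamma p}}{V(x,y)}\,d\mu(y)\,d\mu(x).
\]
For fixed $x$ the inner condition confines $\phi(x,y)$ to one side of the threshold $(g(x)/\lambda)^{1/\gamma}$; it is the lower or upper side depending on the sign of $\gamma$. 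Decompose dyadically into annuli $\{2^k\le\rho(x,y)<2^{k+1}\}$ (for $\phi=V$, into level sets where $V(x,y)\sim 2^k$). On each annulus $\int\phi^{\gamma p}V^{-1}\,d\mu\sim\phi^{\gamma p}$, since $\mu(\text{annulus})\lesssim V$. Because $\gamma\ne0$, the sum over the admissible annuli is a geometric series dominated by its endpoint term $(g(x)/\lambda)^p$. Multiplying by $\lambda^p$ and integrating gives $\lesssim\|g\|_p^p$, hence the right-hand inequality. For $\phi=V$, the geometric decay uses the reverse doubling (lower dimension) estimate. If $X$ is bounded or lacks reverse doubling, the extra terms only help, since they are truncated.

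\textbf{Lower bound.} This is where no a priori regularity is available. I would first go through a localized BBM-type statement: if $f\in L^1_{\rm loc}$ and
\[
\limsup_{s\nearrow1}(1-s)\iint_{\rho(x,y)<r}\frac{\Delta f^q}{\rho^{sq}V}\,d\mu\,d\mu<\infty
\]
for some $q<p$, then $f\in\dot W^{1,q}_{\rm loc}$, with the minimal weak upper gradient controlled by that limit (Di Marino--Squassina type $\Gamma$-liminf, localized). The weak-type hypothesis $\|\rho^{-1}\phi^{-\gamma}\Delta f\|_{L^{p,\infty}}=:A<\infty$ gives exactly such strong-type bounds for $q<p$, via the layer cake formula on each ball with the weights split by scale. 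This yields $f\in W^{1,q}_{\rm loc}$, with local upper gradient bounds depending on $A$ and $\mu(B)^{1/q-1/p}$. The missing upgrade to exponent $p$ is the main obstacle. I would approach it by a blow-up/localization at Lebesgue points. At almost every $x$, by Cheeger's differentiation theory and the fact that $f$ is now $W^{1,q}$, the difference quotients $\Delta f(x,y)/\rho(x,y)$ for small $\rho$ are comparable to $g_f(x)$ on a set of $y$ of proportional measure. That set has $\phi^{\gamma p}V^{-1}$-measure $\sim c\,|\log\varepsilon|$-independent mass, obtained by choosing a scale window determined by $\lambda$ near $g_f(x)$ rather than by $\varepsilon$. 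Plugging in $\lambda=\delta g_f(x)$ levels and summing over level sets of $g_f$ reproduces $\int g_f^p$ from below. This is essentially the Euclidean BSVY argument, transported through Lipschitz approximations on Lusin sets and the pointwise Lipschitz constant $\operatorname{Lip}f=g_f$ almost everywhere (Cheeger).

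\textbf{Part (ii).} Under a $p$-Poincar\'e inequality, $\dot M^{1,p}\sim\dot W^{1,p}$ for $p>1$ by Keith--Zhong self-improvement and the maximal-function Haj\l{}asz gradient $(M g^{p_0})^{1/p_0}$ with $p_0<p$. So (i) gives the equivalence for $\Delta f$. For $m_f$, I would use $m_f(x,y)\le\fint_{B}\fint_{B}\Delta f$, and Poincar\'e gives $m_f(x,r)\lesssim r\,(M g^{p_0})^{1/p_0}(x)$. The upper bound then follows by the same dyadic computation as above. The lower bound for $m_f$ is obtained as before, since $m_f(x,r)/r\to c\,\operatorname{Lip}f(x)$-comparable quantities at almost every $x$ by the blow-up to linear maps in Cheeger charts. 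Alternatively, I would bound $\Delta f$'s weak norm by $m_f$'s via a telescoping chain of balls (a Lebesgue point argument) and reduce to the $\Delta f$ case.
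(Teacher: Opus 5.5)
Your upper bound is correct, and more direct than the paper's route through $f^*_\theta$ and $Mh$: from $\Delta f\le\rho\,[g(x)+g(y)]$, the dyadic computation in $\rho$ (or in level sets of $V$) gives $\lesssim\|g\|_{L^p(\mu)}^p$ using doubling alone. Reverse doubling plays no role there, since for $\gamma\neq0$ the series is already geometric.

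The gap is the lower bound in (i), which is the real content of the theorem.

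\emph{Your route needs a Poincar\'e inequality that (i) does not assume.} Part (i) assumes only that $X$ is complete and doubling. Your upgrade from exponent $q$ to $p$ runs through Cheeger's theory (charts, $\operatorname{Lip}f=g_f$ a.e., blow-ups to generalized linear functions), and that theory is available only under a Poincar\'e inequality.

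\emph{Even with a Poincar\'e inequality, the upgrade is not carried out.}
\begin{itemize}
\item The local information you extract, $\|g\|_{L^q(B)}\lesssim A\,\mu(B)^{1/q-1/p}$, is only a Morrey-type bound. It does not give $g\in L^p$: in $\mathbb R^d$, $|x|^{-d/p}$ satisfies it.
\item A $q$-weak upper gradient is not automatically a $p$-weak upper gradient, because $p$-negligible curve families form a smaller class than $q$-negligible ones.
\item Two claims are asserted rather than proved: that $\Delta f(x,y)/\rho(x,y)\gtrsim g_f(x)$ on a proportional set of $y$ at all small scales, and that this passes from Lipschitz approximants on Lusin sets to an $f$ about which you only know a finite weak-type norm.
\item Summing over levels $\lambda=\delta g_f(x)$ does not match the weak norm, which is a supremum over a single $\lambda$. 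You would have to send $\lambda\to\infty$ or $\lambda\to0$ according to the sign of $\gamma$.
\end{itemize}
Your lower bounds in (ii) either come from (i) or use the same blow-up, so they inherit the gap.

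The missing idea is that you never need to leave exponent $p$, and this direction needs no Poincar\'e inequality. Replace $f$ by its truncation $f_N$, so that $F:=\Delta f_N\in L^\infty$ and $F\le\Delta f$, and localize to a bounded set $E$. For $\gamma>0$, say, write on $E\times E$
\[
\frac{|F|^p}{\rho^{ps}V}=\Big(\frac{|F|}{\rho^{1+\gamma}}\Big)^{p\frac{s+\gamma}{1+\gamma}}\cdot|F|^{p\frac{1-s}{1+\gamma}}\cdot\frac{\rho^{p\gamma}}{V},
\]
and apply the Lorentz duality \eqref{eq:BSVY4-2} with $q=\frac{1+\gamma}{s+\gamma}$. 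The factor $q'=\frac{1+\gamma}{1-s}$ is exactly absorbed by $(1-s)$. The $L^{q',1}$ norm of the bounded middle factor over the bounded set tends to $1$ as $s\nearrow1$.

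This yields $\limsup_{s\nearrow1}(1-s)\|\mathbf 1_{E\times E}\Delta f_N\|^p_{L^p(\rho^{-ps}V^{-1})}\lesssim A^p$, uniformly in $E$ and $N$. These are Lemmas \ref{lem:BSVY963} and \ref{lem:BSVY105}, with analogous splittings for $\gamma\le0$ and $\phi=V$.

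One then concludes with the localized BBM lower bound \eqref{eq:DMS-lower-v2}, which holds in any complete doubling space:
\begin{itemize}
\item the averages $f_t$ have $c\,g_{2t}$ as upper gradients up to scale $t/2$ (Lemma \ref{lem:DMS1869});
\item along some $t_k\searrow0$, the $g_{t_k}$ are bounded in $L^p(\mu_n)$ on the pieces $X_n$ of Proposition \ref{prop:bdSubsets};
\item Theorem \ref{thm:ACDM27} turns a weak limit into a $p$-weak upper gradient on each $X_n$;
\item Propositions \ref{prop:gluing-ug} and \ref{prop:truncW1p} glue these together and remove the truncation.
\end{itemize}

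For $F=m_f$ in (ii), the paper likewise stays with truncated, localized BBM quantities. The telescoping bound $\Delta f\lesssim\sum_j(F_j+F_j\circ\operatorname{swap})$, together with reverse doubling (a consequence of the Poincar\'e inequality), controls $\Delta f$ by $m_f$ inside $L^p(\rho^{-ps}V^{-1})$. For bounded $X$, the scales comparable to $\operatorname{diam}X$ drop out in the limit $s\nearrow1$ thanks to the truncation (Lemma \ref{lem:BSVY161}). Your alternative telescoping in the weak norm is in this spirit. In the weak norm the large scales do not drop out, so it would still need a separate argument for scales comparable to $\operatorname{diam}X$ when $X$ is bounded.
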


\begin{remark}
\begin{enumerate}[\rm(i)]
\item If $X$ is additionally reverse doubling, then
\eqref{eq:intro-Sobolev-sandwich} also holds with $F=m_f$ and both choices of
$\phi$; see Corollary~\ref{cor:mf-reverse-doubling}.
\item The range $\gamma\in\mathbb R\setminus\{0\}$ in Theorem
\ref{thm:intro-Sobolev} is optimal. Indeed, when $X=\mathbb R^d$, the upper
estimate in \eqref{eq:intro-Sobolev-sandwich}
fails for $\gamma=0$ (see Brezis et al.\
\cite[Theorem 1.3(i)]{BSVY} for $F=\Delta f$
and \cite[Remark 1.4(ii)]{ZLYYZ24} for $F=m_f$).
\end{enumerate}
\end{remark}

Under the Poincar\'e assumption, the choice $(F,\phi)=(m_f,\rho)$ also has the
equivalent formulation
\begin{equation}\label{eq:intro-Frank-main}
 \|f\|_{\dot M^{1,p}(\mu)}
 \sim
 \|t^{-1-\gamma}m_f\|_{L^{p,\infty}(t^{\gamma p-1}\,dt\,d\mu)},
\end{equation}
which is closest to Frank's original formula \eqref{eq:intro-Frank}.
The full statements in the body
of the paper also contain intermediate versions involving the lifted sharp
maximal functions $\mathcal M_\theta^\#$ and $f_\theta^*$ introduced in
Section \ref{s:Frank}.
The rightmost estimates in \eqref{eq:intro-Sobolev-sandwich} are based
primarily on the lifted sharp maximal function bounds of Dai et al.\ \cite{DLYYZ}.
The main contribution of the
present paper is the opposite estimate
\begin{equation}\label{eq:lower-qu}
 \|f\|_{\dot W^{1,p}(\mu)}\lesssim \text{nonlocal weak-type quantity},
\end{equation}
which removes the Lipschitz assumption of
\cite{DLYYZ} and turns an equivalence of seminorms on
test functions into a characterization of the full homogeneous Sobolev space.

The strategy of deriving the lower estimate \eqref{eq:lower-qu}
by reducing it to
a BBM lower bound is already present in the Euclidean argument of Brezis et
al.\ \cite[Section~4]{BSVY}; see also
Poliakovsky \cite[Theorem~1.3 and Section~3]{Poliakovsky22} for a related use
of the BBM formula on Euclidean domains.  In contrast, our lower estimate rests on a
localized homogeneous BBM characterization.  Denote the truncation
$f_N:=\max\{-N,\min\{f,N\}\}$ for any $N\in\mathbb N$. The characterization may be
summarized as
follows.

\begin{proposition}[Localized BBM characterization]\label{prop:intro-BBM}
Let $p\in(1,\infty)$, let $(X,\rho,\mu)$ be complete and doubling, and assume
that $X$ supports a $p$-Poincar\'e inequality.  Then, for any
$f\in L^1_{\mathrm{loc}}(\mu)$,
\begin{equation}\label{eq:intro-local-BBM}
 \|f\|_{\dot W^{1,p}(\mu)}^p
 \sim
 \sup_{E,N}\liminf_{s\nearrow 1}(1-s)
 \|\mathbf{1}_{E\times E}\Delta f_N\|_{L^p(\rho^{-ps}V^{-1})}^p
\end{equation}
with the same conclusion when $\liminf$ is replaced by $\limsup$.  Here the
supremum is taken over all bounded measurable subsets $E\subset X$ and
$N\in\mathbb N$.
\end{proposition}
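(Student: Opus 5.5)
We prove the two inequalities in \eqref{eq:intro-local-BBM} separately. Since $\liminf\le\limsup$, it suffices to show two things: the $\limsup$ version of the right-hand side is $\lesssim\|f\|_{\dot W^{1,p}(\mu)}^p$, and $\|f\|_{\dot W^{1,p}(\mu)}^p$ is $\lesssim$ the $\liminf$ version.

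\textbf{Upper bound.} Assume $f\in\dot W^{1,p}(\mu)$. The truncation $f_N$ has minimal weak upper gradient $g_{f_N}\le g_f$. Since $X$ is complete, doubling and supports a $p$-Poincar\'e inequality, and $p>1$, Keith--Zhong self-improvement gives a $q$-Poincar\'e inequality for some $q<p$. The standard Haj\l{}asz pointwise estimate then holds:
\begin{equation*}
|f_N(x)-f_N(y)|\lesssim\rho(x,y)\bigl[(M_{2\lambda\rho(x,y)}g^q)^{1/q}(x)+(M_{2\lambda\rho(x,y)}g^q)^{1/q}(y)\bigr],
\end{equation*}
where $g=g_f$ and $M_R$ is the maximal operator truncated at scale $R$.

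Split the integral over $E\times E$ at $\rho(x,y)=1$.
\begin{itemize}
\item On $\{\rho(x,y)<1\}$, bound by $(M g^q)^{p/q}$ and use the dyadic annuli estimate $\int_{\rho(x,y)<1}\rho^{p-ps}V(x,y)^{-1}\,d\mu(y)\lesssim\sum_{k\ge0}2^{-k(p-ps)}\sim(1-s)^{-1}$, which follows from doubling. This gives $(1-s)\cdot(\ldots)\lesssim\|(Mg^q)^{1/q}\|_{L^p}^p\lesssim\|g\|_{L^p}^p$.
\item On $\{\rho(x,y)\ge1\}$, with $x,y\in E$ bounded, use $|f_N(x)-f_N(y)|\le2N$ and the fact that $\int_E\int_{E\cap\{\rho\ge1\}}V^{-1}\,d\mu\,d\mu<\infty$, since $E$ is bounded and $V(x,y)\gtrsim V(x,1)$ there. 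The factor $(1-s)$ sends this part to $0$.
\end{itemize}
Hence the $\limsup$ is $\lesssim\|f\|_{\dot W^{1,p}}^p$, uniformly in $E$ and $N$.

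\textbf{Lower bound.} This is the main obstacle. Let $A$ denote the $\liminf$ supremum and assume $A<\infty$. Fix $E$ as a ball $B_0$ and fix $N$. Then $f_N\in L^p(B_0)$, so the known metric BBM results of Di Marino--Squassina \cite{DMS} (see also \cite{Han24}), localized to open sets, give
\begin{equation*}
\|g_{f_N}\|_{L^p(\frac12B_0)}^p\lesssim\liminf_{s\nearrow1}(1-s)\|\mathbf 1_{B_0\times B_0}\Delta f_N\|_{L^p(\rho^{-ps}V^{-1})}^p\le A,
\end{equation*}
and in particular $f_N\in W^{1,p}(\tfrac12B_0)$.

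If I must prove this localized lower bound myself, I will follow the Euclidean/metric mollification route. Form a discrete convolution $f_{N,r}$ subordinate to a cover at scale $r$; it is Lipschitz with pointwise upper gradient controlled by $r^{-1}\fint_{B(x,Cr)}|f_N-f_N(x)|$-type averages. Averaging the kernel $(1-s)\rho^{-ps}V^{-1}$ over scales, as in the proof of Bourgain--Brezis--Mironescu, bounds $\liminf_{r\to0}\int_{\frac12B_0}\mathrm{Lip}(f_{N,r})^p$ by the BBM functional. Lower semicontinuity of Cheeger energy under $L^p$ convergence, together with the fact that $p$-Poincar\'e gives $\mathrm{Lip}=g$ via Cheeger, then yields the bound on $g_{f_N}$. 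The delicate point is matching the annular averaging to the $\liminf$ in $s$ rather than a $\limsup$. To handle this I will use the doubling dyadic decomposition, which shows that the BBM functional dominates $(1-s)\sum_k2^{-kp(1-s)}\,a_k$, where $a_k$ are the scale-$2^{-k}$ averaged difference quotients. An Abelian-type lemma then bounds $\liminf_k a_k$ by $\liminf_s$ of this sum.

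\textbf{Concluding.} Exhausting $X$ by balls gives $f_N\in W^{1,p}_{\rm loc}$ with $\|g_{f_N}\|_{L^p(X)}^p\lesssim A$ uniformly in $N$. Since $g_{f_N}=g_{f_M}$ a.e.\ on $\{|f|<N\}$ for $M\ge N$, letting $N\to\infty$ and using locality and lower semicontinuity of minimal weak upper gradients gives $f\in\dot W^{1,p}(\mu)$ with $\|f\|_{\dot W^{1,p}}^p\lesssim A$.
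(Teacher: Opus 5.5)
Your proposal is correct, but both halves take a different route from the paper.

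\textbf{Upper bound.} The paper never estimates the BBM functional directly. It runs the chain $\limsup_{s\nearrow1}(1-s)^{1/p}\|\mathbf{1}_{E\times E}\Delta f_N\|_{L^p(\rho^{-ps}V^{-1})}\lesssim\|\rho^{-1-\gamma}\Delta f\|_{L^{p,\infty}(\rho^{p\gamma}V^{-1})}\lesssim\|f\|_{\dot M^{1,p}(\mu)}\sim\|f\|_{\dot W^{1,p}(\mu)}$. The three steps are Lorentz duality in Lemma \ref{lem:BSVY963}, then \eqref{eq:Nguyen-upper} via $f^*_\theta\lesssim Mh$, then Proposition \ref{prop:MWE}. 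This costs nothing there, because the same chain is needed for Theorem \ref{thm:Nguyen-type}.

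Your direct estimate uses the Haj\l{}asz gradient $(Mg^q)^{1/q}$ with a dyadic annular sum on $\{\rho<1\}$, and the bound $2N$ with $(1-s)\to0$ on $\{\rho\ge1\}$. It is shorter and self-contained. It uses the Poincar\'e inequality in exactly the same place, namely to produce a Haj\l{}asz gradient. That is the content of Proposition \ref{prop:MWE}, including the passage from test plans to modulus (Corollary \ref{cor:AGS7.4loc}) that you gloss over.

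\textbf{Lower bound.} The paper works on the closed dyadic cubes $X_n$ of Proposition \ref{prop:bdSubsets}. These are treated as complete doubling spaces with the density property \eqref{eq:bdSubsets}. It uses plain ball averages $f_t$, whose gradients $c\,g_{2t}$ are only upper gradients up to scale $t/2$ (Lemma \ref{lem:DMS1869}). This forces it into the test-plan framework of Theorem \ref{thm:ACDM27}, followed by Propositions \ref{prop:gluing-ug} and \ref{prop:truncW1p}.

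You instead work on interior half-balls. For small $r$ every ball involved lies inside $B_0$, so full-space volumes apply and no boundary or density issue arises. You also use discrete convolutions, which are genuinely locally Lipschitz. Classical weak lower semicontinuity (Fuglede/Mazur) on the open set then suffices, and the resulting modulus-sense weak upper gradient is automatically one in the test-plan sense (Remark \ref{rem:ug-relations}). Your Fubini bound $\int_{\frac12B_0}\operatorname{lip}(f_{N,r})^p\,d\mu\lesssim r^{-p}\iint_{(B_0\times B_0)\cap\{\rho<2Cr\}}|\Delta f_N|^pV^{-1}$ plays the role of Lemma \ref{lem:gnt}. Your discrete Abelian step is Lemma \ref{lem:2liminf}.

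\textbf{Small corrections.} The result of \cite{DMS} that you first invoke is global, for $f\in L^p(\mu)$ on all of $X$, not localized to open sets. So your self-contained sketch is actually needed, not optional. The appeal to Cheeger's identification $\operatorname{Lip}=g$ is superfluous: $\operatorname{lip}$ of a locally Lipschitz function is already an upper gradient (Remark \ref{rem:lipu}). Hence, as in \eqref{eq:DMS-lower-v2}, the lower bound requires no Poincar\'e inequality. Finally, gluing over exhausting balls and letting $N\to\infty$ need open-set, modulus-sense analogues of Propositions \ref{prop:gluing-ug} and \ref{prop:truncW1p}. These go through with the same proofs.
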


The localization in \eqref{eq:intro-local-BBM} is essential for homogeneous
spaces: a general $f\in\dot W^{1,p}(\mu)$ need not belong to any fixed
fractional Sobolev space globally (see Remark \ref{rem:loc-global}).  Bounded exhaustion,
truncation, and stability of
weak upper gradients allow the localized fractional information to be glued
into a global Sobolev upper gradient.  Combining this lower estimate of
\eqref{eq:intro-local-BBM}
with Lorentz space duality as used in Brezis et al.\ \cite{BSVY} yields the first
inequality in \eqref{eq:intro-Sobolev-sandwich} with $F=\Delta f$ and either
choice of $\phi$.

We also obtain endpoint BV characterizations.  The total variation
$|Df|(X)$ and the homogeneous space $\dot{\rm BV}(\mu)$ are defined in
\eqref{eq:Df(A)} and \eqref{BV}, respectively.  A complete doubling
$1$-Poincar\'e space is automatically reverse doubling and hence
there exists the lower homogeneous dimension
$d\in(0,\infty)$, which may be viewed as the largest exponent appearing in
the reverse doubling condition; see Section~\ref{sec:BV} and see also
\cite[(1.14)]{DLYYZ}.

\begin{theorem}[BV characterizations]\label{thm:intro-BV}
Let $(X,\rho,\mu)$ be a complete doubling $1$-Poincar\'e space
with the lower homogeneous dimension $d\in(0,\infty)$ and let
$f\in L^1_{\mathrm{loc}}(\mu)$.  For $F\in\{\Delta f,m_f\}$, the following statements hold.
\begin{enumerate}[\rm(i)]
\item If $\gamma\in(-\infty,-1)\cup(0,\infty)$, then
\begin{equation}
 |Df|(X)
 \sim \|\rho^{-1-\gamma}F\|_{L^{1,\infty}(\rho^\gamma V^{-1})}.
\end{equation}
\item If $\gamma\in(-\infty,-\frac1d)\cup(0,\infty)$, then
\begin{equation}\label{eq:intro-BV-V}
 |Df|(X)
 \sim \|\rho^{-1}V^{-\gamma}F\|_{L^{1,\infty}(V^{\gamma-1})}.
\end{equation}
\end{enumerate}
In either case, the finiteness of the corresponding weak-type quantity is
equivalent to $f\in\dot{\rm BV}(\mu)$.
\end{theorem}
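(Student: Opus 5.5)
The proof splits into an upper and a lower estimate; the lower estimate is the new part. For the upper estimate $\|\rho^{-1}\phi^{-\gamma}F\|_{L^{1,\infty}}\lesssim |Df|(X)$ I would follow the introduction and use the Lipschitz-class results of Dai et al.\ \cite{DLYYZ}. They prove this inequality (with $|Df|(X)$ replaced by $\|\mathrm{Lip} f\|_{L^1}$ or a Haj\l{}asz-type $1$-gradient) for locally Lipschitz $f$, over exactly the ranges of $\gamma$ in (i) and (ii).

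For general $f\in\dot{\rm BV}(\mu)$ I would pick locally Lipschitz $f_k\to f$ in $L^1_{\rm loc}$ with $\|\mathrm{Lip} f_k\|_{L^1}\to|Df|(X)$; this is the definition of the total variation in \eqref{eq:Df(A)}. Passing to a subsequence, $f_k\to f$ $\mu$-a.e. For $F=\Delta f$ this gives $\Delta f_k\to\Delta f$ a.e.\ on $X\times X$. For $F=m_f$ it gives $m_{f_k}(x,r)\to m_f(x,r)$ for every ball, by $L^1_{\rm loc}$ convergence. The level sets then satisfy
\[
\{F>\lambda\phi^\gamma\rho\}\subset\liminf_k\{F_k>\lambda\phi^\gamma\rho\},
\]
so Fatou's lemma applied level by level gives lower semicontinuity of the weak quasinorm. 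This yields the upper bound for all $f$.

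For the lower estimate $|Df|(X)\lesssim$ weak quantity, I would imitate the Sobolev case through a localized BBM lower bound at $p=1$. The BV analogue of Proposition~\ref{prop:intro-BBM} (a D\'avila-type lower bound on a metric space, following Di Marino--Squassina \cite{DMS}) should read
\[
|Df|(X)\lesssim\sup_{E,N}\liminf_{s\nearrow1}(1-s)\iint_{E\times E}\frac{|f_N(x)-f_N(y)|}{\rho(x,y)^sV(x,y)}\,d\mu\,d\mu .
\]
Its proof uses truncation, bounded exhaustion, and lower semicontinuity of the total variation, with completeness and the $1$-Poincar\'e inequality entering as in the Sobolev case. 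It remains to bound the right-hand side by the weak norm. Set $G:=\rho^{-1}\phi^{-\gamma}\Delta f$ and $d\nu:=\phi^\gamma V^{-1}\,d\mu\,d\mu$, so that
\[
\frac{\Delta f_N}{\rho^sV}\,d\mu\,d\mu=G_N\,\rho^{1-s}\phi^{\gamma}V^{-1}\,d\mu\,d\mu .
\]
I would split the integral dyadically in $\rho$ at scale $\rho\sim\delta$ with $\delta\to0$. On the region $\rho<\delta$, I would use the layer-cake formula against $\nu$, the pointwise bound $G_N\le 2N\rho^{-1}\phi^{-\gamma}$, and the bound $\nu(\{G>\lambda\})\le\|G\|/\lambda$. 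Integrating $\lambda$ up to the cap and letting $s\nearrow1$ produces a factor $\frac{1}{1-s}$ times the weak norm, which the prefactor $(1-s)$ cancels. The annuli with $\rho>\delta$ contribute $o(1/(1-s))$ because $E$ is bounded and $f_N$ is bounded.

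I expect the main obstacle to be this Lorentz-type interpolation at $p=1$. There is no $L^{1,\infty}$--$L^{1}$ duality gain, and the logarithmic divergence must be captured exactly by $(1-s)^{-1}$. This is precisely where the ranges of $\gamma$ enter. The cap $G_N\lesssim N\rho^{-1}\phi^{-\gamma}$ must make $\int_{\lambda_0}^{\infty}\nu(\{G_N>\lambda,\ \rho\sim 2^{-j}\})\,d\lambda$ summable in $j$ with the correct power. By doubling, and by reverse doubling with exponent $d$ in the case $\phi=V$, this holds when $\gamma>0$ or $\gamma<-1$ (respectively $\gamma<-\frac1d$), since then $V(x,r)^{\gamma}$ compares to $r^{\gamma d}$ on the right side. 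If the direct interpolation fails, my fallback is the route of Brezis et al.\ \cite[Section~4]{BSVY}: estimate the truncated BBM integrand by the distribution function over the dyadic shells $\rho\in[2^{-j-1},2^{-j})$ and sum.

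For $F=m_f$ I would reduce to $F=\Delta f$. Averaging, then the $1$-Poincar\'e inequality, then the lifted sharp maximal bounds of Section~\ref{s:Frank} give a pointwise comparison between the weak quantities for $m_f$ and for $\Delta f$, and in the direction needed here this comparison holds on a complete doubling $1$-Poincar\'e space. Finally, combining both estimates shows that finiteness of the weak quantity is equivalent to $|Df|(X)<\infty$, that is, to $f\in\dot{\rm BV}(\mu)$.
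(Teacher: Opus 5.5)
There is a genuine gap in the $p=1$ lower estimate. The step you treat as a black box, the localized D\'avila-type bound $|Df|(X)\lesssim\sup_{E,N}\liminf_{s\nearrow1}(1-s)\|\mathbf 1_{E\times E}\Delta f_N\|_{L^1(\rho^{-s}V^{-1})}$, cannot be proved ``as in the Sobolev case''. For $p>1$ the averaged quotients $g^n_t$ of \eqref{eq:gnt} are bounded in the reflexive space $L^p(\mu_n)$, so a weak limit can be extracted. Theorem~\ref{thm:ACDM27}, which holds only for $p\in(1,\infty)$, then turns upper gradients up to scale into a $p$-weak upper gradient. At $p=1$ a bounded sequence in $L^1(\mu_n)$ has only weak$^*$ limits in $\mathscr M(X_n)$, so two further ingredients are needed.
\begin{enumerate}[\rm(a)]
\item A semicontinuity result for upper gradients up to scale that converge as measures. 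The paper uses \cite[Lemma 2.6]{DMS} (Lemma~\ref{prop:semics}); its conclusion concerns the bary-plan variation $|Df^n|_*$, not the relaxed $|Df^n|$.
\item A comparison $|Df^n|(X_n)\le|Df^n|_w(X_n)\le|Df^n|_*(X_n)$ (Lemmas~\ref{lem:tv-w-lower} and~\ref{lem:tv-c}), made before gluing with Proposition~\ref{p-local} and removing the truncation with Lemma~\ref{lem:lsc}.
\end{enumerate}
Your sketch covers the truncation and exhaustion parts but neither (a) nor (b). Also, the Poincar\'e inequality is not used in this step. A more elementary alternative would be to feed locally Lipschitz approximants, such as the discrete convolutions $\Phi_sf$ of Section~\ref{s:Frank} with $\operatorname{lip}\Phi_sf\lesssim s^{-1}m_f(\cdot,Cs)\lesssim g_{Cs}$ (cf.\ \eqref{eq:ft-gt}), directly into Definition~\ref{def:BV-Mir}. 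Either way, some such device has to be supplied.

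The reduction of $F=m_f$ to $F=\Delta f$ is also unsupported, because the tools you name point the wrong way. The $1$-Poincar\'e inequality, $t^{-1}m_f\le\mathcal M^\#_1f$, and Lemma~\ref{lem:Msharpf<Mh} all bound $m_f$ from above, so they serve the upper estimate. The lower estimate needs $\Delta f$ controlled by mean oscillations. That comes from the Lebesgue-point telescoping \eqref{eq:swap2}, $\Delta f(x,y)\lesssim\sum_{j\ge0}[m_f(x,a^{1-j}\rho(x,y))+m_f(y,a^{1-j}\rho(x,y))]$. It also needs reverse doubling \eqref{rdb} to bring the rescaled radii back to $m_f(x,\rho(x,y))$ via an annular decomposition. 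Reverse doubling is where Poincar\'e actually enters, through Remark~\ref{rem:Poincare>RD}. The paper does this rescaling inside the BBM functional (Lemma~\ref{lem:BSVY161}), where it is a Minkowski computation independent of $\phi$ and $\gamma$. It then applies Lemma~\ref{lem:BSVY963} or~\ref{lem:BSVY105} with $F=m_f\le2\|f\|_{L^\infty(\mu)}$. Doing it directly in the $L^{1,\infty}$ quasinorms with the weights $V^{-\gamma}$ and $V^{\gamma-1}$ needs extra care with non-normability and with the $x$-dependent volume ratios.

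You have also misplaced where the $\gamma$-ranges matter: they do not enter the lower estimate. Write $|F|\rho^{-s}$ as the power $\frac{s+\gamma}{1+\gamma}$ (or $\frac{1+s}{2}$) of $|F|\rho^{-1-\gamma}$ times a bounded factor. This puts the weak factor in $L^{q,\infty}$ with $q>1$, and the duality constant $q'\sim(1-s)^{-1}$ is exactly cancelled by $(1-s)$. This works for every $\gamma\in\mathbb R$ and $p\ge1$, so the obstacle you anticipate disappears; the ranges are needed only in the upper estimate.

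Your upper estimate, applying Fatou to the level sets of $F$ along a recovery sequence, is fine. It is slightly more direct than the paper's, which applies Fatou to $\mathcal M^\#_\theta f$.
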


\begin{remark}
The ranges of $\gamma$ in Theorem \ref{thm:intro-BV} are optimal. When
$X=\mathbb R^d$, this follows from Brezis et al.\
\cite[Theorem~1.4(i)]{BSVY} for $F=\Delta f$ and \cite[Remark 1.4(ii)]{ZLYYZ24}
for $F=m_f$. Moreover, the range
$\gamma\in(-\infty,-\frac1d)\cup(0,\infty)$ in
\eqref{eq:intro-BV-V} remains optimal on general complete Poincar\'e spaces,
as shown by the Grushin-type examples of Dai et al.\ \cite[Section~7]{DLYYZ}.
\end{remark}

\subsection{Relation with known results}

On $\mathbb R^d$, Theorems \ref{thm:intro-Sobolev} and \ref{thm:intro-BV} with
$F=\Delta f$ and either
choice of $\phi$ coincide with BSVY characterizations \eqref{eq:intro-BSVY}, while
Theorem \ref{thm:intro-Sobolev} with
$(F,\phi)=(m_f,\rho)$ and $\gamma=-1$ or
$(F,\phi)=(m_f,V)$ and $\gamma=-\frac1d$
coincides with Frank characterizations \eqref{eq:intro-Frank}.
On complete doubling Poincar\'e spaces, the
case $(F,\phi)=(m_f,\rho)$ of Theorem \ref{thm:intro-Sobolev},
equivalently \eqref{eq:intro-Frank-main}, coincides with
the characterization of \cite[Theorem~1.6]{HK:W1p}.

It is worth pointing out that, without a Poincar\'e inequality,
\eqref{eq:intro-Sobolev-sandwich} places the weak-type quantity between
$\|f\|_{\dot W^{1,p}(\mu)}$ and $\|f\|_{\dot M^{1,p}(\mu)}$ for
$F=\Delta f$.  Han et al.\
\cite[Theorem~1.3]{HXZ} obtained an analogous sandwich between two
Sobolev-type seminorms on Lipschitz differentiability spaces, and
\cite[Corollary~1.5]{HXZ} obtained an equivalence under additional geometric
assumptions.  While their results are restricted to compactly supported Lipschitz functions,
\eqref{eq:intro-Sobolev-sandwich} holds for any $f\in L^1_{\rm loc}(\mu)$ and,
under a Poincar\'e inequality, yields the characterization
\eqref{eq:intro-Sobolev-equivalence} of Sobolev spaces.

As mentioned before, the equivalences in Theorems \ref{thm:intro-Sobolev} and
\ref{thm:intro-BV} with $F=\Delta f$ and
$(F,\phi)=(m_f,\rho)$ have been proved by \cite[Corollaries 1.9, 1.13, and~1.14]{DLYYZ} for
all
Lipschitz functions.  Theorems
\ref{thm:intro-Sobolev} and \ref{thm:intro-BV} remove this a priori
Lipschitz assumption and obtain full characterizations of the Sobolev and BV spaces.

Besides their intrinsic interest in the theory of function spaces, such characterizations
are
useful in applications to commutators $[f,T]$ of pointwise multipliers $f$ and
certain singular integrals $T$; see \cite{Hyt:Sp} for a recent general theory in
metric measure spaces (under assumptions similar to those of the present paper) and
references
therein for earlier developments in $\mathbb R^d$ and other concrete situations. As shown in
\cite{RS:NWO}
in the Euclidean case and extended to $(X,\rho,\mu)$ in \cite{Hyt:Sp}, there are close links
between various mapping properties of $[f,T]$ and the size of the mean oscillation function
$m_f$.
In particular, certain decay of the singular values of $[f,T]$, which is relevant in the
{\em quantized calculus} of \cite{Connes:book}, is characterized by the finiteness of the
right-hand side of \eqref{eq:intro-Frank-main} with $\gamma=-1$. By itself, this would be a
slightly  obscure characterization. But, in combination with \eqref{eq:intro-Frank-main},
it gives a much more illuminating characterization in terms of the condition
$f\in\dot M^{1,p}(\mu)$, allowing \cite{Hyt:Sp} to recover concrete earlier results of
\cite{CST,LMSZ} and others within the general framework of metric measure spaces.
For the case $\gamma=-1$ relevant to \cite{Hyt:Sp}, the characterization
\eqref{eq:intro-Frank-main}
was already known from \cite{HK:W1p}, but its present extension to other values
may play a similar role in further developments of the commutator theory.

\subsection{Organization and notation}

Section~\ref{sec:preliminaries} develops the required notation
and conclusions of Sobolev and BV spaces on metric measure spaces, including
(weak) upper gradients, the approximation by bounded subsets, and the
equivalence of different Sobolev or BV spaces.  The target of
Section~\ref{s:Frank} is to prove the Frank-type
characterization \eqref{eq:intro-Frank-main}.
In Section~\ref{sec:DMS}, we establish the localized BBM
principle of Proposition \ref{prop:intro-BBM} and prove its lower estimate.
The proof of the upper bound is postponed to Section \ref{sec:Nguyen}.
The aim of Section~\ref{sec:Nguyen} is to show Theorem \ref{thm:intro-Sobolev}
with $F=\Delta f$.
Section~\ref{sec:further-variants} treats further variants formulated with
mean oscillations, namely Theorem \ref{thm:intro-Sobolev} with
$(F,\phi)=(m_f,V)$.
In Section~\ref{sec:BV}, we prove Theorem \ref{thm:intro-BV}.

We end this section by introducing several notational conventions.  Let
$\mathbb N:=\{1,2,\ldots\}$ and $\mathbb Z_+:=\mathbb N\cup\{0\}$.
The symbol $C$ always denotes a
positive constant independent of the main parameters involved, but may vary
from line to line.   The notation $f\lesssim g$ indicates that $f\leq Cg$
for some positive constant
$C$.  When both $f\lesssim g$ and $g\lesssim f$ hold, we write $f\sim g$.
For a measurable set $E$ in the
underlying space, we write $\mathbf{1}_E$ for its characteristic function. The cardinality
of a
finite set $A$ is denoted by $\#A$.
Finally, in all proofs we
consistently retain the notation introduced in the original theorem (or
related statement).

\section{Preliminaries}\label{sec:preliminaries}

Throughout this paper, $(X,\rho,\mu)$ denotes a metric measure space, where
$(X,\rho)$ is a metric space and $\mu$ is a nontrivial Borel measure on $X$.
For $x\in X$ and $r\in(0,\infty)$, we write
\begin{equation*}
  V(x,r):=\mu(B(x,r)).
\end{equation*}
For $x,y\in X$ with $x\neq y$, we further abbreviate
\begin{equation*}
  V(x,y):=V(x,\rho(x,y))=\mu(B(x,\rho(x,y))).
\end{equation*}
We say that $\mu$ is \emph{locally finite} if every point has a neighbourhood of
finite measure, and that $(X,\rho,\mu)$ is \emph{doubling} if there is a constant
$C_\mu\in[1,\infty)$ such that
\begin{equation*}
  0<V(x,2r)\leq C_\mu V(x,r)<\infty
  \quad\text{for any }x\in X\text{ and }r\in(0,\infty).
\end{equation*}
We record some standard consequences of these assumptions. A doubling metric
measure space is separable. Indeed, after fixing $x_0\in X$, for any
$n\in\mathbb N$ choose a maximal $2^{-n}$-separated subset $D_n$ of
$B(x_0,n)$. The doubling condition implies that $D_n$ is finite, while
$\bigcup_{n=1}^\infty D_n$ is dense in $X$. Moreover, if $X$ is separable and
$\mu$ is locally finite, then $\mu$ is $\sigma$-finite: the finite-measure
neighbourhoods form an open cover of $X$, and separability gives a countable
subcover. Finally, a complete doubling metric measure space is proper. Indeed,
the doubling condition implies that every bounded subset is totally
bounded, so every closed bounded subset is compact by completeness.

In the rest of these preliminaries, we collect and complement a rather extensive set of
definitions
and results about the basic properties of Sobolev and BV spaces over a
metric measure space $(X,\rho,\mu)$, so as to streamline the discussion of their
new characterizations in the subsequent sections. The definition of these spaces depends
on the notions of curves and curve families and, in particular, negligible families of
curves
that can be discarded. Depending on the details of defining the latter, at least two
prominent
approaches have been developed in the literature: one based on the {\em modulus} of curve
families,
going back to \cite{HK:96,KM:98} and used, e.g., in the monographs
\cite{Heinonen:book,HKST:book},
and a more recent one based on so-called {\em test plans} introduced in
\cite{AGS:13,AGS:14},
with further variants e.g. in \cite{DMS}. We will borrow some results from both approaches,
and hence need some elements of both as preliminaries. Although the equivalence of these two
approaches is basically known since \cite[Theorem 7.4]{AGS:13},
there is a devil in the details: much of the existing literature concentrates on
inhomogeneous
Sobolev and BV spaces, imposing matching integrability on the function and its (weak upper)
gradient (or the total variation measure), while it is the homogeneous versions that feature
in
our main Theorems \ref{thm:intro-Sobolev} and \ref{thm:intro-BV}. In order to close this
gap, and
to make the exact auxiliary results that we need easily available, it seems that a somewhat
lengthy
section on preliminaries is unavoidable. On the other hand, these results may have some
independent
interest and applications in other questions dealing with homogeneous Sobolev spaces over
metric
measure spaces.

\subsection{Curves and curve families}\label{ss:curves}

Following \cite[Section 2.1]{AGS:13}, we say that
$\gamma:[0,1]\to X$ is \emph{absolutely continuous} if there is a function $h\in L^1(0,1)$
such that
\begin{equation}\label{eq:gamma-AC}
  \rho(\gamma(s),\gamma(t))\leq\int_s^t h(u)\,du\quad\text{for all }0\leq s<t\leq 1.
\end{equation}
The minimal such $h$ is called the \emph{metric speed} of $\gamma$ and given by
\begin{equation}\label{eq:metric-speed}
   \abs{\gamma'}(t):=\lim_{s\to t}\frac{\rho(\gamma(s),\gamma(t))}{\abs{s-t}}\quad\text{for
   a.e. }t\in[0,1];
\end{equation}
see \cite[Theorem 1.1.2]{AGS:book}. The length of $\gamma$ is
\begin{equation*}
  \ell(\gamma):=\int_0^1\abs{\gamma'}(t)\,dt.
\end{equation*}

Still following \cite[Section 2.1]{AGS:13}, let
\begin{equation*}
\begin{split}
  C([0,1];X) &:=\{\text{continuous curves $\gamma:[0,1]\to X$ with $\sup$ distance}\}, \\
  AC([0,1];X) &:=\{\gamma\in C([0,1];X) \text{ absolutely continuous}\}, \\
  AC^p([0,1];X) &:=\{\gamma\in AC([0,1];X):  \abs{\gamma'}\in L^p(0,1)\}, \\
  \mathscr P(C([0,1];X)) &:=
  \{\text{Borel probability measures on }C([0,1];X)\}.
\end{split}
\end{equation*}
It is observed in \cite[page 972]{AGS:13} that $AC^p([0,1];X)$ is a Borel subset for
$p\in(1,\infty)$. As pointed out in \cite[page 4154]{AD:14},
$AC^\infty([0,1];X)$ coincides with the set of all Lipschitz curves and, in this case,
we denote by ${\operatorname{Lip}}(\gamma)$ the Lipschitz constant of
$\gamma\in AC^\infty([0,1];X)$.

Given a Borel function $f:\mathcal{X}\to\mathcal{Y}$ and a Borel measure $m$
on $\mathcal{X}$, define the \emph{push-forward} $f_\#m$ as the measure on $\mathcal{Y}$
such that $f_\#m(A)=m(f^{-1}(A))$ for any Borel set $A\subset \mathcal Y$. Moreover, define
$e_t:C([0,1];X)\to X$ as the \emph{evaluation} of $\gamma$ at the time $t$, namely
$e_t(\gamma):=\gamma(t)$.

Given $\pi\in \mathscr P(C([0,1];X))$,
we say $\pi$ has the \emph{bounded compression property} if
there is a positive constant $C(\pi)$ such that, for any
$t\in[0,1]$,
$(e_t)_\#\pi\leq C(\pi)\mu$, i.e.,
\begin{equation}\label{it:C(pi)}
   \pi(\{\gamma\in C([0,1];X):\gamma(t)\in A\})\leq C(\pi)\mu(A)
\end{equation}
for all Borel sets $A\subset X$.

\begin{definition}[\cite{AGS:13}, Definition 4.6 and \cite{AD:14}, Definition 5.5]\label{def:fzplan}
Let $q\in [1,\infty]$ and let
$$\pi\in\mathscr P(C([0,1];X)).$$
We say that $\pi$ is
a \emph{$q$-test plan} if
\begin{enumerate}[\rm(i)]
  \item $\pi$ has the bounded compression property;
  \item\label{it:pi(AC)=1} $\pi(AC^q([0,1];X))=1$;
  \item\label{it:doubleInt} if $q\in[1,\infty)$, then
  $$
  \int_{C([0,1];X)}\int_0^1\abs{\gamma'(t)}^q\,dt\,d\pi(\gamma)<\infty,
  $$
  while, if $q=\infty$, then
  $$
  \Norm{\gamma\mapsto\operatorname{Lip}(\gamma)}{L^\infty(\pi)}<\infty.
  $$
\end{enumerate}
A Borel set $\Gamma\subset C([0,1];X)$ is said to be {\em $p$-negligible}
if $\pi(\Gamma)=0$ for every $q$-test plan $\pi$,
where $q=p'$ is the H\"older conjugate.
A property is said to \emph{hold for $p$-a.e.\ curve},
if it holds for all curves outside a $p$-negligible set.
\end{definition}

\begin{lemma}\label{lem:negligible}
Let $(X,\rho,\mu)$ be a metric measure space and $Y\subset X$ a closed subset.
Let $A\subset C([0,1];Y)\subset C([0,1];X)$ be a Borel set and $p\in[1,\infty]$.
Then $A$ is $p$-negligible as a subset of $C([0,1];Y)$ if and only if
it is $p$-negligible as a subset of $C([0,1];X)$.
\end{lemma}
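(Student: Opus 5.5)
The plan is to compare the two classes of test plans directly. Let $q=p'$ (with the convention matching Definition~\ref{def:fzplan}, so $q\in[1,\infty]$). A test plan on $Y$ is a plan on $Y$ relative to $(Y,\rho|_Y,\mu|_Y)$; a test plan on $X$ is relative to $(X,\rho,\mu)$. The natural embedding $\iota:C([0,1];Y)\to C([0,1];X)$ is an isometry for the sup distance. Since $Y$ is closed, $C([0,1];Y)$ is a closed subset of $C([0,1];X)$, so Borel sets of the former are exactly the Borel subsets of the latter contained in $C([0,1];Y)$. Metric speed, absolute continuity, $AC^q$-membership and Lipschitz constants of a curve in $Y$ are identical whether computed in $Y$ or in $X$, because they only involve $\rho$ evaluated at points of the curve.

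For the direction "negligible in $X$ $\Rightarrow$ negligible in $Y$", take a $q$-test plan $\pi$ on $C([0,1];Y)$ and push it forward, $\tilde\pi:=\iota_\#\pi$. I will check the conditions of Definition~\ref{def:fzplan}. Conditions (ii) and (iii) transfer verbatim by the remarks above. For bounded compression, given a Borel $A\subset X$ we have
\[
\tilde\pi(\{\gamma(t)\in A\})=\pi(\{\gamma(t)\in A\cap Y\})\le C(\pi)\mu(A\cap Y)\le C(\pi)\mu(A).
\]
Hence $\tilde\pi$ is a $q$-test plan on $X$, and so $\pi(A)=\tilde\pi(A)=0$.

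For the converse, take a $q$-test plan $\pi$ on $C([0,1];X)$. If $\pi(C([0,1];Y))=0$, then $\pi(A)=0$ trivially. Otherwise set
\[
\pi_Y:=\pi(C([0,1];Y))^{-1}\,\pi|_{C([0,1];Y)},
\]
a Borel probability measure on $C([0,1];Y)$. Its integrability and $AC^q$ properties are inherited, up to the normalizing factor, since restriction only decreases integrals. For compression, if $A\subset Y$ is Borel, then
\[
\pi_Y(\{\gamma(t)\in A\})\le \pi(C([0,1];Y))^{-1}C(\pi)\mu(A)=\pi(C([0,1];Y))^{-1}C(\pi)(\mu|_Y)(A).
\]
So $\pi_Y$ is a $q$-test plan on $Y$, giving $\pi_Y(A)=0$ and therefore $\pi(A)=0$.

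The only delicate point is measure-theoretic bookkeeping rather than any real estimate. One has to confirm that Borel $\sigma$-algebras match under $\iota$, which uses closedness of $Y$ (or at least that the subspace topology agrees, which holds since $\iota$ is an isometric embedding). One also has to handle the case $\pi(C([0,1];Y))=0$ separately, as above.
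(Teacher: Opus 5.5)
Your proposal is correct and follows the paper's proof essentially verbatim. For one direction you use the normalized restriction $\pi(C([0,1];Y))^{-1}\pi|_{C([0,1];Y)}$, after disposing of the trivial case $\pi(C([0,1];Y))=0$; for the other you use extension by zero, which is exactly your push-forward $\iota_\#\pi$, and you check the compression, $AC^q$, and speed-integrability conditions in the same way the paper does.
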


\begin{proof}
Let first $A$ be $p$-negligible in $C([0,1];Y)$, and let $\pi$ be a $q$-test plan for $X$
with $q=p'$. We need to show that $\pi(A)=0$. If $\pi(C([0,1];Y))=0$, this is clear, so we
assume that $\pi_0:=\pi(C([0,1];Y))>0$. Let
\begin{equation*}
  \tilde\pi(B):=\pi_0^{-1}\pi(B)\quad\text{for all Borel sets }B\subset C([0,1];Y).
\end{equation*}
It is immediate that $\tilde\pi\in\mathscr P(C([0,1];Y))$; we will check that it is a
$q$-test plan for $(Y,\rho|_Y,\mu|_Y)$. Once this is verified, the $p$-negligibility of $A$
in $C([0,1];Y)$ shows that $\tilde\pi(A)=0$, and hence $\pi(A)=\pi_0\tilde\pi(A)=0$.

We turn to the verification of the $q$-test plan properties of $\tilde\pi$. By
Definition \ref{def:fzplan} for $\pi$ it follows that
\begin{equation*}
  \pi_0=\pi(C([0,1];Y)\cap AC^q([0,1];X))=\pi(AC^q([0,1];Y))
\end{equation*}
and hence $\tilde\pi(AC^q([0,1];Y))=1$, which verifies
Definition~\ref{def:fzplan}\eqref{it:pi(AC)=1}
for $(\tilde\pi,Y)$ in place of $(\pi,X)$.

For $q<\infty$, the integral in
Definition~\ref{def:fzplan}\eqref{it:doubleInt} for $\tilde\pi$ is the integral for
$\pi$ restricted to a subset and divided by the positive number $\pi_0$,
hence finite again. Similarly, for $q=\infty$, the restriction of an $L^\infty$-function to
the subset $C([0,1];Y)\subset C([0,1];X)$ remains an $L^\infty$-function over that subset,
also after dividing the measure by $\pi_0$.

Finally, if $F$ is a Borel subset of $Y$ and hence of $X$,
then, for any $t\in[0,1]$,
\begin{equation*}
\begin{split}
  \tilde\pi(\{\gamma\in C([0,1];Y):\gamma(t)\in F\})
  &=\frac{1}{\pi_0}\pi(\{\gamma\in C([0,1];Y):\gamma(t)\in F\}) \\
  &\leq\frac{1}{\pi_0}C(\pi)\mu(F)
  =\frac{1}{\pi_0}C(\pi)\mu|_Y(F),
\end{split}
\end{equation*}
so $C(\tilde\pi):=\pi_0^{-1}C(\pi)<\infty$ qualifies as the constant for $\tilde\pi$. This
completes the verification that $\tilde\pi$ is a $q$-test plan for $Y$, and hence of the
``only if'' part of the lemma.

Let then $A$ be $p$-negligible in $C([0,1];X)$, and let $\pi$ be a $q$-test plan for
$(Y,\rho|_Y,\mu|_Y)$ with $q=p'$. We can then define
\begin{equation*}
  \hat\pi(B):=\pi(B\cap C([0,1];Y))\quad\text{for all Borel sets }B\subset C([0,1];X).
\end{equation*}
It is straightforward to check that $\hat\pi$ is a $q$-test plan for $(X,\rho,\mu)$.
Since $A$ is $p$-negligible in $C([0,1];X)$, it follows that $\hat\pi(A)=0$.
Moreover, by the assumption $A\subset C([0,1];Y)$, we also obtain
\begin{equation*}
  \pi(A)=\pi(A\cap C([0,1];Y))=\hat\pi(A)=0.
\end{equation*}
As $\pi$ was an arbitrary $q$-test plan for $(Y,\rho|_Y,\mu|_Y)$,
this shows that $A$ is $p$-negligible in $C([0,1];Y)$.
This completes the proof of the ``if'' part, and hence of the lemma.
\end{proof}

Following \cite[Section 4.1]{AGS:13}, for $\gamma\in AC([0,1];X)$
and Borel functions $f$ and $g$, we denote
\begin{equation*}
  \int_{\partial\gamma}f:=f(\gamma(1))-f(\gamma(0)),\qquad
  \int_{\gamma}g:=\int_0^1 g(\gamma(t))\abs{\gamma'}(t)\,dt
\end{equation*}
whenever the right-hand sides are well defined.

\begin{lemma}\label{lem:ACDM4.13}
Let $p\in[1,\infty]$ and $0\leq g\in L^p(\mu)$, and $\pi$ be a $p'$-test plan for
$(X,\rho,\mu)$. Then
\begin{equation*}
\begin{split}
&\int_{C([0,1];X)}\Big(\int_{\gamma}g\Big)\,d\pi(\gamma) \\
&\quad\leq
C(\pi)^{\frac1p}\Norm{g}{L^p(\mu)}\Big[\int_{C([0,1];X)}\int_0^1\abs{\gamma'}^{p'}(t)
\,dt\,d\pi(\gamma)\Big]^{\frac{1}{p'}}
<\infty
\end{split}
\end{equation*}
with the usual modification made when $p'=\infty$.
\end{lemma}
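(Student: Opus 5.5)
The plan is to exchange the order of integration (Tonelli), use the bounded compression property at each fixed time, and finish with H\"older's inequality on the product space $C([0,1];X)\times[0,1]$ with the measure $d\pi\,dt$. Since $g\geq 0$ is Borel and the map $(\gamma,t)\mapsto g(\gamma(t))\abs{\gamma'}(t)$ is a nonnegative measurable function on this product, Tonelli's theorem applies. Here we use that $\pi$ is concentrated on $AC^{p'}([0,1];X)$, so the metric speed is defined a.e. For $p'<\infty$ the joint measurability of $(\gamma,t)\mapsto\abs{\gamma'}(t)$ can be taken from \cite{AGS:13}, or one can realize $\abs{\gamma'}(t)$ as a limsup of difference quotients over rational increments.

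First, by H\"older's inequality in $L^p\times L^{p'}$ of the product measure $\pi\otimes dt$,
\begin{equation*}
\int_{C([0,1];X)}\int_0^1 g(\gamma(t))\abs{\gamma'}(t)\,dt\,d\pi(\gamma)
\leq\Big[\int_{C([0,1];X)}\int_0^1 g(\gamma(t))^p\,dt\,d\pi\Big]^{\frac1p}
\Big[\int_{C([0,1];X)}\int_0^1\abs{\gamma'}^{p'}\,dt\,d\pi\Big]^{\frac1{p'}}.
\end{equation*}

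Second, for the first factor, swap the integrals and write, for each fixed $t$,
\begin{equation*}
\int_{C([0,1];X)} g(\gamma(t))^p\,d\pi(\gamma)=\int_X g^p\,d(e_t)_\#\pi\leq C(\pi)\int_X g^p\,d\mu.
\end{equation*}
The inequality follows from $(e_t)_\#\pi\leq C(\pi)\mu$ on Borel sets, extended to nonnegative Borel functions by monotone approximation with simple functions. Integrating over $t\in[0,1]$ gives at most $C(\pi)\Norm{g}{L^p(\mu)}^p$, and taking the $p$-th root yields the factor $C(\pi)^{1/p}\Norm{g}{L^p(\mu)}$. The remaining factor is finite by Definition~\ref{def:fzplan}\eqref{it:doubleInt}, so the whole bound is finite.

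The endpoint cases need separate treatment. When $p=1$, so $p'=\infty$, we bound $\abs{\gamma'}(t)\leq\operatorname{Lip}(\gamma)\leq\Norm{\operatorname{Lip}}{L^\infty(\pi)}$ for $\pi$-a.e. $\gamma$ and a.e. $t$, and then apply the compression step with exponent $1$. When $p=\infty$, so $p'=1$, we first modify $g$ on a $\mu$-null set so that $g\leq\Norm{g}{L^\infty}$ everywhere. Bounded compression ensures this modification does not change $g(\gamma(t))$ for $\pi\otimes dt$-a.e. $(\gamma,t)$, and the estimate then follows directly, with $C(\pi)^{1/\infty}=1$.

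The main point of care is measurability: the integrand must be jointly Borel and the composition $g\circ e_t$ must be well defined up to null sets. That is, changing $g$ on a $\mu$-null set must not affect the left-hand side, and this is again guaranteed by bounded compression. The estimates themselves are routine.
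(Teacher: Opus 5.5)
Your proposal is correct and takes the same route as the paper. The paper's proof is a one-line citation of \cite[(4.13)--(4.14)]{ACDM} for exactly this H\"older-plus-bounded-compression computation on $\pi\otimes dt$, with finiteness then read off from Definition~\ref{def:fzplan}\eqref{it:doubleInt}. Your added details are sound and fill in what the paper leaves to the reference: the Borel representative of $g$ and the joint measurability of $\abs{\gamma'}$, and the endpoint cases $p\in\{1,\infty\}$.
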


\begin{proof}
The first estimate is a rather direct concatenation of definitions and H\"older's
inequality, as detailed in \cite[(4.13)--(4.14)]{ACDM}. The finiteness follows from
Definition~\ref{def:fzplan}\eqref{it:doubleInt} for $p'$-test plans.
This completes the proof of Lemma \ref{lem:ACDM4.13}.
\end{proof}

\begin{lemma}\label{lem:int-gamma-finite}
If $p\in[1,\infty]$ and $0\leq g\in L^p(\mu)$, then
\begin{equation*}
  \int_{\gamma}g<\infty\quad\text{for $p$-a.e.\ curve }\gamma.
\end{equation*}
\end{lemma}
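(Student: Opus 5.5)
The plan is to show that the exceptional set
\begin{equation*}
  \Gamma:=\Big\{\gamma\in C([0,1];X):\ \gamma\in AC([0,1];X),\ \int_\gamma g=\infty\Big\}\cup\big(C([0,1];X)\setminus AC^{p'}([0,1];X)\big)
\end{equation*}
is $p$-negligible in the sense of Definition~\ref{def:fzplan}. So I would fix an arbitrary $p'$-test plan $\pi$ and prove that $\pi(\Gamma)=0$. The second piece of $\Gamma$ is $\pi$-null by Definition~\ref{def:fzplan}\eqref{it:pi(AC)=1}. Hence it suffices to show that $\pi(\{\gamma\in AC^{p'}([0,1];X):\int_\gamma g=\infty\})=0$.

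For this I would invoke Lemma~\ref{lem:ACDM4.13}. It gives
\begin{equation*}
\int_{C([0,1];X)}\Big(\int_\gamma g\Big)\,d\pi(\gamma)<\infty .
\end{equation*}
A nonnegative function with finite integral is finite $\pi$-almost everywhere. Therefore the set where $\int_\gamma g=\infty$ has $\pi$-measure zero. Since $\pi$ was an arbitrary $p'$-test plan, $\Gamma$ is $p$-negligible, which is exactly the claim.

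The only real obstacle is measurability. Definition~\ref{def:fzplan} speaks of Borel sets, so I need $\gamma\mapsto\int_\gamma g$ to be Borel on $AC^{p'}([0,1];X)$, at least after modifying $g$ on a $\mu$-null set. I would take a Borel representative of $g$; changing $g$ on a $\mu$-null set changes $\int_\gamma g$ only on a $\pi$-null set, by the bounded compression property \eqref{it:C(pi)} combined with Fubini. The metric speed \eqref{eq:metric-speed} is a limit of continuous functionals of $(\gamma,t)$, so $(\gamma,t)\mapsto g(\gamma(t))\abs{\gamma'}(t)$ is Borel. Tonelli then gives Borel measurability of $\gamma\mapsto\int_\gamma g$, as in \cite[Section 4.1]{AGS:13}.

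If one prefers to avoid this point altogether, an alternative is to note that $\Gamma$ is contained in a Borel $\pi$-null set for every $\pi$. This is the standard convention, and it suffices for the notion of ``$p$-a.e.\ curve''. The case $p=\infty$, that is $p'=1$, is handled identically, using the $L^1$ integrability of the speed in Lemma~\ref{lem:ACDM4.13}.
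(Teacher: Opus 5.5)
Your proof is correct and is essentially the paper's argument: for every $p'$-test plan $\pi$, Lemma~\ref{lem:ACDM4.13} gives $\int(\int_\gamma g)\,d\pi(\gamma)<\infty$, so the set where $\int_\gamma g=\infty$ is $\pi$-null and hence $p$-negligible. Your extra remarks on the Borel measurability of $\gamma\mapsto\int_\gamma g$ and on discarding curves outside $AC^{p'}([0,1];X)$ are details that the paper's three-line proof leaves implicit.
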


This is mentioned in \cite[page 981]{AGS:13}. We give a proof for completeness.

\begin{proof}
Let $\Gamma:=\{\gamma\in C([0,1];X):\int_\gamma g=\infty\}$. If $\pi$ is a $p'$-test plan,
then Lemma \ref{lem:ACDM4.13} shows that $\pi(\Gamma)=0$. Since this holds for every
$p'$-test plan $\pi$, it means that
$\Gamma$ is $p$-negligible, and hence $\int_{\gamma}g<\infty$ for $p$-a.e.~$\gamma$ by
definition.
This completes the proof of Lemma \ref{lem:int-gamma-finite}.
\end{proof}

Another way of measuring the size of a curve family is via the modulus:

\begin{definition}
Let $p\in[1,\infty)$.
The {\em $p$-modulus} of a curve family $\Gamma\subset AC([0,1];X)$ is
\begin{equation*}
  \operatorname{Mod}_p(\Gamma)
  :=\inf\Big\{\Norm{h}{L^p(\mu)}^p:\int_\gamma h\geq 1\text{ for all }\gamma\in\Gamma\Big\}.
\end{equation*}
A set $\Gamma\subset AC([0,1];X)$ is said to be
\emph{$\operatorname{Mod}_p$-negligible} if $\operatorname{Mod}_p(\Gamma)=0$.
A property is said to \emph{hold for $\operatorname{Mod}_p$-a.e.\ curve} if it
holds for all $\gamma\in AC([0,1];X)\setminus\Gamma$ for some
$\operatorname{Mod}_p$-negligible set $\Gamma$.
\end{definition}

The $\operatorname{Mod}_p$ analogue of Lemma \ref{lem:negligible} is easy:

\begin{lemma}\label{lem:mod-negli}
Let $(X,\rho,\mu)$ be a metric measure space and $Y\subset X$ a closed subset.
Let $\Gamma\subset AC([0,1];Y)\subset AC([0,1];X)$.
Then $\Gamma$ is $\operatorname{Mod}_p$-negligible as a subset of $AC([0,1];X)$ if and only
if it is $\operatorname{Mod}_p$-negligible as a subset of $AC([0,1];Y)$.
\end{lemma}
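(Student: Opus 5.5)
The plan is to compare admissible functions on $X$ and on $Y$ directly, using only the definition of $\operatorname{Mod}_p$. Throughout, every $\gamma\in\Gamma$ takes values in $Y$. Consequently, for any Borel $h\geq 0$ on $X$, the line integral
\begin{equation*}
  \int_\gamma h=\int_0^1 h(\gamma(t))\abs{\gamma'}(t)\,dt
\end{equation*}
depends only on the restriction $h|_Y$. The metric speed $\abs{\gamma'}$ is also the same whether $\gamma$ is viewed in $Y$ or in $X$, because $\rho|_Y$ is just the restriction of $\rho$. The measure on $Y$ is $\mu|_Y$.

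Suppose first that $\Gamma$ is $\operatorname{Mod}_p$-negligible in $X$. Fix $\varepsilon>0$ and choose an admissible $h$ on $X$ with $\int_\gamma h\geq1$ for all $\gamma\in\Gamma$ and $\Norm{h}{L^p(\mu)}^p<\varepsilon$. Then $h|_Y$ is admissible for $\Gamma$ in $Y$, by the observation above. Moreover,
\begin{equation*}
  \Norm{h|_Y}{L^p(\mu|_Y)}^p=\int_Y h^p\,d\mu\leq\Norm{h}{L^p(\mu)}^p<\varepsilon .
\end{equation*}
Letting $\varepsilon\to0$ gives $\operatorname{Mod}_p(\Gamma)=0$ in $Y$.

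Conversely, suppose $\Gamma$ is negligible in $Y$. Given an admissible $h$ on $Y$ with small norm, define its zero extension
\begin{equation*}
  \hat h:=h\ \text{on }Y,\qquad \hat h:=0\ \text{on }X\setminus Y .
\end{equation*}
This is Borel on $X$ because $Y$ is closed, hence Borel. It satisfies $\int_\gamma\hat h=\int_\gamma h\geq1$ for every $\gamma\in\Gamma$, and $\Norm{\hat h}{L^p(\mu)}=\Norm{h}{L^p(\mu|_Y)}$. So $\operatorname{Mod}_p(\Gamma)=0$ in $X$.

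There is no real obstacle in this argument. The only points needing care are the measurability of the zero extension, which uses closedness of $Y$, and the observation that curve lengths and line integrals are intrinsic to the curve. In fact the argument shows that the two moduli are equal, not merely that they vanish simultaneously.
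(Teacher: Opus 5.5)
Your proposal is correct and follows essentially the same route as the paper: you restrict an admissible function from $X$ to $Y$ for one direction and extend it by zero from $Y$ to $X$ for the other. As the paper also notes, this in fact gives $\operatorname{Mod}_p(\Gamma;X)=\operatorname{Mod}_p(\Gamma;Y)$.
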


\begin{proof}
We use the notation $\operatorname{Mod}_p(\Gamma;Z)$ for
the $p$-modulus of $\Gamma$ as a subset of $AC([0,1];Z)$,
where $Z\in\{X,Y\}$.

Let first $h\in L^p(\mu|_Y)$ be such that $\int_\gamma h\geq 1$ for all $\gamma\in\Gamma$.
If $\hat h\in L^p(\mu)$ is the trivial extension of $h$, i.e., $\hat h=h$ on $Y$ and $\hat
h=0$ on $X\setminus Y$,
then $\Norm{\hat h}{L^p(\mu)}=\Norm{h}{L^p(\mu|_Y)}$ and $\int_\gamma \hat h=\int_\gamma
h\geq 1$ for all $\gamma\in\Gamma$, since $\gamma([0,1])\subset Y$ and $h=\hat h$ on $Y$.
Taking the infimum over all such $h$, we find that
$$\operatorname{Mod}_p(\Gamma;X)\leq\operatorname{Mod}_p(\Gamma;Y).$$

Let then $h\in L^p(\mu)$ be such that $\int_\gamma h\geq 1$ for all $\gamma\in\Gamma$.
Taking $\tilde h:=h|_Y\in L^p(\mu|_Y)$ it follows that $\Norm{\tilde
h}{L^p(\mu|_Y)}\leq\Norm{h}{L^p(\mu)}$ and $\int_\gamma \tilde h=\int_\gamma h\geq 1$ for
all $\gamma\in\Gamma$. Taking the infimum over all such $h$, we find that
$$\operatorname{Mod}_p(\Gamma;Y)\leq\operatorname{Mod}_p(\Gamma;X).$$

Hence $\operatorname{Mod}_p(\Gamma;X)=\operatorname{Mod}_p(\Gamma;Y)$, and in particular the
two moduli are both zero or both non-zero simultaneously. This completes the proof.
\end{proof}

\begin{lemma}\label{lem:subadditive}
Let $(X,\rho,\mu)$ be a metric measure space, and let
\begin{equation*}
   \Gamma_n\subset AC([0,1];X),\qquad
   \Gamma:=\bigcup_{n=1}^\infty\Gamma_n.
\end{equation*}
If every $\Gamma_n$ is $p$-negligible (respectively, $\operatorname{Mod}_p$-negligible),
then so is $\Gamma$.
\end{lemma}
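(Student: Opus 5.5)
The statement has two independent halves, and I would treat each by countable subadditivity of the relevant set function.

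\emph{The $p$-negligible case.} Fix an arbitrary $q$-test plan $\pi$, where $q=p'$. By definition, $\pi(\Gamma_n)=0$ for every $n$. Since $\pi$ is a measure, countable subadditivity gives
\begin{equation*}
  \pi(\Gamma)\leq\sum_{n=1}^\infty\pi(\Gamma_n)=0 .
\end{equation*}
As $\pi$ was arbitrary, $\Gamma$ is $p$-negligible. The only point to check is measurability: in Definition~\ref{def:fzplan}, $p$-negligibility is defined for Borel sets, so the $\Gamma_n$ are implicitly Borel. A countable union of Borel sets is Borel, so $\Gamma$ is again a legitimate candidate and $\pi(\Gamma)$ makes sense.

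\emph{The $\operatorname{Mod}_p$-negligible case.} Here I would use the standard argument for countable subadditivity of the modulus. Fix $\varepsilon>0$. For each $n$, $\operatorname{Mod}_p(\Gamma_n)=0$, so I can choose a Borel function $h_n\geq0$ with
\begin{equation*}
  \int_\gamma h_n\geq1 \text{ for all }\gamma\in\Gamma_n,\qquad \Norm{h_n}{L^p(\mu)}^p\leq(\varepsilon2^{-n})^p .
\end{equation*}
(If the admissible $h$ are not assumed nonnegative, replace $h_n$ by $|h_n|$; this only increases $\int_\gamma h_n$ and leaves the norm unchanged.) Now set $h:=\sum_n h_n$. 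By Minkowski's inequality,
\begin{equation*}
  \Norm{h}{L^p(\mu)}\leq\sum_n\varepsilon2^{-n}=\varepsilon .
\end{equation*}
For any $\gamma\in\Gamma$, pick $n$ with $\gamma\in\Gamma_n$. Since all $h_k\geq0$, monotone convergence lets me integrate the series term by term along $\gamma$, so
\begin{equation*}
  \int_\gamma h=\sum_k\int_\gamma h_k\geq\int_\gamma h_n\geq1 .
\end{equation*}
Hence $h$ is admissible for $\Gamma$, and $\operatorname{Mod}_p(\Gamma)\leq\varepsilon^p$. Letting $\varepsilon\to0$ gives $\operatorname{Mod}_p(\Gamma)=0$.

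The main step to watch is the interchange of sum and line integral, together with the nonnegativity and Borel measurability of $h$. Nonnegativity is arranged by taking $h_n\geq0$, and $h$ is Borel as a countable sum of nonnegative Borel functions, possibly taking the value $\infty$; this is harmless because $\Norm{h}{L^p(\mu)}<\infty$. With these in place, term-by-term integration is justified by monotone convergence.
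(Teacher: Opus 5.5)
Your proposal is correct and matches the paper's proof. In the test-plan case the paper likewise uses only the $\sigma$-additivity of each probability measure $\pi$. In the modulus case it cites countable subadditivity of $\operatorname{Mod}_p$, inequality (5.2.6) in the monograph of Heinonen, Koskela, Shanmugalingam and Tyson, which your $\varepsilon 2^{-n}$ argument with $h=\sum_n h_n$ and Minkowski's inequality reproves directly.
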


\begin{proof}
Suppose first that each $\Gamma_n$ is $p$-negligible. Hence, for every $p'$-test plan for
$X$, it has measure $\pi(\Gamma_n)=0$.  By $\sigma$-additivity (recalling that $p'$-test
plans are in particular probability measures), it follows that
$\Gamma:=\bigcup_{n=1}^\infty\Gamma_n\subset C([0,1];X)$ satisfies $\pi(\Gamma)=0$. Valid
for every $p'$-test plan, this shows that $\Gamma$ is $p$-negligible.

Suppose then that each $\Gamma_n$ is $\operatorname{Mod}_p$-negligible, i.e.,
$\operatorname{Mod}_p(\Gamma_n)=0$. It follows from the subadditivity of
$\operatorname{Mod}_p$, namely \cite[(5.2.6)]{HKST:book}
\begin{equation*}
  \operatorname{Mod}_p\Big(\bigcup_{n=1}^\infty\Gamma_n\Big)
  \leq\sum_{n=1}^\infty\operatorname{Mod}_p(\Gamma_n),
\end{equation*}
that $\operatorname{Mod}_p(\Gamma)=0$ as well. This completes the proof.
\end{proof}

\subsection{Upper gradients}\label{ss:ug}

The notions of negligible curve families from Section \ref{ss:curves} give rise to different
notions of upper gradients as follows:

\begin{definition}\label{def:uppergrad}
Let $(X,\rho,\mu)$ be a metric measure space and $p\in[1,\infty)$.
Let $f:X\to\mathbb R$ and $g,h:X\to[0,\infty]$.
Suppose that
\begin{equation}\label{eq:uppergrad}
  \Babs{\int_{\partial\gamma}f}\leq\int_\gamma g
\end{equation}
holds for all $\gamma\in AC([0,1];X)\setminus\Gamma$, for some $\Gamma\subset C([0,1];X)$.
We say that $g$ is
\begin{enumerate}[\rm(i)]
  \item an {\em upper gradient of $f$} if $\Gamma=\varnothing$;
  \item a {\em $p$-weak upper gradient of $f$ in the sense of modulus} if $\Gamma$ is
  $\operatorname{Mod}_p$-negligible;
  \item a {\em $p$-weak upper gradient of $f$ in the sense of test plans} if $\Gamma$ is
  $p$-negligible.
\end{enumerate}
If one of these properties holds for all $\gamma\in AC([0,1];X)\setminus\Gamma$ of length
$\ell(\gamma)>\delta$, then we say that $g$ is a ($p$-weak) upper gradient of $f$ (in one or
the other sense) {\em up to scale $\delta$}.

Suppose that
\begin{equation*}
   \abs{f(x)-f(y)}\leq \rho(x,y)[h(x)+h(y)]
\end{equation*}
for $\mu$-a.e.\ $x,y\in X$. Then $h$ is called a {\em Haj\l{}asz upper gradient of $f$}.
\end{definition}

\begin{remark}
The notion of upper gradients as above was introduced in \cite{HK:96} under the name
``very weak gradient''; according to \cite[p.~55 fn.]{Heinonen:book},
the name ``upper gradient'' was suggested by John Garnett, and this is now widely adopted.
The Haj\l{}asz upper gradient was (implicitly) introduced in \cite{Haj:96},
the $p$-weak upper gradient in the sense of modulus in \cite[(7)]{KM:98}, and the one in the
sense of test plans in \cite[Definition 4.8]{AGS:13}. Both latter ones are simply called
``$p$-weak upper gradients'' in these sources and other works of the respective authors and
their collaborators; notably, the terminology of \cite{KM:98} is also used in the monograph
\cite[page 152]{HKST:book}. On the other hand, \cite{AGS:13} refers to $p$-weak upper
gradients in the sense of modulus briefly as $p$-upper gradients. Since we will make use of
both definitions, we adopt the somewhat pedantic terminology of Definition
\ref{def:uppergrad} to avoid possible confusions.

The notion of {\em up to scale $\delta$} was introduced for $p$-weak upper gradients
in the sense of test plans  in \cite[Definition 26]{ACDM};
the same definition naturally extends to other versions of ($p$-weak or not) upper
gradients.
The version for plain upper gradients appears in \cite[Definition 2.1]{DMS}.
\end{remark}

\begin{convention}\label{con:uppergrad}
When making a statement about $p$-weak upper gradients without specifying the sense, it is
understood that such a statement applies to both $p$-weak upper gradients in the sense of
modulus, and those in the sense of test plans; however, the same sense must be applied
consistently throughout the statement.
\end{convention}

\begin{lemma}\label{lem:minimal-wug}
Let $(X,\rho,\mu)$ be separable and locally finite.
Using Convention \ref{con:uppergrad}, let $p\in[1,\infty)$
and $f:X\to\mathbb R$ be $\mu$-measurable with a $p$-weak upper gradient $g\in L^p(\mu)$.
\begin{enumerate}[\rm(i)]
  \item\label{it:ae-wug} If $\tilde g=g$ $\mu$-a.e., then $\tilde g$ is also a $p$-weak
  upper gradient of $f$.
  \item\label{it:min-wug} There is a minimal $p$-weak upper gradient $g_0\in L^p(\mu)$ of
  $f$, i.e., whenever $g_1\in L^p(\mu)$ is a $p$-weak upper gradient of $f$, then $g_0\leq
  g_1$ pointwise a.e.
\end{enumerate}
\end{lemma}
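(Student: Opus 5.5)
For part \eqref{it:ae-wug}, the plan is to reduce to the case that the exceptional set $N:=\{\tilde g\neq g\}$ is $\mu$-null. Take the Borel function $h:=\infty\cdot\mathbf 1_{N'}$, where $N'\supset N$ is a Borel null set. Then $h=0$ $\mu$-a.e., so $\|h\|_{L^p(\mu)}=0$. Consider the curve family
\begin{equation*}
\Gamma_N:=\Big\{\gamma:\int_\gamma \mathbf 1_{N'}>0\Big\}.
\end{equation*}

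In the modulus sense, $\operatorname{Mod}_p(\Gamma_N)\le\|k\,\mathbf 1_{N'}\|_{L^p(\mu)}^p=0$ for every $k$. Indeed, $\Gamma_N=\bigcup_k\{\int_\gamma k\mathbf 1_{N'}\ge1\}$, so $\Gamma_N$ is $\operatorname{Mod}_p$-negligible by Lemma~\ref{lem:subadditive}.

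In the test plan sense, Lemma~\ref{lem:ACDM4.13} applied to $g=\mathbf 1_{N'}$ gives $\int\int_\gamma\mathbf 1_{N'}\,d\pi=0$ (H\"older with $\|\mathbf 1_{N'}\|_{L^p}=0$). Hence $\pi(\Gamma_N)=0$ for every $p'$-test plan $\pi$.

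For any curve outside $\Gamma_N\cup\Gamma$, where $\Gamma$ is the exceptional family for $g$, we have $\int_\gamma\tilde g=\int_\gamma g$. The curve spends zero $|\gamma'|dt$-measure in $N'$, so \eqref{eq:uppergrad} transfers from $g$ to $\tilde g$. The union of the two families is negligible by Lemma~\ref{lem:subadditive}. A measurability caveat arises: $\Gamma_N$ should be Borel for the test-plan notion. I would handle this by noting that $\gamma\mapsto\int_\gamma \mathbf 1_{N'}$ is Borel on $AC$ for Borel $N'$, or else by working with Borel supersets.

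For part \eqref{it:min-wug}, the key lattice property is this: if $g_1,g_2\in L^p(\mu)$ are $p$-weak upper gradients of $f$, then so is $\min\{g_1,g_2\}$. This is the main obstacle. The standard route is the following.

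First, for $p$-a.e.\ curve $\gamma$, the function $f\circ\gamma$ is absolutely continuous on $[0,1]$ with $|(f\circ\gamma)'|\le g_i(\gamma)|\gamma'|$ a.e. To show this, apply \eqref{eq:uppergrad} to all subcurves $\gamma|_{[s,t]}$ (reparametrized), which is legitimate because the family of curves having a bad subcurve is still negligible. In the modulus sense this is classical (cf.\ \cite{HKST:book}). In the test-plan sense one uses the restriction/rescaling invariance of test plans, as in \cite{AGS:13}. Together with Lemma~\ref{lem:int-gamma-finite}, this yields finite integrals.

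Second, on $\{t:\gamma(t)\in\{g_1\le g_2\}\}$ one uses $g_1$, and on the complement one uses $g_2$. Integrating gives \eqref{eq:uppergrad} for $\min\{g_1,g_2\}$.

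Then set $m:=\inf\{\|g\|_{L^p}\}$ over weak upper gradients $g$, and take a minimizing sequence, replaced by successive minima so that it is decreasing: $g_n\downarrow g_0$. By monotone convergence $\|g_0\|_p=m$, and $\int_\gamma g_n\to\int_\gamma g_0$ whenever $\int_\gamma g_1<\infty$, which holds $p$-a.e.\ by Lemma~\ref{lem:int-gamma-finite}. Hence $g_0$ is a $p$-weak upper gradient, using countable unions of exceptional sets and Lemma~\ref{lem:subadditive}.

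Minimality then follows. If $g$ is another $p$-weak upper gradient, then $\min\{g,g_0\}$ is one with norm $\le m$, so $\|\min\{g,g_0\}\|_p=\|g_0\|_p$. This forces $g_0\le g$ a.e., since the $L^p$ norms are finite and $\min\le g_0$. Separability and local finiteness enter only to ensure $\sigma$-finiteness and the Borel measurability used above; representatives are fixed via part \eqref{it:ae-wug}.
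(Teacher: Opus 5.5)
Part (i) is correct and essentially matches the paper. In the test-plan case the paper gets $\mathcal L^1(\{t:\gamma(t)\in N\})=0$ for $\pi$-a.e.\ $\gamma$ from bounded compression and Fubini, while you get $\int_\gamma\mathbf 1_{N'}=0$ from Lemma~\ref{lem:ACDM4.13}; either suffices. For part (ii) the paper gives no argument of its own, only citations: \cite{HKST:book} for the modulus, \cite[Remark 4.10 and Definition 4.11]{AGS:13} for test plans with $p>1$, and \cite{CKR:24} for $p=1$. Your route (lattice property, a decreasing minimizing sequence, then the minimality trick) is the standard self-contained proof. It treats all $p\in[1,\infty)$ uniformly and is correct in the modulus case. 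However, the test-plan half of the lattice step has a genuine gap.

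You assert that, for $p$-a.e.\ curve, \eqref{eq:uppergrad} holds on \emph{all} subcurves, so $f\circ\gamma$ is absolutely continuous. For the modulus this is valid: a function admissible for $\Gamma$ is admissible for the family of curves having a subcurve in $\Gamma$. Restriction invariance of test plans, however, controls only one pair $(s,t)$, or countably many, at a time, and the claim is false in general.

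Take $\mathbb R^2$ with $p>2$ and $f=\mathbf 1_{\{0\}}$. Then $g=0$ is a $p$-weak upper gradient of $f$ in the sense of test plans, since $\pi(\{\gamma(0)=0\})\le C(\pi)\mu(\{0\})=0$ and likewise at time $1$. Now fix $\alpha\in(\frac1p,\frac12]$ and let $\pi$ be the law of $\gamma_{\tau,\theta}(t)=\operatorname{sgn}(t-\tau)\abs{t-\tau}^{\alpha}(\cos\theta,\sin\theta)$, with $\tau$ uniform on $[\frac14,\frac34]$ and $\theta$ uniform on $[0,2\pi)$. This is a $p'$-test plan: the speed lies in $L^{p'}$ because $\alpha>\frac1p$, and $(e_t)_\#\pi$ has density $\lesssim R^{1/\alpha-2}\lesssim 1$ at radius $R\le 1$. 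Yet $f\circ\gamma=\mathbf 1_{\{\tau\}}$ for every such $\gamma$, so $f\circ\gamma$ is not continuous, and the curves with a bad subcurve have full $\pi$-measure.

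What is true, and what \cite[Remark 4.10]{AGS:13} (cited in the paper) provides, is the weaker ``Sobolev along $p$-a.e.\ curve'' property. Fix a plan $\pi$. For each fixed $(s,t)$, the push-forward of $\pi$ under $\gamma\mapsto\gamma(s+(t-s)\,\cdot\,)$ is again a $p'$-test plan. By Fubini, for $\pi$-a.e.\ $\gamma$ the inequality then holds for $\mathcal L^2$-a.e.\ $(s,t)$, for a.e.\ $t$ when $s=0$, and for a.e.\ $s$ when $t=1$. A one-dimensional lemma then gives an absolutely continuous $F_\gamma$ with $F_\gamma=f\circ\gamma$ a.e.\ and at the endpoints, and $\abs{F_\gamma'}\le g_i(\gamma)\abs{\gamma'}$ a.e.\ for $i=1,2$. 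The same $F_\gamma$ serves both, being continuous and a.e.\ equal to $f\circ\gamma$. Hence $\abs{f(\gamma(1))-f(\gamma(0))}=\abs{F_\gamma(1)-F_\gamma(0)}\le\int_\gamma\min\{g_1,g_2\}$ for $\pi$-a.e.\ $\gamma$ and every $\pi$. With this replacement the rest of your proof goes through unchanged, including $p=1$, since restriction also preserves $\infty$-test plans.
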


\begin{proof}
In the modulus case, claim \eqref{it:ae-wug}
is \cite[Lemma 6.2.8]{HKST:book}
(where assumptions on $X$ are stated in the beginning of \cite[Section 6.2]{HKST:book}),
while claim \eqref{it:min-wug} is contained in \cite[Theorem 6.3.20 and Remark
6.3.21]{HKST:book}
(which refers to the notion of minimal $p$-weak upper gradient defined on the bottom of
\cite[page 161]{HKST:book}).

We turn to the test plans case, based on results of
\cite{AGS:13} for $p\in(1,\infty)$ and \cite{CKR:24} for $p=1$. Since some of these use the
$\sigma$-finiteness of $(X,\mu)$,
we note that this follows from the assumptions of the
present lemma by \cite[Lemma 3.3.28]{HKST:book}.
If $p\in(1,\infty)$, then, by Lemma \ref{lem:int-gamma-finite}, the assumption $g\in
L^p(\mu)$ implies that
$\int_{\gamma}g<\infty$ for $p$-a.e.\ curve $\gamma$. When $f$ has a $p$-weak upper gradient
with this property, \cite[Remark 4.10]{AGS:13} shows that $f$ is {\em Sobolev along
$p$-a.e.\ curve}
in the sense of \cite[Definition 4.9]{AGS:13}, i.e., for $p$-a.e.\ $\gamma$, the function
$f\circ\gamma$ coincides at the end-points $\{0,1\}$ and a.e.\ on the interval $(0,1)$ with
an absolutely continuous function. For such functions (namely, Sobolev along $p$-a.e.\
curve),
the existence of a minimal $p$-weak upper gradient as in
\eqref{it:min-wug} is explained in
\cite[Definition 4.11 et seq.]{AGS:13}. The case $p=1$ of
\eqref{it:min-wug} is
proved in \cite[page 141]{CKR:24}.

To prove \eqref{it:ae-wug}, let $N\subset X$ be a Borel set such that
$\mu(N)=0$ and $g=\widetilde g$ on $X\setminus N$. Fix an arbitrary
$q$-test plan $\pi$ with $q=p'$. By the bounded compression property \eqref{it:C(pi)} of
$\pi$,
for every $t\in[0,1]$, we have
$$
 \pi\bigl(\{\gamma:\gamma(t)\in N\}\bigr)
 =(e_t)_\#\pi(N)
 \leq C(\pi)\mu(N)=0.
$$
Hence, by Fubini's theorem,
$$
 \int \mathcal L^1\bigl(\{t\in[0,1]:\gamma(t)\in N\}\bigr)
 \,d\pi(\gamma)
 =
 \int_0^1\pi\bigl(\{\gamma:\gamma(t)\in N\}\bigr)\,dt=0,
$$
where $\mathcal{L}^1$ denotes the Lebesgue measure on $\mathbb R$.
It follows that, for $\pi$-a.e.\ $\gamma$,
$$
 g(\gamma(t))=\widetilde g(\gamma(t))
 \quad\text{for a.e. }t\in[0,1],
$$
and consequently
$$
 \int_0^1 g(\gamma(t))|\gamma'(t)|\,dt
 =
 \int_0^1 \widetilde g(\gamma(t))|\gamma'(t)|\,dt.
$$
Thus, the inequality \eqref{eq:uppergrad} for $g$ also holds with
$\widetilde g$ in place of $g$. Since $\pi$ is arbitrary,
$\widetilde g$ is also a $p$-weak upper gradient of $f$.
This completes the proof of Lemma \ref{lem:minimal-wug}.
\end{proof}

\begin{remark}\label{rem:ug-relations}Let $p\in[1,\infty)$.
\begin{enumerate}[\rm(i)]
\item An upper gradient is obviously a $p$-weak upper gradient in either sense.
  \item\label{it:mod>testplans} A $p$-weak upper gradient in the sense of modulus is a
  $p$-weak upper gradient in the sense of test plans; see \cite[Remark 4.12]{AGS:13}.
\end{enumerate}
\end{remark}

As a partial converse to Remark \ref{rem:ug-relations}\eqref{it:mod>testplans}, we have the
following:

\begin{theorem}[\cite{AGS:13}, Theorem 7.4]\label{thm:AGS7.4}
Let $p\in(1,\infty)$ and $(X,\rho,\mu)$ be a complete, separable, and locally finite metric
measure space.
Then $f\in L^p(\mu)$ has a $p$-weak upper gradient in $L^p(\mu)$ in the sense of modulus if
and only if
it has a $p$-weak upper gradient in $L^p(\mu)$ in the sense of test plans. Moreover,
the two minimal $p$-weak upper gradients are equal $\mu$-a.e.
\end{theorem}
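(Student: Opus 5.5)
The easy direction comes from what is already recorded, and the converse goes through the relaxed slope of $f$ obtained by Lipschitz approximation. If $f\in L^p(\mu)$ has a $p$-weak upper gradient $g\in L^p(\mu)$ in the sense of modulus, then by Remark~\ref{rem:ug-relations}\eqref{it:mod>testplans} the same $g$ is a $p$-weak upper gradient in the sense of test plans. Writing $g_{\mathrm{Mod}}$ and $g_{\mathrm{TP}}$ for the two minimal objects of Lemma~\ref{lem:minimal-wug}\eqref{it:min-wug}, we get $g_{\mathrm{TP}}\le g_{\mathrm{Mod}}$ $\mu$-a.e. What remains is to show that a test-plan weak upper gradient $g$ yields a modulus weak upper gradient $G$ with $G\le g$ a.e. That gives the ``if'' direction and, applied with $g=g_{\mathrm{TP}}$, the reverse inequality $g_{\mathrm{Mod}}\le g_{\mathrm{TP}}$.

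To do this, I introduce the relaxed gradient. Consider sequences of locally Lipschitz $f_n\to f$ in $L^p(\mu)$ whose local Lipschitz constants $\operatorname{lip} f_n$ converge weakly in $L^p(\mu)$. Define $|Df|_*$ as the minimal weak limit over all such sequences (the Cheeger energy).

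\emph{Step 1 (modulus side).} I claim $|Df|_*$ is a $p$-weak upper gradient of $f$ in the sense of modulus.
\begin{enumerate}[\rm(a)]
\item By Mazur's lemma, after passing to convex combinations we may assume $f_n\to f$ and $\operatorname{lip}f_n\to G$ strongly in $L^p(\mu)$, where $G\le|Df|_*$ is obtained from the strong limit by taking a minimum; the lattice property of upper gradients handles this.
\item Since $\operatorname{lip}f_n$ is a genuine upper gradient of $f_n$, Fuglede's lemma applies. After passing to a subsequence, $\int_\gamma|\operatorname{lip}f_n-G|\to0$ for all curves outside a $\operatorname{Mod}_p$-null family.
\item The same argument applied to $\sum_n|f_n-f|$ shows that $f_n\to f$ along $\operatorname{Mod}_p$-a.e.\ curve at a.e.\ time. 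A standard modification of $f$ on a null set, as in \cite[Chapter 6]{HKST:book}, upgrades this to convergence at the endpoints.
\item Passing to the limit in $|f_n(\gamma(1))-f_n(\gamma(0))|\le\int_\gamma\operatorname{lip}f_n$ gives the upper gradient inequality for $f$ with $G$.
\end{enumerate}
Subadditivity (Lemma~\ref{lem:subadditive}) handles the countable union of exceptional families.

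\emph{Step 2 (the main obstacle).} I must show that $|Df|_*\le g$ a.e.\ whenever $g$ is a test-plan weak upper gradient. This is the core of \cite{AGS:13}, and I would follow their route.
\begin{enumerate}[\rm(a)]
\item Run the $L^2$ (or $L^p$-adapted) gradient flow of the Cheeger energy starting from a suitably truncated and localized version of $f$. Properness is not assumed, so completeness and separability must be used through tightness arguments.
\item Use Kuwada's duality to show that the flow of densities $u_t\mu$ is an absolutely continuous curve in the Wasserstein space $W_{p'}$, with metric speed controlled by $\int|Du_t|_*^p/u_t^{p-1}$.
\item By Lisini's superposition principle, lift this curve to a $p'$-test plan $\pi$; bounded compression comes from a maximum principle for the flow.
\item Test the upper gradient inequality for $g$ against $\pi$, as in Lemma~\ref{lem:ACDM4.13}. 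Comparing with the energy dissipation identity of the flow gives $\int|Du_t|_*^p\,\cdots\le\int g^p\,\cdots$, which localizes to $|Df|_*\le g$ a.e.
\end{enumerate}
Rather than redo the entropy and dissipation estimates, I would cite the identification ``relaxed gradient $=$ minimal test-plan weak upper gradient'' from \cite[Sections 5--6]{AGS:13}.

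\emph{Conclusion.} Combining the two steps gives
\[
g_{\mathrm{Mod}}\le |Df|_*\le g_{\mathrm{TP}}\le g_{\mathrm{Mod}}\quad\mu\text{-a.e.}
\]
This yields both the equivalence of existence and the a.e.\ equality of the two minimal weak upper gradients. Lemma~\ref{lem:minimal-wug}\eqref{it:ae-wug} lets us ignore the choice of representatives.
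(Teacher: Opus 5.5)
Your outline is correct and matches what the paper does: the paper gives no independent proof of this statement but imports it from \cite[Theorem 7.4]{AGS:13}, which proves it for compact $X$ and finite $\mu$, with the extension to complete, separable, locally finite spaces indicated in \cite[Section 8.2]{AGS:13}. Your chain $g_{\mathrm{Mod}}\le|Df|_*\le g_{\mathrm{TP}}\le g_{\mathrm{Mod}}$ follows the structure of their proof, and like them you delegate the middle inequality to the gradient-flow, Kuwada, and superposition argument (one small slip: with $p$ the Sobolev exponent, Kuwada's bound has denominator $u_t^{p'-1}$, not $u_t^{p-1}$).
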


\begin{remark}
Theorem \ref{thm:AGS7.4} is proved in \cite[Section 7]{AGS:13} under the stronger
assumptions that $(X,\rho)$ is a compact metric space and $\mu$ is a finite Borel measure,
but an extension to the version formulated in Theorem \ref{thm:AGS7.4} is indicated in
\cite[Section 8.2]{AGS:13}.
 An equivalence with two further notions of weak gradients is also contained in
 \cite[Theorem 7.4]{AGS:13}, but we do not need them here.
\end{remark}

\begin{corollary}\label{cor:AGS7.4}
Theorem \ref{thm:AGS7.4} remains valid for $f\in L^1(\mu)$.
\end{corollary}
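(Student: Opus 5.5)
We reduce Corollary~\ref{cor:AGS7.4} to Theorem~\ref{thm:AGS7.4} by truncation and localization, since both notions of $p$-weak upper gradient are local and stable under truncation. Let $f\in L^1(\mu)$, $p\in(1,\infty)$. One direction needs no work: a $p$-weak upper gradient in the sense of modulus is one in the sense of test plans by Remark~\ref{rem:ug-relations}\eqref{it:mod>testplans}. For the converse, let $g\in L^p(\mu)$ be a $p$-weak upper gradient of $f$ in the sense of test plans. We want to show that $g$, or at least the test-plan minimal gradient $g_0\le g$, is a $p$-weak upper gradient in the sense of modulus.

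\emph{Step 1: truncation.} For $N\in\mathbb N$ set $f_N:=\max\{-N,\min\{f,N\}\}$. Since $t\mapsto\max\{-N,\min\{t,N\}\}$ is $1$-Lipschitz, $|\int_{\partial\gamma}f_N|\le|\int_{\partial\gamma}f|$ for every curve. Hence $g$ is still a test-plan $p$-weak upper gradient of $f_N$, with the same exceptional family. Moreover
\[
\int|f_N|^p\,d\mu\le N^{p-1}\int|f|\,d\mu<\infty,
\]
so $f_N\in L^p(\mu)$.

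\emph{Step 2: apply Theorem~\ref{thm:AGS7.4}.} Applied to $f_N$, the theorem gives that $f_N$ has a modulus $p$-weak upper gradient in $L^p(\mu)$. It also gives that the modulus-minimal gradient $g_N^{\rm Mod}$ equals the test-plan-minimal gradient $g_N^{\rm TP}$ a.e. Since $g_N^{\rm TP}\le g$ a.e., Lemma~\ref{lem:minimal-wug}\eqref{it:ae-wug} shows that $h_N:=\min\{g_N^{\rm Mod},g\}$ is a modulus $p$-weak upper gradient of $f_N$, with $h_N\le g$. Thus, for each $N$, there is a $\operatorname{Mod}_p$-negligible family $\Gamma_N$ off which $|\int_{\partial\gamma}f_N|\le\int_\gamma g$. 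Here $g$ itself may be used, because enlarging an upper gradient preserves the inequality.

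\emph{Step 3: pass to the limit in $N$.} Let $\Gamma:=\bigcup_N\Gamma_N$, which is $\operatorname{Mod}_p$-negligible by Lemma~\ref{lem:subadditive}. For $\gamma\notin\Gamma$ with finite endpoint values, $f_N(\gamma(i))\to f(\gamma(i))$ for $i=0,1$. This yields $|\int_{\partial\gamma}f|\le\int_\gamma g$. The endpoint values are automatically finite because $f$ is real-valued. So $g$ is a modulus $p$-weak upper gradient of $f$.

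\emph{Step 4: equality of minimal gradients.} Apply Steps 1--3 to $g=g^{\rm TP}$ to get $g^{\rm Mod}\le g^{\rm TP}$. The reverse inequality follows from Remark~\ref{rem:ug-relations}\eqref{it:mod>testplans}, which makes $g^{\rm Mod}$ a test-plan gradient.

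The only delicate point I anticipate concerns representatives. Theorem~\ref{thm:AGS7.4} is phrased for $\mu$-equivalence classes, while the curve inequalities depend on pointwise values of $f$. I would fix one Borel representative of $f$ throughout and use the same representative's truncations $f_N$. Then Step~3 is pointwise and needs no further care.
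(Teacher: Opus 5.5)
Your proposal is correct and follows essentially the same route as the paper: truncate to $f_N\in L^1(\mu)\cap L^\infty(\mu)\subset L^p(\mu)$, apply Theorem~\ref{thm:AGS7.4} to each $f_N$ to turn the test-plan gradient bound into a modulus one, take the countable union of the $\operatorname{Mod}_p$-negligible exceptional families via Lemma~\ref{lem:subadditive}, and then choose $N$ above the endpoint values $|f(\gamma(0))|,|f(\gamma(1))|$. The differences are cosmetic: you start from an arbitrary test-plan gradient $g$ rather than the minimal one, and you make explicit, via Lemma~\ref{lem:minimal-wug}\eqref{it:ae-wug} applied to $\min\{g_N^{\rm Mod},g\}$, the passage from the a.e.\ inequality $g_N^{\rm Mod}\le g$ to $g$ being a modulus weak upper gradient of $f_N$, a step the paper leaves implicit.
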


\begin{proof}
Suppose that $f\in L^1(\mu)$ has a $p$-weak upper gradient $g\in L^p(\mu)$ in the sense of
test plans, and hence a minimal such object, $\abs{\nabla f}_{w,p}\in L^p(\mu)$. We consider
the usual truncations
\begin{equation}\label{eq:fN-def}
  f_N(x):=\begin{cases} f(x) & \text{if }\abs{f(x)}\leq N, \\
  \displaystyle N\frac{f(x)}{\abs{f(x)}} & \text{else}.\end{cases}
\end{equation}
Then
\begin{equation}\label{eq:fN-key}
  \abs{f_N(x)-f_N(y)} \leq \abs{f(x)-f(y)}\quad\text{for all }x,y\in X,
\end{equation}
and hence
\begin{equation*}
  \Babs{\int_{\partial\gamma}f_N}\leq\Babs{\int_{\partial\gamma}f}\quad
  \text{for every curve }\gamma.
\end{equation*}
From this, it is immediate that $\abs{\nabla f}_{w,p}\in L^p(\mu)$ is also a $p$-weak upper
gradient, in the sense of test plans, of every $f_N$. But $f_N\in L^1(\mu)\cap
L^\infty(\mu)\subset L^p(\mu)$. Hence Theorem \ref{thm:AGS7.4} applies to show that $f_N$
also has a $p$-weak upper gradient in $L^p(\mu)$ in the sense of modulus. Let $\abs{\nabla
f_N}_{p},\abs{\nabla f_N}_{w,p}\in L^p(\mu)$ be the minimal $p$-weak upper gradients of
$f_N\in L^p(\mu)$ in the sense of modulus and test plans,
respectively. Then $\abs{\nabla f_N}_{p}=\abs{\nabla f_N}_{w,p}$ a.e.\ by Theorem
\ref{thm:AGS7.4}, and $\abs{\nabla f_N}_{w,p}\leq\abs{\nabla f}_{w,p}$ a.e.\ by the
minimality of $\abs{\nabla f_N}_{w,p}$, since $\abs{\nabla f}_{w,p}\in L^p(\mu)$ is also a
$p$-weak upper gradient of $f_N$ in the sense of test plans. But then $\abs{\nabla
f}_{w,p}\geq\abs{\nabla f_N}_{w,p}=\abs{\nabla f_N}_p$ is also a $p$-weak upper gradient of
$f_N$ in the sense of modulus, for every $N$. This means that
\begin{equation*}
  \Babs{\int_{\partial\gamma}f_N}\leq\int_\gamma\abs{\nabla f}_{w,p}\qquad
  \forall\ \gamma\in AC([0,1];X)\setminus\Gamma_N
\end{equation*}
for some $\Gamma_N$ with $\operatorname{Mod}_p(\Gamma_N)=0$. Then
$\Gamma:=\bigcup_{N=1}^\infty\Gamma_N$ also satisfies $\operatorname{Mod}_p(\Gamma)=0$ by
Lemma \ref{lem:subadditive}.

Let $\gamma\in AC([0,1];X)\setminus\Gamma\subset AC([0,1];X)\setminus\Gamma_N$ for every
$N$. Choose $N>\max\{\abs{f(\gamma(s))}:s=0,1\}$. Then
\begin{equation*}
  \Babs{\int_{\partial\gamma}f}
  =\Babs{\int_{\partial\gamma}f_N}
  \leq\int_\gamma\abs{\nabla f}_{w,p}.
\end{equation*}
Since this holds for every $\gamma\in AC([0,1];X)\setminus\Gamma$,
where $\operatorname{Mod}_p(\Gamma)=0$, it follows that
$\abs{\nabla f}_{w,p}\in L^p(\mu)$ is a $p$-weak upper gradient of $f$ in the sense of
modulus.
Hence the minimal such object satisfies $\abs{\nabla f}_p\leq \abs{\nabla f}_{w,p}$. The
converse is immediate from \cite[Remark 4.12]{AGS:13}: every $p$-weak upper gradient in the
sense of modulus is a $p$-weak upper gradient in the sense of test plans.
This completes the proof of Corollary \ref{cor:AGS7.4}.
\end{proof}

\begin{remark}
While \cite{AGS:13} mostly discusses $p$-weak upper gradients of functions in $L^p(\mu)$
with the matching integrability $f\in L^p(\mu)$, \cite[Remark 4.4]{AGS:13} points out that
``in principle the integrability of $f$ could be decoupled from the integrability of the
gradient''. Our Corollary \ref{cor:AGS7.4} makes this concrete in a particular example.
\end{remark}

The following theorem shows the role of upper gradients up to some positive scale in
constructing $p$-weak upper gradients without a scale restriction:

\begin{theorem}[\cite{ACDM}, Theorem 27]\label{thm:ACDM27}
Let $(X,\rho,\mu)$ be a complete and separable metric space with a nonnegative Borel measure
$\mu$.
Let $p\in(1,\infty)$. Let $f_n,f:X\to\mathbb R$ be measurable and  $g_n,g\in L^p(\mu)$ be
such that
\begin{enumerate}[\rm(i)]
  \item $g_n$ is an upper gradient of $f_n$ up to scale $\delta_n$, where $\delta_n\searrow
  0$; and
  \item $f_n\to f$ pointwise $\mu$-a.e., and $g_n\rightharpoonup g$ weakly in $L^p(\mu)$.
\end{enumerate}
Then $g$ is a $p$-weak upper gradient of $f$ in the sense of test plans.
\end{theorem}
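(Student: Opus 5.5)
This is a cited result, so the plan is to reproduce the argument of \cite{ACDM} in the present notation rather than give a new one. The proof works with a fixed test plan. Let $q=p'$ and let $\pi$ be any $q$-test plan. It suffices to show that
\begin{equation*}
  \Big|\int_{\partial\gamma}f\Big|\leq\int_\gamma g\qquad\text{for $\pi$-a.e. }\gamma.
\end{equation*}
Since this is required for every $q$-test plan, the idea is to test it in integrated form. For a Borel set $A\subset C([0,1];X)$ of curves I would aim for
\begin{equation*}
\int_A\Big|\int_{\partial\gamma}f\Big|\,d\pi\leq\int_A\int_\gamma g\,d\pi .
\end{equation*}
Arbitrariness of $A$ then gives the pointwise a.e.\ inequality. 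Both sides should first be localized: restrict $\pi$ to $A$ and renormalize, which keeps it a test plan as in the proof of Lemma~\ref{lem:negligible}. One could also truncate $f$, but the scheme below does not need it.

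\textbf{Step 1 (the scale restriction).} The upper gradient inequality for $g_n$ holds only on curves of length $>\delta_n$. To use it, split each curve into pieces. By Definition~\ref{def:fzplan}, $\pi$ is concentrated on $AC^q$ curves of positive length, plus constant curves, for which the inequality is trivial. For $\pi$-a.e.\ such curve, $\ell(\gamma)>\delta_n$ once $n$ is large. More robustly, subdivide $[0,1]$ into $k$ equal intervals $[t_{i-1},t_i]$ and use the telescoping bound
\begin{equation*}
|f_n(\gamma(1))-f_n(\gamma(0))|\leq\sum_i|f_n(\gamma(t_i))-f_n(\gamma(t_{i-1}))|.
\end{equation*}
On subintervals where the restricted curve has length $>\delta_n$, the upper gradient bound applies. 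The remaining short pieces need separate handling. Since $\int_{\gamma|_{[s,t]}}g_n$ is additive, this yields $\int_\gamma g_n$ plus an error term, which I will call $E_n(\gamma)$. I would control $E_n$ by keeping only curves whose pieces all have length $>\delta_n$. That holds on a set of curves whose $\pi$-measure tends to $1$ as $n\to\infty$ for fixed $k$, because the short pieces have speed bounded below on a large set.

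\textbf{Step 2 (endpoint evaluation and weak convergence).} Next, pass to the limit on the left. Bounded compression gives $(e_t)_\#\pi\ll\mu$, so $f_n(\gamma(t_i))\to f(\gamma(t_i))$ for $\pi$-a.e.\ $\gamma$ from the $\mu$-a.e.\ convergence. Fatou's lemma then gives
\begin{equation*}
\int_A|f(\gamma(1))-f(\gamma(0))|\,d\pi\leq\liminf_n\int_A\Big|\int_{\partial\gamma}f_n\Big|\,d\pi .
\end{equation*}
On the right, the functional $h\mapsto\int_A\int_\gamma h\,d\pi$ is continuous linear on $L^p(\mu)$ by Lemma~\ref{lem:ACDM4.13}. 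It equals $\int_X h\,d\nu$ for a measure $\nu$ with density in $L^{q}(\mu)$; this is Fubini together with bounded compression. Hence weak convergence gives
\begin{equation*}
\int_A\int_\gamma g_n\,d\pi\to\int_A\int_\gamma g\,d\pi .
\end{equation*}

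\textbf{Main obstacle.} The hardest part is making Step 1 rigorous. The curves excluded at scale $\delta_n$ must be handled without losing the weak-convergence step, which is only available in integrated form. I would first restrict to subfamilies $A_n$ of curves with $\ell>\delta_n$, so that the exceptional set $A\setminus A_n$ decreases to a $\pi$-null set. The difficulty is the cross term: weak convergence alone does not control $\int_{A\setminus A_n}\int_\gamma g_n\,d\pi$ uniformly. To handle it, I would use that $g_n$ is bounded in $L^p$, together with Lemma~\ref{lem:ACDM4.13} applied to $\pi$ restricted to $A\setminus A_n$. This bounds that term by $C\|g_n\|_p\big(\int_{A\setminus A_n}\int|\gamma'|^q\big)^{1/q}\to0$. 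The only adjustment needed is that the compression constant of the unnormalized restriction is still at most $C(\pi)$. Combining this with Fatou on the left finishes the proof.
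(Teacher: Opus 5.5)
The paper gives no proof of its own here; it simply quotes \cite[Theorem 27]{ACDM}, so your argument has to stand on its own. Its core is right. You fix a $q$-test plan $\pi$, transfer $f_n\to f$ $\mu$-a.e.\ to the endpoints by bounded compression, and pass to the limit on the right through the bounded linear functional $h\mapsto\int_A\int_\gamma h\,d\pi$ from Lemma~\ref{lem:ACDM4.13}.

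Step~1 should be deleted, however. You claim that the curves all of whose $k$ pieces have length $>\delta_n$ carry $\pi$-measure tending to $1$. This is false: a test plan can be concentrated on curves that are constant on $[0,\frac12]$. The justification ``speed bounded below on a large set'' has no basis. No subdivision is needed anyway. Every nonconstant curve satisfies $\ell(\gamma)>\delta_n$ for all large $n$, and constant curves satisfy the inequality trivially. For the same reason, $A\setminus A_n$ with $A_n:=A\cap\{\ell>\delta_n\}$ decreases to the constant curves in $A$. That set need not be $\pi$-null, though this is harmless.

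More importantly, you have put the difficulty on the wrong side. If you apply Fatou on the fixed set $A$, as in Step~2, you then need to bound $\int_{A\setminus A_n}|\int_{\partial\gamma}f_n|\,d\pi$. The hypotheses say nothing about $f_n$ along curves of length $\leq\delta_n$, so this term is uncontrolled. Apply Fatou instead to $\mathbf{1}_{A_n}(\gamma)|\int_{\partial\gamma}f_n|$. Since $\delta_n\searrow 0$, this converges $\pi$-a.e.\ to $\mathbf{1}_{A\cap\{\ell>0\}}(\gamma)|\int_{\partial\gamma}f|$, and you get
\begin{align*}
\int_{A\cap\{\ell>0\}}\Big|\int_{\partial\gamma}f\Big|\,d\pi
&\leq\liminf_{n\to\infty}\int_{A_n}\Big|\int_{\partial\gamma}f_n\Big|\,d\pi
\leq\liminf_{n\to\infty}\int_{A_n}\int_\gamma g_n\,d\pi\\
&\leq\lim_{n\to\infty}\int_A\int_\gamma g_n\,d\pi
=\int_A\int_\gamma g\,d\pi .
\end{align*}
Here the second inequality is the hypothesis on $A_n$, and the third is just $g_n\geq 0$. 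So the cross-term estimate you single out as the main obstacle is correct but unnecessary. The left-hand side equals $\int_A|\int_{\partial\gamma}f|\,d\pi$, because constant curves contribute zero.

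To finish, take $A:=\{\gamma:|\int_{\partial\gamma}f|>\int_\gamma g\}$ and use $\int_A\int_\gamma g\,d\pi<\infty$ to get $\pi(A)=0$. Since this $A$ does not depend on $\pi$, it is the required $p$-negligible set.

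You could also avoid the integrated formulation altogether. By Mazur's lemma, convex combinations $(\tilde f_h,\tilde g_h)$ of $\{(f_n,g_n)\}_{n\geq h}$ are again upper-gradient pairs up to scale $\delta_h$, with $\tilde f_h\to f$ $\mu$-a.e. They can be chosen with $\tilde g_h\to g$ strongly in $L^p(\mu)$. Then $\int_\gamma\tilde g_h\to\int_\gamma g$ for $\pi$-a.e.\ $\gamma$ along a subsequence, and the inequality passes to the limit curve by curve.
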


\begin{remark}
The statement of \cite[Theorem 27]{ACDM} has no conditions on $(X,\rho,\mu)$, but the
standing assumptions for the whole \cite[Section 4]{ACDM} (which contains the said theorem)
are given at the beginning of the section; it is explicitly mentioned there that ``not even
$\sigma$-finiteness is needed for the results of this section.'' The formulation of
\cite[Theorem 27]{ACDM} is slightly more general, but this will not be needed here. A
variant is also given in \cite[Lemma 2.6]{DMS}, where a statement dealing with $p=1$ is
included.
\end{remark}

We also recall the following definition of Poincar\'e spaces.

\begin{definition}
Let $p\in[1,\infty)$ and let $(X,\rho,\mu)$ be a metric measure space.
The space $(X,\rho,\mu)$ is called a \emph{$p$-Poincar\'e space}
with a parameter $\lambda_0\in [1,\infty)$
if there exists a positive constant $C$ such that,
for any integrable functions $f$ on $X$, any upper gradient
$g$ of $f$, and any ball $B\subset X$,
\begin{align*}
\fint_B|f-\ave{f}_B| \, d\mu \le C r_B
\left(\fint_{\lambda_0 B} g^p \, d\mu\right)^{\frac1p},
\end{align*}
where $r_B$ denotes the radius of $B$.
\end{definition}

\subsection{Approximation by bounded subsets}\label{sec:bdApprox}

In several arguments further below, our final results on the full space $X$ will be achieved
by first considering the case of bounded subsets, followed by appropriate limits. In this
section, we collect some tools that facilitate this approximation process. It will be
convenient to use the following notion:

\begin{definition}\label{def:bdExhaustion}
Let $(X,\rho)$ be a metric space. We say that a sequence of subsets $X_n\subset X$ has the
{\em bounded exhaustion property} if each bounded subset $B\subset X$ satisfies $B\subset
X_n$ for all large enough $n$.
\end{definition}

For any nonempty subset $E\subset X$, we define $
\operatorname{diam}(E):=\sup\{\rho(x,y):x,y\in E\}$.
As some of our arguments will need a bounded set, it will be convenient to have the
following approximation result:

\begin{proposition}\label{prop:bdSubsets}
For each doubling metric measure space $(X,\rho,\mu)$, there exists an increasing sequence
of closed bounded subsets $X_n\subset X$ such that
\begin{enumerate}[\rm(i)]
  \item\label{it:XnDb} each $(X_n,\rho|_{X_n},\mu|_{X_n})$ is a doubling metric measure
  space;
  \item\label{it:bdExhaustion} the sequence $(X_n)_{n=1}^\infty$ has the bounded exhaustion
  property (Definition \ref{def:bdExhaustion});
  \item\label{it:Vn} the measure of balls in each $X_n$ satisfies
\begin{equation}\label{eq:bdSubsets}
  \mu(B(x,r)\cap X_n)\sim V(x,\min\{r,\operatorname{diam}(X_n)\}),\quad \forall\ x\in X_n
  \ \text{and}\ r\in(0,\infty);
\end{equation}
\item if $X$ is a complete metric space, then all $X_n$ are complete as well.
\end{enumerate}
Moreover, the implicit constants in properties \eqref{it:XnDb} and \eqref{it:Vn}
of Proposition~\ref{prop:bdSubsets} are uniform in $n$ and only depend on the doubling
constant of $X$.
\end{proposition}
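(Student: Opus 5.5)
We take $X_n$ to be closed balls, $X_n:=\overline{B}(x_0,n)=\{x:\rho(x,x_0)\le n\}$ for a fixed $x_0\in X$. These sets are closed, bounded and increasing. Every bounded $B$ lies in some ball $B(x_0,R)$, so $B\subset X_n$ for all $n\ge R$, which is \eqref{it:bdExhaustion}. If $X$ is complete, each $X_n$ is complete as a closed subset. The substance is therefore \eqref{it:Vn}, since \eqref{it:XnDb} will follow from it.

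For \eqref{it:Vn}, fix $x\in X_n$ and $r>0$, and note $\operatorname{diam}(X_n)\le 2n$. The upper bound is immediate. If $r\le \operatorname{diam}(X_n)$, then $\mu(B(x,r)\cap X_n)\le V(x,r)$. If $r>\operatorname{diam}(X_n)$, then $B(x,r)\cap X_n\subset X_n\subset B(x,2\operatorname{diam}(X_n))$. This uses $x\in X_n$ and $\operatorname{diam}(X_n)>0$, where the degenerate case of a singleton is handled by taking $n$ large, or is trivial. Doubling then gives the bound by $C_\mu V(x,\operatorname{diam}(X_n))$.

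The lower bound is the key step. The plan is to show that for $x\in \overline B(x_0,n)$ and $0<s\le 2n$, the set $B(x,s)\cap \overline B(x_0,n)$ contains a ball $B(z,cs)$ with $z$ close to $x$, namely $\rho(z,x)\le s/2$. Doubling then gives $\mu\gtrsim V(x,s)$. This is the main obstacle, because a general metric space need not be geodesic, so we cannot simply move along a segment from $x$ toward $x_0$.

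First try: if $\rho(x,x_0)\le n-s/2$, take $z=x$, since $B(x,s/2)\subset \overline B(x_0,n)$. Otherwise $x$ is near the boundary. In that case take $z=x_0$ when $s\gtrsim n$, so that $B(x_0,n)\subset X_n$ has measure comparable to $V(x,s)$ by doubling. When $s\ll n$, however, there may be no good point between $x$ and $x_0$.

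To avoid this, we will instead define $X_n$ differently, using the doubling measure directly:
\begin{equation*}
X_n:=\overline{B}(x_0,2^n),
\end{equation*}
together with a direct estimate that only compares scales. If $\operatorname{diam}(X_n)$ is very small relative to $2^n$, some annulus may be empty. This is harmless, because then the ball $B(x,s)$ has the same measure as a larger ball inside $X$.

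Concretely, split into two cases. If $s\le \rho(x,x_0)/2$ fails, then $B(x,s)\supset B(x_0,\rho(x,x_0))$-type comparisons give the bound via doubling. Otherwise we are in the regime $s\le\rho(x,x_0)/2\le 2^{n-1}$. If this naive approach fails, I would fall back on a known construction: replace $\rho$-balls by the chain-type sets of Christ dyadic cubes, taking $X_n$ to be the closure of a large dyadic cube $Q_n\ni x_0$ of generation $-n$. Dyadic cubes contain a ball of comparable radius around each of their subcubes' centers, and each $x\in Q_n$ at scale $s\le \operatorname{diam} Q_n$ lies in a subcube $Q\subset Q_n$ of side about $s$ containing a ball $B(z_Q,cs)$ with $\rho(x,z_Q)\lesssim s$. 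This yields $\mu(B(x,Cs)\cap X_n)\gtrsim V(x,s)$, and hence \eqref{eq:bdSubsets} after adjusting constants. Since nested cubes of growing size exhaust $X$, and each bounded set lies in a single large cube for large $n$ (choosing the cubes containing the fixed $x_0$, with an adjustment to avoid $x_0$ lying near the boundary of all of them), the bounded exhaustion property holds. Completeness is again inherited by closedness.

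Finally, \eqref{it:XnDb} follows from \eqref{eq:bdSubsets}. Writing $D_n=\operatorname{diam}(X_n)$, we get $\mu(B(x,2r)\cap X_n)\sim V(x,\min\{2r,D_n\})\le C_\mu V(x,\min\{r,D_n\})\sim \mu(B(x,r)\cap X_n)$. All constants depend only on $C_\mu$, which gives the claimed uniformity in $n$.
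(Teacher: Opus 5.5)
Your closed-ball attempts do not merely lack a proof of the lower bound in (iii); they can fail. The case split you sketch for $X_n=\overline B(x_0,2^n)$ leaves open exactly the regime $s\le\rho(x,x_0)/2$ where the trouble occurs. Take $X=\mathbb R\times\mathbb Z\subset\mathbb R^2$ with the Euclidean distance and length measure on each line; this space is complete and doubling. Let $x_0=(0,0)$ and let $R$ be any integer radius, so $R=n$ or $R=2^n$ are both covered. Then $x=(0,R)\in\overline B(x_0,R)$. For $s<1$, however, $B(x,s)\cap\overline B(x_0,R)=\{x\}$ has measure zero, while $V(x,\min\{s,\operatorname{diam}\overline B(x_0,R)\})=V(x,s)=2s$. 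So (iii) fails for every $n$, and so does the positivity required in (i). Your treatment of the upper bound, of completeness, and of deducing (i) from (iii) is fine.

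Your dyadic fallback is the route the paper takes, and your lower-bound sketch is its argument. For $r<2C_1\delta^{-n}$, pick $k>-n$ with $2C_1\delta^k\le r<2C_1\delta^{k-1}$ and take a level-$k$ subcube $Q^k_\alpha$ whose closure contains $x$. Then $B(x^k_\alpha,c_1\delta^k)\subset Q^k_\alpha\subset B(x,r)\cap X_n$, and doubling finishes. For $x\in\overline{Q_n}\setminus Q_n$ you also need, as the paper uses, that a closed cube is the union of the closures of its level-$k$ descendants \cite[(2.17)]{HK:cubes}; your sketch only treats $x\in Q_n$. What is genuinely missing is (ii). Your assertion that nested cubes of growing size exhaust $X$ is false for a fixed dyadic system: the dyadic intervals $[0,2^n)$ containing $x_0=0$ only exhaust $[0,\infty)$. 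Writing \emph{with an adjustment} names the difficulty but does not resolve it. The paper supplies this step through \cite[Proposition 4.3]{HK:cubes}, which gives a dyadic system in which the preassigned $x_0$ is the centre of some cube $Q^k_0$ at every level. Hence $B(x_0,c_1\delta^{-n})\subset Q^{-n}_0\subset X_n:=\overline{Q^{-n}_0}$, which yields bounded exhaustion. Alternatively, with any fixed system you can let $X_n$ be the closure of the union of the finitely many level-$(-n)$ cubes meeting $B(x_0,\delta^{-n})$. The parent of such a cube meets $B(x_0,\delta^{-n-1})$, so these sets increase, they contain $B(x_0,\delta^{-n})$, and the same subcube argument (with $k\ge -n$) gives (iii).
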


\begin{remark}
If $X$ is also a $p$-Poincar\'e space, one may ask whether the subsets $X_n$ can be chosen
(uniformly in $n$) to be $p$-Poincar\'e spaces as well. A certain converse is true: if such
a sequence of subsets $X_n$ happen to be $p$-Poincar\'e spaces uniformly in $n$, then $X$
itself is a $p$-Poincar\'e space; see \cite[Theorem 3]{K:03}.

We do not know the answer to this first-mentioned question, but we also do not need it for
the present purposes. Although our main results depend on the Poincar\'e inequality, we will
only use this approximation by bounded subsets in a part of the argument, where it does not
play a role.
\end{remark}

\begin{proof}[Proof of Proposition \ref{prop:bdSubsets}]
We will make use of systems of dyadic cubes from \cite{HK:cubes}, elaborating on earlier
construction by \cite{Christ:cubes}. For every $k\in\mathbb Z$, the space $X$ has a
measurable partition $\mathscr D_k$, where $\mathscr D_{k+1}$ is a refinement of $\mathscr
D_k$, and each $Q^k_\alpha\in\mathscr D_k$ has a centre point $x^k_\alpha$
 such that \cite[(2.8) and (2.15)]{HK:cubes}
\begin{equation}
  B(x^k_\alpha,c_1\delta^k)\subset Q^k_\alpha
  \subset\overline{Q^k_\alpha}\subset B(x^k_\alpha,C_1\delta^k)
\end{equation}
for some constants $0<c_1\leq C_1<\infty$ and $\delta\in(0,1)$. Specifically, we will use
the variant in \cite[Proposition 4.3]{HK:cubes}, where a preassigned point $x_0\in X$ has
the property that, for every level $k\in\mathbb Z$, this $x_0$ is the centre point of some
cube $Q^k_0\in\mathscr D_k$.

We will then choose $X_n:=\overline{Q^{-n}_0}$, the corresponding closed cube. If $X$ is
complete, then each $X_n$ is also complete as a closed subset of a complete space. For the
bounded exhaustion property in Proposition~\ref{prop:bdSubsets}\eqref{it:bdExhaustion}, it
suffices to consider $B=B(x_0,r)$, since every bounded set is contained in such a ball for
some large enough $r>0$. Then it is immediate that $B\subset B(x_0,c_1\delta^{-n})\subset
X_n$ as soon as $r\leq c_1\delta^{-n}$, which clearly holds for all large enough $n$.

Let us note that the doubling property of $X_n$ follows, once we prove \eqref{eq:bdSubsets}.
Indeed,
\begin{equation*}
\begin{split}
  \mu(B(x,2r)\cap X_n)
  &\sim V(x,\min\{2r,\operatorname{diam}(X_n)\})\quad\text{by \eqref{eq:bdSubsets}} \\
  &\lesssim V(x,\min\{r,\operatorname{diam}(X_n)\})\quad\text{by doubling in }X \\
  &\sim\mu(B(x,r)\cap X_n)\quad\text{by \eqref{eq:bdSubsets}}.
\end{split}
\end{equation*}

Let us then prove \eqref{eq:bdSubsets}. Since $B(x,r)\cap X_n\subset
B(x,\min\{r,\operatorname{diam}(X_n)\})$, the upper bound $\lesssim$ is evident. For the
lower bound, suppose first that $r\geq 2C_1\delta^{-n}\geq\operatorname{diam}(X_n)$. Then
\begin{equation*}
  B(x_0,c_1\delta^{-n})\subset X_n\subset B(x,\operatorname{diam}(X_n))\subset B(x,r)
\end{equation*}
and hence
\begin{equation*}
\begin{split}
  \mu(B(x,r)\cap X_n)=\mu(X_n)
  &\geq V(x_0,c_1\delta^{-n})
  \gtrsim V(x_0, 2C_1\delta^{-n}) \\
  &\geq V(x_0,\operatorname{diam}(X_n))\sim V(x,\operatorname{diam}(X_n)),
\end{split}
\end{equation*}
which is the required lower bound.

Suppose then that $0<r<2C_1\delta^{-n}$. Then we can find a unique $k>-n$ such that
$2C_1\delta^k\leq r<2C_1\delta^{k-1}$.
Since $X_n=\overline{Q^{-n}_0}$ is the union of some closed cubes of level $k$ (by
\cite[(2.17)]{HK:cubes}), we have $x\in\overline{Q^k_\alpha}$ for some $\alpha$. Then
\begin{equation*}
   B(x^k_\alpha,c_1\delta^k)\subset
   Q^k_\alpha\subset\overline{Q^k_\alpha}\subset\overline{Q^{-n}_0}=X_n
\end{equation*}
and
\begin{equation*}
\begin{split}
  Q^k_\alpha\subset B(x^k_\alpha,C_1\delta^k)
  \subset B(x,2C_1\delta^k)\subset B(x,r)
  &\subset B(x^k_\alpha,r+C_1\delta^k) \\
  &\subset B(x^k_\alpha,C_1(2\delta^{-1}+1)\delta^k).
\end{split}
\end{equation*}
Thus,
\begin{equation*}
  B(x^k_\alpha,c_1\delta^k)\subset Q^k_\alpha\subset B(x,r)\cap X_n,
\end{equation*}
and
\begin{equation*}
  V(x,r)
  \leq V(x^k_\alpha,C_1(2\delta^{-1}+1)\delta^k)
  \lesssim V(x^k_\alpha,c_1\delta^k)
  \leq \mu(B(x,r)\cap X_n).
\end{equation*}
This is the required lower bound in \eqref{eq:bdSubsets} in the remaining case.
This completes the proof.
\end{proof}

The following proposition shows how to build a global $p$-weak upper gradient from local
versions:

\begin{proposition}\label{prop:gluing-ug}
Let $(X,\rho,\mu)$ be a separable and locally finite metric measure space and
$p\in(1,\infty)$. Let $X_n$ be an
increasing sequence of closed subsets of $X$ with the bounded exhaustion property
(Definition \ref{def:bdExhaustion}). Using Convention \ref{con:uppergrad}, let
$f:X\to\mathbb R$ be a function such that each restriction $f^n:=f|_{X_n}$ has a $p$-weak
upper gradient $g^n\in L^p(\mu_n)$, where $\mu_n:=\mu|_{X_n}$. Then $f$ has a $p$-weak upper
gradient $g$ such that
\begin{equation*}
  \Norm{g}{L^p(\mu)}\leq\sup_{n\in\mathbb N}\Norm{g^n}{L^p(\mu_n)}.
\end{equation*}
\end{proposition}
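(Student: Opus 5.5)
The plan is to extract a weak limit of the extended gradients $g^n$, show that this limit is a $p$-weak upper gradient of each restriction $f^m$, and then use bounded exhaustion to pass from the pieces to all of $f$.

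\emph{Step 1: weak limit.} We may assume $M:=\sup_n\Norm{g^n}{L^p(\mu_n)}<\infty$. Extend each $g^n$ by zero to $\hat g^n$ on $X$, so that $\Norm{\hat g^n}{L^p(\mu)}\le M$. Since $p\in(1,\infty)$, $L^p(\mu)$ is reflexive. Because $\mu$ is $\sigma$-finite (separable and locally finite), a subsequence satisfies $\hat g^{n_k}\rightharpoonup g$ weakly in $L^p(\mu)$. We may take $g\ge0$, and by weak lower semicontinuity $\Norm{g}{L^p(\mu)}\le M$.

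\emph{Step 2: $g$ works on each fixed $X_m$.} Fix $m$. For $n\ge m$ we have $X_m\subset X_n$. By Lemma~\ref{lem:negligible} or Lemma~\ref{lem:mod-negli}, depending on the sense, negligibility of curve families in $C([0,1];X_m)$ is intrinsic. Hence $g^n|_{X_m}$ is a $p$-weak upper gradient of $f^m$ on $X_m$ for every $n\ge m$: the exceptional set for $g^n$ on $X_n$, intersected with curves in $X_m$, is still negligible.

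The set of $p$-weak upper gradients of $f^m$ in $L^p(\mu_m)$ is convex. In the modulus case it is also closed in norm, by Fuglede's lemma (cf.\ \cite[Ch.~6]{HKST:book}). In the test-plan case, closedness follows from Lemma~\ref{lem:ACDM4.13}: norm convergence $h_j\to h$ in $L^p$ gives $\int\int_\gamma|h_j-h|\,d\pi\to0$, so a subsequence converges for $\pi$-a.e.\ curve, and \eqref{eq:uppergrad} passes to the limit. Since $g^{n_k}|_{X_m}\rightharpoonup g|_{X_m}$ weakly in $L^p(\mu_m)$, Mazur's lemma yields convex combinations converging strongly to $g|_{X_m}$. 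Therefore $g|_{X_m}$ is a $p$-weak upper gradient of $f^m$.

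\emph{Step 3: gluing over curves.} By Lemma~\ref{lem:negligible} or Lemma~\ref{lem:mod-negli}, the exceptional family $\Gamma_m\subset C([0,1];X_m)$ for $g|_{X_m}$ is negligible in $X$. Set $\Gamma:=\bigcup_m\Gamma_m$; it is negligible by Lemma~\ref{lem:subadditive}. Any $\gamma\in AC([0,1];X)$ has compact, hence bounded, image. By the bounded exhaustion property, $\gamma([0,1])\subset X_m$ for some $m$. If $\gamma\notin\Gamma$, then $\gamma\notin\Gamma_m$, so
\begin{equation*}
  \Babs{\int_{\partial\gamma}f}\le\int_\gamma g|_{X_m}=\int_\gamma g.
\end{equation*}
Hence $g$ is a $p$-weak upper gradient of $f$ with $\Norm{g}{L^p(\mu)}\le M$.

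\emph{Main obstacle.} I expect the hardest part to be the closedness of the class of $p$-weak upper gradients under strong convergence, needed in Step~2, carried out carefully in both senses. There is also a measurability subtlety: the sets $\Gamma_m$ must be Borel in the test-plan setting. I would handle this by enlarging each exceptional set to a Borel negligible set, using the definitions from \cite{AGS:13}.
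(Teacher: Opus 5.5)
Your argument is correct, but it takes a different route from the paper at the key step.

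The paper never takes weak limits. It first replaces each $g^n$ by the \emph{minimal} $p$-weak upper gradient of $f^n$ on $X_n$, using Lemma~\ref{lem:minimal-wug}. It then combines your restriction observation from Step~2 with minimality to get $g^{n-1}\le g^n$ $\mu$-a.e.\ on $X_{n-1}$. Hence $\mathbf{1}_{X_n}g^n$ increases a.e., and $g:=\sup_n\mathbf{1}_{X_n}g^n$ satisfies the norm bound by monotone convergence. For a curve $\gamma$ with image in $X_n$ that avoids the union of the original exceptional sets, one has directly $|\int_{\partial\gamma}f|\le\int_\gamma g^n\le\int_\gamma g$.

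So the paper trades your reflexivity, Mazur and closure machinery for the cited existence of minimal weak upper gradients. Its exceptional sets are just the given Borel sets $\Gamma_n$, so no new measurability issue arises. Since Lemma~\ref{lem:minimal-wug} also covers $p=1$, the paper's gluing argument does not use $p>1$ in an essential way, whereas your weak-compactness step does. Your route, in turn, avoids minimal weak upper gradients altogether, at the cost of proving the closure lemma in both senses.

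On that closure lemma, note one imprecision in the test-plan case. The subsequence you extract from $\int\int_\gamma|h_j-h|\,d\pi\to0$ depends on $\pi$. To get a single Borel exceptional set, first pass to a subsequence with $\sum_j\|h_j-h\|_{L^p}<\infty$. Then Lemma~\ref{lem:ACDM4.13} and monotone convergence give $\sum_j\int_\gamma|h_j-h|<\infty$ for $\pi$-a.e.\ $\gamma$, for every test plan $\pi$. Take as exceptional set the fixed set of curves where this sum diverges, together with the exceptional sets of the $h_j$. This also settles the measurability concern you raised, since the set is independent of $\pi$ and of any measurability of $f$.
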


\begin{proof}
Both separability and local finiteness of $(X,\rho,\mu)$ are inherited by the subspaces
$(X_n,\rho|_{X_n},\mu|_{X_n})$. We may then replace each $g^n\in L^p(\mu_n)$ by the
minimal $p$-weak upper gradient of $f^n$ guaranteed by Lemma \ref{lem:minimal-wug} under
these assumptions on $(X_n,\rho|_{X_n},\mu|_{X_n})$.

We will argue that $g^{n}|_{X_{n-1}}$ is a $p$-weak upper gradient of $f^{n-1}$ when $n>1$.
Indeed, since $g^{n}$ is a $p$-weak upper gradient of $f^{n}=f|_{X_n}$, we deduce that
\begin{equation}\label{eq:ug-Xn}
  \Babs{\int_{\partial\gamma}f}=\Babs{\int_{\partial\gamma}f^{n}}
  \leq\int_{\gamma}g^{n}\qquad\forall\,\gamma\in AC([0,1];X_{n})\setminus\Gamma_{n},
\end{equation}
for some $p$-negligible (resp.  $\operatorname{Mod}_p$-negligible) set $\Gamma_{n}$ for
$X_{n}$. Then the smaller set
\begin{equation*}
  \Gamma_{n}':=\Gamma_{n}\cap AC([0,1];X_{n-1})\subset\Gamma_{n}
\end{equation*}
is certainly $p$-negligible (resp. $\operatorname{Mod}_p$-negligible) for $X_{n}$ as well,
and therefore $p$-negligible (resp. $\operatorname{Mod}_p$-negligible) for $X_{n-1}$ by
Lemma \ref{lem:negligible} (resp. Lemma \ref{lem:mod-negli}). Now
\begin{equation*}
  AC([0,1];X_{n-1})\setminus\Gamma_{n}'
  \subset AC([0,1];X_{n})\setminus\Gamma_{n},
\end{equation*}
so \eqref{eq:ug-Xn} holds, in particular, for all $\gamma$ in this family. Since
$f^{n}=f=f^{n-1}$ on $\gamma([0,1])\subset X_{n-1}$ it follows that
\begin{equation*}
  \Babs{\int_{\partial\gamma}f^{n-1}}
  \leq\int_{\gamma}g^{n}
  =\int_{\gamma}g^{n}|_{X_{n-1}}\qquad\forall\,\gamma\in AC([0,1];X_
  {n-1})\setminus\Gamma_{n}',
\end{equation*}
where $\Gamma_{n}'$ is $p$-negligible (resp. $\operatorname{Mod}_p$-negligible) for
$X_{n-1}$. This shows that $g^{n}|_{X_{n-1}}$ is also a $p$-weak upper gradient of
$f^{n-1}$, as claimed.

Since $g^{n-1}$ is the minimal $p$-weak upper gradient of $f^{n-1}$, it follows that
\begin{equation*}
  g^{n-1}\leq g^{n}\quad\text{for a.e. }x\in X_{n-1}.
\end{equation*}
Hence, the sequence of functions $\mathbf{1}_{X_n}g^n\geq 0$ is monotone increasing at a.e.\
$x\in X$. We can then define
\begin{equation}\label{eq:g=sup}
\begin{split}
  g(x)&:=\sup_{n\in\mathbb N}\mathbf{1}_{X_n}(x)g^n(x) \\
  &\phantom{:}=\lim_{n\to\infty}\mathbf{1}_{X_n}(x)g^n(x)\quad\text{at a.e. }x\in X.
\end{split}
\end{equation}
The monotone convergence theorem implies that
\begin{equation*}
  \Norm{g}{L^p(\mu)}^p
  =\lim_{n\to\infty}\int_{X_n}(g^n)^p\,d\mu
  =\lim_{n\to\infty}\Norm{g^n}{L^p(\mu_n)}^p
  =\sup_{n\in\mathbb N}\Norm{g^n}{L^p(\mu_n)}^p.
\end{equation*}

It remains to check that $g$ is a $p$-weak upper gradient of $f$. For each $n$, let again
$\Gamma_n$ be a $p$-negligible (resp. $\operatorname{Mod}_p$-negligible) set in
$C([0,1];X_n)$ such that \eqref{eq:ug-Xn} holds. By Lemma \ref{lem:negligible} (resp. Lemma
\ref{lem:mod-negli}), $\Gamma_n$ is also $p$-negligible (resp.
$\operatorname{Mod}_p$-negligible) in $C([0,1];X)$. Lemma \ref{lem:subadditive} then implies
that $\Gamma:=\bigcup_{n=1}^\infty\Gamma_n\subset C([0,1];X)$ is also $p$-negligible (resp.
$\operatorname{Mod}_p$-negligible).

Now, let $\gamma\in AC([0,1];X)\setminus\Gamma$ be given. Since the image $\gamma([0,1])$ is
a bounded set (indeed, of diameter at most $\ell(\gamma)$), it will be contained in $X_n$
for all large enough $n$ by the bounded exhaustion property. Thus,
\begin{equation*}
  \gamma\in AC([0,1];X_n)\setminus\Gamma
  \subset AC([0,1];X_n)\setminus\Gamma_n.
\end{equation*}
Then \eqref{eq:ug-Xn} and \eqref{eq:g=sup} imply that
\begin{equation*}
  \Babs{\int_{\partial\gamma}f}
  \leq\int_{\gamma}g^n
  =\int_{\gamma}\mathbf{1}_{X_n}g^n
  \leq\int_{\gamma}g.
\end{equation*}
Since $\gamma\in AC([0,1];X)\setminus\Gamma$ was arbitrary, and $\Gamma$ is $p$-negligible
(resp.  $\operatorname{Mod}_p$-negligible), this shows that $g$ is a $p$-weak upper gradient
of $f$, and completes the proof.
\end{proof}

We will next extend Theorem \ref{thm:AGS7.4} and Corollary \ref{cor:AGS7.4}
to the more general function class of locally integrable functions,
for which we adopt the following definition:
\begin{equation}\label{eLoc}
  L^1_{\mathrm{loc}}(\mu):=\Big\{h:X\to\mathbb R\text{ measurable}: \mathbf{1}_B h\in
  L^1(\mu)\text{ for all balls }B\subset X\Big\}.
\end{equation}

\begin{remark}
Our definition \eqref{eLoc} of local integrability agrees with that of
\cite[p.~1905]{FHK:99} and \cite[p.~6]{HK:00}, but differs from the convention used e.g.\ in
\cite[p.~5]{Heinonen:book} and \cite[p.~46]{HKST:book}, where  ``all balls $B\subset X$'' is
replaced by ``some ball $B=B(x,r_x)$ at every $x\in X$''. Definition \eqref{eLoc} as
written is the more useful version for us.
\end{remark}

\begin{corollary}\label{cor:AGS7.4loc}
Theorem \ref{thm:AGS7.4} remains valid for $f\in L^1_{\mathrm{loc}}(\mu)$.
\end{corollary}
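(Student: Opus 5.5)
The plan is to reduce to Corollary \ref{cor:AGS7.4} by localizing to the bounded closed subsets $X_n$ and then reassembling the global weak upper gradient with Proposition \ref{prop:gluing-ug}. One direction needs no work. By Remark \ref{rem:ug-relations}\eqref{it:mod>testplans}, every $p$-weak upper gradient in the sense of modulus is one in the sense of test plans. Hence the minimal test-plan gradient $\abs{\nabla f}_{w,p}$ is at most the minimal modulus gradient $\abs{\nabla f}_p$ a.e., and the only nontrivial task is the converse.

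So let $f\in L^1_{\mathrm{loc}}(\mu)$ have a $p$-weak upper gradient $g\in L^p(\mu)$ in the sense of test plans. We may take $g=\abs{\nabla f}_{w,p}$, which is minimal by Lemma \ref{lem:minimal-wug}.

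\begin{enumerate}[\rm(1)]
\item Fix any increasing sequence of closed bounded sets $X_n$ with the bounded exhaustion property, for instance $X_n:=\overline{B(x_0,n)}$. Completeness, separability and local finiteness pass to each $(X_n,\rho|_{X_n},\mu_n)$. Since $X_n$ lies in a ball, the definition \eqref{eLoc} gives $f^n:=f|_{X_n}\in L^1(\mu_n)$.
\item Show that $g|_{X_n}$ is a $p$-weak upper gradient of $f^n$ in the sense of test plans on $X_n$. If $\Gamma$ is the exceptional $p$-negligible family for $f$ on $X$, then $\Gamma\cap C([0,1];X_n)$ is $p$-negligible in $C([0,1];X)$, hence in $C([0,1];X_n)$ by Lemma \ref{lem:negligible}. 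For curves in $X_n$ outside this set, inequality \eqref{eq:uppergrad} for $f^n$ and $g|_{X_n}$ is just the inequality for $f$ and $g$.
\item Apply Corollary \ref{cor:AGS7.4} on $X_n$. It yields that $f^n$ has a $p$-weak upper gradient in the sense of modulus, and that its minimal one $g^n$ coincides a.e.\ with the minimal test-plan gradient of $f^n$. By minimality, $g^n\le g$ a.e.\ on $X_n$.
\item Apply Proposition \ref{prop:gluing-ug} in the modulus sense. It produces a $p$-weak upper gradient $\tilde g$ of $f$ in the sense of modulus, which by the construction \eqref{eq:g=sup} equals $\sup_n\mathbf 1_{X_n}g^n\le g$ a.e. Hence $\tilde g\in L^p(\mu)$, which proves the ``only if'' part, and $\abs{\nabla f}_p\le\tilde g\le\abs{\nabla f}_{w,p}$. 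Together with the reverse inequality above, this gives equality a.e.
\end{enumerate}

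The main obstacle is step (3). The a.e.\ inequality $g^n\le g$ requires the minimal modulus gradient on $X_n$ to agree with the minimal test-plan gradient on $X_n$, so that minimality can be applied to $g|_{X_n}$. This is exactly the content of Corollary \ref{cor:AGS7.4}, applied to the subspace $X_n$. We also need the hypotheses of that corollary, namely completeness, separability and local finiteness of $X_n$, which hold because $X_n$ is closed in $X$.

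A second point to check is that the function $g$ in the proof of Proposition \ref{prop:gluing-ug} is literally the supremum in \eqref{eq:g=sup}, so that we get the pointwise bound $\tilde g\le g$ and not merely the norm bound stated there. This is visible from that proof. Alternatively, the norm bound suffices for the membership statement, and minimality then handles the equality of gradients.
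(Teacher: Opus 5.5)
Your proposal is correct and follows the paper's proof. Like the paper, you localize to $X_n=\overline{B(x_0,n)}$, apply Corollary \ref{cor:AGS7.4} on each $X_n$ to obtain $|\nabla f^n|_p=|\nabla f^n|_{w,p}\le |\nabla f|_{w,p}$ a.e.\ on $X_n$, and glue with Proposition \ref{prop:gluing-ug}.

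You are somewhat more explicit than the paper in two places. First, the paper calls the restriction step evident, while you justify it via Lemma \ref{lem:negligible}. Second, the paper records only the norm bound $\|\,|\nabla f|_p\|_{L^p(\mu)}\le\|\,|\nabla f|_{w,p}\|_{L^p(\mu)}$, whereas you spell out the a.e.\ equality of the two minimal gradients.

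One small labelling slip: the direction you call ``only if'' (test plans $\Rightarrow$ modulus) is the ``if'' direction in the statement's ordering.
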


\begin{proof}
Our assumptions on the underlying space $(X,\rho,\mu)$ are those of Theorem
\ref{thm:AGS7.4}, i.e., it is complete, separable, and locally finite.
Suppose that $f\in L^1_{\mathrm{loc}}(\mu)$ has a $p$-weak upper gradient in $L^p(\mu)$ in
the sense of test plans, and hence a minimal such object, $\abs{\nabla f}_{w,p}\in
L^p(\mu)$.

We choose an increasing sequence of closed bounded subsets $X_n\subset X$ with the bounded
exhaustion property (Definition \ref{def:bdExhaustion}),
 e.g., $X_n=\overline{B(x_0,n)}$. With the induced metric $\rho_n:=\rho|_{X_n}$ and measure
 $\mu_n:=\mu|_{X_n}$, the triple $(X_n,\rho_n,\mu_n)$ is again a complete, separable, and
 locally finite metric measure space. Thus, each $(X_n,\rho_n,\mu_n)$ satisfies the same
 assumptions as $(X,\rho,\mu)$ in Theorem \ref{thm:AGS7.4}. Since $X_n$ is bounded, the
 assumption $f\in L^1_{\mathrm{loc}}(\mu)$ implies that $f^n:=f|_{X_n}\in L^1(\mu_n)$. It is
 evident that $\abs{\nabla f}_{w,p}|_{X_n}\in L^p(\mu_n)$ is a $p$-weak upper gradient of
 $f^n$ in the sense of test plans. Now, Corollary \ref{cor:AGS7.4} applies to show that
 $f^n\in L^1(\mu_n)$ also has a $p$-weak upper gradient in $L^p(\mu_n)$ in the sense of
 modulus, and hence a minimal such object, $\abs{\nabla f^n}_p\in L^p(\mu_n)$. Moreover, if
 $\abs{\nabla f^n}_{w,p}\in L^p(\mu_n)$ is the minimal $p$-weak upper gradient in the sense
 of test plans, these two minimal objects agree almost everywhere, and the minimality of the
 latter implies that
\begin{equation*}
  \abs{\nabla f^n}_{p}=\abs{\nabla f^n}_{w,p}\leq\abs{\nabla f}_{w,p}|_{X_n}.
\end{equation*}
Summarising, each restriction $f^n=f|_{X_n}$ has a $p$-weak upper gradient in the sense of
modulus, $\abs{\nabla f^n}_{p}\in L^p(\mu_n)$. This is precisely the setting of Proposition
\ref{prop:gluing-ug}, which then implies that the original $f$ has a $p$-weak upper gradient
in the sense of modulus, $g$, with the norm estimate
\begin{equation*}
\begin{split}
  \Norm{g}{L^p(\mu)}
  &\leq\sup_{n\in\mathbb N}\Norm{\abs{\nabla f^n}_p}{L^p(\mu_n)} \\
  &\leq\sup_{n\in\mathbb N}\Norm{\abs{\nabla f}_{w,p}}{L^p(\mu_n)}
  =\Norm{\abs{\nabla f}_{w,p}}{L^p(\mu)}<\infty.
\end{split}
\end{equation*}
Hence, there is a minimal such object, $\abs{\nabla f}_p\in L^p(\mu)$ whose minimality
ensures that
\begin{equation*}
 \Norm{\abs{\nabla f}_p}{L^p(\mu)}
  \leq\Norm{g}{L^p(\mu)}
  \leq\Norm{\abs{\nabla f}_{w,p}}{L^p(\mu)}.
\end{equation*}
This completes the proof of Corollary \ref{cor:AGS7.4loc}.
\end{proof}

\subsection{Sobolev spaces}\label{ss:Sobolev}

In this section, we define the relevant notions of homogeneous Sobolev spaces and norms
featuring in our main results, and collect some of their basic properties.

\begin{definition}\label{def:dotW1p}
Let $p\in[1,\infty)$.
We say that $f\in L^1_{\mathrm{loc}}(\mu)$ belongs to the
\emph{homogeneous Sobolev space
$\dot W^{1,p}(\mu)$} if $f$ has a $p$-weak upper gradient $g\in L^p(\mu)$
in the sense of test plans. In this case, the minimal $p$-weak upper gradient
is denoted by $\abs{\nabla f}_{w,p}$. The homogeneous Sobolev seminorm is defined by
\begin{equation*}
   \Norm{f}{\dot W^{1,p}(\mu)}:=\Norm{\abs{\nabla f}_{w,p}}{L^p(\mu)}.
\end{equation*}
We let $\Norm{f}{\dot W^{1,p}(\mu)}:=\infty$ if $f$ has no $p$-weak upper gradient in
$L^p(\mu)$.
\end{definition}

\begin{remark}
The minimality in Definition \ref{def:dotW1p} is understood with respect to
the $\mu$-a.e. pointwise order. The existence and the uniqueness up to $\mu$-a.e.
equality of the minimal $p$-weak upper gradient follow from Lemma
\ref{lem:minimal-wug}.
\end{remark}

\begin{remark}
Definition \ref{def:dotW1p} is an adaptation of \cite[Definition 2.4 and (2.2)]{DMS}, where
the inhomogeneous space $W^{1,p}(\mu)$ is similarly defined, assuming that $f\in L^p(\mu)$.
Note that \cite[Definition 2.3]{DMS} only defines $p$-weak upper gradients for functions
$f\in L^p(\mu)$, but \cite[Definition 4.8]{AGS:13} allows more general $f$. In place of
$\Norm{f}{\dot W^{1,p}(\mu)}^p$, the notation $\operatorname{Ch}_p(f)$ and the name {\em
Cheeger energy} is used in \cite[(2.2)]{DMS}.
\end{remark}

Let $p\in[1,\infty]$.
Recalling the notion of Haj\l{}asz upper gradients from Definition \ref{def:uppergrad}, for
measurable $f:X\to\mathbb R$ we define
\begin{equation*}
  \Norm{f}{\dot M^{1,p}(\mu)}
  :=\inf\{ \Norm{h}{L^p(\mu)}:h\text{ is a Haj\l{}asz upper gradient of }f\}\in[0,\infty]
\end{equation*}
and
\begin{equation*}
  \dot M^{1,p}(\mu):=\{f\text{ measurable}: \Norm{f}{\dot M^{1,p}(\mu)}<\infty\}.
\end{equation*}

\begin{remark}\label{rem:dotM-basic}
Let $p\in[1,\infty]$ and $f\in\dot M^{1,p}(\mu)$. Then
\begin{enumerate}[\rm(i)]
  \item\label{it:hinLp} $f$ has a Haj\l{}asz upper gradient $h\in L^p(\mu)$;
  \item\label{it:finLploc} $f\in L^p_{\mathrm{loc}}(\mu)$.
\end{enumerate}
\end{remark}

\begin{proof}
For \eqref{it:hinLp}, if the stated conclusion of \eqref{it:hinLp} failed, then the infimum in the definition of
$\Norm{f}{\dot M^{1,p}(\mu)}$ would be taken over a set that only consists of $\infty$, in
which case this infimum would also be $\infty$.

For \eqref{it:finLploc}, let us fix a Haj\l{}asz upper gradient $h\in L^p(\mu)$ and $y_0\in
X$ with $\abs{h(y_0)}<\infty$ such that $\abs{f(x)-f(y_0)}\leq\rho(x,y_0)[h(x)+h(y_0)]$ at
almost every $x\in X$. Then
$$\abs{f}\leq\abs{f(y_0)}+\rho(\cdot,y_0)\big[h(\cdot)+h(y_0)\big]\in
L^p_{\mathrm{loc}}(\mu),$$
since $h\in L^p(\mu)$, and $x\mapsto\rho(x,y_0)$ is locally
bounded.
This completes the proof of both assertions.
\end{proof}

It will be useful to know that the two kinds of Sobolev spaces coincide over a wide class of
metric measure spaces $(X,\rho,\mu)$. This is essentially known for the inhomogeneous spaces
$W^{1,p}(\mu):=L^p(\mu)\cap\dot W^{1,p}(\mu)$ and $M^{1,p}(\mu):=L^p(\mu)\cap\dot
M^{1,p}(\mu)$; see \cite[Theorem 7.4]{AGS:13} for the identification of several variants of
$W^{1,p}(\mu)$, including the one commonly denoted by $N^{1,p}(\mu)$, and \cite[Corollary
10.2.9]{HKST:book} for $N^{1,p}(\mu)=M^{1,p}(\mu)$. For completeness, we give the details
for the homogeneous spaces of relevance to us:

\begin{proposition}\label{prop:MWE}
Let $(X,\rho,\mu)$ be a complete doubling
$p$-Poincar\'e space with $p\in(1,\infty)$.
Then $\dot{W}^{1,p}(\mu)=\dot{M}^{1,p}(\mu)$ with equivalent seminorms.
\end{proposition}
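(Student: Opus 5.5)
We prove the two inclusions separately, with norm bounds, and in each direction pass through the modulus-based theory so that the results of \cite{HKST:book} apply.

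\emph{Step 1: $\dot M^{1,p}(\mu)\subset\dot W^{1,p}(\mu)$ with $\Norm{f}{\dot W^{1,p}(\mu)}\lesssim\Norm{f}{\dot M^{1,p}(\mu)}$.} Let $h\in L^p(\mu)$ be a Haj\l{}asz upper gradient of $f$; by Remark \ref{rem:dotM-basic}, $f\in L^p_{\mathrm{loc}}(\mu)\subset L^1_{\mathrm{loc}}(\mu)$. The standard argument shows that $Ch$ has a representative that is a $p$-weak upper gradient in the sense of modulus of a suitable representative of $f$ (cf.\ \cite[Chapter 10]{HKST:book}), with $C$ depending only on the doubling constant. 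This argument is local (it is usually run on compact curves via Lebesgue points and discrete convolutions), so if the references only state it for $f\in L^p(\mu)$, we localize. We apply it to $f\varphi_n$ with Lipschitz cutoffs $\varphi_n$ equal to $1$ on $B(x_0,n)$, or on the bounded sets $X_n=\overline{B(x_0,n)}$, and glue the pieces with Proposition~\ref{prop:gluing-ug}. By Remark~\ref{rem:ug-relations}\eqref{it:mod>testplans}, $Ch$ is then also a $p$-weak upper gradient in the sense of test plans, and minimality gives $\Norm{\abs{\nabla f}_{w,p}}{L^p(\mu)}\le C\Norm{h}{L^p(\mu)}$. Taking the infimum over $h$ gives the claim. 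Neither completeness nor the Poincar\'e inequality is needed in this step.

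\emph{Step 2: $\dot W^{1,p}(\mu)\subset\dot M^{1,p}(\mu)$.} Let $f\in\dot W^{1,p}(\mu)$. By Corollary~\ref{cor:AGS7.4loc}, using that a doubling space is separable and locally finite, $g:=\abs{\nabla f}_{p}=\abs{\nabla f}_{w,p}$ is a $p$-weak upper gradient in the sense of modulus. The $p$-Poincar\'e inequality is stated for upper gradients, but by the standard approximation of $p$-weak upper gradients by genuine upper gradients in $L^p$ \cite[Lemma 6.2.2 / Prop.~6.2.x]{HKST:book}, it also holds with $g$ on every ball. Moreover, by Keith--Zhong self-improvement (complete doubling $p$-Poincar\'e spaces with $p>1$ support a $q$-Poincar\'e inequality for some $q<p$), we may use the $q$-version. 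The chaining argument over balls $B(x,2^{-k}\rho(x,y))$ and $B(y,2^{-k}\rho(x,y))$ then gives, for Lebesgue points $x,y$,
\[
\abs{f(x)-f(y)}\lesssim\rho(x,y)\big[(M_{2\lambda_0\rho(x,y)}g^q)^{1/q}(x)+(M_{2\lambda_0\rho(x,y)}g^q)^{1/q}(y)\big].
\]
Hence $h:=C(Mg^q)^{1/q}$ is a Haj\l{}asz upper gradient, and since $p/q>1$, the Hardy--Littlewood maximal theorem yields $\Norm{h}{L^p(\mu)}\lesssim\Norm{g}{L^p(\mu)}$. The chaining requires only $f\in L^1_{\mathrm{loc}}(\mu)$, so the homogeneous setting causes no difficulty here.

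\emph{Main obstacle.} The delicate part is Step 2's use of self-improvement and the transfer of the Poincar\'e inequality to $p$-weak upper gradients of a merely locally integrable $f$. If the Keith--Zhong theorem is to be avoided, one could instead use the endpoint weak-type estimate $M g^p\in L^{1,\infty}$ together with the Haj\l{}asz characterization in \cite[Corollary 10.2.9]{HKST:book} applied to the truncations $f_N$, localized to bounded sets where they lie in $L^p$. One then lets $N\to\infty$, using that Haj\l{}asz inequalities pass to a.e.\ pointwise limits with the same $h$. In either route, the constants depend only on the data of $X$.
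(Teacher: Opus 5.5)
Your proof is correct and is essentially the paper's. Step 2 is exactly its argument: Corollary~\ref{cor:AGS7.4loc}, approximation of $\abs{\nabla f}_p$ by genuine upper gradients via \cite[Lemma 6.2.2]{HKST:book}, Keith--Zhong self-improvement, the chaining estimate of \cite[Theorem 8.1.7]{HKST:book}, and the maximal theorem in $L^{p/q}(\mu)$. For Step 1 the paper simply cites \cite[Theorem 1.3]{JSYY:15}, which makes $4h$ a genuine upper gradient (after null-set modifications) of any $f\in L^1_{\mathrm{loc}}(\mu)$. Your localization and gluing contingency is therefore unnecessary.

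You should drop the fallback in your last paragraph. Without self-improvement, the $p$-Poincar\'e inequality only yields the Haj\l{}asz gradient $(Mg^p)^{1/p}\in L^{p,\infty}(\mu)$, which is not enough for an $L^p$ bound. Also, restricting to bounded subsets need not preserve the Poincar\'e inequality.
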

\begin{proof}
Let $f\in L^1_{\rm loc}(\mu)$. If $h\in L^p(\mu)$ is a Haj\l asz
upper gradient of $f$, then \cite[Theorem 1.3]{JSYY:15} shows, after
changing $f$ and $h$ on null sets, that $4h$ is an upper gradient of~$f$.
In particular, it is a $p$-weak upper gradient in the sense of
test plans. Hence
$$
   \Norm{f}{\dot W^{1,p}(\mu)}
   \leq 4\Norm{h}{L^p(\mu)}.
$$
Taking the infimum over all such $h$ gives
$$
   \Norm{f}{\dot W^{1,p}(\mu)}
   \lesssim\Norm{f}{\dot M^{1,p}(\mu)}.
$$

Conversely, suppose that $f\in\dot W^{1,p}(\mu)$.
This means, by definition, that $f\in L^1_{\mathrm{loc}}(\mu)$ has a $p$-weak upper gradient
in $L^p(\mu)$
in the sense of test plans, and then, by Corollary \ref{cor:AGS7.4loc},
also in the sense of modulus. By Lemma \ref{lem:minimal-wug}, $f$ has minimal
$p$-weak upper gradients in both senses, respectively, denoted by $\abs{\nabla f}_{w,p}$
and $\abs{\nabla f}_p$. By Corollary \ref{cor:AGS7.4loc} again, there is coincidence
$\abs{\nabla f}_{w,p}=\abs{\nabla f}_p$ a.e.

We will now proceed with the minimal $p$-weak upper gradient in the sense of modulus,
$\abs{\nabla f}_p\in L^p(\mu)$, using results from \cite{HKST:book},
which are formulated in this setting.
From \cite[Lemma 6.2.2]{HKST:book}, we deduce that there exists a
sequence $\{g_k\}_{k\in\mathbb{N}}$ of the upper gradients of $f$ such that
$g_k \searrow |\nabla f|_p$ and $\||\nabla f|_p-g_k\|_p\to 0$ as $k\to \infty$.
Applying the self-improving property of
the Poincar\'e inequality (see \cite[Theorem 1.0.1]{KZ:08})
and \cite[Theorem 8.1.7]{HKST:book},
we find that there exist $q\in(1,p)$ and
a positive constant $C$ such that, for any $k\in\mathbb{N}$ and for $\mu$-almost every
$x,y\in X$,
\begin{align*}
|f(x)-f(y)|\le C \rho(x,y)\Big(\big[M(g_k^q)(x) \big]^{\frac1q}+
\big[M(g_k^q)(y) \big]^{\frac1q}\Big),
\end{align*}
where $M$ is the Hardy--Littlewood maximal operator.
Thus, $C[M(g_k^q)]^{\frac1q}$ is a Haj\l asz upper
gradient of $f$ for each $k\in\mathbb{N}$ and, by
the Hardy--Littlewood theorem, we further obtain, for any $k\in\mathbb{N}$,
\begin{align*}
\|f\|_{\dot{M}^{1,p}(\mu)}\lesssim \Big\|\big[M(g_k^q)\big]^{\frac1q}\Big\|_{p}\lesssim
\|g_k\|_{p}.
\end{align*}
Letting $k\to\infty$, we obtain
$$\|f\|_{\dot{M}^{1,p}(\mu)}\lesssim \||\nabla f|_p\|_{p}=
\|f\|_{\dot{W}^{1,p}},$$
where the last step used Corollary \ref{cor:AGS7.4loc}.
This completes the proof of Proposition \ref{prop:MWE}.
\end{proof}

\begin{proposition}\label{prop:truncW1p}
Let $p\in[1,\infty)$, $f\in L^1_{\mathrm{loc}}(\mu)$, and let $f_N$ be its standard
truncations \eqref{eq:fN-def}. Then $f\in\dot W^{1,p}(\mu)$ if and only if $f_N\in \dot
W^{1,p}(\mu)$ uniformly in $N\in\mathbb N$, and moreover
\begin{equation*}
  \Norm{f}{\dot W^{1,p}(\mu)}=\sup_{n\in\mathbb N}\Norm{f_N}{\dot W^{1,p}(\mu)}.
\end{equation*}
\end{proposition}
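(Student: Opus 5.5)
The proof has two inequalities, and both work directly with test plans as in Definition~\ref{def:dotW1p}.

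\emph{The easy direction, $\sup_N\|f_N\|_{\dot W^{1,p}(\mu)}\le\|f\|_{\dot W^{1,p}(\mu)}$.} We may assume $f\in\dot W^{1,p}(\mu)$. By \eqref{eq:fN-key}, $|\int_{\partial\gamma}f_N|\le|\int_{\partial\gamma}f|$ for every curve. Hence every $p$-weak upper gradient of $f$ is one of $f_N$, with the same exceptional family. In particular $|\nabla f|_{w,p}$ is a $p$-weak upper gradient of $f_N$, and the minimality in Lemma~\ref{lem:minimal-wug}\eqref{it:min-wug} gives $|\nabla f_N|_{w,p}\le|\nabla f|_{w,p}$ a.e. Also $f_N\in L^1_{\rm loc}(\mu)$, since $|f_N|\le|f|$. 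This yields the bound for every $N$.

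\emph{The converse direction.} Assume $S:=\sup_N\|f_N\|_{\dot W^{1,p}(\mu)}<\infty$, and let $g_N:=|\nabla f_N|_{w,p}$.

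First, the sequence $g_N$ is monotone. For $M\ge N$ we have $f_N=(f_M)_N$. Applying the easy direction to $f_M$ in place of $f$ gives $g_N\le g_M$ a.e. So $g:=\sup_N g_N=\lim_N g_N$ exists a.e., and monotone convergence gives $\|g\|_{L^p(\mu)}=\lim_N\|g_N\|_{L^p(\mu)}\le S$.

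Second, $g$ is a $p$-weak upper gradient of $f$. For each $N$, let $\Gamma_N$ be a $p$-negligible family outside of which $|\int_{\partial\gamma}f_N|\le\int_\gamma g_N\le\int_\gamma g$. By Lemma~\ref{lem:subadditive}, $\Gamma:=\bigcup_N\Gamma_N$ is $p$-negligible. Take $\gamma\in AC([0,1];X)\setminus\Gamma$ and choose $N>\max\{|f(\gamma(0))|,|f(\gamma(1))|\}$, as in the proof of Corollary~\ref{cor:AGS7.4}. Then $f_N=f$ at both endpoints, so
\[
\Big|\int_{\partial\gamma}f\Big|=\Big|\int_{\partial\gamma}f_N\Big|\le\int_\gamma g .
\]
Thus $g\in L^p(\mu)$ is a $p$-weak upper gradient of $f$ in the sense of test plans. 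Consequently $f\in\dot W^{1,p}(\mu)$ and $\|f\|_{\dot W^{1,p}(\mu)}\le\|g\|_{L^p(\mu)}\le S$.

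The statement requires $p\in[1,\infty)$, so the case $p=1$ must also be covered; it needs the existence of minimal gradients from Lemma~\ref{lem:minimal-wug}, which holds for $p=1$ via \cite{CKR:24}. That lemma assumes $X$ separable and locally finite, which I take as standing assumptions here. If one wants to avoid minimality altogether, one can instead take $g:=\liminf_N g_N$ for arbitrary near-optimal gradients $g_N$, but then the inequality $\int_\gamma g_N\le\int_\gamma g$ fails in general. So the monotone choice through minimal gradients is the key step.

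The main delicate point is this monotonicity step, that is, ensuring the minimal gradients increase in $N$ so that a single $g$ dominates all of them simultaneously along $p$-a.e.\ curve. Once that holds, the gluing over $N$ mirrors the argument already given in Corollary~\ref{cor:AGS7.4}.
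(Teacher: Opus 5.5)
Your proof is correct and follows the paper's argument essentially step by step. The easy direction goes through \eqref{eq:fN-key} and the minimality of $|\nabla f_N|_{w,p}$. The converse uses the tower property $(f_M)_N=f_N$ to get a.e.\ monotonicity of the minimal gradients, then $g:=\sup_N|\nabla f_N|_{w,p}$ with monotone convergence, a countable union of exceptional curve families, and a choice of $N$ above the endpoint values of $f$ on each curve. Your remark that Lemma~\ref{lem:minimal-wug} tacitly requires $X$ to be separable and locally finite is a fair point, which the paper leaves implicit.
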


\begin{proof}
``Only if'':  Suppose that $f$ has a $p$-weak upper gradient in the sense of test plans, and
hence a minimal such object, $\abs{\nabla f}_{w,p}\in L^p(\mu)$. From \eqref{eq:fN-key}, it
follows that
\begin{equation*}
  \Babs{\int_{\partial\gamma}f_M}
  \leq\Babs{\int_{\partial\gamma}f}
  \leq\int_\gamma\abs{\nabla f}_{w,p}
\end{equation*}
for all $\gamma\in AC([0,1];X)\setminus\Gamma$, where $\Gamma$ is $p$-negligible. This means
that $\abs{\nabla f}_{w,p}$ is also a $p$-weak upper gradient in the sense of test plans for
$f_M$. Then minimality shows that $\abs{\nabla f_M}_{w,p}\leq\abs{\nabla f}_{w,p}$ a.e., and
taking the $L^p$ norms and then the supremum over $M\in\mathbb N$ proves that
\begin{equation}\label{eq:truncW1p-easy}
  \sup_{M\in\mathbb N}\Norm{f_M}{\dot W^{1,p}(\mu)}
  =\sup_{M\in\mathbb N}\Norm{\abs{\nabla f_M}_{w,p}}{L^p(\mu)}
  \leq\Norm{\abs{\nabla f}_{w,p}}{L^p(\mu)}
  =\Norm{f}{\dot W^{1,p}(\mu)}.
\end{equation}

``If'': Suppose that each $f_N$ has a $p$-weak upper gradient in the sense of test plans,
and hence a minimal such object, $\abs{\nabla f_N}_{w,p}\in L^p(\mu)$. It is immediate to
check that the truncations $f_N$ from \eqref{eq:fN-def} satisfy the tower property
$(f_N)_M=f_M$ for $M\leq N$. Applying the first part of the proof to $f_N$ in place of $f$
shows that $\abs{\nabla f_M}_{w,p}\leq\abs{\nabla f_N}_{w,p}$ a.e.\ whenever $M\leq N$.
Hence the sequence $\abs{\nabla f_N}_{w,p}$ is a.e.\ pointwise increasing in $N$. We may
then define
\begin{equation*}
\begin{split}
  g(x) &:=\sup_{N\in\mathbb N}\abs{\nabla f_N}_{w,p}(x) \\
  &\phantom{:}=\lim_{N\to\infty}\abs{\nabla f_N}_{w,p}(x)\quad\text{at a.e. }x\in X.
\end{split}
\end{equation*}
Then monotone convergence shows that
\begin{equation*}
  \Norm{g}{L^p(\mu)}=\lim_{N\to\infty}\Norm{\abs{\nabla f_N}_{w,p}}{L^p(\mu)}
  =\lim_{N\to\infty}\Norm{f_N}{\dot W^{1,p}(\mu)}
  \leq  \sup_{N\in\mathbb N}\Norm{f_N}{\dot W^{1,p}(\mu)}.
\end{equation*}

Finally, we check that $g$ is a $p$-weak upper gradient of $f$ in the sense of test plans.
Let $\Gamma_N$ be a $p$-negligible set of curves related to the $p$-weak upper gradient
property of $\abs{\nabla f_N}_{w,p}$, and $\Gamma:=\bigcup_{N=1}^\infty\Gamma_N$. Then
$\Gamma$ is also $p$-negligible.

Let $\gamma\in AC([0,1];X)\setminus\Gamma\subset AC([0,1];X)\setminus\Gamma_N$ for every
$N$. Choose
$$N>\max\{\abs{f(\gamma(s))}:s=0,1\}.$$ Then
\begin{equation*}
  \Babs{\int_{\partial\gamma}f}
  =\Babs{\int_{\partial\gamma}f_N}
  \leq\int_\gamma\abs{\nabla f_N}_{w,p}
  \leq\int_\gamma g.
\end{equation*}
Thus, $g\in L^p(\mu)$ is a $p$-weak upper gradient of $f$ in the sense of test plans, and
\begin{equation}\label{eq:truncW1p-harder}
  \Norm{f}{\dot W^{1,p}(\mu)}\leq\Norm{g}{L^p(\mu)}
    \leq  \sup_{N\in\mathbb N}\Norm{f_N}{\dot W^{1,p}(\mu)}.
\end{equation}
A combination of \eqref{eq:truncW1p-easy} and \eqref{eq:truncW1p-harder}
then completes the proof.
\end{proof}

\subsection{Test plans for BV theory}\label{ss:planBV}

The remaining preliminaries are dedicated to some replacements of the Sobolev spaces $\dot
W^{1,p}$ and related notions that will be needed to obtain the correct end-point results at
integrability $p=1$. We will first discuss the relevant notions of test plans in this
section. In Definition~\ref{def:plan} below, case \eqref{it:infty-plan} is just the natural
limiting case $q=\infty$ of Definition~\ref{def:fzplan}, while case \eqref{it:bary-plan} is
a new variant that is relevant for the BV theory. We will compare these two classes
carefully below.

\begin{definition}\label{def:plan}
A Borel probability $\pi\in\mathscr P(C([0,1];X))$ with the bounded compression property
\eqref{it:C(pi)} is called
\begin{enumerate}[\rm(i)]
  \item\label{it:infty-plan} an \emph{$\infty$-test plan} if
  \begin{equation*}
    \pi(AC^\infty([0,1];X))=1,\qquad
    [\gamma\mapsto{\operatorname{Lip}}(\gamma)]\in L^\infty(C([0,1];X),\pi);
  \end{equation*}
  \item\label{it:bary-plan} a \emph{bary-plan} if it has a \emph{barycenter} $b_\pi\in
  L^\infty(\mu)$
such that, for every bounded continuous function $g$ on $X$,
\begin{align}\label{eq:baryc}
\int_{AC([0,1];X)}\Bigg(\int_{\gamma}g \Bigg)\,d\pi =
\int_X gb_\pi\,d\mu.
\end{align}
\end{enumerate}
A Borel set $\Gamma\subset C([0,1];X)$ is said to be {\em $1$-negligible} (resp.\
\emph{bary-negligible}) if $\pi(\Gamma)=0$ for every $\infty$-test plan (resp.\ bary-plan)
$\pi$. A property is said to hold for {\em $1$-a.e.}\ (resp.\ \emph{bary-a.e.}) curve if it
holds for all curves outside a $1$-negligible (resp.\ bary-negligible) set.
\end{definition}

\begin{remark}
The notion of $\infty$-test plans is from \cite[Definition 5.5]{AD:14}.
Bary-plans were introduced under the name ``$1$-plans'' in \cite[Definition 2.2]{DMS}, where
more general barycenters $b_\pi\in L^q(\mu)$ are also considered. Since we will only need
$b_\pi\in L^\infty(\mu)$ here, we omit numerical labels from the name ``bary-plan'' for
brevity.
\end{remark}

The following basic observation is from \cite[Lemma 3.15]{AILP}.

\begin{lemma}\label{lem-plans}
Every $\infty$-test plan $\pi$ on a metric measure space $(X,\rho,\mu)$ is a bary-plan.
Moreover,
\begin{align}\label{bpi}
\|b_\pi\|_{L^\infty(\mu)}\le C(\pi)\|\gamma\mapsto{\operatorname{Lip}}(\gamma)
\|_{L^\infty(\pi)}<\infty,
\end{align}
where $b_\pi$ is as in \eqref{eq:baryc} and $C(\pi)$ is as in \eqref{it:C(pi)}.
\end{lemma}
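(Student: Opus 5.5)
The plan is to construct $b_\pi$ as a Radon--Nikodym derivative and bound it through the bounded compression property.

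Write $L:=\|\gamma\mapsto\operatorname{Lip}(\gamma)\|_{L^\infty(\pi)}$. For a Lipschitz curve $\gamma$ we have $|\gamma'|(t)\le\operatorname{Lip}(\gamma)\le L$ for a.e.\ $t$ and for $\pi$-a.e.\ $\gamma$. For a Borel set $A\subset X$ define
\begin{equation*}
  \nu(A):=\int_{C([0,1];X)}\int_0^1\mathbf 1_A(\gamma(t))\,|\gamma'|(t)\,dt\,d\pi(\gamma).
\end{equation*}
Measurability of $(\gamma,t)\mapsto|\gamma'|(t)$ can be handled as in \cite{AGS:13}, for instance by writing the metric speed as a limsup of difference quotients over rational increments. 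Monotone convergence then shows that $\nu$ is a countably additive Borel measure.

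Next I bound $\nu$ by $\mu$. Using the speed bound and then Fubini,
\begin{equation*}
  \nu(A)\le L\int_0^1\pi(\{\gamma:\gamma(t)\in A\})\,dt=L\int_0^1 (e_t)_\#\pi(A)\,dt\le L\,C(\pi)\,\mu(A),
\end{equation*}
where the last step is the bounded compression property \eqref{it:C(pi)}. Hence $\nu\ll\mu$ with $\nu\le C(\pi)L\,\mu$. The same estimate with $A=X$ gives $\nu(X)\le L<\infty$, because $\pi$ is a probability measure and $\int_0^1|\gamma'|\,dt\le L$. So $\nu$ is a finite measure.

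To apply the Radon--Nikodym theorem I need $\mu$ to be $\sigma$-finite. If that is not available, I would instead restrict to the set $\{b>0\}$, which is $\sigma$-finite for $\nu$; alternatively one can work in the generality assumed in \cite{AILP}. Either way this gives a density $b_\pi:=d\nu/d\mu$ with $0\le b_\pi\le C(\pi)L$ $\mu$-a.e., which is exactly \eqref{bpi}.

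Finally I verify the barycenter identity \eqref{eq:baryc}. The definition of $\nu$ gives it for indicator functions $g=\mathbf 1_A$, since then $\int_\gamma g=\int_0^1\mathbf 1_A(\gamma(t))|\gamma'|(t)\,dt$. By linearity it extends to simple functions. It then extends to bounded Borel functions, in particular bounded continuous $g$, by dominated convergence; the domination holds because both sides are controlled by $\|g\|_\infty L$ and $\nu(X)<\infty$.

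The only delicate point is the measurability and $\sigma$-finiteness bookkeeping. The core of the argument is the one-line estimate of $\nu$ displayed above.
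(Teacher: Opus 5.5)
Your argument is correct whenever $\mu$ is $\sigma$-finite. It is the standard proof of this fact: the paper gives no proof of its own and only cites \cite[Lemma 3.15]{AILP}. The key estimate $\nu\le C(\pi)L\,\mu$ is right, as are the Radon--Nikodym step (equivalently, $L^1$--$L^\infty$ duality) and the extension of \eqref{eq:baryc} from indicators to bounded Borel $g$ using $\int b_\pi\,d\mu=\nu(X)\le L$.

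The part to correct is your fallback for non-$\sigma$-finite $\mu$. Restricting to $\{b>0\}$ is circular, since $b$ is the density you are trying to construct. Moreover, every set is $\sigma$-finite for the finite measure $\nu$; the obstruction is $\sigma$-finiteness of $\mu$.

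No repair is possible, because the statement fails without $\sigma$-finiteness. Take $X=[0,1]$ with the Euclidean metric, $\mu$ the counting measure on Borel sets, and $\pi=\delta_{\gamma_0}$ with $\gamma_0(t)=t$. Then $(e_t)_\#\pi=\delta_t\le\mu$, so $\pi$ is an $\infty$-test plan with $C(\pi)=\operatorname{Lip}(\gamma_0)=1$. Suppose a barycenter existed. Testing $g\equiv1$ forces $b_\pi\in L^1(\mu)$, so $\sum_x|b_\pi(x)|<\infty$. The identity \eqref{eq:baryc} would then say that Lebesgue measure on $[0,1]$ agrees on $C([0,1])$, hence as a measure, with the purely atomic measure $\sum_x b_\pi(x)\delta_x$. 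This is impossible.

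So you should state $\sigma$-finiteness as a hypothesis rather than try to circumvent it. This costs nothing for the paper: every place the lemma is used involves doubling spaces, and these are separable with locally finite $\mu$, hence $\sigma$-finite, as observed at the beginning of Section~\ref{sec:preliminaries}.
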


\begin{remark}\label{rem:plans}
By Lemma \ref{lem-plans} and the definitions, every bary-negligible set is $1$-negligible.
\end{remark}

\begin{remark}\label{rem:same-neg}
The converse of Lemma \ref{lem-plans} fails, i.e., bary-plans are strictly more general than
$\infty$-test plans. Note that, unlike $\infty$-test plans,
bary-plans are not assumed to be concentrated on $AC^\infty([0,1];X)$.
But even if we added this requirement, the resulting
class of bary-plans concentrated on $AC^\infty([0,1];X)$ still strictly contains the one of
$\infty$-test plans;
see Example \ref{ex-plans}.

However, the negligible sets described by $\infty$-test plans
and bary-plans concentrated on
$AC^\infty([0,1];X)$ are the same (see,
for instance, \cite[p.\,40]{D14}). Indeed, fix a $1$-negligible set $A$.
For any $n\in\mathbb{N}$, let
$\Gamma_n:=\{\gamma\in AC^\infty([0,1];X):
{\operatorname{Lip}}(\gamma)\le n\}$.
Clearly, $AC^\infty([0,1];X)= \bigcup_{n=1}^\infty\Gamma_n$. For any
bary-plan $\pi$ concentrated on $AC^\infty([0,1];X)$ and for any
$n\in\mathbb{N}$,
define
\begin{align*}
\pi_n:=
\begin{cases}
\frac{\pi|_{\Gamma_n}}{\pi(\Gamma_n)} & \mbox{if}\ \pi(\Gamma_n)>0,\\
0 & \mbox{otherwise}.
\end{cases}
\end{align*}
It is easy to check that $\pi_n$ is an $\infty$-test plan and hence $\pi_n(A)=0$.
This further implies that
\begin{align*}
\pi(A)\le \sum_{n=1}^{\infty}\pi(A\cap \Gamma_n) = \sum_{n=1}^{\infty}
\pi(\Gamma_n)\pi_n(A)=0.
\end{align*}
Therefore, $A$ is also negligible with respect to all such bary-plans.
Thus, every $1$-negligible set $A$ is also bary-negligible, which,
combined with Remark \ref{rem:plans}, further implies the desired conclusion.
\end{remark}

\begin{example}\label{ex-plans}
The condition that a bary-plan is concentrated on
$AC^{\infty}([0,1];X)$ does not imply that it is an
$\infty$-test plan.

Indeed, consider the circle $\mathbb T=\mathbb R/\mathbb Z$, endowed
with its geodesic distance and normalized Haar measure $\mu$. For
$n\in\mathbb N$ and $a\in\mathbb T$, define
$$
    \gamma_{n,a}(t):=a+nt\pmod 1,
    \qquad t\in[0,1].
$$
Let $\eta_n$ be the pushforward of $\mu$ under the map
$\Phi_n:a\mapsto\gamma_{n,a}$, i.e., $\eta_n:= (\Phi_n)_\sharp\mu
$ and define
\begin{align}\label{eq:pi-eta}
\pi:=\sum_{n=1}^{\infty}2^{-n}\eta_n.
\end{align}
Translation invariance of $\mu$ gives
$(e_t)_\#\eta_n=\mu$ for any $t\in[0,1]$.
Indeed, for any Borel set $B\subseteq\mathbb T$, we have
\begin{align*}
(e_t)_\#\eta_n(B)
&=\eta_n\bigl(\{\gamma:\gamma(t)\in B\}\bigr)=\mu\bigl(\{a\in\mathbb T:a+nt\in B\}\bigr) \\
&=\mu(B-nt)=\mu(B),
\end{align*}
where the last equality follows from the translation invariance of the
Haar measure $\mu$. Consequently,
$$
(e_t)_\#\pi=\sum_{n=1}^{\infty}2^{-n}(e_t)_\#\eta_n =\mu.
$$
It is easy to verify that $|\gamma_{n,a}'|=n$ for any $a\in\mathbb T$
and hence, for any bounded nonnegative Borel function $g$ on $\mathbb T$,
\begin{align*}
\int\Big(\int_\gamma g\Big)\,d\pi(\gamma)
&=\sum_{n=1}^{\infty}2^{-n}\int_{\mathbb T}
\Big(\int_{\gamma_{n,a}}g \Big)\,d\mu(a)\quad\text{by \eqref{eq:pi-eta} and
$\eta_n=(\Phi_n)_\sharp\mu$}\\
&=\sum_{n=1}^{\infty}2^{-n}n\int_{\mathbb T}
\Big[\int_{0}^1 g(a+nt)\,dt \Big]\,d\mu(a)\quad \text{by $|\gamma_{n,a}'|=n$}\\
&=\sum_{n=1}^{\infty}2^{-n}n\int_{\mathbb T}g\,d\mu
    =2\int_{\mathbb T}g\,d\mu\quad\text{by Fubini's theorem.}
\end{align*}
Thus, $\pi$ is a bary-plan with barycenter $b_\pi=2$. Since
$\operatorname{Lip}(\gamma_{n,a})=n$, the measure $\pi$ is concentrated
on $AC^{\infty}([0,1];\mathbb T)$. However, for any
$M\in\mathbb N$,
$$
    \pi\bigl(\{\gamma:
    \operatorname{Lip}(\gamma)\in(M,\infty)\}\bigr)
    =\sum_{n=M+1}^{\infty}2^{-n}
    =2^{-M}.
$$
Consequently,
$\operatorname{Lip}(\gamma)\notin L^\infty(\pi)$, and hence $\pi$ is
not an $\infty$-test plan.
\end{example}

\subsection{BV spaces}\label{ss:BV}

In this final section of the preliminaries, we define and compare various notions of bounded
variation spaces on a metric measure space $(X,\rho,\mu)$. For this, we first introduce an
auxiliary notion on the unit interval, following \cite[(5.14)]{AD:14}:

\begin{definition}\label{def:BVunit}
Given $g:[0,1]\to\mathbb R$, we say that $g\in \dot{\rm BV}(0,1)$ if $g\in L^1(0,1)$ and the
distributional derivative $Dg$ is a finite measure, and we have the additional boundary
regularity
\begin{align}\label{eq:s9}
|g(1)-g(0)|\le |Dg|(0,1)<\infty,
\end{align}
where $\abs{Dg}$ is the total variation measure of $Dg$.
\end{definition}

We will use Definition \ref{def:BVunit} with $g=f\circ\gamma$, where $f\in
L^1_{\mathrm{loc}}(\mu)$ and $\gamma$ ranges over some family of curves in $C([0,1];X)$.
While a condition like \eqref{eq:s9} for a fixed $\gamma$ would depend on the choice of the
representative of $f$ within its equivalence class, it will become a meaningful condition
when required for a family of curves excluding some exceptional set.

Based on either $\infty$-test plans or bary-plans from Definition \ref{def:plan}, and the
corresponding notions of negligible from the same definition, we introduce two versions of
BV classes as follows:

\begin{definition}\label{def:BVboth}
Let $(X,\rho,\mu)$ be a metric measure space and $f\in L^1_{\mathrm{loc}}(\mu)$.
Suppose that, for some classes of curves $\Gamma\subset AC([0,1];X)$ and probability
measures $\Pi\subset\mathscr P(C([0,1];X))$, the following conditions are satisfied:
\begin{enumerate}[\rm(i)]
  \item\label{it:BV(0,1)} $f\circ\gamma\in\dot{\rm BV}(0,1)$ for every $\gamma\in
  AC([0,1];X)\setminus\Gamma$, and
  \item\label{it:exists-nu} there is a finite Borel measure $\nu$ on $X$ such that, for all
  $\pi\in\Pi$ and Borel sets $A\subset X$,
\begin{equation}\label{eq:BVboth}
  \int_{AC([0,1];X)}\abs{D(f\circ\gamma)}(\gamma^{-1}(A))\,d\pi(\gamma)  \leq K(\pi)\nu(A).
\end{equation}
\end{enumerate}
Then we say that $f$ has
\begin{enumerate}[\rm(i)]
  \item\label{it:BVw} {\em weak bounded variation}, and write $f\in\dot{\rm BV}_w(\mu)$ if
\begin{equation*}
   \Gamma\text{ is $1$-negligible},\quad\Pi=\{\text{all $\infty$-test plans}\},\quad
   K(\pi)=C(\pi)\Norm{\gamma\mapsto\operatorname{Lip}(\gamma)}{L^\infty(\pi)},
\end{equation*}
where $C(\pi)$ is the compression constant of $\pi$ as in \eqref{it:C(pi)};
  \item\label{it:BV*} {\em bounded variation}, and write $f\in\dot{\rm BV}_*(\mu)$ if
\begin{equation*}
   \Gamma\text{ is bary-negligible},\qquad\Pi=\{\text{all bary-plans}\},\qquad
   K(\pi)=\Norm{b_\pi}{L^\infty(\mu)},
\end{equation*}
where $b_\pi$ is the barycenter of $\pi$ as in \eqref{eq:baryc}.
\end{enumerate}
\end{definition}

\begin{remark}\label{rem:BV*}
In Definition \ref{def:BVboth}, part \eqref{it:BVw} is from \cite[Section 5.3]{AD:14}, while
part \eqref{it:BV*} is essentially from \cite[Definition 2.5(a)]{DMS}, up to the following
details:

First, in \cite[Definition 2.5(a)]{DMS}, it is not explicitly required that
$f\circ\gamma$ satisfy the boundary regularity condition
\eqref{eq:s9}, which appears in \cite[(5.14)]{AD:14}. Since \cite[p.\,1859]{DMS} states that
its
definition of BV is taken from \cite{AD:14}, we understand also \cite[Definition
2.5(a)]{DMS} as incorporating this condition.

Second, in \cite[Definition 2.5(a)]{DMS}, condition \eqref{eq:BVboth} is only assumed for
open sets $A\subset X$.
However, the left-hand side of \eqref{eq:BVboth}, say $\eta_\pi(A)$, defines a finite Borel
measure $\eta_\pi$ on $X$; hence \eqref{eq:BVboth} says that $\eta_\pi(A)\leq K(\pi)\nu(A)$.
In a metric measure space, every Borel measure $\lambda$ is outer regular, i.e., for all
Borel sets $A\subset X$, we have
\begin{equation*}
  \lambda(A)=\inf\{\lambda(G):A\subset G\subset X,\ G\text{ open}\};
\end{equation*}
see \cite[Proposition 3.3.37]{HKST:book}. Then, assuming \eqref{eq:BVboth} for all open
sets, and taking the infimum of both sides over all open sets containing a given Borel set,
we immediately deduce \eqref{eq:BVboth} for all Borel sets.
\end{remark}

\begin{lemma}\label{lem:smallest-nu}
Under the assumptions of Definition \ref{def:BVboth}\eqref{it:exists-nu}, there is a minimal
Borel measure $\nu_0$ that satisfies the same condition.
\end{lemma}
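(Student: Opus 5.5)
The natural candidate for $\nu_0$ is the supremum of the normalized curve-measures. For each $\pi\in\Pi$ with $K(\pi)\in(0,\infty)$, define on Borel sets $A\subset X$
\begin{equation*}
  \eta_\pi(A):=\int_{AC([0,1];X)}\abs{D(f\circ\gamma)}(\gamma^{-1}(A))\,d\pi(\gamma).
\end{equation*}
As noted in Remark \ref{rem:BV*}, $\eta_\pi$ is a Borel measure. It is finite because \eqref{eq:BVboth} gives $\eta_\pi(X)\le K(\pi)\nu(X)$. We then set
\begin{equation*}
  \nu_0(A):=\sup\Big\{\sum_{i}K(\pi_i)^{-1}\eta_{\pi_i}(A_i)\Big\},
\end{equation*}
where the supremum runs over all countable Borel partitions $\{A_i\}$ of $A$ and all choices $\pi_i\in\Pi$. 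This is the standard construction of the least upper bound of a family of measures dominated by a common finite measure. Plans with $K(\pi)=0$ force $\eta_\pi=0$ by \eqref{eq:BVboth}, so they impose no constraint and can be discarded.

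The proof has four steps.

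\begin{enumerate}[\rm(1)]
\item \emph{Domination.} For any admissible $\nu$ and any partition,
\begin{equation*}
  \sum_i K(\pi_i)^{-1}\eta_{\pi_i}(A_i)\le\sum_i\nu(A_i)=\nu(A).
\end{equation*}
Hence $\nu_0\le\nu$, and in particular $\nu_0$ is finite.

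\item \emph{$\nu_0$ is admissible.} Taking the trivial partition $\{A\}$ and a single plan $\pi$ gives $\eta_\pi(A)\le K(\pi)\nu_0(A)$. This is exactly \eqref{eq:BVboth} with $\nu_0$ in place of $\nu$.

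\item \emph{Countable additivity.} This is the main technical step, though it is routine. Superadditivity on disjoint sets $B=\bigcup_j B_j$ follows by combining near-optimal partitions of each $B_j$ into one partition of $B$. Subadditivity follows by intersecting a partition $\{A_i\}$ of $B$ with the $B_j$: each $\eta_{\pi_i}$ is countably additive, so the sum splits over $j$ by Tonelli for nonnegative double series. Finiteness is guaranteed by Step (1), so no $\infty-\infty$ issues arise. It remains to check $\nu_0(\varnothing)=0$, which is immediate.

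\item \emph{Minimality.} Step (1) shows $\nu_0\le\nu$ setwise for every admissible $\nu$. Hence $\nu_0$ is the minimal Borel measure satisfying Definition \ref{def:BVboth}\eqref{it:exists-nu}.
\end{enumerate}

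The only subtle point is the additivity in Step (3). Because $\Pi$ may be uncountable, I would avoid any sequential or monotone-limit argument and rely purely on the partition supremum, which handles uncountable families directly.
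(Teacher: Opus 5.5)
Your proposal is correct and is essentially the paper's proof. The paper likewise normalizes $\mu_\pi:=\eta_\pi/K(\pi)$ (with $0/0:=0$ when $K(\pi)=0$, since then $\eta_\pi\equiv0$), defines $\nu_0(A)$ as the supremum of $\sum_n\mu_{\pi_n}(A_n)$ over countable Borel partitions of $A$ and sequences $\pi_n\in\Pi$, and notes that $\nu_0$ is a finite Borel measure dominated by every admissible $\nu$; your Step (3) just writes out the countable additivity that the paper calls straightforward.
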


\begin{proof}
As in Remark \ref{rem:BV*}, denoting the left-hand side of \eqref{eq:BVboth} by
$\eta_\pi(A)$, this defines a Borel measure $\eta_\pi$, and \eqref{eq:BVboth} says that
$\eta_\pi(A)\leq K(\pi)\nu(A)$ for all Borel sets $A$. If $K(\pi)=0$, this implies that
$\eta_\pi\equiv 0$. Hence, we can define another Borel measure by
$\mu_\pi(A):=\eta_\pi(A)/K(\pi)$, taking $0/0:=0$, and the assumption of Definition
\ref{def:BVboth}\eqref{it:exists-nu} is further equivalent to $\mu_\pi(A)\leq\nu(A)$ for all
Borel sets $A$ and all $\pi\in\Pi$.
Let
\begin{equation*}
  \nu_0(A):=\sup\sum_{n=1}^\infty\mu_{\pi_n}(A_n),
\end{equation*}
where the supremum is taken over all countable Borel partitions $\{A_n\}_{n=1}^\infty$ of
$A$ and
all sequences $\{\pi_n\}_{n=1}^\infty\subset\Pi$. Then it is straightforward to verify that
$\nu_0$ defines a finite Borel measure, and $\nu_0(A)\leq\nu(A)$ for any Borel measure $\nu$
that
satisfies \eqref{eq:BVboth}; cf. \cite[(2.8)]{AILP}. This completes the proof.
\end{proof}

Lemma \ref{lem:smallest-nu} allows us to make the following definition:

\begin{definition}\label{def:BVboth2}
In the setting of the respective part of Definition \ref{def:BVboth}, the smallest measure
$\nu$ that satisfies \eqref{eq:BVboth} is denoted by
\begin{enumerate}[\rm(i)]
  \item $\abs{Df}_w$ for $f\in\dot{\rm BV}_w(\mu)$ and other parameters as in
  Definition~\ref{def:BVboth}\eqref{it:BVw};
  \item $\abs{Df}_*$ for $f\in\dot{\rm BV}_*(\mu)$ and other parameters as in
  Definition~\ref{def:BVboth}\eqref{it:BV*}.
\end{enumerate}
We also denote
\begin{equation*}
  \Norm{f}{\dot{\rm BV}_w(\mu)}
  :=\begin{cases} \abs{Df}_w(X) & f\in \dot{\rm BV}_w(\mu), \\ \infty &
  \text{otherwise},\end{cases}\qquad
  \Norm{f}{\dot{\rm BV}_*(\mu)}
  :=\begin{cases} \abs{Df}_*(X) & f\in \dot{\rm BV}_*(\mu), \\ \infty &
  \text{otherwise}.\end{cases}
\end{equation*}
\end{definition}

Next, we study the relation of these two BV spaces.

\begin{lemma}\label{lem:tv-c}
Let $(X,\rho,\mu)$ be a metric measure space.
If $f\in \dot{\rm BV}_*(\mu)$, then $f\in \dot{\rm BV}_w(\mu)$
with $|Df|_w\le |Df|_*$.
\end{lemma}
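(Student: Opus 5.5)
The plan is to verify the two conditions of Definition~\ref{def:BVboth}\eqref{it:BVw} directly from those of Definition~\ref{def:BVboth}\eqref{it:BV*}, using Lemma~\ref{lem-plans} and Remark~\ref{rem:plans}.

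\emph{Condition \eqref{it:BV(0,1)}.} Since $f\in\dot{\rm BV}_*(\mu)$, there is a bary-negligible set $\Gamma$ such that $f\circ\gamma\in\dot{\rm BV}(0,1)$ for every $\gamma\in AC([0,1];X)\setminus\Gamma$. By Remark~\ref{rem:plans}, every bary-negligible set is $1$-negligible, so the same $\Gamma$ serves for $\dot{\rm BV}_w(\mu)$.

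\emph{Condition \eqref{it:exists-nu}.} Take $\nu:=\abs{Df}_*$, which is finite by assumption, and let $\pi$ be any $\infty$-test plan. By Lemma~\ref{lem-plans}, $\pi$ is a bary-plan whose barycenter satisfies \eqref{bpi}, namely $\Norm{b_\pi}{L^\infty(\mu)}\le C(\pi)\Norm{\gamma\mapsto\operatorname{Lip}(\gamma)}{L^\infty(\pi)}$. The defining inequality \eqref{eq:BVboth} for $\dot{\rm BV}_*$, applied to $\pi$, then gives, for all Borel sets $A\subset X$,
\begin{equation*}
 \int_{AC([0,1];X)}\abs{D(f\circ\gamma)}(\gamma^{-1}(A))\,d\pi(\gamma)
 \le\Norm{b_\pi}{L^\infty(\mu)}\abs{Df}_*(A)
 \le C(\pi)\Norm{\gamma\mapsto\operatorname{Lip}(\gamma)}{L^\infty(\pi)}\abs{Df}_*(A).
\end{equation*}
This is exactly \eqref{eq:BVboth} with the weak-BV constant $K(\pi)$.

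Hence $f\in\dot{\rm BV}_w(\mu)$, and $\abs{Df}_*$ is an admissible measure in Definition~\ref{def:BVboth}\eqref{it:BVw}. The minimality in Lemma~\ref{lem:smallest-nu} and Definition~\ref{def:BVboth2} then yields $\abs{Df}_w\le\abs{Df}_*$ as measures.

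The only point needing care is a measurability issue. The left-hand side integrand must be defined for $\pi$-a.e.\ curve, which requires that the exceptional set $\Gamma$ be $\pi$-null for $\infty$-test plans; this is supplied by Remark~\ref{rem:plans}. Everything else is a direct comparison of constants, so no real obstacle is expected.
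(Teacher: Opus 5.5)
Your proposal is correct and follows the paper's proof essentially verbatim: Remark~\ref{rem:plans} lets the same exceptional set serve, Lemma~\ref{lem-plans} embeds $\infty$-test plans into bary-plans with $\Norm{b_\pi}{L^\infty(\mu)}\le C(\pi)\Norm{\gamma\mapsto\operatorname{Lip}(\gamma)}{L^\infty(\pi)}$, and minimality of $\abs{Df}_w$ then gives $\abs{Df}_w\le\abs{Df}_*$.
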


\begin{proof}
Suppose that $f\in\dot{\rm BV}_*(\mu)$, hence the conditions of Definition
\ref{def:BVboth}\eqref{it:BV*} hold. By Remark \ref{rem:plans}, the bary-negligible set
$\Gamma$ is also $1$-negligible. By Lemma \ref{lem-plans}, the set $\Pi$ of all bary-plans
contains the set $\Pi_\infty$ of all $\infty$-test plans. Moreover, by the same lemma the
constant $K(\pi)=\Norm{b_\pi}{L^\infty(\mu)}$ is dominated by
$C(\pi)\Norm{\gamma\mapsto\operatorname{Lip}(\gamma)}{L^\infty(\pi)}$ for
$\pi\in\Pi_\infty$. Hence also the conditions of Definition \ref{def:BVboth}\eqref{it:BVw}
hold with the same measure $\nu$, and thus $f\in\dot{\rm BV}_w(\mu)$. Taking
$\nu=\abs{Df}_*$, and noting that $\abs{Df}_w$ is the smallest measure that satisfies these
conditions, it follows that $\abs{Df}_w\leq\abs{Df}_*$. This completes the proof of Lemma
\ref{lem:tv-c}.
\end{proof}

To obtain a comparison in the other direction, we need yet another notion of the bounded
variation
from \cite{M:03}. This is defined with the help of (locally) Lipschitz functions.
Given an open set $A\subset X$, for any $u\in\operatorname{Lip}_{\mathrm{loc}}(A)$ and $x\in
A$, let
\begin{equation}\label{eq:lipu}
  \operatorname{lip}u(x)
  :=\liminf_{r\searrow 0}\frac{1}{r}\sup_{y\in B(x,r)}|u(y)-u(x)|
  =\liminf_{r\searrow 0}\frac{1}{r}\sup_{y\in \bar B(x,r)}|u(y)-u(x)|,
\end{equation}
where the identity between the last two forms is elementary.

\begin{remark}\label{rem:lipu}
If $f\in\operatorname{Lip}_{\mathrm{loc}}(X)$,
then $\operatorname{lip} f$ is an upper gradient of $f$;
see, e.g., \cite[Lemma 6.2.6]{HKST:book}.
\end{remark}

We now come to a definition of bounded variation through relaxation from \cite{M:03}.

\begin{definition}\label{def:BV-Mir}
Let $(X,\rho,\mu)$ be a complete doubling metric measure space and $f\in
L_{\mathrm{loc}}^1(\mu)$.
For any open set $A\subset X$, let
\begin{align}\label{eq:Df(A)}
  |Df|(A):=\inf_{\{f_n\}_{n\in\mathbb N}}
  \liminf_{n\to\infty}\int_A\operatorname{lip}f_n(x)\,d\mu(x),
\end{align}
where the infimum is taken over all sequences
$\{f_n\}_{n\in\mathbb N}\subset\operatorname{Lip}_{\mathrm{loc}}(A)$ such that
$f_n\to f$ in $L_{\mathrm{loc}}^1(A)$. For an arbitrary set $E\subset X$, let
$$
|Df|(E):=\inf\left\{|Df|(G): E\subset G\subset X
\ \text{and }G\text{ is open}\right\}.
$$
We say that $f$ has {\em relaxed bounded variation} if $|Df|(X)<\infty$, and define
\begin{align}\label{BV}
\dot{\rm BV}(\mu):=\left\{f\in L_{\mathrm{loc}}^1(\mu):
|f|_{\dot{\rm BV}(\mu)}:=|Df|(X)<\infty\right\}.
\end{align}
By convention, we let $|f|_{\dot{\rm BV}(\mu)}=\infty$ if
$f\notin\dot{\rm BV}(\mu)$. \end{definition}

\begin{remark}\label{rem:BV-Mir}
The set function $|Df|$ defined in Definition \ref{def:BV-Mir} is an outer measure whose
restriction to the Borel
$\sigma$-algebra is a regular Borel measure; see \cite{M:03}. We use the same symbol $|Df|$
for this Borel measure.
\end{remark}

\begin{remark}
Many of the more recent papers, like \cite[Section 5]{AD:14}, deal with the inhomogeneous BV
class with the global integrability assumption $f\in L^1(\mu)$ and take the infimum in
\eqref{eq:Df(A)} over sequences $f_n\to f$ in $L^1(A)$; cf.\ \cite[(5.1)]{AD:14}. However,
the original definition of \cite[(7)]{M:03} allows $f\in L^1_{\mathrm{loc}}(\mu)$ and takes
the infimum over the same sequences as in \eqref{eq:Df(A)}.
\end{remark}

We now record a useful localization property that we will need. It is the BV analogue of
Proposition \ref{prop:truncW1p}.

\begin{proposition}\label{p-local}
Let $(X,\rho,\mu)$ be a complete doubling metric measure space,
$\{X_n\}_{n\in\mathbb N}$ be an increasing sequence of closed bounded subsets
of $X$ with the bounded exhaustion property (Definition \ref{def:bdExhaustion}),
and let $\mu_n:=\mu|_{X_n}$. For
$f\in L_{\mathrm{loc}}^1(\mu)$, let $f^n:=f|_{X_n}$. Then, for every
$n_0\in\mathbb N$,
\begin{equation}
|Df|(X)=\sup_{n\ge n_0}|Df^n|(X_n),
\end{equation}
where both sides are allowed to be infinite.
\end{proposition}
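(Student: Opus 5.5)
We prove the two inequalities $\sup_{n\ge n_0}|Df^n|(X_n)\le|Df|(X)$ and $|Df|(X)\le\sup_{n\ge n_0}|Df^n|(X_n)$ separately. Here $|Df^n|$ denotes the relaxed variation of Definition~\ref{def:BV-Mir} computed in the metric measure space $(X_n,\rho|_{X_n},\mu_n)$. This space is complete, and we assume it is doubling when it comes from Proposition~\ref{prop:bdSubsets}. The key elementary fact used throughout is that $\operatorname{lip}$ can only decrease under restriction. If $u\in\operatorname{Lip}_{\mathrm{loc}}(X)$, then $u|_{X_n}$ is locally Lipschitz on $X_n$, and at $x\in X_n$ the supremum over $B(x,r)\cap X_n$ is at most the supremum over $B(x,r)$. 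Taking the $\liminf$ in \eqref{eq:lipu} gives $\operatorname{lip}_{X_n}(u|_{X_n})\le\operatorname{lip}u$ on $X_n$.

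\emph{Lower bound.} We show $|Df^n|(X_n)\le|Df|(X)$ for every $n$. We may assume $|Df|(X)<\infty$. Take $f_k\in\operatorname{Lip}_{\mathrm{loc}}(X)$ with $f_k\to f$ in $L^1_{\mathrm{loc}}(X)$ and $\liminf_k\int_X\operatorname{lip}f_k\,d\mu\le|Df|(X)+\varepsilon$. The restrictions $f_k|_{X_n}$ converge to $f^n$ in $L^1_{\mathrm{loc}}(X_n)$, in fact in $L^1(X_n)$, since $X_n$ is bounded. By the restriction fact,
\begin{equation*}
 \int_{X_n}\operatorname{lip}_{X_n}(f_k|_{X_n})\,d\mu\le\int_X\operatorname{lip}f_k\,d\mu .
\end{equation*}
Taking $\liminf_k$ and then letting $\varepsilon\to0$ gives the claim. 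Taking the supremum over $n\ge n_0$ yields the first inequality.

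\emph{Upper bound.} We may assume $S:=\sup_{n\ge n_0}|Df^n|(X_n)<\infty$. We use the regular Borel measure $|Df|$ of Remark~\ref{rem:BV-Mir}, together with the interior of $X_n$. Fix a ball $B=B(x_0,R)$. By bounded exhaustion, $B(x_0,2R)\subset X_n$ for large $n$, so $B\subset\operatorname{int}X_n$ with a margin. The plan is to show
\begin{equation*}
 |Df|(B)\le|Df^n|(\operatorname{int}X_n)\le|Df^n|(X_n)\le S .
\end{equation*}
For this, take $g_k\in\operatorname{Lip}_{\mathrm{loc}}(X_n)$ with $g_k\to f^n$ in $L^1(X_n)$ and $\liminf_k\int_{X_n}\operatorname{lip}_{X_n}g_k\,d\mu\le|Df^n|(X_n)+\varepsilon$. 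On the open set $B\subset X$, the functions $g_k|_B$ are locally Lipschitz and converge to $f$ in $L^1_{\mathrm{loc}}(B)$. At a point $x\in B$, small balls in $X$ and in $X_n$ coincide, so $\operatorname{lip}_B g_k=\operatorname{lip}_{X_n}g_k$ there. Definition~\ref{def:BV-Mir} applied to the open set $A=B$ then gives $|Df|(B)\le\liminf_k\int_B\operatorname{lip}g_k\,d\mu\le|Df^n|(X_n)+\varepsilon$. Consequently $|Df|(B(x_0,R))\le S$ for every $R$. Since the Borel measure $|Df|$ is continuous along the increasing union $X=\bigcup_R B(x_0,R)$, we get $|Df|(X)\le S$.

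\emph{Main obstacle.} The delicate point is the compatibility between the relaxation on the open set $B\subset X$ and the relaxation on the closed subspace $X_n$. Two things must be checked. First, $|Df|$ evaluated on an open set must agree with the definition of \eqref{eq:Df(A)} on that set. Second, the measure property of Remark~\ref{rem:BV-Mir} must be available before finiteness of $|Df|(X)$ is known, in order to pass from balls to $X$. If Miranda's measure property in \cite{M:03} is stated only for finite total variation, I would avoid it as follows. I would glue near-optimal sequences on the balls $B(x_0,R_j)$ with Lipschitz cutoffs, using a standard partition-of-unity argument as in the proof of inner regularity in \cite{M:03}. This gives $|Df|(X)\le\sup_j|Df|(B(x_0,R_j))$ directly. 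In that argument, the cross term involving $|g_k-g_{k'}|\operatorname{lip}\chi$ tends to $0$ by the $L^1_{\mathrm{loc}}$ convergence.
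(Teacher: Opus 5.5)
Your proposal is correct and follows essentially the same route as the paper. For the inequality $\sup_{n\ge n_0}|Df^n|(X_n)\le|Df|(X)$, you restrict a global recovery sequence to $X_n$ and use that $\operatorname{lip}$ can only decrease under restriction. For the reverse inequality, you compare $|Df|(B(x_0,R))$ with $|Df^n|(X_n)$ once $B(x_0,R)\subset X_n$, and then use continuity from below of the measure $|Df|$. The paper takes this last step by invoking Remark~\ref{rem:BV-Mir} without further comment, so your fallback gluing argument is an extra safeguard rather than a needed step.
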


\begin{proof}
  Fix $n_0\in\mathbb N$.
We first prove
$$
\sup_{n\ge n_0}|Df^n|(X_n)\le |Df|(X).
$$
There is nothing to prove if $|Df|(X)=\infty$. Otherwise, given
$\varepsilon\in(0,\infty)$, choose
$\{h_k\}_{k\in\mathbb N}\subset\operatorname{Lip}_{\mathrm{loc}}(X)$ such that
$h_k\to f$ in $L_{\mathrm{loc}}^1(\mu)$ and
$$
\liminf_{k\to\infty}\int_X\operatorname{lip}_X h_k\,d\mu
\le |Df|(X)+\varepsilon.
$$
Since a complete doubling metric space is proper, each $X_n$ is compact.
Thus, $h_k|_{X_n}\to f^n$ in $L^1(\mu_n)$ and
$$
\operatorname{lip}(h_k|_{X_n})
\le \operatorname{lip} h_k
\quad\text{on }X_n.
$$
It follows that
$$
|Df^n|(X_n)
\le |Df|(X)+\varepsilon
$$
for every $n\ge n_0$. Letting $\varepsilon\searrow 0$ proves the first
inequality.

For the converse, let
$$
M:=\sup_{n\ge n_0}|Df^n|(X_n)
$$
and assume that $M<\infty$. Fix $x_0\in X$ and $m\in\mathbb N$. By the
bounded exhaustion property, there exists $N=N(m)\ge n_0$ such that
$B(x_0,m)\subset X_N$. Hence
$$
|Df|(B(x_0,m))
=
|Df^N|(B(x_0,m))
\le |Df^N|(X_N)
\le M.
$$
Letting $B(x_0,m)\nearrow X$ gives
$$
|Df|(X)=\lim_{m\to\infty}|Df|(B(x_0,m))\le M.
$$
This proves the reverse inequality and completes the proof.
\end{proof}

Proposition \ref{p-local} and the
known inhomogeneous equivalence
yield the following conclusion.

\begin{lemma}\label{lem:tv-w-lower}
Let $(X,\rho,\mu)$ be a complete doubling metric measure space. Then, for
every $f\in L_{\mathrm{loc}}^1(\mu)$,
$$
|Df|(X)\le |Df|_w(X).
$$
\end{lemma}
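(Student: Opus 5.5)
We may assume $|Df|_w(X)<\infty$, so $f\in\dot{\rm BV}_w(\mu)$; otherwise there is nothing to prove. The plan is to localize with Proposition \ref{p-local} and, on each bounded piece, invoke the known inhomogeneous equivalence between the relaxed total variation and the test-plan total variation from \cite[Section 5]{AD:14} (valid for $L^1$ functions on complete separable spaces with finite measure).

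First, take $X_n:=\overline{B(x_0,n)}$, or the closed cubes of Proposition \ref{prop:bdSubsets}. These are compact, since $X$ is proper, and $\mu_n:=\mu|_{X_n}$ is finite. Put $f^n:=f|_{X_n}\in L^1(\mu_n)$, which holds because $f\in L^1_{\mathrm{loc}}(\mu)$. By Proposition \ref{p-local},
\begin{equation*}
|Df|(X)=\sup_{n}|Df^n|(X_n).
\end{equation*}
It therefore suffices to show that $|Df^n|(X_n)\le |Df|_w(X)$ for each $n$.

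Second, we show that $f^n\in\dot{\rm BV}_w(\mu_n)$ with $|Df^n|_w\le |Df|_w|_{X_n}$.

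Condition (i) of Definition \ref{def:BVboth}: let $\Gamma$ be the $1$-negligible exceptional set for $f$ in $X$. Then $\Gamma\cap C([0,1];X_n)$ is $1$-negligible in $X_n$, by the argument of Lemma \ref{lem:negligible} with $q=\infty$. That lemma is stated for $p\in[1,\infty]$ and its proof handles $q=\infty$ verbatim. Since $f^n\circ\gamma=f\circ\gamma$ for curves in $X_n$, condition (i) holds for $f^n$.

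Condition (ii): let $\pi$ be an $\infty$-test plan on $X_n$. Its extension $\hat\pi$ to $C([0,1];X)$, defined as in the second half of the proof of Lemma \ref{lem:negligible}, is an $\infty$-test plan on $X$ with the same compression constant and the same Lipschitz bound. For Borel $A\subset X_n$, applying \eqref{eq:BVboth} for $f$ to $\hat\pi$ gives
\begin{equation*}
\int |D(f^n\circ\gamma)|(\gamma^{-1}(A))\,d\pi(\gamma)\le K(\pi)\,|Df|_w(A).
\end{equation*}
Hence $\nu:=|Df|_w|_{X_n}$ is admissible, and minimality (Lemma \ref{lem:smallest-nu}) yields $|Df^n|_w(X_n)\le|Df|_w(X_n)\le|Df|_w(X)$.

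Third, on the compact space $(X_n,\rho,\mu_n)$ with $f^n\in L^1(\mu_n)$, apply \cite[Theorem 1.1 / Section 5.3]{AD:14}. This gives $|Df^n|(X_n)=|Df^n|_w(X_n)$ whenever either side is finite, and in particular $|Df^n|(X_n)\le|Df^n|_w(X_n)$.

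One check is needed for this step. Ambrosio--Di Marino define the relaxation with $L^1(X_n)$ convergence, whereas \eqref{eq:Df(A)} uses $L^1_{\mathrm{loc}}$ convergence and locally Lipschitz approximants. On compact $X_n$, $L^1_{\mathrm{loc}}(X_n)$ convergence coincides with $L^1$ convergence. The Lipschitz versus locally Lipschitz discrepancy, and the use of $\operatorname{lip}$ rather than the asymptotic slope, are known to give the same relaxation on doubling spaces. Should this require care, the fallback is to note that locally Lipschitz functions on compact sets are Lipschitz, and to rely on the equality of relaxations with $\operatorname{lip}$ and with the asymptotic Lipschitz constant from \cite{AD:14}.

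Combining the three steps gives $|Df|(X)\le|Df|_w(X)$.

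The main obstacle is the third step: matching the precise hypotheses and definitions of the inhomogeneous equivalence in \cite{AD:14} (compact or finite-measure setting, slope used in the relaxation, and the boundary condition \eqref{eq:s9}) with Definition \ref{def:BV-Mir} restricted to $X_n$. The localization and restriction of plans are routine.
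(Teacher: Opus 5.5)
Your proposal is correct and follows the paper's proof essentially step for step. You localize via Proposition~\ref{p-local}, observe that $\infty$-test plans on $X_n$ extend to $\infty$-test plans on $X$ with the same compression and Lipschitz constants (so $|Df^n|_w(X_n)\le|Df|_w(X_n)\le|Df|_w(X)$), and apply \cite[Theorem 1.1]{AD:14} on the finite-measure space $(X_n,\rho|_{X_n},\mu_n)$.

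The definitional mismatch you flag in the third step is harmless. Indeed, $\operatorname{lip}$ is pointwise dominated by the slope used in \cite{AD:14}, and $L^1$ convergence implies $L^1_{\mathrm{loc}}$ convergence. So the relaxation of Definition~\ref{def:BV-Mir} is at most the one in \cite{AD:14}, which is exactly the direction $|Df^n|(X_n)\le|Df^n|_w(X_n)$ that you need.
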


\begin{proof}
It suffices to consider $f\in\dot{\rm BV}_w(\mu)$. Let
$\{X_n\}_{n\in\mathbb N}$ and $\mu_n$ be as in Proposition
\ref{p-local}, and let $f^n:=f|_{X_n}$. Since $X_n$ is bounded,
$f^n\in L^1(\mu_n)$. Moreover, every $\infty$-test plan on
$(X_n,\rho|_{X_n},\mu_n)$ is an $\infty$-test plan on $(X,\rho,\mu)$ with
the same bounded compression constant as in \eqref{it:C(pi)}. Therefore,
\begin{equation}\label{eq:weak-restriction}
|Df^n|_{w}(X_n)
\le |Df|_{w}(X_n)
\le |Df|_{w}(X).
\end{equation}
The space $X_n$ is complete and separable, $\mu_n$ is finite and $f^n\in L^1(\mu_n)$, and
hence
\cite[Theorem 1.1]{AD:14} gives
$$
|Df^n|(X_n)=|Df^n|_{w}(X_n).
$$
Applying Proposition \ref{p-local} and then \eqref{eq:weak-restriction}, we
obtain
\begin{align*}
|Df|(X)
=\sup_{n\in\mathbb N}|Df^n|(X_n)
=\sup_{n\in\mathbb N}|Df^n|_{w}(X_n)
\le |Df|_{w}(X).
\end{align*}
This completes the proof of Lemma \ref{lem:tv-w-lower}.
\end{proof}

The inhomogeneous equivalence between the relaxed and weak definitions of
BV is due to Ambrosio and Di Marino \cite[Theorem 1.1]{AD:14}.  The
bary-plan formulation used here is taken from Di Marino and Squassina
\cite[Definition 2.5]{DMS}.  The corresponding equivalence for the
homogeneous spaces considered in this paper is recorded next.

\begin{proposition}\label{p-eq}
Let $(X,\rho,\mu)$ be a complete doubling metric measure space. Then
$$
\dot{\rm BV}_w(\mu)=\dot{\rm BV}(\mu)=\dot{\rm BV}_*(\mu)
$$
and $|Df|_w(X)=|Df|(X)=|Df|_*(X)$ for every $f\in L_{\mathrm{loc}}^1(\mu)$.
\end{proposition}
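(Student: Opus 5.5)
We already have $|Df|(X)\le |Df|_w(X)$ from Lemma \ref{lem:tv-w-lower}, and $|Df|_w\le|Df|_*$ from Lemma \ref{lem:tv-c}; in particular $\dot{\rm BV}_*(\mu)\subset\dot{\rm BV}_w(\mu)\subset\dot{\rm BV}(\mu)$. So the plan is to close the chain by proving
\[
|Df|_*(X)\le |Df|(X)
\]
for every $f\in L^1_{\mathrm{loc}}(\mu)$ with $|Df|(X)<\infty$. Once this holds, all three quantities coincide, which also gives the equality of the spaces.

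\textbf{Step 1: localize.} Take the closed bounded sets $X_n$ from Proposition \ref{prop:bdSubsets} (or simply $\overline{B(x_0,n)}$). Put $f^n:=f|_{X_n}\in L^1(\mu_n)$. Each $X_n$ is compact, hence complete and separable, and $\mu_n$ is finite. In this inhomogeneous setting we invoke the known equivalence between relaxed BV and the bary-plan BV: \cite[Theorem 1.1]{AD:14} combined with \cite{DMS}, in the form used there for $b_\pi\in L^\infty$. This yields $f^n\in\dot{\rm BV}_*(\mu_n)$ and
\[
|Df^n|_*\le|Df^n| \quad\text{as measures on } X_n.
\]
By Proposition \ref{p-local}, $|Df^n|(X_n)\le|Df|(X)$. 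If the cited equivalence only supplies the weak-plan version, the fallback is to follow the \cite{AD:14} argument directly with bary-plans. A bary-plan $\pi$ gives $\int_X|D(f\circ\gamma)|$-type bounds via the barycenter. Approximate $f^n$ by locally Lipschitz $h_k$ with $\int\operatorname{lip}h_k\to|Df^n|$. Since $\operatorname{lip}h_k$ is an upper gradient (Remark \ref{rem:lipu}), we get
\[
\int|D(h_k\circ\gamma)|(\gamma^{-1}(A))\,d\pi\le\int_A\operatorname{lip}h_k\,b_\pi\,d\mu .
\]
Then pass to the limit using lower semicontinuity of the total variation on curves and Fubini along the plan.

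\textbf{Step 2: glue.} This is the step I expect to be the main obstacle; it is the BV analogue of Proposition \ref{prop:gluing-ug}. We need one bary-negligible set $\Gamma$ and one measure $\nu$ on $X$ that work globally.

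For curves, let $\Gamma_n\subset C([0,1];X_n)$ be the exceptional sets for $f^n$. Show each $\Gamma_n$ is bary-negligible in $X$ by the restriction/normalization argument of Lemma \ref{lem:negligible}, adapted to barycenters. Restricting a bary-plan $\pi$ to curves lying in $X_n$ and renormalizing gives a bary-plan on $X_n$. The point to check here is that the barycenter of the restriction is still bounded by $\|b_\pi\|_\infty/\pi_0$: it is dominated because the integrand $\int_\gamma g$ is nonnegative for $g\ge0$. Then set $\Gamma=\bigcup_n\Gamma_n$. Every absolutely continuous curve has bounded image, so it lies in some $X_n$, which gives condition \eqref{it:BV(0,1)} of Definition \ref{def:BVboth}.

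For the measure, take $\nu:=|Df|$, which is a finite Borel measure by Remark \ref{rem:BV-Mir}. Given a bary-plan $\pi$ on $X$ and a Borel set $A$, split $\pi=\sum_n\pi|_{G_n}$, where $G_n$ is the set of curves lying in $X_n$ but not in $X_{n-1}$. Applying the local estimate to each piece gives
\[
\int|D(f\circ\gamma)|(\gamma^{-1}(A))\,d\pi|_{G_n}\le\|b_{\pi|_{G_n}}\|_\infty\,|Df^n|(A\cap X_n).
\]
Summing these pieces naively loses control of the constant, since the barycenters add up. To avoid this, use instead that $|Df^n|\le|Df|$ on open subsets of the interior of $X_n$; this follows from the proof of Proposition \ref{p-local}. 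Write the left-hand side as the measure $\eta_\pi(A)$ from Remark \ref{rem:BV*}, and note that
\[
\eta_\pi(A)=\lim_n\eta_{\pi|_{\{\gamma\subset X_n\}}}(A).
\]
Each term on the right is at most $\|b_\pi\|_\infty|Df|(A)$, because the restricted plan's barycenter is at most $b_\pi$. This proves \eqref{eq:BVboth} with $K(\pi)=\|b_\pi\|_\infty$. Hence $f\in\dot{\rm BV}_*(\mu)$ with $|Df|_*\le|Df|$, and the chain of inequalities closes.
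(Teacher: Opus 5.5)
Your reduction to the single inequality $|Df|_*(X)\le|Df|(X)$ is correct, and your Step~2 is essentially sound. In fact $|Df^n|(A\cap X_n)\le|Df|(A)$ holds for every Borel $A$, because restricting a recovery sequence on an open $G\subset X$ to $G\cap X_n$ can only decrease $\operatorname{lip}$; so you do not need to restrict to the interior of $X_n$. The gap is in Step~1, which carries all the content. \cite[Theorem 1.1]{AD:14} identifies the relaxed variation with $|Df^n|_w$, which is defined through $\infty$-test plans. Bary-plans form a strictly larger class: they need not have $\gamma\mapsto\operatorname{Lip}(\gamma)$ in $L^\infty(\pi)$, cf.\ Example~\ref{ex-plans}. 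They are also tested with the constant $\|b_\pi\|_{L^\infty(\mu)}$, which by Lemma~\ref{lem-plans} is at most $C(\pi)\|\gamma\mapsto\operatorname{Lip}(\gamma)\|_{L^\infty(\pi)}$. So that theorem does not give $|Df^n|_*\le|Df^n|$, and the paper proves this comparison itself in Lemma~\ref{lem:tv-star} rather than citing it. Your fallback sketch (chain rule for $h_k\circ\gamma$, barycenter identity, lower semicontinuity of the one-dimensional variation along $\pi$-a.e.\ curve) reproduces only the first half of that lemma. It yields \eqref{eq:BVboth} and the finiteness of $|D(f\circ\gamma)|(0,1)$ for bary-a.e.\ $\gamma$.

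What you never address is the boundary regularity \eqref{eq:s9} built into Definition~\ref{def:BVunit}. Condition \eqref{it:BV(0,1)} of Definition~\ref{def:BVboth} requires $|f(\gamma(1))-f(\gamma(0))|\le|D(f\circ\gamma)|(0,1)$ for bary-a.e.\ $\gamma$. This concerns the pointwise values of $f$ at the endpoints, and it follows neither from a variation bound nor from lower semicontinuity. Without it, $f\notin\dot{\rm BV}_*(\mu)$ and $|Df|_*(X)=\infty$ by convention, so your chain does not close. The paper obtains this condition from Lemmas~\ref{lem:ADM5.8} and~\ref{lem:ADM5.8aux}, taking a global recovery sequence $h_n$ with upper gradients $\operatorname{lip}h_n$ (Remark~\ref{rem:lipu}):
\begin{itemize}
\item bounded compression transfers estimates from $h_n$ to $f$;
\item the barycenter, not a Lipschitz bound (which a bary-plan lacks), makes $\int_\gamma\mathbf{1}_U\operatorname{lip}h_n$ finite for $\pi$-a.e.\ $\gamma$;
\item Fatou's lemma then shows that $0$ and $1$ are weak ($\liminf$) Lebesgue points of $f\circ\gamma$;
\item a one-dimensional mollifier argument turns this into \eqref{eq:s9}.
\end{itemize}
Simply ``following \cite{AD:14}'' does not supply this step. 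Their Lemma~5.8 uses $\infty$-test plans, and its proof only yields the $\liminf$ version (Remark~\ref{rem:ADM5.8aux}). Once the argument is run directly, localization is also superfluous. The paper works on $X$ itself, with a recovery sequence in $\operatorname{Lip}_{\mathrm{loc}}(A)$ for each open $A$ and bounded open sets $U$ with $\overline U\subset A$. So the gluing step you expected to be the main obstacle is not needed at all.
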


The proof of Proposition \ref{p-eq} is given in Appendix \ref{app:homogeneous-BV-equivalence}.

We emphasize that the complete equivalence in Proposition \ref{p-eq} is
not needed in the applications below.  Those applications only use the
one-sided comparison
\begin{equation}\label{eq:BV-one-sided-comparison}
  |Df|(X)\leq |Df|_w(X)\leq |Df|_*(X),
\end{equation}
which follows directly from Lemmas \ref{lem:tv-w-lower} and
\ref{lem:tv-c}.  For completeness, Appendix
\ref{app:homogeneous-BV-equivalence} proves the reverse comparison and
hence the full homogeneous equivalence.

\section{Frank-type characterizations}\label{s:Frank}

We now come to the main part of the paper dealing with several new characterizations of the
Sobolev and BV spaces that we have presented in Section \ref{sec:preliminaries}.
We begin by setting up some notation. For a function $f\in L^1_{\mathrm{loc}}(\mu)$ [recall
\eqref{eLoc}] and a ball $B=B(x,t)$ of center $x\in X$ and radius $t\in(0,\infty)$, we
define the \emph{mean oscillation}
\begin{equation*}
  m_f(B):=m_f(x,t):=\fint_{B}\abs{f-\ave{f}_B}.
\end{equation*}
Slightly adapting the notation of \cite{DLYYZ}, we define the \emph{lifted maximal operator}
\begin{equation}\label{eq:Msharp(x,t)}
  \mathcal M^{\#}f(x,t)
  :=\sup_{\gfz{B\owns x}{r_B\in[t,2t)}}\frac{m_f(B)}{r_B}, \quad x\in X\ \text{and}\
  t\in(0,\infty).
\end{equation}
From this single definition, we infer the two variants considered in \cite{DLYYZ}: for
any $\theta\in(0,\infty)$, $x,y\in X$, and $t\in(0,\infty)$,
\begin{equation*}
\begin{split}
  \mathcal M^{\#}_\theta f(x,t)
  &:=\mathcal M^{\#}f(x,\theta t),\\
  f_\theta^*(x,y)
  &:=\mathcal M^{\#}_\theta f(x,\rho(x,y))
  =\mathcal M^{\#}f(x,\theta\rho(x,y)).
\end{split}
\end{equation*}
Strictly speaking, \cite{DLYYZ} defines $f_\theta^*$ with a closed interval $[t,2t]$ in
place of $[t,2t)$ in \eqref{eq:Msharp(x,t)}, but this distinction is not essential.

Now we are ready for our first new characterization of the Sobolev space. We state it in
terms of $\dot M^{1,p}(\mu)$, since this space very naturally comes up in the proof, but
recall that it coincides with $\dot W^{1,p}(\mu)$ under the assumptions below, by
Proposition \ref{prop:MWE}.

\begin{theorem}\label{thm:Frank-type}
Let $p\in(1,\infty)$, $\gamma\in\mathbb R\setminus\{0\}$, and $(X,\rho,\mu)$ be a complete
and doubling $p$-Poincar\'e space. If $f\in L^1_{\mathrm{loc}}(\mu)$, then
\begin{subequations}\label{eq:Frank-type}
\begin{align}
  \Norm{f}{\dot M^{1,p}(\mu)}
  &\lesssim\Norm{t^{-1-\gamma}m_f}{L^{p,\infty}(t^{\gamma p-1}dt\,d\mu)}
    \label{eq:Frank-lower} \\
  &\lesssim\sup_{\theta\in(0,1]}
    \Norm{t^{-\gamma}\mathcal M_\theta^{\#}f}{L^{p,\infty}(t^{\gamma p-1}dt\,d\mu)}
   \lesssim\Norm{f}{\dot M^{1,p}(\mu)}.
   \label{eq:Frank-upper}
\end{align}
\end{subequations}
In particular, $f\in\dot M^{1,p}(\mu)$ if and only if either one of the quantities in the
middle of \eqref{eq:Frank-type} is finite.
\end{theorem}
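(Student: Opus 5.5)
The chain \eqref{eq:Frank-type} consists of three inequalities, which I would prove in order of increasing difficulty.

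\textbf{The middle inequality.} Fix $(x,t)$ and apply the definition \eqref{eq:Msharp(x,t)} with $\theta=\tfrac12$. The ball $B(x,t)$ contains $x$ and has radius $t\in[\tfrac t2,t)$. Hence
\[
m_f(x,t)\le t\,\mathcal M^\#f(x,t/2)=t\,\mathcal M^\#_{1/2}f(x,t).
\]
It follows that $t^{-1-\gamma}m_f\le t^{-\gamma}\mathcal M^\#_{1/2}f$ pointwise, and the middle estimate is immediate.

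\textbf{The rightmost inequality.} Take a Haj\l{}asz upper gradient $h\in L^p(\mu)$ of $f$. For a ball $B\ni x$ with $r_B\in[\theta t,2\theta t)$, we have
\[
m_f(B)\lesssim r_B\fint_B h\lesssim r_B\, Mh(x),
\]
so $t^{-\gamma}\mathcal M^\#_\theta f(x,t)\lesssim t^{-\gamma}\min\{Mh(x),\,2t^{-1}\cdots\}$. A bare $Mh$ bound alone is not enough, because the weak norm in $t$ must be finite. I would instead follow the lifted sharp maximal bounds of \cite{DLYYZ}, which apply to general $f$ once one uses the Haj\l{}asz pointwise inequality rather than Lipschitz regularity. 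The key pointwise bound is
\[
\mathcal M^\#_\theta f(x,t)\lesssim Mh(x).
\]
With it, for $\gamma>0$ we compute
\[
\big|\{(x,t):t^{-\gamma}Mh(x)>\lambda\}\big|_{t^{\gamma p-1}dt}
=\int\int_0^{(Mh/\lambda)^{1/\gamma}}t^{\gamma p-1}\,dt\,d\mu
\sim\lambda^{-p}\|Mh\|_p^p.
\]
For $\gamma<0$ the same computation works with the region $t>(\lambda/Mh)^{1/|\gamma|}$, where $t^{\gamma p-1}$ is integrable at infinity. Hardy--Littlewood then gives the bound $\lesssim\|h\|_p$ uniformly in $\theta$, and taking the infimum over $h$ finishes this step. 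So this step is elementary once the pointwise bound is in place.

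\textbf{The leftmost inequality (main obstacle).} I would follow the strategy of \cite{HK:W1p} (the case $\gamma=-1$), adapted to general $\gamma$.

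First, I would use a chaining/telescoping argument with the Poincar\'e-free estimate
\[
|f(x)-f(y)|\lesssim\sum_k m_f(x,2^{-k}r)+\sum_k m_f(y,2^{-k}r)+m_f(x,4r),\qquad r=\rho(x,y),
\]
valid at Lebesgue points. This produces a Haj\l{}asz-type gradient built from a dyadic maximal oscillation.

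Second, the weak-type hypothesis must be converted into strong $L^p$ control of a Haj\l{}asz gradient. The key device, as in Frank and Hyt\"onen--Korte, is a \emph{good-$\lambda$/level set argument}: set
\[
g(x):=\sup_t\frac{m_f(x,t)}{t}
\]
and show $\|g\|_p$ is controlled by the weak norm. The trick is that for fixed $x$ with $g(x)>\lambda$, the Poincar\'e inequality together with doubling show that $m_f(x,s)/s\gtrsim\lambda$ for all $s$ in a multiplicative range $[t,ct]$. I would obtain this range by comparing $m_f$ at comparable radii, using $m_f(x,s)\lesssim m_f(x,2s)$ via doubling, so that $s^{-1}m_f\gtrsim 2^{-1}\cdots$ holds on a $t$-interval of fixed ratio. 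That interval carries weighted measure $\sim t^{\gamma p}$ in $t^{\gamma p-1}dt$, which matches $(t^{-\gamma})^{-p}$, so the level sets of $t^{-1-\gamma}m_f$ at height $\lambda t^{-\gamma}$ capture $\{g>\lambda\}$. This is delicate because the height $\lambda t^{-\gamma}$ varies with $t$.

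I expect this to be the crux. My first attempt would be to dyadically decompose $t$, estimate $\mu\{g_k>\lambda\}$ for the scale-restricted maxima $g_k$, and sum using the $\gamma\ne0$ geometric decay. This should produce
\[
\|g\|_{L^{p,\infty}}\lesssim\text{weak norm}.
\]
Third, I would upgrade weak $L^p$ to strong $L^p$. Following \cite{HK:W1p}, apply this at the level of the self-improved Poincar\'e exponent $q<p$ (via \cite{KZ:08}) and use Marcinkiewicz interpolation to pass to strong $L^p$. Alternatively, I could localize to balls and exploit that weak-type control of the oscillation on all sub-balls yields strong control. I will take the former route: if it fails, fall back to proving the lower bound for truncations $f_N$ and bounded sets via Proposition \ref{prop:truncW1p}.
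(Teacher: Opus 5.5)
Your middle and right-hand inequalities follow the paper's argument, up to two slips. First, with the half-open range $r_B\in[t,2t)$ in \eqref{eq:Msharp(x,t)}, the ball $B(x,t)$ is admissible for $\theta=1$, not $\theta=\frac12$, since its radius $t$ does not lie in $[\frac t2,t)$. Second, the bare bound $\mathcal M^\#_\theta f\lesssim Mh$ from Lemma~\ref{lem:Msharpf<Mh} \emph{is} sufficient, as your own $t$-integration shows.

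The genuine gap is the left-hand inequality. With $g_k(x):=\sup_{t\in[2^k,2^{k+1})}m_f(x,t)/t$, your level-set argument actually yields only
\[
  \mu(\{g_k>\lambda\})\lesssim\lambda^{-p}\Norm{t^{-1-\gamma}m_f}{L^{p,\infty}(t^{\gamma p-1}dt\,d\mu)}^p
  \qquad\text{uniformly in }k\in\mathbb Z .
\]
The height factor $2^{-k\gamma p}$ and the $t^{\gamma p-1}dt$-length $2^{k\gamma p}$ of the relevant $t$-interval cancel exactly, for either sign of $\gamma$. So $\gamma\neq0$ gives no geometric decay, and $\sum_{k\in\mathbb Z}$ diverges.

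This cannot be fixed inside your scheme, because it uses only doubling: the comparison $m_f(x,s)\lesssim m_f(x,2s)$ has nothing to do with Poincar\'e. Without a Poincar\'e inequality the desired bound is false. On the Cantor set with its Hausdorff measure, place oscillations $\pm3^{-k}$ at $N$ scales $3^{-k}$ on sparse, well-separated, pairwise disjoint unions $E_k$ of generation-$k$ cubes, each with $\mu(E_k)=\varepsilon$. Then $\sup_t m_f(\cdot,t)/t\ge1$ on $\bigcup_k E_k$, so $\Norm{f}{\dot M^{1,p}(\mu)}^p\gtrsim N\varepsilon$ by Lemma~\ref{lem:Msharpf<Mh}. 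One checks, however, that the weak-type quantity is $\lesssim\varepsilon^{1/p}$, since at each level essentially only one scale contributes. The Poincar\'e inequality must therefore enter through the infinitesimal behaviour of $m_f(x,t)/t$, and your plan has no mechanism for that. Likewise, your weak-to-strong upgrade by Marcinkiewicz has no sublinear operator between Lebesgue spaces to interpolate.

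The missing idea, which the paper takes from \cite{HK:W1p}, is to pass to the locally Lipschitz approximants $\Phi_sf$.
\begin{enumerate}[\rm(i)]
\item For these, the a priori estimate $\Norm{\operatorname{lip}(\Phi_sf)}{L^p(\mu)}\lesssim\Norm{t^{-1-\gamma}m_{\Phi_sf}}{L^{p,\infty}(t^{\gamma p-1}dt\,d\mu)}$ of \cite{DLYYZ} is available. This is where the Poincar\'e inequality and the small-scale analysis of \cite[Proposition 6.3]{HK:W1p} are used.
\item One has the pointwise bound $m_{\Phi_sf}(x,t)\lesssim\mathcal A_{s,t}(m_f(\cdot,t))(x)$, where $\mathcal A_{s,t}$ is an $L^q$-average over $B(x,C\max\{s,t\})$ and $q<p$ comes from \cite{KZ:08}.
\item Marcinkiewicz interpolation is applied to the averaging operator $\mathcal A_s$, which is bounded on $L^u(t^{\gamma p-1}dt\,d\mu)$ for $u\ge q$. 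Together with (i) and (ii), this gives $\Norm{\operatorname{lip}(\Phi_sf)}{L^p(\mu)}\lesssim K$ uniformly in $s$, where $K:=\Norm{t^{-1-\gamma}m_f}{L^{p,\infty}(t^{\gamma p-1}dt\,d\mu)}$.
\item Finally, a weak limit of $\operatorname{lip}(\Phi_{s_j}f)^q$ in $L^{p/q}(\mu)$ produces the Haj\l{}asz gradient $cM_qg$ of $f$ (Lemma~\ref{lem:lip(Phis)<K}).
\end{enumerate}
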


Except for the intermediate step involving $\mathcal M_{\theta}^{\#}$, case $\gamma=-1$ is
due to \cite{Frank:W1p} for $X=\mathbb R^d$, and due to \cite{HK:W1p} for the same class of
spaces as in Theorem \ref{thm:Frank-type}. With $\mathcal M_{\theta}^{\#}$ and for other
values of $\gamma$, the norm equivalence \eqref{eq:Frank-type} is due to \cite[Theorem 1.12
and Corollary 1.13]{DLYYZ} with $\Norm{\operatorname{lip}f}{L^p(\mu)}$ in place of
$\Norm{f}{\dot M^{1,p}(\mu)}$ and under the
a priori assumption that $f\in\operatorname{Lip}(X)$. Related results on $p=1$ are also
contained in \cite{DLYYZ}. While restricted to $p\in(1,\infty)$, the novelty of Theorem
\ref{thm:Frank-type} is obtaining a characterization of the whole space $\dot M^{1,p}(\mu)$,
instead of just an equivalent norm of test functions in this space.

\begin{lemma}\label{lem:Msharpf<Mh}
If $f\in L^1_{\mathrm{loc}}(\mu)$ has a Haj\l{}asz upper gradient $h\in
L^1_{\mathrm{loc}}(\mu)$, then
\begin{equation*}
   \frac{m_f(B)}{r_B}\lesssim\fint_B h
\end{equation*}
for all balls $B$. In particular,
\begin{equation*}
  \mathcal M^{\#}f(x,t)\lesssim Mh(x),\quad
  \mathcal M^{\#}_\theta f(x,t)\lesssim Mh(x),\quad
  f^*_\theta (x,y)\lesssim Mh(x),
\end{equation*}
where $M$ is the Hardy--Littlewood maximal operator.
\end{lemma}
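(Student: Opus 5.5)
The plan is to bound the mean oscillation by a double average of differences, and then use the Haj\l{}asz inequality pointwise. For any ball $B$ with radius $r_B$, I would start from the elementary estimate
\begin{equation*}
  m_f(B)=\fint_B\abs{f-\ave{f}_B}\,d\mu
  \leq\fint_B\fint_B\abs{f(x)-f(y)}\,d\mu(y)\,d\mu(x).
\end{equation*}
Here $\mu(B)\in(0,\infty)$ by doubling, and $f\in L^1(B)$ by \eqref{eLoc}.

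For $\mu$-a.e.\ $x,y\in B$ we have $\rho(x,y)<2r_B$, so the Haj\l{}asz inequality of Definition~\ref{def:uppergrad} gives $\abs{f(x)-f(y)}\leq 2r_B[h(x)+h(y)]$. The exceptional set is $\mu\otimes\mu$-null, which is harmless under the double integral; this uses Fubini, which is legitimate since doubling spaces are separable and $\mu$ is $\sigma$-finite, as noted in Section~\ref{sec:preliminaries}. Integrating, the two terms give $2r_B\fint_Bh$ each, so
\begin{equation*}
  \frac{m_f(B)}{r_B}\leq 4\fint_Bh\,d\mu.
\end{equation*}
Here $h\in L^1_{\mathrm{loc}}(\mu)$ ensures that the right-hand side is finite.

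For the maximal statements, I would recall that $\mathcal M^{\#}f(x,t)$ is a supremum over balls $B\ni x$ with $r_B\in[t,2t)$. Each such quotient is at most $4\fint_Bh\leq 4Mh(x)$, taking $M$ to be the uncentred maximal operator over balls containing $x$. If $M$ is instead the centred operator, then $B\subset B(x,2r_B)$, and doubling gives
\begin{equation*}
  \fint_Bh\lesssim\fint_{B(x,2r_B)}h\leq Mh(x).
\end{equation*}
Taking the supremum gives $\mathcal M^{\#}f(x,t)\lesssim Mh(x)$ uniformly in $t$. The bounds for $\mathcal M^{\#}_\theta f(x,t)=\mathcal M^{\#}f(x,\theta t)$ and $f^*_\theta(x,y)=\mathcal M^{\#}f(x,\theta\rho(x,y))$ then follow immediately, since they are evaluations of $\mathcal M^{\#}f(x,\cdot)$ at particular radii.

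There is no real obstacle here. The only point needing care is the a.e.\ nature of the Haj\l{}asz inequality, which is handled by the product-measure argument above.
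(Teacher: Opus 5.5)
Your proof is correct and is essentially the paper's argument: the paper defers the first estimate to the proof of \cite[Proposition 5.1]{HK:W1p}, which is the double-average bound $m_f(B)\le\fint_B\fint_B\abs{f(x)-f(y)}\,d\mu(y)\,d\mu(x)$ combined with the Haj\l{}asz inequality, exactly as you write out (giving the constant $4$). The paper then obtains the maximal bounds, as you do, by taking the supremum over balls $B\ni x$ with $r_B\in[t,2t)$ and substituting $\theta t$ or $\theta\rho(x,y)$ for $t$; your remark on the centred versus uncentred maximal operator via doubling is a correct detail that the paper leaves implicit.
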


\begin{proof}
The first estimate is contained in the proof of \cite[Proposition 5.1]{HK:W1p}.
The bound for $\mathcal M^{\#}f$ follows from taking the supremum over all balls $B\owns x$
of radius $r_B\in[t,2t)$. The last two bounds follow by substituting $\theta t$ or
$\theta\rho(x,y)$ in place of $t$ into the bound for $\mathcal M^{\#}f$.
This completes the proof of Lemma \ref{lem:Msharpf<Mh}.
\end{proof}

\begin{proof}[Proof of the upper bound \eqref{eq:Frank-upper}]
The first $\lesssim$ in \eqref{eq:Frank-upper} follows from the elementary pointwise bound
$t^{-1}m_f\leq\mathcal M_1^{\#}f$, which is immediate from the definition and also recorded
as \cite[(2.13)]{DLYYZ}.

For the second $\lesssim$, if $h$ is a Haj\l{}asz upper gradient of $f$, it follows that
\begin{equation*}
\begin{split}
   \lambda^p\iint_{\{t^{-\gamma}\mathcal M_\theta^{\#}f>\lambda\}}t^{\gamma p-1}\,dt\,d\mu
   &\leq \lambda^p\iint_{\{t^{-\gamma}Mh>c\lambda\}}t^{\gamma p-1}\,dt\,d\mu
   \quad\text{by Lemma \ref{lem:Msharpf<Mh}} \\
   &\sim\lambda^p\int\Big(\frac{Mh}{c\lambda}\Big)^p\, d\mu
   \sim\int (Mh)^p\,d\mu\sim\Norm{h}{L^p(\mu)}^p
\end{split}
\end{equation*}
by an elementary integration in $t$ in the first $\sim$ and the Hardy--Littlewood maximal
theorem in the last one. Taking the supremum over $\lambda>0$, we obtain $\Norm{ t^{-\gamma}
\mathcal M_\theta^{\#} f
 }{ L^{p,\infty}(t^{\gamma p-1}dt\,d\mu)}
\lesssim\Norm{h}{L^p(\mu)}$. Taking the infimum over all $h$ and the supremum over
$\theta\in(0,1]$, we obtain \eqref{eq:Frank-upper}.
This completes the proof of the upper bound \eqref{eq:Frank-upper}.
\end{proof}

For the lower bound \eqref{eq:Frank-lower}, we use the following idea from \cite{HK:W1p}.

Given $f\in L^1_{\mathrm{loc}}(\mu)$ and $\alpha\in (0,\infty)$,
we consider the approximate identities \cite[(3.2)]{HK:W1p}
\begin{equation}
\Phi_s f(x):=\sum_i \phi_i(x)\ave{f}_{B(x_i,s)}, \quad s\in(0,\infty)\ \text{and}\ x\in X,
\end{equation}
where the balls $B(x_i,\frac12 s)$ are pairwise disjoint, and the functions $\phi_i$ satisfy
\begin{equation*}
  0\leq\phi_i\leq \mathbf{1}_{B(x_i,(1+\alpha)s)},\qquad
  \sum_i\phi_i\equiv 1,\qquad
  \operatorname{Lip}(\phi_i)\lesssim s^{-1}.
\end{equation*}
Then $\Phi_s f\in\operatorname{Lip}_\mathrm{loc}(X)$.

\begin{lemma}\label{lem:lip(Phis)<K}
Let $p\in(1,\infty)$ and $(X,\rho,\mu)$ be a complete doubling
$p$-Poincar\'e space. Let $f\in L^1_{\mathrm{loc}}(\mu)$ and suppose that
there exists a positive constant $K$ such that $\Norm{\operatorname{lip}(\Phi_s
f)}{L^p(\mu)}\leq K$ uniformly in $s>0$. Then $\Norm{f}{\dot M^{1,p}(\mu)}\lesssim K$.
\end{lemma}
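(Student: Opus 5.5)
We prove the bound by passing to the limit $s\to0$ in the Lipschitz approximations $\Phi_s f$. The plan is to obtain a $p$-weak upper gradient of $f$ in $L^p(\mu)$ with norm $\lesssim K$, and then to convert it to a Haj\l{}asz gradient via Proposition~\ref{prop:MWE}.

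\emph{Step 1 (convergence).} We first check that $\Phi_s f\to f$ in $L^1_{\mathrm{loc}}(\mu)$, and, along a sequence $s_n\searrow0$, also $\mu$-a.e. Pointwise,
\[
\abs{\Phi_s f(x)-f(x)}\le\sum_i\phi_i(x)\abs{\ave{f}_{B(x_i,s)}-f(x)}\lesssim\fint_{B(x,(2+\alpha)s)}\abs{f-f(x)}\,d\mu,
\]
using doubling and the bounded overlap of the balls $B(x_i,(1+\alpha)s)$. The right-hand side tends to $0$ at every Lebesgue point of $f$, and Lebesgue points are $\mu$-a.e.\ in a doubling space. Local $L^1$ convergence then follows from the maximal-function domination on bounded sets, or more simply from approximating $f$ by continuous functions.

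\emph{Step 2 (weak limit of gradients).} By Remark~\ref{rem:lipu}, $g_n:=\operatorname{lip}(\Phi_{s_n}f)$ is a genuine upper gradient of $f_n:=\Phi_{s_n}f$, in particular an upper gradient up to every scale $\delta_n$. The sequence $(g_n)$ is bounded by $K$ in $L^p(\mu)$ with $p\in(1,\infty)$, so after passing to a subsequence $g_n\rightharpoonup g$ weakly, with $\Norm{g}{L^p(\mu)}\le K$. Theorem~\ref{thm:ACDM27} applies, since $X$ is complete and separable, $f_n\to f$ a.e., and $g_n\rightharpoonup g$. It shows that $g$ is a $p$-weak upper gradient of $f$ in the sense of test plans, so $f\in\dot W^{1,p}(\mu)$ with $\Norm{f}{\dot W^{1,p}(\mu)}\le K$. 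Here $f$ is understood as the representative given by the a.e.\ limit, which is harmless since $f\in L^1_{\mathrm{loc}}$ is an equivalence class. Proposition~\ref{prop:MWE} then gives $\Norm{f}{\dot M^{1,p}(\mu)}\lesssim K$.

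\emph{Main obstacle.} The delicate point is the representative issue in Theorem~\ref{thm:ACDM27}: the weak upper gradient property depends on pointwise values along curves. I would handle it by working with $\tilde f:=\limsup_n f_n$, which equals $f$ a.e. This gives $\tilde f\in\dot W^{1,p}(\mu)$, and since the Haj\l{}asz condition is only required for a.e.\ $x,y$, the conclusion $\tilde f\in\dot M^{1,p}(\mu)$ transfers to $f$. If one prefers to avoid test plans altogether, an alternative route is to apply the Poincar\'e inequality directly to $\Phi_s f$ with upper gradient $\operatorname{lip}\Phi_s f$. Combined with the self-improvement of \cite{KZ:08}, this gives
\[
\abs{\Phi_sf(x)-\Phi_sf(y)}\lesssim\rho(x,y)\Big(\big[M(g_s^q)(x)\big]^{\frac1q}+\big[M(g_s^q)(y)\big]^{\frac1q}\Big).
\]
Taking a weak limit of $[M(g_s^q)]^{1/q}$, bounded in $L^p$ uniformly by $K$, would then require a Mazur-type convex combination argument to pass the pointwise inequality to the limit, which seems more cumbersome than the route above.
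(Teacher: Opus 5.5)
Your argument is correct, but it is organized differently from the paper's.

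\textbf{Your route.} You pass through the Sobolev space. Remark~\ref{rem:lipu} and the a.e.\ convergence $\Phi_s f\to f$ at Lebesgue points put you exactly in the setting of Theorem~\ref{thm:ACDM27}. That theorem yields $\Norm{f}{\dot W^{1,p}(\mu)}\le K$. Only then do you use the Poincar\'e inequality, which is hidden inside Proposition~\ref{prop:MWE}, to reach $\Norm{f}{\dot M^{1,p}(\mu)}\lesssim K$.

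\textbf{The paper's route.} Following \cite{HK:W1p}, the paper never touches $\dot W^{1,p}$. It fixes the Keith--Zhong exponent $q\in(1,p)$ and extracts a weak limit $\operatorname{lip}(\Phi_{s_j}f)^q\rightharpoonup g^q$ in $L^{p/q}(\mu)$. It then applies the $q$-Poincar\'e inequality to $\Phi_{s_j}f$ along a telescoping chain of balls and passes to the limit ball by ball. This gives $\abs{f(x)-f(y)}\lesssim\rho(x,y)[M_qg(x)+M_qg(y)]$ at Lebesgue points, so $cM_qg$ is an explicit Haj\l{}asz gradient.

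\textbf{What each buys.} Your route is shorter given the preliminaries. It also shows that the first half, $\Norm{f}{\dot W^{1,p}(\mu)}\le K$ with constant $1$, needs no Poincar\'e inequality at all. It needs only a.e.\ convergence of $\Phi_s f$, not $L^1_{\mathrm{loc}}$ convergence. The paper's route stays inside the maximal-function framework and avoids the test-plan machinery. That machinery includes Theorem~\ref{thm:ACDM27} and, inside Proposition~\ref{prop:MWE}, Corollary~\ref{cor:AGS7.4loc}.

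\textbf{Two side remarks.}

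First, the Mazur-type argument you anticipate for the direct route is not needed. The trick is to take the weak limit of $\operatorname{lip}(\Phi_s f)^q$, not of the maximal functions. Truncated at a finite level, the telescoping estimate involves averages over finitely many fixed balls. Each such average converges by testing the weak convergence against $\mathbf 1_B$. The maximal function is formed only after the limit.

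Second, the representative issue you call the main obstacle does not arise for test plans. If $N$ is $\mu$-null, then for every test plan $\pi$,
\begin{equation*}
\pi(\{\gamma:\gamma(0)\in N\text{ or }\gamma(1)\in N\})\le 2C(\pi)\mu(N)=0.
\end{equation*}
So being a $p$-weak upper gradient in the sense of test plans does not depend on the representative of $f$. Your $\tilde f$ workaround is harmless but unnecessary.
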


\begin{proof}
This is implicitly contained in the proof of \cite[Proposition 8.2]{HK:W1p}. We indicate the
main steps for completeness.

By assumption, the functions $\operatorname{lip}(\Phi_s f)^q$ form a bounded set in the
reflexive space $L^{p/q}(\mu)$, where we denote by $q\in(1,p)$ an exponent guaranteed by
\cite{KZ:08} such that $X$ is a $q$-Poincar\'e space. Hence, there is a sequence $s_j\to 0$
such that $\operatorname{lip}(\Phi_{s_j} f)^q\rightharpoonup g^q\in L^{p/q}(\mu)$ for some
$g\in L^p(\mu)$, where $\rightharpoonup$ denotes weak convergence in $L^{p/q}(\mu)$.

As in \cite[(8.4) and (8.5)]{HK:W1p}, we obtain
\begin{equation}\label{eq:HK8.4}
  \Norm{g}{L^p(\mu)}
  \leq\liminf_{j\to\infty}\Norm{\operatorname{lip}(\Phi_{s_j}f)}{L^p(\mu)} \leq K
\end{equation}
and, at Lebesgue points of $f$,
\begin{equation*}
  \abs{f(x)-f(y)}
  \lesssim\rho(x,y)\left[M_q g(x)+M_q g(y)\right],
\end{equation*}
where $M_q g:=(M\abs{g}^q)^{\frac1q}$ is the rescaled maximal function.

This shows that $c\cdot M_q g$, with some constant $c$, is a Haj\l{}asz upper gradient of
$f$, and hence
\begin{equation*}
  \Norm{f}{\dot M^{1,p}(\mu)}
  \lesssim\Norm{M_q g}{L^p(\mu)}
  \lesssim\Norm{g}{L^p(\mu)}
  \leq K
\end{equation*}
by \eqref{eq:HK8.4} in the last step.
This completes the proof of Lemma \ref{lem:lip(Phis)<K}.
\end{proof}

\begin{proof}[Proof of the lower bound \eqref{eq:Frank-lower}]
We extend the approach of \cite{HK:W1p} from the case $\gamma=-1$.

While \eqref{eq:Frank-lower} is stated in \cite[Corollary 1.13]{DLYYZ} for
$f\in\operatorname{Lip}(X)$, its proof in \cite[Section 2.2]{DLYYZ} (where the key
ingredient is \cite[Proposition 6.3]{HK:W1p}) only requires
$f\in\operatorname{Lip}_{\mathrm{loc}}(X)$. Hence, we have
\begin{equation}\label{eq:1.13Phis}
  \Norm{\operatorname{lip}(\Phi_s f)}{L^p(\mu)}
  \lesssim\Norm{t^{-1-\gamma}m_{\Phi_s f}}{L^{p,\infty}(t^{p\gamma-1}dt\,d\mu)}.
\end{equation}
From \cite[Lemma 7.3]{HK:W1p}, we deduce the pointwise bound
\begin{equation}\label{eq:HK7.3}
  m_{\Phi_s f}(x,t)
  \lesssim\mathcal A_{s,t}( m_f(\cdot,t))(x),
\end{equation}
where
\begin{equation*}
  \mathcal A_{s,t}g(x)
  := \Big[\fint_{B(x, C\max(s,t))} g(z)^q\, d\mu(z)\Big]^{\frac 1q},
\end{equation*}
provided that $X$ is a $q$-Poincar\'e space. Thanks to \cite{KZ:08}, this is the case for
some $q\in(1,p)$. Multiplying both sides of  \eqref{eq:HK7.3} by $t^{-1-\gamma}$, we obtain
\begin{equation}\label{eq:HK7.3b}
  \frac{m_{\Phi_s f}(x,t)}{t^{1+\gamma}}
  \lesssim
  \mathcal A_{s,t}\Big(\frac{m_f(\cdot,t)}{t^{1+\gamma}}\Big)(x).
\end{equation}

We next adapt the proof of \cite[Lemma 7.5]{HK:W1p}. For each fixed $s,t\in(0,\infty)$, it
is immediate from H\"older's inequality that $\mathcal A_{s,t}$ is bounded on $L^u(\mu)$ for
all $u\in[q,\infty)$. Hence, by Fubini's theorem,
\begin{equation*}
  G\mapsto \mathcal A_s G,\quad
  \mathcal A_s G(x,t):=\mathcal A_{s,t}(G(\cdot,t))(x)
\end{equation*}
is bounded on $L^u(\mu\times\nu)$ for any $\sigma$-finite measure $\nu$ on $(0,\infty)$ and
for the same range $u\in[q,\infty)$. Then, by the Marcinkiewicz interpolation theorem (see,
e.g., \cite[Theorem 2.2.3]{HNVW1} for a version that covers the case at hand), the sublinear
operator $\mathcal A_s$ is also bounded  on $L^{u,\infty}(\mu\times\nu)$ for all
$u\in(q,\infty)$. In particular, choosing $u=p$ and $d\nu(t)=t^{\gamma p-1}dt$, it follows
that
\begin{equation}\label{eq:HK7.5}
  \BNorm{\mathcal A_{s,t}\Big(\frac{m_f(\cdot,t)}{t^{1+\gamma}}\Big)}{L^{p,\infty}(t^{\gamma
  p-1}dt\,d\mu)}
  \lesssim\BNorm{\frac{m_f}{t^{1+\gamma}}}{L^{p,\infty}(t^{\gamma p-1}dt\,d\mu)}.
\end{equation}
Combining these estimates, it follows that
\begin{equation}
\begin{split}
  \Norm{\operatorname{lip}(\Phi_s f)}{L^p(\mu)}
  &\lesssim\Norm{t^{-1-\gamma}m_{\Phi_s f}}{L^{p,\infty}(t^{p\gamma-1}dt\,d\mu)}
  \quad\text{by \eqref{eq:1.13Phis}} \\
  &\lesssim\Norm{\mathcal A_{s,t}(t^{-1-\gamma}m_f(\cdot,t))
  }{L^{p,\infty}(t^{p\gamma-1}dt\,d\mu)}
  \quad\text{by \eqref{eq:HK7.3b}} \\
  &\lesssim\Norm{t^{-1-\gamma}m_f}{L^{p,\infty}(t^{p\gamma-1}dt\,d\mu)}=:K
  \quad\text{by \eqref{eq:HK7.5}}.
\end{split}
\end{equation}
Thus, we have verified the assumptions of Lemma \ref{lem:lip(Phis)<K} with $K$ as above. The
said lemma then implies that
\begin{equation*}
  \Norm{f}{\dot M^{1,p}(\mu)}
  \lesssim K:=\Norm{t^{-1-\gamma}m_f}{L^{p,\infty}(t^{p\gamma-1}dt\,d\mu)}.
\end{equation*}
This completes the proof of \eqref{eq:Frank-lower} and hence of Theorem
\ref{thm:Frank-type}.
\end{proof}

\section{BBM-type characterizations}\label{sec:DMS}

An influential point of departure for the particular line of singular nonlocal
characterizations considered here is the work of Bourgain, Brezis, and Mironescu \cite{BBM}.
Their results have a slightly different spirit in the sense of involving limits of Sobolev
norms of lower, fractional smoothness. Such limit relations are not only interesting in
their own right but also serve as tools for obtaining other kinds of characterizations,
e.g., in \cite[Section 4]{BSVY}. Extensions of the \cite{BBM} results to metric measure
spaces have already been explored by a number of authors \cite{DMS,Munnier}, and we quote
the following theorem as an example:

\begin{theorem}[\cite{DMS}, Theorem 1.4]\label{thm:DMS}
Let $p\in[1,\infty)$, and $(X,\rho,\mu)$ be a complete doubling $p$-Poincar\'e space.
If $f\in L^p(\mu)$, then
\begin{subequations}\label{eq:DMS}
\begin{align}
  {\operatorname{Ch}}_p(f)
  &\lesssim
  \liminf_{s\nearrow 1}(1-s)\Norm{\Delta f}{L^{p}(\rho^{-ps}V^{-1})}^p
    \label{eq:DMS-lower} \\
  &\leq
  \limsup_{s\nearrow 1}(1-s)\Norm{\Delta f}{L^{p}(\rho^{-ps}V^{-1})}^p
   \lesssim{\operatorname{Ch}}_p(f),
   \label{eq:DMS-upper}
\end{align}
\end{subequations}
where $\Delta f(x,y):=\abs{f(x)-f(y)}$ for any $x,y\in X$ and
\begin{align*}
{\operatorname{Ch}}_p(f):=
\begin{cases}\Norm{f}{\dot W^{1,p}(\mu)}^p & \text{for}\ p\in(1,\infty), \\
|Df|_*(X) & \text{for}\ p=1.\end{cases}
\end{align*}
\end{theorem}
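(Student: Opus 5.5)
The plan is to prove the two outer inequalities separately. The middle inequality $\liminf\le\limsup$ is trivial. Throughout, write
\begin{equation*}
  \psi(r):=\iint_{\{r\le\rho(x,y)<2r\}}\frac{\abs{f(x)-f(y)}^p}{r^{p}V(x,y)}\,d\mu(y)\,d\mu(x),\qquad r\in(0,\infty).
\end{equation*}
Splitting into dyadic annuli and using doubling gives
\begin{equation*}
  (1-s)\Norm{\Delta f}{L^p(\rho^{-ps}V^{-1})}^p\sim(1-s)\sum_{k\in\mathbb Z}2^{kp(1-s)}\psi(2^k),
\end{equation*}
up to constants depending only on $C_\mu$ and $p$. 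The scales $2^k\ge 1$ contribute at most $(1-s)\sum_{k\ge0}2^{-kps}\,2^p\Norm{f}{L^p(\mu)}^p\lesssim(1-s)\Norm{f}{L^p(\mu)}^p\to0$. Here I use $\int_{\{\rho(x,y)\ge 2^k\}}\rho^{-ps}V^{-1}\,d\mu(y)\lesssim 2^{-kps}$ together with $V(x,y)\sim V(y,x)$. So only the scales $r<1$ matter. On these, the weights $(1-s)2^{-kp(1-s)}$, $k\ge0$, form an approximate probability distribution spreading over all small scales as $s\nearrow1$.

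\emph{Upper bound \eqref{eq:DMS-upper}.} I will prove the local Poincar\'e-type energy estimate
\begin{equation*}
  \psi(r)\lesssim\int_X g^p\,d\mu
\end{equation*}
uniformly in $r$, first for $f\in\operatorname{Lip}_{\mathrm{loc}}(X)$ with an upper gradient $g$. To do so, cover $X$ by balls $B_i=B(x_i,r)$ with bounded overlap of the dilates $C\lambda_0B_i$. Every pair with $\rho(x,y)<2r$ lies in some $3B_i\times3B_i$, and on that product
\begin{equation*}
  \iint_{3B_i\times3B_i}\frac{\abs{f(x)-f(y)}^p}{V(x,y)}\lesssim\int_{3B_i}\abs{f-\ave{f}_{3B_i}}^p\lesssim r^p\int_{3\lambda_0B_i}g^p .
\end{equation*}
The last step uses the $p$-Poincar\'e inequality, upgraded to the $(p,p)$ form. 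That upgrade is standard for $p>1$ in complete doubling spaces via self-improvement \cite{KZ:08}, and for $p=1$ it needs the $(1,1)$ version, which I would take from the same chain-of-balls argument. Summing over $i$ gives the estimate, and then $(1-s)\sum_{k\ge0}2^{-kp(1-s)}\lesssim1$ bounds the $\limsup$ by $\int g^p$. For general $f$ with $p>1$, I choose upper gradients $g_k\to\abs{\nabla f}_p$ in $L^p$ as in the proof of Proposition~\ref{prop:MWE}. The Poincar\'e inequality holds for any upper gradient, so no Lipschitz approximation is needed there. For $p=1$, I take $f_n\in\operatorname{Lip}_{\mathrm{loc}}$ with $f_n\to f$ in $L^1$ and $\int\operatorname{lip}f_n\to\abs{Df}(X)$. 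By Remark~\ref{rem:lipu}, $\operatorname{lip}f_n$ is an upper gradient. Fatou's lemma in $(x,y)$ then transfers the bound for $\psi$ to $f$, and Proposition~\ref{p-eq} identifies $\abs{Df}(X)=\abs{Df}_*(X)$.

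\emph{Lower bound \eqref{eq:DMS-lower}.} Let $A$ be the $\liminf$, assumed finite. Since the weights above average $\psi(2^{-k})$ over $k\ge0$ with vanishing weight on any fixed finite set of $k$, an averaging argument produces scales $r_n\searrow0$ with
\begin{equation*}
  \psi(r_n)+\psi(2r_n)+\dots+\psi(2^{m}r_n)\lesssim A+\varepsilon_n
\end{equation*}
for a fixed number $m$ of neighbouring dyadic scales. Otherwise every tail of the sequence would exceed $A$ by a fixed margin, contradicting the choice of $A$. At these scales I form the discrete convolutions $f_n:=\Phi_{r_n}f$ from Section~\ref{s:Frank}. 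The standard estimate
\begin{equation*}
  \operatorname{lip}f_n(x)\lesssim\frac1{r_n}\sum_{i,j:\,x\in\operatorname{supp}\phi_i\cap\operatorname{supp}\phi_j}\abs{\ave{f}_{B_i}-\ave{f}_{B_j}}
  \lesssim\Big[\fint_{B(x,Cr_n)}\fint_{B(x,Cr_n)}\frac{\abs{f(z)-f(w)}^p}{r_n^p}\,d\mu(z)\,d\mu(w)\Big]^{\frac1p}
\end{equation*}
(Jensen in the last step) then gives, after integrating in $x$ and using Fubini with doubling, $\Norm{\operatorname{lip}f_n}{L^p}^p\lesssim\sum_{j\le m}\psi(2^jr_n)\lesssim A+\varepsilon_n$. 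Pairs with $\rho(z,w)<r_n$ are handled by absorbing them into neighbouring scales through a midpoint-averaging (triangle) argument.

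Finally, $f_n\to f$ in $L^p$ and, along a subsequence, $\mu$-a.e., by Lebesgue differentiation. For $p>1$, a subsequence of $\operatorname{lip}f_n$ converges weakly in $L^p$ to some $g$. These are genuine upper gradients (Remark~\ref{rem:lipu}), so Theorem~\ref{thm:ACDM27} makes $g$ a $p$-weak upper gradient of $f$, and $\operatorname{Ch}_p(f)\le\Norm{g}{L^p}^p\lesssim A$. For $p=1$, the definition \eqref{eq:Df(A)} directly gives $\abs{Df}(X)\le\liminf_n\int\operatorname{lip}f_n\lesssim A$, and Proposition~\ref{p-eq} converts this to $\abs{Df}_*(X)$.

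I expect the main obstacle to be the pointwise bound on $\operatorname{lip}\Phi_rf$ in terms of the scale-$r$ energy $\psi$ near the diagonal. One must control the contribution of very close pairs $\rho(z,w)\ll r$ by the annuli actually counted in $\psi(2^jr)$, without losing more than a constant factor. A second delicate point is the selection of good scales $r_n$ from a $\liminf$ of averages.
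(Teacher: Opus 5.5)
Your proposal is correct, but it takes a different route from the paper's. The paper only quotes this theorem from \cite{DMS}; it reproduces the \cite{DMS} lower-bound mechanism in Section~\ref{sec:DMS} and proves a localized upper bound in Section~\ref{sec:Nguyen}.

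\textbf{Lower bound.} The paper works with the cumulative energy $\mathcal K_t=\iint_{\{\rho<t\}}(\Delta f)^pV^{-1}$ and the exact layer-cake identity \eqref{eq:intKtE}. With these, Lemma~\ref{lem:2liminf} at once gives scales $t_k\searrow0$ with $t_k^{-p}\mathcal K_{2t_k}\lesssim A$. Near-diagonal pairs cost nothing there, since $V(x,y)\lesssim V(z,t)$ whenever $x,y\in B(z,t)$. The approximants are the averages $f_t$ and the double-averaged quotients $g_t$ of Lemma~\ref{lem:DMS1869}. These are upper gradients only up to scale, which is why Theorem~\ref{thm:ACDM27} is used in its ``up to scale'' form.

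You instead use annular energies $\psi$, a block-averaging selection of scales, discrete convolutions $\Phi_rf$ whose $\operatorname{lip}$ are genuine upper gradients, and a midpoint argument for pairs with $\rho(z,w)<r$. That midpoint step needs the annulus $\{u:2r\le\rho(z,u)<Cr\}$ to have measure $\gtrsim V(z,r)$, i.e.\ reverse doubling, and you should say so. Reverse doubling does hold here at small scales by Remark~\ref{rem:Poincare>RD}. However, relying on it makes your lower bound geometry-dependent, whereas \eqref{eq:DMS-lower-v2} holds in every complete doubling space.

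You can drop the midpoint step entirely. For $\rho(z,w)<r$ one has $V(z,r)\ge V(z,w)$, so the near-diagonal part is at most $\sum_{j\ge1}2^{-jp}\psi(2^{-j}r)$. Your block-averaging argument controls such geometric tails equally well, and this is exactly what $\mathcal K_t$ encodes.

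\textbf{Upper bound.} Your covering argument uses the $(p,p)$-Poincar\'e inequality; for $p=1$ that is simply the hypothesis, so no chain argument is needed there. Combined with the $L^p$ tail estimate for large scales, it is more elementary than the paper's route, which goes through Lorentz duality (Lemma~\ref{lem:BSVY963}), the Haj\l{}asz bound $f^*_\theta\lesssim Mh$, and Proposition~\ref{prop:MWE}. It also gives the global statement for $f\in L^p(\mu)$ directly. That $L^p$ tail is precisely what is unavailable in the homogeneous setting, and its absence is what forces the localization in \eqref{eq:DMS-upper-v2}.

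\textbf{The case $p=1$.} Your approximants land in the relaxed $|Df|(X)$, so your lower bound needs the harder half of Proposition~\ref{p-eq} (Lemma~\ref{lem:tv-star}). A DMS-style route via Lemma~\ref{prop:semics} yields $|Df|_*$ directly. Your upper bound needs only the easy comparison \eqref{eq:BV-one-sided-comparison}.
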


\begin{remark}
Under the additional assumption of Ahlfors regularity, the case $p\in(1,\infty)$ of Theorem
\ref{thm:DMS} was already obtained in \cite[Theorem 2]{Munnier}.
\end{remark}

 Our present interest in this type of characterizations is practical: having them as tools
 for other characterizations in the same way as in \cite[Section 4]{BSVY}. With this in
 mind, while Theorem \ref{thm:DMS} is an elegant statement in its own right, it turns out
 that a somewhat technical modification is necessary for our needs:

\begin{proposition}\label{prop:DMS}
Let $p\in(1,\infty)$ and $(X,\rho,\mu)$ be a complete doubling metric measure space. If
$f\in L^1_{\mathrm{loc}}(\mu)$, then
\begin{subequations}
\begin{equation}\label{eq:DMS-lower-v2}
\begin{split}
  \Norm{f}{\dot W^{1,p}(\mu)}^p \lesssim
  \sup_{E,N}\liminf_{s\nearrow 1}(1-s)\Norm{\mathbf{1}_{E\times E}\Delta
  f_N}{L^p(\rho^{-ps}V^{-1})}^p.
\end{split}
\end{equation}
If, in addition, $X$ is a $p$-Poincar\'e space, then
\begin{equation}\label{eq:DMS-upper-v2}
  \sup_{E,N}\limsup_{s\nearrow 1}(1-s)\Norm{\mathbf{1}_{E\times E}\Delta
  f_N}{L^p(\rho^{-ps}V^{-1})}^p
  \lesssim\Norm{f}{\dot W^{1,p}(\mu)}^p.
\end{equation}
\end{subequations}
where $f_N$ is the usual truncation \eqref{eq:fN-def}, and the supremum is taken over all
bounded measurable sets $E\subset X$ and over all $N\in\mathbb N$.
\end{proposition}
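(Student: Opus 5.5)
We prove the two estimates separately and reduce both to bounded sets and bounded functions, so that the tools from Section~\ref{sec:preliminaries} apply.

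\emph{Lower bound \eqref{eq:DMS-lower-v2}.} Let $K$ denote the right-hand side of \eqref{eq:DMS-lower-v2} and assume $K<\infty$. Fix the increasing closed bounded sets $X_n$ of Proposition~\ref{prop:bdSubsets}. Each $(X_n,\rho|_{X_n},\mu_n)$ is complete and doubling, uniformly in $n$, and its ball volumes satisfy $\mu(B(x,r)\cap X_n)\sim V(x,\min\{r,\operatorname{diam}X_n\})$.

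Fix $N$. The function $f_N|_{X_n}$ is bounded on a set of finite measure, so it lies in $L^p(\mu_n)$. We aim to apply the lower half of the Di Marino--Squassina argument on $X_n$, which is the content of \eqref{eq:DMS-lower}. That half needs no Poincar\'e inequality. Its mechanism is to build, from the fractional energies, upper gradients up to scale $\delta$ of suitably mollified functions. A weak limit then gives a $p$-weak upper gradient, via Theorem~\ref{thm:ACDM27}, with norm controlled by the $\liminf$. I would therefore take from \cite{DMS} precisely that lower bound, stated without the Poincar\'e hypothesis. The kernel on $X_n$ is $\rho^{-ps}\mu_n(B(x,\rho(x,y))\cap X_n)^{-1}$. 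For $\rho(x,y)\le\operatorname{diam}X_n$ this is comparable to $\rho^{-ps}V^{-1}$. So with $E=X_n$ the energy on $X_n$ is dominated by $\Norm{\mathbf 1_{X_n\times X_n}\Delta f_N}{L^p(\rho^{-ps}V^{-1})}^p$, with constants uniform in $n$. The conclusion is that $f_N|_{X_n}$ has a $p$-weak upper gradient $g^n$, in the sense of test plans, with $\Norm{g^n}{L^p(\mu_n)}^p\lesssim K$.

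Proposition~\ref{prop:gluing-ug} glues the $g^n$ into a $p$-weak upper gradient of $f_N$ on $X$ with the same bound, so $\Norm{f_N}{\dot W^{1,p}(\mu)}^p\lesssim K$. Proposition~\ref{prop:truncW1p} then lets $N\to\infty$. The main obstacle is the first step: checking that the \cite{DMS} lower-bound argument really runs without Poincar\'e, and on the subspace $X_n$ with uniform constants. If citing it is not clean, I would redo the key step directly. That means using discrete convolutions $\Phi_\delta f_N$ at scale $\delta$, bounding their local Lipschitz constants by averaged differences, and then passing to the limit with Theorem~\ref{thm:ACDM27}.

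\emph{Upper bound \eqref{eq:DMS-upper-v2}.} Assume $f\in\dot W^{1,p}(\mu)$ and fix a bounded $E$ and an $N$. By Proposition~\ref{prop:truncW1p}, $\Norm{f_N}{\dot W^{1,p}(\mu)}\le\Norm{f}{\dot W^{1,p}(\mu)}$. By Proposition~\ref{prop:MWE}, $f_N$ has a Haj\l{}asz gradient $h\in L^p(\mu)$ with $\Norm{h}{L^p}\lesssim\Norm{f}{\dot W^{1,p}(\mu)}$.

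Split the integral over $E\times E$ according to whether $\rho(x,y)<1$ or $\rho(x,y)\ge1$. For the near part, use $\Delta f_N\le\rho(h(x)+h(y))$ and symmetry. Integrating the kernel $\rho^{p-ps}V^{-1}$ over the region $\rho<1$ by dyadic annuli and doubling gives
\begin{equation*}
\int_{\rho(x,y)<1}\rho^{p(1-s)}V(x,y)^{-1}\,d\mu(y)\lesssim\sum_{k\ge0}2^{-kp(1-s)}\lesssim(1-s)^{-1},
\end{equation*}
so the near part contributes at most $\lesssim(1-s)^{-1}\Norm{h}{L^p}^p$.

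For the far part, use $\Delta f_N\le2N$. Since $E$ is bounded, only $\rho(x,y)\le R=\operatorname{diam}E$ occurs, and the region $1\le\rho\le R$ contributes at most $(2N)^p\mu(E)\log_2(2R)$, which is finite. After multiplying by $(1-s)$, this far part vanishes as $s\nearrow1$. Taking $\limsup_{s\nearrow1}$ and then the supremum over $E$ and $N$ gives \eqref{eq:DMS-upper-v2}.
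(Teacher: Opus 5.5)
Your proposal is correct. The lower bound follows essentially the paper's route, and the upper bound takes a genuinely different, more elementary route.

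\textbf{Lower bound \eqref{eq:DMS-lower-v2}.} Your structure matches the paper's:
\begin{enumerate}
\item exhaust $X$ by the sets $X_n$ of Proposition~\ref{prop:bdSubsets};
\item on each $X_n$, use a Di Marino--Squassina lower bound to produce a $p$-weak upper gradient in the sense of test plans (ultimately via Theorem~\ref{thm:ACDM27});
\item glue these with Proposition~\ref{prop:gluing-ug};
\item remove the truncation with Proposition~\ref{prop:truncW1p}.
\end{enumerate}
The only difference is in step 2. You invoke the DMS lower bound on $X_n$ as a black box and transfer the kernel via \eqref{eq:bdSubsets}. The paper cannot quote Theorem~\ref{thm:DMS} that way, since it assumes a Poincar\'e inequality, and the $X_n$ are not known to be Poincar\'e spaces even when $X$ is. So the paper re-runs the Poincar\'e-free part of the argument:
\begin{itemize}
\item Lemma~\ref{lem:gnt}, which applies the averaging lemma in the ambient space and then compares volumes via \eqref{eq:bdSubsets2};
\item Lemma~\ref{lem:2liminf};
\item weak compactness in Lemma~\ref{lem:weakLp>liminf};
\item Lemma~\ref{lem:DMS1869}, where $c\,g_{2t}$ is an upper gradient of the ball average $f_t$ only up to scale $\frac12 t$.
\end{itemize}
This is exactly the step you flagged as the main obstacle. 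It works with ball averages rather than the discrete convolutions of your fallback, with constants depending only on the (uniform) doubling constant of $X_n$.

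\textbf{Upper bound \eqref{eq:DMS-upper-v2}.} Here your route differs from the paper's. You take a Haj\l{}asz gradient from Proposition~\ref{prop:MWE} and integrate $\rho^{p(1-s)}V^{-1}$ over dyadic annuli. This gives the factor $(1-s)^{-1}$ for $\rho<1$. The region $\rho\ge 1$ contributes a finite amount, because $\Delta f_N\le 2N$ and $E$ is bounded, so it vanishes after multiplying by $1-s$. The paper instead passes through a weak-type norm with some fixed $\gamma\neq 0$:
\begin{itemize}
\item Corollary~\ref{cor:DMS}, which rests on Lorentz duality in Lemmas~\ref{lem:BSVY963} and~\ref{lem:BSVY105};
\item then \eqref{eq:Nguyen-upper} or its volume analogue, via the lifted sharp maximal function;
\item then Proposition~\ref{prop:MWE}.
\end{itemize}
Your argument is shorter and self-contained. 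The paper's chain is chosen because the same string of inequalities also proves the lower bounds \eqref{eq:Nguyen-lower} and \eqref{eq:Nguyen-lower-v2}, so the BBM upper bound comes at no extra cost.
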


\begin{remark}\label{rem:loc-global}
Note that $\Delta f_N\leq \Delta f$ by \eqref{eq:fN-key}, and obviously $\mathbf{1}_{E\times
E}\leq 1$. Moreover, the assumption $f\in L^1_{\mathrm{loc}}(\mu)$ is more general than
$f\in L^p(\mu)$. Hence \eqref{eq:DMS-lower-v2} is a strengthening of \eqref{eq:DMS-lower} in
the range $p\in(1,\infty)$. Moreover, this technical modification is necessary to obtain a
two-sided estimate when we only assume that $f\in L^1_{\mathrm{loc}}(\mu)$, instead of $f\in
L^p(\mu)$ as in Theorem \ref{thm:DMS}. Indeed, already in $\mathbb R^d$, the quantity
$\Norm{\Delta f}{L^p(\rho^{-ps}V^{-1})}$ is the seminorm of the homogeneous Besov space
$\dot B^s_{p,p}$, which is incomparable with the homogeneous Sobolev space $\dot W^{1,p}$ of
different smoothness. This is most easily seen for $p=2$ in the Fourier domain:
\begin{equation*}
  \Norm{f}{\dot W^{1,2}(\mathbb R^d)}^2
  \sim\int_{\mathbb R^d}\abs{\xi}^2\abs{\hat f(\xi)}^2\, d\xi,\qquad
  \Norm{f}{\dot B^s_{2,2}(\mathbb R^d)}^2
  \sim_s\int_{\mathbb R^d}\abs{\xi}^{2s}\abs{\hat f(\xi)}^2\, d\xi.
\end{equation*}
For $f\in \dot W^{1,2}(\mathbb R^d)$, it can easily happen that $f\notin \dot
B^s_{2,2}(\mathbb R^d)$ for all $s<1$; hence, the middle terms in \eqref{eq:DMS} can be
$\infty$ while the right-hand side is finite, making \eqref{eq:DMS-upper} impossible. (While
it is equally easy to find $f\in\dot B^s_{2,2}(\mathbb R^d)\setminus\dot W^{1,2}(\mathbb
R^d)$ for any {\em individual} $s\in(0,1)$, there is no contradiction with
\eqref{eq:DMS-lower}, where the finiteness of the right-hand side requires that $f\in\dot
B^s_{2,2}(\mathbb R^d)$ for a {\em sequence} of $s=s_j\nearrow 1$, with a control on the
respective norms.)
\end{remark}

The proof of the upper bound \eqref{eq:DMS-upper-v2} is postponed to Section
\ref{sec:Nguyen},
and we now turn to the proof of the lower bound \eqref{eq:DMS-lower-v2}.
This will be achieved by combining the methods of \cite{DMS} from their proof of Theorem
\ref{thm:DMS}
 with the techniques of approximating by bounded subsets from Section
 \ref{sec:bdApprox}.
Indeed, to obtain a lower bound for the supremum over all bounded sets, it is enough to
consider
some particular sets, and choosing $E=X_n$ as the approximating bounded subsets
constructed in Proposition \ref{prop:bdSubsets} turns out to be convenient:

\begin{lemma}\label{lem:gnt}
Let $(X,\rho,\mu)$ be a doubling metric measure space and $f\in L^1_{\mathrm{loc}}(\mu)$.
Let $X_n$ be a sequence of bounded subsets of $X$ as constructed in Proposition
\ref{prop:bdSubsets},
and denote
\begin{equation*}
  B_n(z,t):=B(z,t)\cap X_n,\quad
  \mu_n:=\mu|_{X_n},\quad
  V_n(z,t):=\mu_n(B_n(z,t)).
\end{equation*}
Let
\begin{equation}\label{eq:gnt}
   g_t^n(z):=\frac{1}{V_n(z,t)^2}\iint_{B_n(z,t)\times B_n(z,t)}\frac{\Delta f}{t}.
\end{equation}
Then, for some $n_0$ that only depends on the space $X$, and for all $n\geq n_0$ and
$p\in[1,\infty)$,
\begin{equation}
 \Norm{\mathbf{1}_{X_n\times X_n}\Delta f}{L^p(\rho^{-ps}V^{-1})}^p
   \gtrsim \frac{ps}{2^{ps}}\int_0^1\frac{\Norm{g_t^n}{L^p(\mu_n)}^p}{t^{p(s-1)+1}}\,dt,
   \quad s\in(0,1).
\label{eq:weakLp>limsup}
\end{equation}
\end{lemma}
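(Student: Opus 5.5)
We start from the right-hand side, which is a double integral in $t$ and $z$, and bound it pointwise by the left-hand side. Expanding $\Norm{g_t^n}{L^p(\mu_n)}^p$ and applying Jensen's (H\"older's) inequality to the normalized double average in \eqref{eq:gnt} gives
\begin{equation*}
  g_t^n(z)^p\leq\frac{1}{V_n(z,t)^2}\iint_{B_n(z,t)\times B_n(z,t)}\frac{\Delta f(x,y)^p}{t^p}\,d\mu(x)\,d\mu(y).
\end{equation*}
Integrating in $z\in X_n$ and using Fubini, we exchange the order. For fixed $(x,y)\in X_n\times X_n$, the set of $z\in X_n$ with $x,y\in B(z,t)$ is nonempty only when $\rho(x,y)<2t$, and it is contained in $B_n(x,t)$.

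The key comparison is $V_n(z,t)\sim V(x,\min\{t,\operatorname{diam}X_n\})$ for $z\in B(x,t)$. This follows from \eqref{eq:bdSubsets} in Proposition~\ref{prop:bdSubsets} together with doubling, since $B(z,t)\subset B(x,2t)$ and conversely. We choose $n_0$ so that $\operatorname{diam}(X_n)\geq 1$ for $n\geq n_0$, which is possible by the bounded exhaustion property. Then for $t\in(0,1]$ we get $V_n(z,t)\sim V(x,t)$ uniformly in $n$. Consequently
\begin{equation*}
  \int_{X_n}g_t^n(z)^p\,d\mu(z)\lesssim\iint_{X_n\times X_n}\mathbf{1}_{\{\rho(x,y)<2t\}}\frac{\Delta f(x,y)^p}{t^p}\,\frac{\mu(B_n(x,t))}{V(x,t)^2}\,d\mu(x)\,d\mu(y),
\end{equation*}
and $\mu(B_n(x,t))\leq V(x,t)$ leaves a factor $V(x,t)^{-1}$. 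When $\rho(x,y)<2t$, doubling gives $V(x,t)\gtrsim V(x,2t)\geq V(x,\rho(x,y))=V(x,y)$, so this factor is at most a constant times $V(x,y)^{-1}$.

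Next we multiply by $t^{-p(s-1)-1}$ and integrate over $t\in(0,1)$, using Tonelli again. The $t$-integral is
\begin{equation*}
  \int_{\rho(x,y)/2}^{1}t^{-ps-1}\,dt\leq\int_{\rho(x,y)/2}^\infty t^{-ps-1}\,dt=\frac{2^{ps}}{ps}\,\rho(x,y)^{-ps}.
\end{equation*}
This is exactly the factor $\frac{ps}{2^{ps}}$ in \eqref{eq:weakLp>limsup}, and what remains is $\iint_{X_n\times X_n}\Delta f^p\rho^{-ps}V^{-1}$, i.e.\ the left-hand side.

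The only delicate step is the volume comparison. We must ensure that the constants coming from \eqref{eq:bdSubsets} and from doubling are independent of $n$ and $s$, and that truncating at $t\leq1$ is what allows $\operatorname{diam}X_n$ to be replaced by $t$. This is the reason $n_0$ is needed. Measurability of $g_t^n$ in $(z,t)$ is routine.
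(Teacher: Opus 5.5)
Your proof is correct and follows essentially the same route as the paper. The paper writes the left-hand side via the layer-cake identity $ps\int_0^\infty\mathcal K^{X_n}_t\,t^{-ps-1}\,dt$, with $\mathcal K^{X_n}_t:=\iint_{\{\rho<t\}}\mathbf 1_{X_n\times X_n}(\Delta f)^pV^{-1}$. It then quotes \cite[Lemma 3.1]{DMS} for the Fubini/volume-comparison bound that you derive by hand, applies H\"older/Jensen as you do, and uses \eqref{eq:bdSubsets} for $t<1\le\operatorname{diam}(X_n)$ to pass from $V$ to $V_n$.

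The only slip is your choice of $n_0$. If $\operatorname{diam}(X)<1$ you cannot have $\operatorname{diam}(X_n)\ge1$. In that case, however, $X_n=X$ for $n\ge n_0$ by bounded exhaustion, so $V_n=V$ and your comparison holds trivially for all $t$; this is how the paper treats the bounded case.
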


\begin{proof}
Starting with a generic $E$, we first note that
\begin{equation}\label{eq:intKtE}
\begin{split}
  \iint_{E\times E}\frac{(\Delta f)^p}{\rho^{ps}V}
  &=\iint_{E\times E}\frac{(\Delta f)^p}{V}ps\int_{\rho}^\infty\frac{dt}{t^{ps+1}} \\
  &=ps\int_0^\infty\Big[\iint_{\{\rho<t\}}
  \frac{\mathbf{1}_{E\times E}(\Delta f)^p}{V}\Big]\frac{dt}{t^{ps+1}} \\
  &=:ps\int_0^\infty\mathcal K_t^E\frac{dt}{t^{ps+1}}
  =\frac{ps}{2^{ps}}\int_0^\infty\mathcal K_{2t}^E\frac{dt}{t^{ps+1}}.
\end{split}
\end{equation}

From \cite[Lemma 3.1(i--ii)]{DMS} followed by H\"older's inequality, we infer that
\begin{equation}
\begin{split}
   \mathcal K_{2t}^E\gtrsim\mathcal S_t^E
   &:=\int_X\frac{1}{V(z,t)^2}\Big[\iint_{B(z,t)\times B(z,t)} \mathbf{1}_{E\times
   E}(\Delta f)^p \Big]\,d\mu(z) \\
   &\geq
   \int_X\Big[\frac{1}{V(z,t)^2} \iint_{B(z,t)\times B(z,t)} \mathbf{1}_{E\times E}\Delta f
   \Big]^p\, d\mu(z);
\end{split}\label{eq:DMS3.1}
\end{equation}
strictly speaking, \cite[Lemma 3.1]{DMS} is formulated for $E=X$ only,
but an inspection of the proof shows that the parts (i) and (ii) apply to a generic function $F$
on $X\times X$
in place of $\Delta f$, so we can in particular take $F=\mathbf{1}_{E\times E}\Delta f$.

Now, in \eqref{eq:DMS3.1}, we choose $E=X_n$, one of the bounded subsets constructed in
Proposition \ref{prop:bdSubsets}. If $z\in X_n$ and $t\leq\operatorname{diam}(X_n)$, then
\begin{equation}
   V(z,t)=V(z,\min\{t,\operatorname{diam}(X_n)\})\sim
   \mu(B(z,t)\cap X_n)\quad\text{by \eqref{eq:bdSubsets}}
   \label{eq:bdSubsets2}
\end{equation}
and hence
\begin{equation}
\begin{split}
   \mathcal K_{2t}^{X_n}
   &\overset{\text{\eqref{eq:DMS3.1}}}{\gtrsim}
   \int_{X_n}\Big[\frac{1}{V(z,t)^2}\iint_{B(z,t)\times B(z,t)}
   \mathbf{1}_{X_n\times X_n}\Delta f\Big]^p\,d\mu(z) \\
   &\overset{\text{\eqref{eq:bdSubsets2}}}{\sim}
    \int_{X_n}\Big[\frac{1}{\mu(B(z,t)\cap X_n)^2}
   \iint_{B(z,t)\times B(z,t)}
    \mathbf{1}_{X_n\times X_n}\Delta f\Big]^p\,d\mu(z)     \\
   & \overset{\text{\eqref{eq:gnt}}}{=}\int_{X_n} [t g^n_t(z)]^p\,d\mu_n(z).
\end{split}\label{eq:KtXn}
\end{equation}

Now recall that the sets $X_n$ from Proposition \ref{prop:bdSubsets} have the bounded
exhaustion property (Definition \ref{def:bdExhaustion}). If $X$ itself is bounded, it means
that $X_n=X$ for $n\geq n_0$.
In this case, \eqref{eq:bdSubsets2} and \eqref{eq:KtXn} hold for all $t\in(0,\infty)$. If
$X$ is unbounded, then $\operatorname{diam}(X_n)\to\operatorname{diam}(X)=\infty$. Thus, for
some $n_0$, it follows that all $n\geq n_0$ satisfy $\operatorname{diam}(X_n)\geq 1$ and
hence \eqref{eq:bdSubsets2} and \eqref{eq:KtXn} hold in particular for
$t\in(0,1)\subset(0,\operatorname{diam}(X_n))$. Thus, in either case, we conclude that
\begin{equation*}
  \mathcal K_{2t}^{X_n}
  \gtrsim (t\Norm{g_t^n}{L^p(\mu_n)})^p,\qquad t\in(0,1),\quad n\geq n_0.
\end{equation*}
Substituting this into \eqref{eq:intKtE}, we obtain
\begin{equation*}
\begin{split}
  \Norm{\mathbf{1}_{X_n\times X_n}\Delta f}{L^p(\rho^{-ps}V^{-1})}^p
  &\geq\frac{ps}{2^{ps}}\int_0^1\mathcal K_{2t}^{X_n}\frac{dt}{t^{ps+1}} \\
  &\gtrsim\frac{ps}{2^{ps}}\int_0^1\Norm{g_t^n}{L^p(\mu_n)}^p \frac{dt}{t^{p(s-1)+1}}
\end{split}
\end{equation*}
for $n\geq n_0$, and this completes the proof.
\end{proof}

\begin{lemma}\label{lem:2liminf}
Let $h:(0,1)\to[0,\infty]$ be a measurable function and $p\in[1,\infty)$. Then
\begin{equation*}
  \liminf_{s\nearrow 1}(1-s)ps\int_0^1\frac{h(t)\,dt}{t^{p(s-1)+1}}
  \geq \liminf_{t\searrow 0}h(t).
\end{equation*}
\end{lemma}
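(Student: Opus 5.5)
The idea is that the kernel $(1-s)ps\,t^{-p(s-1)-1}=(1-s)ps\,t^{p(1-s)-1}$ on $(0,1)$ is, up to a factor tending to $1$, a probability density that concentrates near $t=0$ as $s\nearrow1$. Indeed,
\begin{equation*}
  \int_0^1 t^{p(1-s)-1}\,dt=\frac{1}{p(1-s)},
\end{equation*}
so the kernel has total mass exactly $s\to1$. I will first reduce to the nontrivial case: if $L:=\liminf_{t\searrow0}h(t)=0$ there is nothing to prove. Otherwise fix any $a\in(0,L)$; in the case $L=\infty$, $a$ may be arbitrarily large. By the definition of $\liminf$, choose $\delta\in(0,1)$ with $h(t)\geq a$ for all $t\in(0,\delta)$.

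Since $h\geq0$, I can discard the integral over $[\delta,1)$ and estimate
\begin{equation*}
  (1-s)ps\int_0^1\frac{h(t)\,dt}{t^{p(s-1)+1}}
  \geq a(1-s)ps\int_0^\delta t^{p(1-s)-1}\,dt
  = a\,s\,\delta^{p(1-s)}.
\end{equation*}
As $s\nearrow1$, both $s\to1$ and $\delta^{p(1-s)}\to\delta^0=1$, so the $\liminf$ of the left-hand side is at least $a$. Letting $a\nearrow L$ gives the claim, including the case $L=\infty$. Measurability of $h$ is only needed so that the integral makes sense, possibly as $+\infty$, which is harmless for a lower bound.

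There is no real obstacle here. The only point needing care is that the mass of the kernel on any fixed interval $[\delta,1)$ is negligible, namely $s\bigl(1-\delta^{p(1-s)}\bigr)\to0$. This is exactly why only the small-$t$ behaviour of $h$ survives, and it is handled above simply by dropping that part using $h\geq0$.
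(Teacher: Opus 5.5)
Your proposal is correct and follows essentially the same argument as the paper. Both fix a level below $L$, use $h\geq0$ to discard the integral over $[\delta,1)$, evaluate the remaining integral explicitly as $s\,\delta^{p(1-s)}$ times that level, and then let the level increase to $L$.
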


\begin{proof}
Let us denote the $\liminf$ on the right by $L$. The claim is trivial if $L=0$, so suppose
that $L\in(0,\infty]$.
By the definition of the $\liminf$, given $L'\in(0,L)$, we can find $\delta\in(0,1)$ such that
$h(t)>L'$ for all $t\in(0,\delta)$. Hence
\begin{equation*}
  \int_0^1\frac{h(t)\, dt}{t^{p(s-1)+1}}
  \geq \int_0^\delta L' t^{p(1-s)-1}\,dt
  =L'\frac{\delta^{p(1-s)}}{p(1-s)}
\end{equation*}
and
\begin{equation*}
  \liminf_{s\nearrow 1}(1-s)ps\int_0^1\frac{h(t)\,dt}{t^{p(s-1)+1}}
  \geq \liminf_{s\nearrow 1} sL'\delta^{p(1-s)}=L'.
\end{equation*}
The proof is finished by taking the limit $L'\to L$.
\end{proof}

\begin{lemma}\label{lem:weakLp>liminf}
Let $(X,\rho,\mu)$ be a doubling metric measure space.
Let $\gamma\in\mathbb R$, $p\in(1,\infty)$, and let
$f\in L^1_{\mathrm{loc}}(\mu)$ be such that
\begin{equation}\label{eq:L=supE}
  L:=
  \sup_E\liminf_{s\nearrow 1}(1-s)^{\frac1p}
  \Norm{\mathbf{1}_{E\times E}\Delta f}{L^p(\rho^{-ps}V^{-1})}<\infty,
\end{equation}
where the supremum is taken over all bounded measurable sets $E\subset X$.
Let the assumptions and notation of Lemma \ref{lem:gnt} be in force.
Then, for every $n\geq n_0$, there are a sequence $t_k\searrow 0$ and $g^n\in L^p(\mu_n)$ such
that the functions \eqref{eq:gnt} satisfy
\begin{equation*}
  g^n_{t_k}\rightharpoonup g^n\quad\text{weakly in }L^p(\mu_n)
\end{equation*}
and
\begin{equation*}
  \Norm{g^n}{L^p(\mu_n)}\lesssim L.
\end{equation*}
\end{lemma}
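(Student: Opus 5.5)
We take $E=X_n$ with $n\geq n_0$ and combine Lemma \ref{lem:gnt} with Lemma \ref{lem:2liminf}. The goal is a single sequence $t_k\searrow 0$ along which $\Norm{g^n_{t_k}}{L^p(\mu_n)}$ is bounded by a constant multiple of $L$. Reflexivity of $L^p(\mu_n)$ for $p\in(1,\infty)$ then supplies a weakly convergent subsequence.

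First, by the definition \eqref{eq:L=supE} of $L$ with $E=X_n$, which is bounded by Proposition \ref{prop:bdSubsets}, we have
\begin{equation*}
  \liminf_{s\nearrow 1}(1-s)\Norm{\mathbf{1}_{X_n\times X_n}\Delta f}{L^p(\rho^{-ps}V^{-1})}^p\leq L^p.
\end{equation*}
Multiplying \eqref{eq:weakLp>limsup} by $(1-s)$ and taking $\liminf_{s\nearrow1}$ gives
\begin{equation*}
  L^p\gtrsim\liminf_{s\nearrow1}(1-s)\frac{ps}{2^{ps}}\int_0^1\frac{h(t)\,dt}{t^{p(s-1)+1}},\qquad h(t):=\Norm{g^n_t}{L^p(\mu_n)}^p.
\end{equation*}
The factor $2^{-ps}$ tends to $2^{-p}$, so it only affects constants. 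Lemma \ref{lem:2liminf} then yields $\liminf_{t\searrow0}h(t)\lesssim L^p<\infty$. Before applying it, we check that $h$ is measurable in $t$. This follows from Fubini, since $g^n_t(z)$ is built from integrals of a nonnegative measurable function over balls whose indicator $\mathbf 1_{\rho(z,\cdot)<t}$ is jointly measurable, and since $V_n(z,t)$ is measurable in $(z,t)$. If a lower-semicontinuity subtlety arises here, it suffices to work with a measurable majorant.

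Next, by the definition of $\liminf$ there is a sequence $t_k\searrow0$ with $\Norm{g^n_{t_k}}{L^p(\mu_n)}^p\leq 2\liminf_{t\searrow0}h(t)+1/k$. Hence this sequence is bounded in $L^p(\mu_n)$ by a constant times $L$. If $L=0$, we instead choose the $t_k$ so that the norms tend to zero. Since $p\in(1,\infty)$, the space $L^p(\mu_n)$ is reflexive, and the Banach--Alaoglu/Eberlein--\v Smulian theorem lets us pass to a subsequence, still denoted $t_k$, with $g^n_{t_k}\rightharpoonup g^n$ weakly. Weak lower semicontinuity of the norm then gives
\begin{equation*}
  \Norm{g^n}{L^p(\mu_n)}\leq\liminf_k\Norm{g^n_{t_k}}{L^p(\mu_n)}\lesssim L.
\end{equation*}

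The only delicate point is the measurability of $t\mapsto h(t)$ needed in Lemma \ref{lem:2liminf}; everything else is a direct chaining of the two preceding lemmas with weak compactness. The implicit constants depend only on the doubling constant and on $p$, through Lemma \ref{lem:gnt}, and are uniform in $n$.
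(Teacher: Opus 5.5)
Your proposal is correct and follows the paper's proof essentially step for step. You take $E=X_n$ and chain \eqref{eq:weakLp>limsup} with Lemma \ref{lem:2liminf} to get $\liminf_{t\searrow0}\Norm{g^n_t}{L^p(\mu_n)}^p\lesssim L^p$, pick $t_k\searrow0$ realizing this bound, and then use reflexivity of $L^p(\mu_n)$ to extract a weakly convergent subsequence, with the norm bound coming from weak lower semicontinuity. Your extra remarks, namely the measurability of $t\mapsto\Norm{g^n_t}{L^p(\mu_n)}^p$ via Tonelli and the harmless factor $2^{-ps}\geq 2^{-p}$, are details the paper leaves implicit.
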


\begin{proof}
Combining several results from above, we find that
\begin{equation*}
\begin{split}
  \liminf_{t\searrow 0}\Norm{g^n_t}{L^p(\mu)}^p
  &\leq\liminf_{s\nearrow 1}(1-s)ps\int_0^1\frac{\Norm{g^n_t}{L^p(\mu)}^p}{t^{p(s-1)+1}}\,dt
  \quad\text{by Lemma \ref{lem:2liminf}} \\
&  \lesssim \liminf_{s\nearrow 1}(1-s)
   \Norm{ \mathbf{1}_{X_n\times X_n}\Delta f }{ L^p(\rho^{-ps}V^{-1})}^p
  \quad\text{by \eqref{eq:weakLp>limsup}} \\
& \leq L^p\quad\text{by \eqref{eq:L=supE}}.
\end{split}
\end{equation*}

From the definition of $\liminf$, we deduce the existence of a sequence $t_k\searrow 0$ with
\begin{equation*}
   \Norm{g^n_{t_k}}{L^p(\mu)}
 \lesssim L.
\end{equation*}
The bounded sequence in the reflexive Banach space $L^p(\mu_n)$ (this is where we use
$p\in(1,\infty)$) has a weakly convergent subsequence $g^n_{t_k}\rightharpoonup g^n$, which
we continue to denote by the same symbol. The upper bound on $\Norm{g^n}{L^p(\mu_n)}$ is then a
standard property of weak limits.
This completes the proof of Lemma \ref{lem:weakLp>liminf}.
\end{proof}

The functions of the form $g^n_{t_k}$ are upper gradients, up to a certain scale, of
mollifications of $f$; we will apply the following lemma with $X_n$ in place of $X$.

\begin{lemma}[\cite{DMS}]\label{lem:DMS1869}
Let $(X,\rho,\mu)$ be a doubling metric measure space.
Given $f\in L^1_{\mathrm{loc}}(X)$, consider the functions
\begin{equation}\label{eq:ft-gt}
  f_t(x):=\fint_{B(x,t)}f,\qquad
  g_t(x):=\fint_{B(x,t)}\fint_{B(x,t)}\frac{\Delta f}{t},\qquad t>0.
\end{equation}
Then, for a constant $c$ that only depends on the doubling properties of $X$,
\begin{enumerate}[\rm(i)]
  \item\label{it:ft->f} $f_t\to f$ pointwise $\mu$-a.e.\ as $t\searrow 0$;
  \item\label{it:gt-uppergrad} $c\cdot g_{2t}$ is an upper gradient of $f_t$ up to scale
  $\frac12 t$.
\end{enumerate}
\end{lemma}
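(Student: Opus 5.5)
Part \eqref{it:ft->f} is the Lebesgue differentiation theorem, which holds on doubling metric measure spaces for $f\in L^1_{\mathrm{loc}}(\mu)$: almost every point is a Lebesgue point, so $\abs{f_t(x)-f(x)}\le\fint_{B(x,t)}\abs{f-f(x)}\,d\mu\to 0$ as $t\searrow0$. I would simply quote this.

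The substance is \eqref{it:gt-uppergrad}. First I would prove a two-point estimate: if $x,y\in X$ with $\rho(x,y)\le t$, then
\begin{equation*}
  \abs{f_t(x)-f_t(y)}\lesssim \rho(x,y)\,\big[g_{2t}(x)+g_{2t}(y)\big],
\end{equation*}
or a variant with $\rho(x,y)$ replaced by $t$. To get it, write $f_t(x)-f_t(y)=\fint_{B(x,t)}\fint_{B(y,t)}(f(u)-f(v))\,d\mu(v)\,d\mu(u)$. Both balls lie in $B(x,2t)$, and $V(x,t)\sim V(y,t)\sim V(x,2t)$ by doubling. This gives
\begin{equation*}
  \abs{f_t(x)-f_t(y)}\lesssim\fint_{B(x,2t)}\fint_{B(x,2t)}\Delta f=2t\,g_{2t}(x).
\end{equation*}
So the increment over any pair at distance $\le t$ is $\lesssim t\,g_{2t}(x)$. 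By symmetry it is also $\lesssim t\,g_{2t}(y)$.

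Next I would pass to curves. Take a rectifiable curve $\gamma$ with $\ell(\gamma)>t/2$, and parametrize it by arc length on $[0,\ell]$. Partition $[0,\ell]$ into $m\sim \ell/t$ consecutive subintervals $I_j$ of length between $t/4$ and $t/2$; this is possible because $\ell>t/2$. For any $\sigma\in I_j$ and $\sigma'\in I_{j+1}$, the points $\gamma(\sigma),\gamma(\sigma')$ are at distance $\le t$. Telescoping along chosen points $\sigma_j\in I_j$ and applying the two-point bound gives
\begin{equation*}
  \abs{f_t(\gamma(\ell))-f_t(\gamma(0))}\lesssim\sum_j t\,g_{2t}(\gamma(\sigma_j)).
\end{equation*}
The endpoints are included among the $\sigma_j$.

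The main obstacle is turning this Riemann-type sum into $\int_\gamma g_{2t}$, since $g_{2t}$ is not constant along pieces. I would first try to show that $g_{2t}$ is essentially constant at scale $t$: for $\rho(z,z')\le t/2$, doubling gives $B(z,2t)\subset B(z',3t)$ and hence $g_{2t}(z)\lesssim g_{3t}(z')$. This mismatch of radii is exactly why the statement has $g_{2t}$ for $f_t$. To close the loop, I would instead run the telescoping bound with intermediate balls of radius about $t$ built on $B(\gamma(\sigma),t/2)$. Concretely, I would rederive the two-point estimate as $\abs{f_t(x)-f_t(y)}\lesssim t\,g_{2t}(z)$ for every $z$ with $\rho(z,x),\rho(z,y)\le t/4$. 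Here $B(x,t)\cup B(y,t)\subset B(z,2t)$ and $V(z,2t)\sim V(x,t)$, so the same computation applies. Then $t\,g_{2t}(\gamma(\sigma_j))$ can be replaced by $\inf_{\sigma\in I_j}t\,g_{2t}(\gamma(\sigma))\lesssim\int_{I_j}g_{2t}(\gamma(\sigma))\,d\sigma$, because $\abs{I_j}\ge t/4$. Summing over $j$ yields $\abs{\int_{\partial\gamma}f_t}\le c\int_\gamma g_{2t}$, so $c\,g_{2t}$ is an upper gradient of $f_t$ up to scale $t/2$. The care goes into choosing the subinterval lengths so that consecutive chosen points and every point of $I_j$ all lie within $t/4$ of a common $z$.
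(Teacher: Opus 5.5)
Your proposal is correct and follows the same route as the paper. Part (i) is the Lebesgue differentiation theorem. Part (ii), which the paper takes from \cite[p.~1869]{DMS}, is the chaining argument you describe: the doubling bound $|f_t(x)-f_t(y)|\lesssim t\,g_{2t}(z)$ for $z$ close to $x$ and $y$, telescoped along a partition of the arc-length parametrized curve into pieces of length comparable to $t$, with the point values of $g_{2t}$ replaced by their averages over the pieces.

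One piece of bookkeeping needs tidying. As written, your $\frac14 t$ proximity requirement is incompatible with $|I_j|\ge\frac14 t$. It is also unnecessary: $B(x,t)\cup B(y,t)\subset B(z,2t)$ with $V(z,2t)\le C_\mu^2\min\{V(x,t),V(y,t)\}$ already holds when $\rho(z,x),\rho(z,y)\le t$. So pieces of length in $(\frac14 t,\frac12 t]$, telescoped at their endpoints, work directly. If you prefer to keep the $\frac14 t$ condition, take pieces of length in $(\frac18 t,\frac14 t]$; this only changes the constant.
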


\begin{proof}
Lemma~\ref{lem:DMS1869}\eqref{it:ft->f} is the well-known Lebesgue differentiation theorem;
see e.g. \cite[Theorem 1.8]{Heinonen:book}. The proof of
Lemma~\ref{lem:DMS1869}\eqref{it:gt-uppergrad} is contained in \cite[page 1869]{DMS}.
This completes the proof of Lemma \ref{lem:DMS1869}.
\end{proof}

Now, we are ready to prove \eqref{eq:DMS-lower-v2}.
\begin{proof}[Proof of \eqref{eq:DMS-lower-v2}]
We assume that the right-hand side of \eqref{eq:DMS-lower-v2} is finite, since otherwise
there is nothing to prove.
We first work under the stronger assumption that $f\in L^\infty(\mu)$. In this case, $f_N=f$
for all sufficiently large $N$, and we are reduced to proving that
\begin{equation}\label{eq:DMS-lower-v3}
  \Norm{f}{\dot W^{1,p}(\mu)}
  \lesssim\sup_E\liminf_{s\nearrow 1}(1-s)^{\frac1p}
  \Norm{\mathbf{1}_{E\times E}\Delta f}{L^p(\rho^{-ps}V^{-1})}=:L.
\end{equation}

Let $X_n$ be a sequence of bounded subsets of $X$ as constructed in Proposition
\ref{prop:bdSubsets}, with measure $\mu_n:=\mu|_{X_n}$ and balls $B_n(x,t):=B(x,t)\cap X_n$
of volume $V_n(x,t):=\mu_n(B_n(x,t))$.
With functions $f^n:=f|_{X_n}$, we define
\begin{equation}\label{eq:fnt-gnt}
  f^n_t:=\fint_{B_n(x,t)}f^n,\quad
  g^n_t:=\fint_{B_n(x,t)}\fint_{B_n(x,t)}\frac{\Delta f^n}{t},\quad t>0,
\end{equation}
i.e., the same as \eqref{eq:ft-gt} with $f^n$ in place of $f$. Note that the definition of
$g^n_t$ in \eqref{eq:fnt-gnt} is consistent with \eqref{eq:gnt}. Combining the results of
Lemmas \ref{lem:weakLp>liminf} and \ref{lem:DMS1869}, we then
conclude that, for each $n\geq n_0$,
there are a sequence $t_k\searrow 0$ and a function $g^n\in L^p(\mu_n)$ such that
\begin{enumerate}[\rm(i)]
  \item $c\cdot g_{t_k}^n$ is an upper gradient of $f^n_{\frac12 t_k}$ up to scale $\frac14
  t_k$, where $\frac14 t_k\searrow 0$;
  \item $f^n_{\frac12 t_k}\to f^n$ pointwise $\mu_n$-a.e., and $g^n_{t_k}\rightharpoonup
  g^n$ weakly in $L^p(\mu_n)$.
\end{enumerate}
These are precisely the assumptions of Theorem \ref{thm:ACDM27},
which guarantees that $c\cdot g^n\in L^p(\mu_n)$ is a $p$-weak upper gradient of $f^n=f|_{X_n}$ in
the sense of test plans, for each $n\geq n_0$. Thus, we are in a position to apply
Proposition \ref{prop:gluing-ug}, which guarantees that the original function $f$ has a
$p$-weak upper gradient $g$ in the sense of test plans such that
\begin{equation*}
\begin{split}
  \Norm{g}{L^p(\mu)}  &\leq\sup_{n\geq n_0}\Norm{g^n}{L^p(\mu_n)}
  \quad\text{by Proposition \ref{prop:gluing-ug}} \\
  &\lesssim L
  \quad\text{by Lemma \ref{lem:weakLp>liminf}}.
\end{split}
\end{equation*}
By Definition \ref{def:dotW1p}, this means that $f\in \dot W^{1,p}(\mu)$ and
\begin{equation*}
  \Norm{f}{\dot W^{1,p}(\mu)}\lesssim L,
\end{equation*}
which completes the proof under the additional assumption that $f\in L^\infty(\mu)$.

Let us then consider a general $f\in L^1_{\mathrm{loc}}(\mu)$. Its truncations satisfy
$f_N\in L^\infty(\mu)$, a case that we already considered, and hence we can apply
\eqref{eq:DMS-lower-v3} to $f_N$ in place of $f$:
\begin{equation*}
  \Norm{f_N}{\dot W^{1,p}(\mu)}
  \lesssim\sup_E\liminf_{s\nearrow 1}(1-s)^{\frac1p}
  \Norm{\mathbf{1}_{E\times E}\Delta f_N}{L^p(\rho^{-ps}V^{-1})}=:L_N.
\end{equation*}
We can now apply Proposition \ref{prop:truncW1p} to conclude that
\begin{equation*}
  \Norm{f}{\dot W^{1,p}(\mu)}
  =\sup_{N\in\mathbb N}\Norm{f_N}{\dot W^{1,p}(\mu)}
  \lesssim\sup_{N\in\mathbb N}L_N
  =\sup_{E,N}\liminf_{s\nearrow 1}(1-s)^{\frac1p}
  \Norm{\mathbf{1}_{E\times E}\Delta f_N}{L^p(\rho^{-ps}V^{-1})}.
\end{equation*}
Taking the $p$-th power gives \eqref{eq:DMS-lower-v2} and concludes the proof
of the said inequality.
\end{proof}

We have hence proved the lower bound \eqref{eq:DMS-lower-v2} of Proposition \ref{prop:DMS}.
The upper bound \eqref{eq:DMS-upper-v2} will be proved in the following section.

\section{Nguyen-type characterizations}\label{sec:Nguyen}

In this section, we complete the proof of Proposition \ref{prop:DMS} and also use it to
obtain another characterization of the Sobolev space, combining some elements of both
Theorem \ref{thm:Frank-type} and Proposition \ref{prop:DMS}: as in Theorem
\ref{thm:Frank-type}, the characterization is in terms of an $L^{p,\infty}$ norm; as in
Proposition \ref{prop:DMS}, it involves the differences $\Delta f(x,y)=\abs{f(x)-f(y)}$.

\begin{theorem}\label{thm:Nguyen-type}
Let $p\in(1,\infty)$, $\gamma\in\mathbb{R}\setminus\{0\}$,
and $(X,\rho,\mu)$ be a complete doubling metric measure space.
If $f\in L^1_{\mathrm{loc}}(\mu)$, then
\begin{subequations}\label{eq:Nguyen-type}
\begin{align}
  \Norm{f}{\dot W^{1,p}(\mu)}
  &\lesssim\Norm{\rho^{-1-\gamma}\Delta f}{L^{p,\infty}(\rho^{p\gamma}V^{-1})}
    \label{eq:Nguyen-lower} \\
  &\lesssim\sup_{\theta\in(0,1]}
    \Norm{\rho^{-\gamma} f_\theta^*}{L^{p,\infty}(\rho^{\gamma p}V^{-1})}
   \lesssim\Norm{f}{\dot M^{1,p}(\mu)}.
   \label{eq:Nguyen-upper}
\end{align}
\end{subequations}
If, in addition, $X$ is a $p$-Poincar\'e space, then all four quantities in
\eqref{eq:Nguyen-type} are comparable; thus, $f\in\dot M^{1,p}(\mu)$ if and only if either of
the two quantities in the middle of \eqref{eq:Nguyen-type} is finite.
\end{theorem}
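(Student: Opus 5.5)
The upper bounds \eqref{eq:Nguyen-upper} and the lower bound \eqref{eq:Nguyen-lower} will be handled separately. The upper bounds follow the pattern of the proof of \eqref{eq:Frank-upper}. The lower bound is reduced to the localized BBM lower bound \eqref{eq:DMS-lower-v2}, following \cite[Section 4]{BSVY}.

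For the middle inequality in \eqref{eq:Nguyen-upper}, I would use the pointwise bound $\Delta f(x,y)\lesssim \rho(x,y)\,[f_\theta^*(x,y)+f_\theta^*(y,x)]$ at Lebesgue points. This should hold for any fixed $\theta\in(0,1]$, up to constants depending on $\theta$. It comes from the usual telescoping of ball averages along dyadic radii $\theta 2^{-j}\rho(x,y)$ centred at $x$ and at $y$, plus one comparison of two balls of radius $\sim\rho(x,y)$; this is essentially \cite[(2.14)]{DLYYZ}, and I would take it from there.

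Since $V(x,y)\sim V(y,x)$ by doubling, the measure $\rho^{p\gamma}V^{-1}\,d\mu\,d\mu$ is symmetric up to constants. Hence the swapped term has a comparable weak norm, and the first $\lesssim$ follows from quasi-subadditivity of $L^{p,\infty}$.

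For the last inequality, Lemma \ref{lem:Msharpf<Mh} gives $f_\theta^*(x,y)\lesssim Mh(x)$ for any Haj\l{}asz upper gradient $h$. Then, for fixed $x$, the set $\{y:\rho(x,y)^{-\gamma}Mh(x)>c\lambda\}$ is a ball or a complement of a ball, depending on the sign of $\gamma$. Its $\rho^{p\gamma}V^{-1}$-measure is computed by the dyadic annuli estimate
\[
\int_{\{\rho(x,\cdot)\lessgtr r\}}\rho(x,y)^{p\gamma}V(x,y)^{-1}\,d\mu(y)\lesssim r^{p\gamma}.
\]
This estimate uses $\gamma>0$ for small balls and $\gamma<0$ for complements; in the latter case no reverse doubling is needed, since each annulus contributes $\lesssim 2^{kp\gamma}r^{p\gamma}$ and the series converges geometrically. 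This yields $\lambda^p\cdot(\ldots)\lesssim \Norm{Mh}{L^p}^p\lesssim\Norm{h}{L^p}^p$, exactly as in the proof of \eqref{eq:Frank-upper}.

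For \eqref{eq:Nguyen-lower}, set $A:=\Norm{\rho^{-1-\gamma}\Delta f}{L^{p,\infty}(\rho^{p\gamma}V^{-1})}$. Since $\Delta f_N\le\Delta f$, it suffices by \eqref{eq:DMS-lower-v2} to show, for each bounded $E$, that
\[
\limsup_{s\nearrow 1}(1-s)\Norm{\mathbf 1_{E\times E}\Delta f_N}{L^p(\rho^{-ps}V^{-1})}^p\lesssim A^p,
\]
uniformly in $E$ and $N$. I would follow Brezis et al. Write
\[
\frac{(\Delta f_N)^p}{\rho^{ps}}=\Big(\frac{\Delta f_N}{\rho^{1+\gamma}}\Big)^{p}\rho^{p\gamma}\cdot\rho^{p(1-s)},
\]
and use the layer-cake formula in $\lambda$ for $G:=\rho^{-1-\gamma}\Delta f_N$ with respect to the measure $\rho^{p\gamma}V^{-1}$ on $E\times E$. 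The key features are that $G\le 2N\rho^{-1-\gamma}$ and that $\rho\le\operatorname{diam}E$ on $E\times E$. I would split the $\lambda$-integral at a threshold and bound each level set through the weak norm $A$. The factor $\rho^{p(1-s)}$ should produce exactly the $1/(1-s)$ blow-up, with constant $\lesssim A^p$ as $s\nearrow 1$, via an interpolation estimate of the form
\[
\Norm{G\rho^{\,1-s}}{L^p}^p\lesssim\frac{1}{1-s}\,\Norm{G}{L^{p,\infty}}^{p}
\]
plus lower-order terms depending on $N$ and $\operatorname{diam}E$.

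The main obstacle is making this last interpolation work for both signs of $\gamma$. One needs to control the portions where $\rho$ is small or large relative to the level $\lambda$; the truncation by $N$ and the boundedness of $E$ are what make the tails finite. If a direct interpolation fails, the fallback is to go through the Lorentz duality argument in \cite[Section 4]{BSVY}: pair $\Delta f_N$ against $\rho^{-ps+p-1}$-type weights lying in $L^{p',1}$, with norm $\sim(1-s)^{-1/p'}$.

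The additional statement, that all four quantities are comparable under a $p$-Poincar\'e inequality, then follows from Proposition \ref{prop:MWE}. Finally, \eqref{eq:DMS-upper-v2} can be proved in the same section by the easy direction: bound $\Delta f_N$ by a Haj\l{}asz gradient $h$ and integrate. This gives $\lesssim\Norm{h}{L^p}^p$ after the $(1-s)$ normalization, localized to $E$.
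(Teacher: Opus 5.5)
Your overall architecture matches the paper's: Lemma~\ref{lem:Msharpf<Mh} plus dyadic annuli for the last inequality, reduction of \eqref{eq:Nguyen-lower} to \eqref{eq:DMS-lower-v2} via a bound uniform in $E$ and $N$, and Proposition~\ref{prop:MWE} for the Poincar\'e case. Your treatment of the last inequality in \eqref{eq:Nguyen-upper} is correct. Two steps, however, do not work as written.

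\textbf{The single-$\theta$ pointwise bound.} The inequality $\Delta f(x,y)\lesssim_\theta\rho(x,y)[f_\theta^*(x,y)+f_\theta^*(y,x)]$ for one fixed $\theta$ is false. The quantity $f_\theta^*(x,y)$ only sees balls of radius in $[\theta\rho(x,y),2\theta\rho(x,y))$, whereas your telescoping runs through all radii $\theta2^{-j}\rho(x,y)$, $j\ge0$. For a counterexample, work in $\mathbb R^d$ with $\rho(x,y)=1$ and $f=\mathbf 1_{B(x,\varepsilon)}$, $\varepsilon\le\frac14$. Every ball $B$ has $m_f(B)\le2(\varepsilon/r_B)^d$, so $f_\theta^*(x',y')+f_\theta^*(y',x')\lesssim\varepsilon^d\theta^{-d-1}$ on $B(x,\varepsilon)\times B(y,\frac12)$. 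On that same set $\Delta f=1$, so letting $\varepsilon\to0$ breaks the bound. What the telescoping actually gives is \eqref{eq:swap}: $\Delta f/\rho\lesssim\sum_{j\ge0}2^{-j}(f^*_{2^{-j}}+f^*_{2^{-j}}\circ\operatorname{swap})$, with a different $\theta=2^{-j}$ at each scale. This is why the supremum over $\theta\in(0,1]$ is needed in \eqref{eq:Nguyen-upper}. It also means two-term quasi-subadditivity of $L^{p,\infty}$ is not enough, since you must sum infinitely many terms. That requires an equivalent ($r$-)norm on $L^{p,\infty}$, as in \eqref{eq:ar}.

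\textbf{The core estimate for the lower bound.} The estimate that carries \eqref{eq:Nguyen-lower} is $\limsup_{s\nearrow1}(1-s)\Norm{\mathbf 1_{E\times E}\Delta f_N}{L^p(\rho^{-ps}V^{-1})}^p\lesssim A^p$, uniformly in $E$ and $N$ (Lemma~\ref{lem:BSVY963}). You leave this as an unresolved ``main obstacle''. Your interpolation inequality is not a general property of $L^{p,\infty}$, and you do not say how the constraint $G\le2N\rho^{-1-\gamma}$ would enter.

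Your fallback of pairing against an $L^{p',1}$ weight with fixed exponents $p,p'$ does not scale correctly. The factor $(\Delta f_N)^{p-1}$ left in the weight can only be bounded by $(2N)^{p-1}$, which does not disappear as $s\nearrow1$.

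The mechanism that works makes the Lorentz exponent depend on $s$. Set $\nu:=\rho^{p\gamma}V^{-1}\,d\mu\,d\mu$ and $G:=\rho^{-1-\gamma}\Delta f_N$.
\begin{itemize}
\item For $\gamma>0$, write $(\Delta f_N)^p\rho^{-ps}V^{-1}=G^{p\frac{s+\gamma}{1+\gamma}}(\Delta f_N)^{p\frac{1-s}{1+\gamma}}\rho^{p\gamma}V^{-1}$. Apply \eqref{eq:BSVY4-2} with $q=\frac{1+\gamma}{s+\gamma}\searrow1$. The duality constant $q'=\frac{1+\gamma}{1-s}$ gives exactly the factor $(1-s)^{-1}$. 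The remaining factor is at most $(2N)^{\frac{p(1-s)}{1+\gamma}}\nu(E\times E)^{\frac{1-s}{1+\gamma}}\to1$, using that $\nu(E\times E)<\infty$ for $\gamma>0$.
\item For $\gamma<0$, $\nu(E\times E)$ is generally infinite (e.g.\ in $\mathbb R^d$), so that splitting fails. Take instead $q=\frac{2}{1+s}$ and keep a positive power of $\rho$ in the second factor: $(\mathbf 1_{E\times E}\Delta f_N\,\rho^{1-\gamma})^{\frac{p(1-s)}{2}}\le(2N)^{\frac{p(1-s)}{2}}\mathbf 1_{E\times E}\rho^{t}$, with $t=\frac{p(1-s)(1-\gamma)}{2}$. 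A dyadic-annuli count shows its $L^{\frac{2}{1-s},1}(\nu)$-norm is $O(1)$.
\end{itemize}
Without this two-case argument, in which the $N$- and $E$-dependent quantities enter only with exponents of order $1-s$, the lower bound is not established. Your direct Haj\l{}asz-gradient proof of \eqref{eq:DMS-upper-v2} is fine, but it is not needed here.
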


\begin{remark}\label{rem:Nguyen-finish}
Under the additional assumptions of the last claim of Theorem \ref{thm:Nguyen-type}, we have
the equivalence $\Norm{f}{\dot W^{1,p}(\mu)}\sim\Norm{f}{\dot M^{1,p}(\mu)}$ by Proposition
\ref{prop:MWE}; hence this last claim will immediately follow from \eqref{eq:Nguyen-type}
and the said equivalence, and we only need to prove \eqref{eq:Nguyen-type}.
\end{remark}

\begin{remark}
Except for the intermediate step involving $f_\theta^*$, case $X=\mathbb R^d$ is due to
Nguyen \cite[Theorem 2]{N:06} for $\gamma=-1$ and due to Brezis et al.\ \cite[Theorem
1.3]{BSVY} for all $\gamma\in\mathbb R\setminus\{0\}$ as in Theorem \ref{thm:Nguyen-type}.
For general $p$-Poincar\'e spaces but under stronger a priori assumptions on $f$, the case
$\gamma=-1$ was obtained by Di Marino and Squassina \cite[Theorem 1.5]{DMS} for $f\in
L^p(\mu)$ (sufficient for characterizing the inhomogeneous Sobolev space $W^{1,p}(\mu)$, but
not $\dot W^{1,p}(\mu)$) and the case of general $\gamma\in\mathbb R\setminus\{0\}$ by Dai
et al. \cite[Corollary 1.14]{DLYYZ} for $f\in\operatorname{Lip}(X)$.
\end{remark}

\begin{remark}
Although a norm equivalence is only achieved in a $p$-Poincar\'e space,
it is interesting to note that a comparison involving two different Sobolev norms is
achieved
in a much greater generality. This shares some of spirit of the recent results of
\cite{HXZ},
where \cite[Theorem 1.3]{HXZ} sandwiches a weak-type quantity like those
in the middle of \eqref{eq:Nguyen-type} between two different Sobolev type norms,
while \cite[Corollary 1.5]{HXZ} achieves an equivalence under additional assumptions on the
space.
However, in contrast to Theorem \ref{thm:Nguyen-type},
which provides a characterization of the whole {\em space} $\dot M^{1,p}(\mu)$,
\cite{HXZ} only compares the {\em norms} of Lipschitz functions of bounded support.
\end{remark}

Another variant of Theorem \ref{thm:Nguyen-type} in general metric measure spaces is as
follows. Although we do not assume any direct functional relationship (like Ahlfors
regularity) between  the distance $\rho$ and the volume $V$, we can nevertheless replace the
pair of distance factors $(\rho^{-\gamma},\rho^{p\gamma})$ on the function and the weight in
Theorem \ref{thm:Nguyen-type} by a corresponding pair of volume factors
$(V^{-\gamma},V^{p\gamma})$ in Theorem \ref{thm-Nguyen-type-v2}.

\begin{theorem}\label{thm-Nguyen-type-v2}
Let $p\in(1,\infty)$, $\gamma\in\mathbb{R}\setminus\{0\}$,
and $(X,\rho,\mu)$ be a complete and
doubling metric measure space. If $f\in L^1_{\mathrm{loc}}(\mu)$, then
\begin{subequations}\label{eq:Nguyen-type-v2}
\begin{align}
  \Norm{f}{\dot W^{1,p}(\mu)}
  &\lesssim\Norm{\rho^{-1}V^{-\gamma}\Delta f}{L^{p,\infty}(V^{\gamma p-1})}
    \label{eq:Nguyen-lower-v2} \\
  &\lesssim\sup_{\theta\in(0,1]}
    \Norm{V^{-\gamma} f_\theta^*}{L^{p,\infty}(V^{\gamma p-1})}
   \lesssim\Norm{f}{\dot M^{1,p}(\mu)}.
   \label{eq:Nguyen-upper-v2}
\end{align}
\end{subequations}
If, in addition, $X$ is a $p$-Poincar\'e space,
then all four quantities in \eqref{eq:Nguyen-type-v2} are comparable;
thus, $f\in\dot M^{1,p}(\mu)$ if and only if either of the two quantities
in the middle of \eqref{eq:Nguyen-type-v2} is finite.
\end{theorem}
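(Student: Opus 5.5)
The plan has three steps, taken in order: the middle inequality by a pointwise bound, the rightmost inequality by a weak-type computation in the volume variable, and the leftmost inequality by reduction to the localized BBM lower bound \eqref{eq:DMS-lower-v2}. The last claim then follows from Proposition \ref{prop:MWE}, exactly as in Remark \ref{rem:Nguyen-finish}.

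\textbf{Middle inequality.} I would prove a pointwise estimate of the form
\begin{equation*}
  \frac{\Delta f(x,y)}{\rho(x,y)}\lesssim f^*_\theta(x,y)+f^*_\theta(y,x)+\text{(dyadic tail terms)}
\end{equation*}
at Lebesgue points. The idea is a telescoping chain of averages over balls $B(x,2^{-j}\rho(x,y))$, as in \cite{DLYYZ}. Since $V(x,y)\sim V(y,x)$ by doubling, the weight $V^{\gamma p-1}$ is essentially symmetric. I would then control the level sets of $\rho^{-1}V^{-\gamma}\Delta f$ by those of $V^{-\gamma}f^*_\theta$ for suitable $\theta\in(0,1]$, using the same argument as in the distance version (Theorem \ref{thm:Nguyen-type}).

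\textbf{Rightmost inequality.} Let $h$ be a Haj\l{}asz upper gradient of $f$. By Lemma \ref{lem:Msharpf<Mh}, $f_\theta^*(x,y)\lesssim Mh(x)$. For fixed $x$, the set $\{y: V(x,y)^{-\gamma}Mh(x)>c\lambda\}$ is a ball or a ball complement, according to the sign of $\gamma$. Writing $\mu(B(x,r))=V(x,r)$ and applying the layer-cake formula in the variable $v=V(x,y)$, I expect
\begin{equation*}
  \int_{\{y:\,V(x,y)^{-\gamma}Mh(x)>c\lambda\}}V(x,y)^{\gamma p-1}\,d\mu(y)\lesssim\Big(\frac{Mh(x)}{\lambda}\Big)^{p}.
\end{equation*}
This should hold because $\int_{\{V\le R\}}V^{a-1}\,d\mu\lesssim R^{a}$ for $a>0$ and $\int_{\{V\ge R\}}V^{a-1}\,d\mu\lesssim R^{a}$ for $a<0$. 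Both estimates follow by summing over dyadic annuli, using that doubling gives geometric growth of $V(x,2^kr)$ in the number of annuli per doubling of volume. Integrating in $x$ and using the maximal theorem then gives the bound by $\|h\|_{L^p(\mu)}$.

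\textbf{Leftmost inequality (main obstacle).} The difficulty is that \eqref{eq:DMS-lower-v2} is stated with the distance weight $\rho^{-ps}V^{-1}$, while here the natural fractional object is the volume version. Following \cite[Section 4]{BSVY}, fix a bounded set $E$ and $N$, and set $K:=\|\rho^{-1}V^{-\gamma}\Delta f\|_{L^{p,\infty}(V^{\gamma p-1})}$, so that $\Delta f_N\le\Delta f$ keeps $K$ as a bound for $f_N$ too. Write $\rho^{-ps}(\Delta f_N)^p=(\rho^{-1}V^{-\gamma}\Delta f_N)^{ps}\,(\Delta f_N)^{p(1-s)}V^{\gamma ps}$. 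On $E\times E$ we have $\Delta f_N\le 2N$, $\rho\le\operatorname{diam}E$, and $V$ is bounded above and locally below by doubling. The weight $V^{\gamma ps-1}$ then has to be compared with $V^{\gamma p-1}$; this costs a factor $V^{\gamma p(s-1)}$, which on $E$ is bounded as $s\to1$, except that near the diagonal $V\to0$ and it blows up. Lorentz duality $L^{p,\infty}\cdot$ against $L^{p/(ps),1}$-type quantities should give $(1-s)\|\cdot\|^p\lesssim K^{ps}\cdot C(E,N)^{p(1-s)}\cdot(1-s)\,\|\ldots\|_{L^{1/(1-s),1}}$, and the $(1-s)$ factor should exactly cancel the Lorentz norm blow-up as $s\nearrow1$. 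I will also use the reverse-type lower bound on $V(x,y)$ in terms of $\rho(x,y)^{d'}$ for $x,y\in E$, where $d'$ is a doubling upper dimension, to control the near-diagonal region. If this works, the $\liminf$ is $\lesssim K^p$, and \eqref{eq:DMS-lower-v2} gives $\|f\|_{\dot W^{1,p}(\mu)}\lesssim K$. The step I expect to fail first is the uniformity of the near-diagonal volume factor as $s\nearrow1$. If it does fail, I would redo Lemma \ref{lem:gnt} directly with a volume-based fractional weight $V^{-ps'-1}$ in place of $\rho^{-ps}V^{-1}$, producing the same $g^n_t$ and then reusing the remainder of the proof of \eqref{eq:DMS-lower-v2}.
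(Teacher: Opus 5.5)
Your treatment of the middle and rightmost inequalities and of the final claim follows the paper. That is, the pointwise bound \eqref{eq:swap} plus the quasi-triangle inequality in $L^{p,\infty}$; the bound $f^*_\theta\lesssim Mh$, \eqref{eq:ele} and the maximal theorem; and Proposition \ref{prop:MWE}. One small correction: \eqref{eq:ele} should be proved by decomposing into volume levels $\{y:2^kR\le V(x,y)<2^{k+1}R\}$, not radial annuli, since doubling alone gives no growth of $V(x,2^kr)$ in $k$. Your architecture for the leftmost inequality is also the paper's: Lorentz duality on $E\times E$ for $F=\Delta f_N$, then \eqref{eq:DMS-lower-v2}.

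\textbf{The gap.} The duality computation itself fails, and it fails for every $s<1$, not only in the limit. Your split pairs $G^{ps}$, where $G:=\rho^{-1}V^{-\gamma}\Delta f_N$, in $L^{1/s,\infty}(\nu)$ with $\nu:=V^{\gamma p-1}\,d\mu\,d\mu$, against $H_s:=\mathbf{1}_{E\times E}(\Delta f_N)^{p(1-s)}V^{-\gamma p(1-s)}$ in $L^{1/(1-s),1}(\nu)$.
\begin{itemize}
\item The duality constant $q'=1/(1-s)$ already uses up the factor $1-s$, so you need $\Norm{H_s}{L^{1/(1-s),1}(\nu)}=O(1)$. There is no spare $(1-s)$ to absorb a blow-up.
\item After $\Delta f_N\le 2N$, you are left with $\mathbf{1}_{E\times E}V^{-\gamma p(1-s)}$. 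Its $\frac1{1-s}$-th power times $\nu$ is $V^{-1}$.
\item Concretely, already in $\mathbb R^d$, $\nu(\{V^{-\gamma p(1-s)}>\lambda\}\cap E\times E)^{1-s}\sim\lambda^{-1}$. This holds as $\lambda\to\infty$ if $\gamma>0$ and as $\lambda\to0$ if $\gamma<0$. So the layer-cake integral diverges logarithmically at the diagonal.
\item For $\gamma<0$ your diagnosis is also off. There $V^{\gamma p(s-1)}$ is bounded, but $\iint_{E\times E}V^{\gamma p-1}=\infty$.
\item The upper-dimension bound $V(x,y)\gtrsim V_0(\rho(x,y)/R)^D$, with $V_0:=V(x_0,R)$ and $E\subset B(x_0,R)$, cannot rescue these exponents. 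It only turns $V^{-\gamma p(1-s)}$ into $\rho^{-D\gamma p(1-s)}$, which is just as singular.
\end{itemize}

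\textbf{The missing idea (Lemma \ref{lem:BSVY105}).} Rebalance the exponents, depending on the sign of $\gamma$, so that part of the $\rho$-power moves into the complementary factor.

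For $\gamma>0$, write
\[
\frac{|F|^p}{\rho^{ps}V}=\Big(\frac{|F|}{\rho V^{\gamma}}\Big)^{\frac{p(s+D\gamma)}{1+D\gamma}}\Big(\frac{|F|\rho^{D\gamma}}{V^{\gamma}}\Big)^{\frac{p(1-s)}{1+D\gamma}}V^{p\gamma-1}
\]
and take $q=\frac{1+D\gamma}{s+D\gamma}$.
\begin{itemize}
\item By \eqref{eq:db}, $\rho^{D\gamma}V^{-\gamma}\lesssim R^{D\gamma}V_0^{-\gamma}$ on $E\times E$.
\item By \eqref{eq:ele}, $\nu(E\times E)\lesssim V_0^{p\gamma+1}<\infty$.
\item Hence the $L^{q',1}(\nu)$ norm of the second factor is at most $C^{1-s}\to1$, while $(1-s)q'=1+D\gamma$.
\end{itemize}

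For $\gamma<0$, take $q=\frac{2}{1+s}$ and the complementary factor $\big(\frac{|F|\rho}{V^{\gamma}}\big)^{\frac{p(1-s)}{2}}$.
\begin{itemize}
\item Since $\rho\lesssim R\,(V/V_0)^{1/D}$, this factor is $\lesssim C^{1-s}V^{t}$ with $t=\frac{p(1-\gamma D)(1-s)}{2D}$. It vanishes on the diagonal.
\item By \eqref{eq:ele}, $\nu(\{V>\lambda^{1/t}\}\cap E\times E)\lesssim V_0\lambda^{p\gamma/t}$.
\item The layer-cake integral then reduces to $\int_0^{(cV_0)^t}\lambda^{\frac{\gamma D}{1-\gamma D}}\,d\lambda$. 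This is finite because $\frac{\gamma D}{1-\gamma D}>-1$, and it stays bounded as $s\nearrow1$.
\end{itemize}

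With this lemma applied to $F=\Delta f_N$ (Corollary \ref{cor:DMS}), \eqref{eq:DMS-lower-v2} finishes the proof exactly as you intended.

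Your fallback of redoing Lemma \ref{lem:gnt} with a kernel $V^{-ps'-1}$ is unnecessary. Outside the Ahlfors-regular case it also would not reproduce the factor $t^{p(1-s)-1}$ that Lemma \ref{lem:2liminf} needs, since $V(z,t)$ is not a fixed power of $t$ uniformly in $z$.
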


\begin{remark}
As in Theorem \ref{thm:Nguyen-type} (cf.\ Remark \ref{rem:Nguyen-finish}), the last claim
will immediately follow from \eqref{eq:Nguyen-type-v2} and the equivalence of the two
Sobolev norms (Proposition \ref{prop:MWE}).
\end{remark}

\begin{remark}
In the Ahlfors regular space $\mathbb R^d$, Theorem \ref{thm-Nguyen-type-v2} reduces to
Theorem \ref{thm:Nguyen-type} and therefore has the same history. In general, the two
theorems are different.
As an a priori estimate for test functions $f\in\operatorname{Lip}(X)$, the norm equivalence
\eqref{eq:Nguyen-type-v2} is previously due to Dai et al. \cite[Theorem 1.8 and Corollary
1.9]{DLYYZ}.
\end{remark}

\begin{proof}[Proof of the upper bound \eqref{eq:Nguyen-upper}]
For the first inequality, we use the following estimate from \cite[Lemma 2.1]{DLYYZ}:
\begin{equation}\label{eq:swap}
  \frac{\Delta
  f}{\rho}\lesssim\sum_{j=0}^\infty2^{-j}(f_{2^{-j}}^*+f_{2^{-j}}^*\circ\operatorname{swap}),\qquad
  \operatorname{swap}(x,y):=(y,x),
\end{equation}
where a simple change of variables shows that
\begin{equation}\label{eq:rid-of-swap}
  \Norm{\rho^{-\gamma}(f_{2^{-j}}^*\circ\operatorname{swap})}{
     L^{p,\infty}(\rho^{p\gamma}V^{-1})}
   \sim\Norm{\rho^{-\gamma}f_{2^{-j}}^*}{
     L^{p,\infty}(\rho^{p\gamma}V^{-1})},
\end{equation}
since $\rho\circ\operatorname{swap}=\rho$ and $V\circ\operatorname{swap}\sim V$ by doubling.

Using an $r$-norm equivalent to $\Norm{\ }{L^{p,\infty}}$ with some $r\in(0,1]$, as provided
by the Aoki--Rolewicz theorem (actually, for $p\in(1,\infty)$, we can take $r=1$), we obtain
from the $r$-triangle inequality that
\begin{equation}\label{eq:ar}
\begin{split}
  \BNorm{\rho^{-\gamma}\frac{\Delta f}{\rho}}{L^{p,\infty}(\rho^{p\gamma}V^{-1})}
  &\lesssim
  \Big[\sum_{j=0}^\infty 2^{-jr}\Norm{\rho^{-\gamma}f_{2^{-j}}^*}{
     L^{p,\infty}(\rho^{p\gamma}V^{-1})}^r \Big]^{\frac1r} \\
  &\leq\sup_{\theta\in(0,1]}\Norm{\rho^{-\gamma}f_{\theta}^*}{
     L^{p,\infty}(\rho^{p\gamma}V^{-1})}\Big(\sum_{j=0}^\infty 2^{-jr}\Big)^{\frac1r},
\end{split}
\end{equation}
which proves the first inequality in \eqref{eq:Nguyen-upper}.

For the second inequality, we recall from Lemma \ref{lem:Msharpf<Mh} that
$f_\theta^*(x,y)\lesssim Mh(x)$, when $h$ is a Haj\l{}asz upper gradient of $f$. Thus,
\begin{equation*}
\begin{split}
  \lambda^p &\iint_{\{\rho^{-\gamma}f_\theta^*>\lambda\}}\rho^{\gamma p}V^{-1} \\
  &\leq\lambda^p\int_X \int_{ \{ y: \rho(x,y)^{\gamma}<c Mh(x) / \lambda \}}
  \rho(x,y)^{\gamma p}V(x,y)^{-1}\,d\mu(y)\,d\mu(x) \\
  &\lesssim\lambda^p\int_X\Big[\frac{Mh(x)}{\lambda}\Big]^p \,d\mu(x)
  =\Norm{Mh}{L^p(\mu)}^p\lesssim\Norm{h}{L^p(\mu)}^p
\end{split}
\end{equation*}
by a straightforward estimate using a dyadic annular decomposition in the first $\lesssim$,
and
by the Hardy--Littlewood maximal theorem in the last step. Taking the supremum over
$\lambda>0$ and $\theta\in(0,1]$ and then the infimum over all Haj\l{}asz upper gradients $h$ of
$f$ proves the second estimate in \eqref{eq:Nguyen-upper} and completes the proof of
\eqref{eq:Nguyen-upper}.
\end{proof}

\begin{proof}[Proof of the upper bound \eqref{eq:Nguyen-upper-v2}]
The first inequality of \eqref{eq:Nguyen-upper-v2} follows from an argument
similar to \eqref{eq:ar} with $\rho^{-\gamma}$ and $\rho^{p\gamma}V^{-1}$ replaced,
respectively, by $V^{-\gamma}$ and $V^{p\gamma-1}$.

For the second inequality of \eqref{eq:Nguyen-upper-v2}, we need the following
elementary estimates in doubling metric measure spaces, proved in \cite[Lemma 2.8 and
p.~33]{DLYYZ:JFA} and stated as below in \cite[Lemma 3.2]{DLYYZ}:
for any given $\alpha\in\mathbb{R}\setminus\{0\}$ and
for any $x\in X$ and $R\in(0,\infty)$,
\begin{equation}\label{eq:ele}
\begin{split}
\int_{\{y\in X:[V(x,y)]^{-\alpha}>R\}}[V(x,y)]^{\alpha-1}\,d\mu(y)\lesssim R^{-1},
\end{split}
\end{equation}
where the implicit positive constant depends only on $\alpha$ and the doubling constant
of $X$. From this and Lemma \ref{lem:Msharpf<Mh}, it follows that
\begin{equation}
\begin{split}
\lambda^p &\iint_{\{V^{-\gamma}f_\theta^*>\lambda\}}V^{\gamma p-1} \\
&\leq\lambda^p\int_X \int_{ \{ y:c [V(x,y)]^{-\gamma}Mh(x) >\lambda \}}
V(x,y)^{\gamma p-1}\,d\mu(y)\,d\mu(x) \\
&\lesssim\lambda^p\int_X\Big[\frac{Mh(x)}{\lambda}\Big]^p\, d\mu(x)
=\Norm{Mh}{L^p(\mu)}^p\lesssim\Norm{h}{L^p(\mu)}^p,
\end{split}
\end{equation}
where $h$ is a Haj\l{}asz upper gradient of $f$ and the last step
used the Hardy--Littlewood maximal theorem.
Taking the supremum over $\lambda>0$ and $\theta\in(0,1]$
and then the infimum over all Haj\l{}asz upper gradients $h$ of $f$
proves the second estimate in \eqref{eq:Nguyen-upper-v2},
which completes the proof of \eqref{eq:Nguyen-upper-v2}.
\end{proof}

We then turn to the more challenging lower bounds \eqref{eq:Nguyen-lower}
and \eqref{eq:Nguyen-lower-v2}.
We adapt the Euclidean argument of \cite[Section 4]{BSVY},
where these estimates in the case of $X=\mathbb R^d$ were reduced
to the lower bound in a characterization of $\dot W^{1,p}(\mathbb R^d)$ from \cite{BBM}.
Our substitute for this is Proposition \ref{prop:DMS}, whose lower bound
\eqref{eq:DMS-lower-v2} we already proved in Section \ref{sec:DMS}.
While reducing the lower bounds in Theorems \ref{thm:Nguyen-type} and
\ref{thm-Nguyen-type-v2} to the lower  bound \eqref{eq:DMS-lower-v2}  of Proposition
\ref{prop:DMS}, we will at the same time obtain a proof of the upper bound
\eqref{eq:DMS-upper-v2}  of Proposition \ref{prop:DMS} that we postponed until now.

Let us first see how to control the middle terms of \eqref{eq:DMS-lower} by the right-hand
side of \eqref{eq:Nguyen-lower}. In the relevant application, we will take $F=\Delta f_N$ in
the lemma below:

\begin{lemma}\label{lem:BSVY963}
Let $(X,\rho,\mu)$ be a doubling metric measure space.
Let $p\in[1,\infty)$, $\gamma\in\mathbb{R}$,
$F\in L^\infty(\mu\times\mu)$, and $E\subset X$ be a bounded set. Then
\begin{equation*}
  \limsup_{s\nearrow 1}(1-s)^{\frac1p}\Norm{\mathbf{1}_{E\times
  E}F}{L^{p}(\rho^{-ps}V^{-1})}
  \lesssim\Norm{\rho^{-1-\gamma}F}{L^{p,\infty}(\rho^{p\gamma}V^{-1})},
\end{equation*}
where the implicit positive constant depends only on $p$ and $\gamma$.
\end{lemma}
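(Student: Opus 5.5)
We adapt the computation of \cite[p.~963]{BSVY}. Set $A:=\Norm{\rho^{-1-\gamma}F}{L^{p,\infty}(\rho^{p\gamma}V^{-1})}$, assumed finite, and $M:=\Norm{F}{L^\infty(\mu\times\mu)}$. The weight $\rho^{-ps}V^{-1}$ is singular only near the diagonal, so the plan is to split $E\times E$ into the part where $\rho$ is small and the part where $\rho$ is bounded below. On the second part we use only the boundedness of $F$ and of $E$, and the factor $(1-s)$ kills the contribution in the limit. On the first part we use the distribution function of $F$ through the weak-type bound.

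\emph{Far part.} Fix $\delta\in(0,1)$. Let $R:=\operatorname{diam}(E)$ and fix $x_0\in X$ with $E\subset B(x_0,R')$. For $x\in E$, consider the region $\delta\le\rho(x,y)\le R$. A dyadic annular decomposition together with doubling bounds the integral of $V(x,y)^{-1}$ over this region by $\lesssim 1+\log(R/\delta)$. Hence
$$\iint_{E\times E,\ \rho\ge\delta}|F|^p\rho^{-ps}V^{-1}\lesssim M^p\mu(E)\,\delta^{-p}\bigl(1+\log(R/\delta)\bigr)<\infty,$$
uniformly for $s\in(1/2,1)$. After multiplying by $(1-s)$, this part tends to $0$ as $s\nearrow1$.

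\emph{Near part.} Write $G:=\rho^{-1-\gamma}|F|$ and $d\nu:=\rho^{p\gamma}V^{-1}\,d\mu\,d\mu$. On $\{\rho<\delta\}$ we have $|F|^p\rho^{-ps}V^{-1}=G^p\rho^{p(1-s)}\,d\nu$, since $|F|^p=G^p\rho^{p+p\gamma}$ and $\rho^{p+p\gamma-ps}V^{-1}=\rho^{p(1-s)}\rho^{p\gamma}V^{-1}$. Decompose $\{\rho<\delta\}$ into the dyadic annuli $D_j:=\{2^{-j-1}\delta\le\rho<2^{-j}\delta\}$, $j\ge0$. On $D_j$ we have $\rho^{p(1-s)}\le(2^{-j}\delta)^{p(1-s)}$ and $G\le M\rho^{-1-\gamma}\sim M(2^{-j}\delta)^{-1-\gamma}=:\Lambda_j$. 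Then
$$\int_{D_j}G^p\,d\nu=\int_0^{\Lambda_j}p\lambda^{p-1}\nu(D_j\cap\{G>\lambda\})\,d\lambda .$$
Bound $\nu(D_j\cap\{G>\lambda\})$ by $\min\{A^p\lambda^{-p},\nu(D_j\cap(E\times X))\}$. Here $\nu(D_j\cap(E\times X))\lesssim\mu(E)(2^{-j}\delta)^{p\gamma}$ by doubling. With these bounds the $\lambda$-integral is $\lesssim A^p\log_+(\cdot)$ plus a bounded term, and the annular bounds balance to give
$$\int_{D_j}G^p\,d\nu\lesssim A^p\bigl(1+\log\tfrac{\Lambda_j}{\lambda_j^*}\bigr),$$
where $\lambda_j^*$ is the crossover point. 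This approach is a bit crude.

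A cleaner route is to apply the sup in the weak norm at a single level $\lambda=\Lambda_j$-type on each annulus. Summing over $j$ with the factor $(1-s)2^{-jp(1-s)}$ then gives
$$(1-s)\sum_j 2^{-jp(1-s)}\cdot O(A^p)=O(A^p),$$
since $\sum_j 2^{-jp(1-s)}\sim\frac1{p(1-s)\log2}$.

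\emph{Main obstacle.} The delicate step is getting a per-annulus bound of $O(A^p)$ with no logarithmic loss. The issue is that $G$ ranges over all values $\le\Lambda_j$, so only a level-set decomposition works. I would instead mimic \cite{BSVY} directly. Write
$$\int G^p\rho^{p(1-s)}\,d\nu=\int_0^\infty p\lambda^{p-1}\int_{\{G>\lambda\}}\rho^{p(1-s)}\,d\nu\,d\lambda.$$
On $\{G>\lambda\}$ we have $\rho^{1+\gamma}<M/\lambda$, which gives an upper or lower bound on $\rho$ depending on the sign of $1+\gamma$. Combined with $\rho<\delta$, this gives $\rho^{p(1-s)}\le\min\{\delta,(M/\lambda)^{1/(1+\gamma)}\}^{p(1-s)}$ when $1+\gamma>0$. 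Use this together with the weak bound $\nu(\{G>\lambda\})\le A^p\lambda^{-p}$ for large $\lambda$, and the trivial bound $\nu(\{\rho<\delta\}\cap(E\times X))\lesssim\mu(E)\delta^{p\gamma}$ (for $\gamma>0$) for small $\lambda$. The large-$\lambda$ range yields
$$\int_{\lambda_0}^\infty\lambda^{-1}(M/\lambda)^{p(1-s)/(1+\gamma)}\,d\lambda\sim\frac{1+\gamma}{p(1-s)},$$
which after the factor $(1-s)$ is $O(A^p)$. The small-$\lambda$ range is $O(1)$ and vanishes after multiplying by $(1-s)$. The cases $1+\gamma\le0$ and $\gamma<0$ need the complementary splitting, where the finiteness comes from the boundedness of $E$ and $\rho\le R$ instead of $\rho<\delta$. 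This sign-dependent case analysis is where I expect the main care to be needed. Finally, take the $p$-th root and the $\limsup$.
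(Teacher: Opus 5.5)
Your final layer-cake argument is correct for $\gamma>0$. There $\nu:=\rho^{p\gamma}V^{-1}\,d\mu\,d\mu$ has finite mass on $E\times E$. After multiplying by $1-s$, the range $\lambda>\lambda_0$ contributes $(1+\gamma)A^p(M/\lambda_0)^{p(1-s)/(1+\gamma)}\to(1+\gamma)A^p$, and the range $\lambda<\lambda_0$ is $O(1)$. This is a hands-on version of the paper's Case~1 (Lorentz duality with $q=\frac{1+\gamma}{s+\gamma}$) and gives the same constant.

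The dyadic-annulus detour should be deleted, including the ``single level'' claim. Bounding each annulus by the global weak norm double-counts level sets and costs a factor of order $j$ on the $j$-th annulus. That is fatal, because $(1-s)\sum_j j\,2^{-jp(1-s)}\sim(1-s)^{-1}$.

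The genuine gap is $\gamma\le 0$. The lemma covers it, and the paper needs it for negative $\gamma$ in Theorems~\ref{thm:Nguyen-type} and~\ref{thm:BV-characterizations}. You defer it to a ``complementary splitting'' where ``finiteness comes from the boundedness of $E$ and $\rho\le R$''. That addresses large distances, but for $\gamma\le0$ the trouble is at the diagonal. For $\gamma<0$ and $s>1+\gamma$ one has in general (already in $\mathbb R^d$) $\iint_{E\times E}\rho^{p(1-s)}\,d\nu=\iint_{E\times E}\rho^{p(1-s+\gamma)}V^{-1}=\infty$. So in the small-$\lambda$ range your trivial bound is infinite. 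The weak bound alone only gives the useless estimate $A^pR^{p(1-s)}\int_0^{\lambda_0}p\lambda^{-1}\,d\lambda=\infty$. For $\gamma=0$ the trivial bound, keeping the factor $\rho^{p(1-s)}$, is finite but of size $\mu(E)/(1-s)$. After the factor $1-s$ it leaves a term $\lesssim\lambda_0^p\mu(E)$ that does not vanish, so you would at least have to let $\lambda_0\to0$ at the end, which you do not say.

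What is missing for $\gamma<0$ is an interpolation between the two bounds at a $\lambda$-dependent scale $r$:
\begin{equation*}
\iint_{\{G>\lambda\}\cap(E\times E)}\rho^{p(1-s)}\,d\nu
\le r^{p(1-s)}A^p\lambda^{-p}+R^{p(1-s)}\,\nu\bigl(\{\rho\ge r\}\cap(E\times E)\bigr),
\qquad
\nu\bigl(\{\rho\ge r\}\cap(E\times E)\bigr)\lesssim\mu(E)\,r^{p\gamma}.
\end{equation*}
The last bound follows from dyadic annuli and doubling, as in \eqref{eq:est0}. Choose $r^{p(1-s-\gamma)}\sim\mu(E)\lambda^pA^{-p}$. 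Then $p\lambda^{p-1}$ times the left-hand side is $\lesssim\lambda^{p(1-s)/(1-s-\gamma)-1}$, which is integrable at $0$ with $\int_0^{\lambda_0}\sim\frac{1-s-\gamma}{p(1-s)}$. After the factor $1-s$ this tends to something $\lesssim|\gamma|A^p$, since $\mu(E)$, $R$, $\lambda_0$ and the doubling constant enter only with exponents tending to $0$. The large-$\lambda$ range is your tail estimate when $-1<\gamma<0$. It is empty when $\gamma\le-1$, because $G\le MR^{-1-\gamma}$ on $E\times E$.

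The paper gets the same tradeoff for all $\gamma\le0$ at once by Lorentz duality with $q=\frac{2}{1+s}$. It pairs $G^{p(1+s)/2}\in L^{\frac{2}{1+s},\infty}(\nu)$ with $(\mathbf{1}_{E\times E}|F|\rho^{1-\gamma})^{p(1-s)/2}\in L^{\frac{2}{1-s},1}(\nu)$. The second norm stays bounded as $s\nearrow1$ because, with $t=\frac{p(1-s)(1-\gamma)}{2}$, one has $\nu(\{\rho>\lambda^{1/t}\}\cap(E\times E))^{(1-s)/2}\le[C\mu(E)]^{(1-s)/2}\lambda^{\gamma/(1-\gamma)}$, which is integrable near $\lambda=0$.
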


\begin{proof}
Adapting the argument of \cite[page 963]{BSVY}, we will make use of the Lorentz space
duality \cite[(4-2)]{BSVY}
\begin{equation}\label{eq:BSVY4-2}
  \int GH\,d\nu
  \leq q'\Norm{G}{L^{q,\infty}(\nu)}\Norm{H}{L^{q',1}(\nu)},\qquad q\in(1,\infty).
\end{equation}
We consider the following two cases for $\gamma\in\mathbb{R}$.

\emph{Case 1: $\gamma\in(0,\infty)$}. Given $s\in(0,1)$, we can write
\begin{equation}
\begin{split}
  \iint_{E\times E} &
     \frac{|F|^p}{\rho^{ps}V}
  =\iint_{E\times E}
     \frac{|F|^p}{\rho^{p(s+\gamma)}}\frac{\rho^{p\gamma}}{V}    \\
  &=\iint_{X\times X}\Big(\frac{|F|}{\rho^{1+\gamma}}\Big)^{p\frac{s+\gamma}{1+\gamma}}
  \cdot\mathbf{1}_{E\times E}|F|^{p\frac{1-s}{1+\gamma}}
  \cdot\frac{\rho^{p\gamma}}{V} \\
  &\leq\frac{1+\gamma}{1-s}
  \BNorm{\Big(\frac{|F|}{\rho^{1+\gamma}}\Big)^{p\frac{s+\gamma}{1+\gamma} }}{
  L^{\frac{1+\gamma}{s+\gamma},\infty} (\rho^{p\gamma}V^{-1})}
  \BNorm{\mathbf{1}_{E\times E}|F|^{p\frac{1-s}{1+\gamma}} }{
  L^{\frac{1+\gamma}{1-s},1}( \rho^{p\gamma} V^{-1} ) } \\
  &\qquad\text{by \eqref{eq:BSVY4-2} with }q=\frac{1+\gamma}{s+\gamma}\in(1,\infty) \\
  &\leq\frac{1+\gamma}{1-s}
  \BNorm{ \frac{F}{\rho^{1+\gamma}} }{   L^{p,\infty} (\rho^{p\gamma}V^{-1})
  }^{p\frac{s+\gamma}{1+\gamma}}
  \Norm{F}{L^\infty(\mu\times\mu)}^{p\frac{1-s}{1+\gamma}}
  \Big(\iint_{E\times E}\rho^{p\gamma}V^{-1}\Big)^{\frac{1-s}{1+\gamma}},
\end{split}\label{eq:BSVY963}
\end{equation}
where we used $\Norm{\mathbf{1}_A}{L^{r,1}(\nu)}=\nu(A)^{\frac1r}$ in the last step.

Concerning the last integral in \eqref{eq:BSVY963}, we make the following observations.
Since $E$ is a bounded set, we have $E\subset B(x_0,R)$ for some $x_0\in X$ and $R>0$. Then,
for each $x,y\in E$, we have $y\in B(x,2R)$. Hence,
\begin{equation*}
\begin{split}
  \int_E\frac{\rho(x,y)^{p\gamma}}{V(x,y)}\,d\mu(y)
  &\leq\sum_{k=0}^\infty\int_{2^{-k}R\leq
  \rho(x,y)<2^{1-k}R}\frac{(2^{1-k}R)^{p\gamma}}{V(x,2^{-k}R)}\,d\mu(y) \\
  &\leq\sum_{k=0}^\infty\frac{2^{(1-k)p\gamma}R^{p\gamma}}{V(x,2^{-k}R)}V(x,2^{1-k}R)
  \lesssim\sum_{k=0}^\infty 2^{-kp\gamma}R^{p\gamma}\lesssim R^{p\gamma},
\end{split}
\end{equation*}
and hence
\begin{equation*}
  \iint_{E\times E}\frac{\rho(x,y)^{p\gamma}}{V(x,y)}\,d\mu(y)\,d\mu(x)
  \lesssim\int_{B(x_0,R)}R^{p\gamma}\,d\mu(x)=R^{p\gamma}V(x_0,R)<\infty.
\end{equation*}
Thus, the last two terms on the right of \eqref{eq:BSVY963} are finite. By inspection of the
exponents, these terms converge to $1$ as $s\nearrow 1$. Hence,
\begin{equation*}
  \limsup_{s\nearrow 1}(1-s)\iint_{E\times E}
     \frac{|F|^p}{\rho^{ps}V}
   \leq(1+\gamma)\BNorm{\frac{F}{\rho^{1+\gamma}}}{L^{p,\infty}(\rho^{p\gamma}V^{-1})}^p,
\end{equation*}
which completes the proof. 

\emph{Case 2: $\gamma\in(-\infty,0]$}. We fix $s\in (0,1)$ and write
\begin{align*}
\iint_{E\times E} \frac{|F|^p}{\rho^{ps}V}
=\iint_{E\times E} \Big(\frac{|F|}{\rho^{1+\gamma}}
\Big)^{\frac{p(1+s)}{2}}\Big(\mathbf{1}_{E\times E}|F|
\rho^{1-\gamma}\Big)^{\frac{p(1-s)}{2}}\frac{\rho^{p\gamma}}{V}.
\end{align*}
With the Lorentz space duality \eqref{eq:BSVY4-2}, this is further estimated by
\begin{align}
\le\frac{2}{1-s}\Big\|\Big(\frac{|F|}{\rho^{1+\gamma}}
\Big)^{\frac{p(1+s)}{2}}\Big\|_{L^{\frac{2}{1+s},\infty}(\rho^{p\gamma}V^{-1})}
 \Big\|\Big(\mathbf{1}_{E\times E}|F|\rho^{1-\gamma}
 \Big)^{\frac{p(1-s)}{2}}\Big\|_{L^{\frac{2}{1-s},1}(\rho^{p\gamma}V^{-1})}.
\end{align}
Note that
\begin{align}
\Big\|\Big(\frac{|F|}{\rho^{1+\gamma}}
\Big)^{\frac{p(1+s)}{2}}\Big\|_{L^{\frac{2}{1+s},\infty}(\rho^{p\gamma}V^{-1})}
=\Big\|\frac{F}{\rho^{1+\gamma}}
\Big\|_{L^{p,\infty}(\rho^{p\gamma}V^{-1})}^{\frac{p(1+s)}{2}}
\end{align}
and
\begin{align}\label{eq:term2}
\Big\|\Big(\mathbf{1}_{E\times E}|F|\rho^{1-\gamma}
\Big)^{\frac{p(1-s)}{2}}\Big\|_{L^{\frac{2}{1-s},1}(\rho^{p\gamma}V^{-1})} 
\le \|F\|_{L^\infty(\mu\times\mu)}^{\frac{p(1-s)}{2}}
\Big\|\mathbf{1}_{E\times E}\rho^{t}
\Big\|_{L^{\frac{2}{1-s},1}(\rho^{p\gamma}V^{-1})},
\end{align}
where
\begin{equation*}
  t:=\frac{p(1-s)(1-\gamma)}{2}>0
\end{equation*}
since $\gamma\in(-\infty,0]$ and $s\in(0,1)$. If we can show that $\limsup_{s\nearrow 1}$ of
the last term on the right of \eqref{eq:term2} is bounded by a constant, then it follows
from the previous estimates that
\begin{equation*}
  \limsup_{s\nearrow 1}\, (1-s)\iint_{E\times E} \frac{\abs{F}^p}{\rho^{ps}V}\lesssim
  \BNorm{ \frac{F}{ \rho^{1+\gamma} } }{L^{p,\infty}(\rho^{p\gamma}V^{-1})}^p,
\end{equation*}
which completes the proof. So we turn to this remaining task.

Recall that $E\subset B(x_0,R)$ and hence
$\rho^{t}\le (2R)^{t}$ on $E\times E$. Thus,
\begin{align}\label{eq:need-bounded}
\Big\|\mathbf{1}_{E\times E}\rho^{t}
\Big\|_{L^{\frac{2}{1-s},1}(\rho^{p\gamma}V^{-1})} 
=
\int_{0}^{(2R)^{t}}\Big[\iint_{\{(x,y)\in E\times E:
\rho(x,y)^{t}>\lambda\}}\frac{\rho^{p\gamma}}{V}
\Big]^{\frac{1-s}{2}}\,d\lambda.
\end{align}
Given $\lambda\in (0,\infty)$, choose $k_\lambda\in\mathbb{Z}$
such that $2^{-k_\lambda}R\le \lambda^{\frac{1}{t}}<
2^{-k_{\lambda}+1}R$.

For any given $\lambda\in(0,(2R)^{t})$
and $x\in E$,
\begin{equation}\label{eq:est0}
\begin{split}
&\int_{\{y\in
E:\rho(x,y)>\lambda^{\frac{1}{t}}\}}\frac{\rho(x,y)^{p\gamma}}{V(x,y)}\,d\mu(y)\\
&\quad\le\sum_{k=0}^{k_\lambda}\int_{2^{-k}R\le
\rho(x,y)<2^{1-k}R}\frac{(2^{-k}R)^{p\gamma}}{
V(x,2^{-k}R)}\,d\mu(y) \\
&\quad\lesssim \sum_{k=0}^{k_\lambda}(2^{-k}R)^{p\gamma}
\lesssim
\begin{cases}
 (2^{-k_\lambda}R)^{p\gamma}
\sim\lambda^{\frac{p\gamma}{t}} & \text{if }\gamma<0, \\
\displaystyle
1+k_\lambda\leq 1+\frac{1}{t}\log_2\frac{(2R)^t}{\lambda} & \text{if }\gamma=0. \end{cases}
\end{split}
\end{equation}

\emph{Subcase 2a: $\gamma\in(-\infty,0)$}.
If $\gamma<0$, then \eqref{eq:need-bounded} and \eqref{eq:est0} further imply that
\begin{equation*}
\Big\|\mathbf{1}_{E\times E}\rho^{t}
\Big\|_{L^{\frac{2}{1-s},1}(\rho^{p\gamma}V^{-1})}
\le \Big[C V(x_0,R)\Big]^{\frac{1-s}{2}} \int_{0}^{(2R)^{t}}
\lambda^{\frac{\gamma}{1-\gamma}}\,d\lambda,
\end{equation*}
where
\begin{equation*}
  \int_{0}^{(2R)^{t}}\lambda^{\frac{\gamma}{1-\gamma}}\,d\lambda
  =(1-\gamma)((2R)^t)^{\frac{1}{1-\gamma}}
  =(1-\gamma)(2R)^{\frac{p(1-s)}{2}}.
\end{equation*}
Then clearly
\begin{equation*}
  \limsup_{s\nearrow 1}\Big\|\mathbf{1}_{E\times E}\rho^{t}
\Big\|_{L^{\frac{2}{1-s},1}(\rho^{p\gamma}V^{-1})}
\leq 1-\gamma,
\end{equation*}
as we wanted.

\emph{Subcase 2b: $\gamma=0$.}
If $\gamma=0$, then $t=\frac{p(1-s)}{2}$, and \eqref{eq:need-bounded} and \eqref{eq:est0}
imply that
\begin{align*}
&\Big\|\mathbf{1}_{E\times E}\rho^{\frac{p(1-s)}{2}}
\Big\|_{L^{\frac{2}{1-s},1}(V^{-1})}\\
&\quad\le \int_{0}^{(2R)^{\frac{p(1-s)}{2}}}\Big[
1+\frac{2}{p(1-s)}\log_2 \frac{(2R)^{\frac{p(1-s)}{2}}}{\lambda}\Big]^{\frac{1-s}{2}}
\,d\lambda[CV(x_0,R)]^{\frac{1-s}{2}}\\
&\quad=(2R)^{\frac{p(1-s)}{2}}\int_{0}^{1}\Big[
1+\frac{2}{p(1-s)}\log_2 \frac{1}{\lambda}\Big]^{\frac{1-s}{2}}
\,d\lambda[CV(x_0,R)]^{\frac{1-s}{2}}.
\end{align*}
By dominated convergence, we obtain
\begin{align*}
\limsup_{s\nearrow 1}\Big\|\mathbf{1}_{E\times E}\rho^{\frac{p(1-s)}{2}}
\Big\|_{L^{\frac{2}{1-s},1}(V^{-1})}\le 1.
\end{align*}
This is the required limit, which completes the proof of Lemma \ref{lem:BSVY963}.
\end{proof}

A counterpart of Lemma \ref{lem:BSVY963} related to \eqref{eq:Nguyen-type-v2}
is as follows.

\begin{lemma}\label{lem:BSVY105}
Let $(X,\rho,\mu)$ be a doubling metric measure space.
Let $p\in[1,\infty)$, $\gamma\in\mathbb{R}$,
$F\in L^\infty(\mu\times\mu)$, and $E\subset X$ be a bounded measurable set. Then
\begin{equation*}
  \limsup_{s\nearrow 1}(1-s)^{\frac1p}\Norm{\mathbf{1}_{E\times
  E}F}{L^{p}(\rho^{-ps}V^{-1})}
  \lesssim\Norm{\rho^{-1}V^{-\gamma}F}{L^{p,\infty}(V^{p\gamma-1})},
\end{equation*}
where the implicit positive constant depends only on $p,\gamma$, and the doubling constant
of $X$.
\end{lemma}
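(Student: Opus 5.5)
We will mimic the proof of Lemma~\ref{lem:BSVY963}, replacing powers of $\rho$ by powers of $V$ in the splitting. The target weight is now $V^{p\gamma-1}$ and the target function is $G:=\rho^{-1}V^{-\gamma}F$, so we write
\begin{equation*}
  \frac{|F|^p}{\rho^{ps}V}
  =G^{pa}\cdot\Big(\mathbf 1_{E\times E}|F|^{p(1-a)}\rho^{p(a-s)}V^{p\gamma a-p\gamma}\Big)\cdot V^{p\gamma-1}
\end{equation*}
for an exponent $a=a(s)\in(0,1)$ with $a\to1$ as $s\nearrow1$, chosen so that $1-a\sim 1-s$. Then the Lorentz duality \eqref{eq:BSVY4-2} with $q=1/a$ gives
\begin{equation*}
\iint_{E\times E}\frac{|F|^p}{\rho^{ps}V}\le \frac{1}{1-a}\Norm{G}{L^{p,\infty}(V^{p\gamma-1})}^{pa}
\|F\|_{L^\infty}^{p(1-a)}\BNorm{\mathbf 1_{E\times E}\rho^{p(a-s)}V^{-p\gamma(1-a)}}{L^{\frac1{1-a},1}(V^{p\gamma-1})}.
\end{equation*}
Multiplying by $(1-s)$ and letting $s\nearrow1$, it suffices to show that the last Lorentz norm has $\limsup$ bounded by a constant depending only on $p,\gamma$ and the doubling constant. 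As in Case 2 there, the boundedness of $E\subset B(x_0,R)$ is what makes the constant factors (like $V(x_0,R)^{1-a}$ and $R^{p(1-a)}$) tend to $1$.

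The choice of $a$ depends on the sign of $\gamma$. The simplest choice is $a:=s$, so the leftover weight is $\mathbf 1_{E\times E}V^{-p\gamma(1-s)}$. For $\gamma<0$ this is bounded on $E\times E$ by $V(x,2R)^{p|\gamma|(1-s)}$, which tends to $1$. The distribution function then involves $\iint_{E\times E}\{V^{-p\gamma(1-s)}>\lambda\}V^{p\gamma-1}$, which we control by \eqref{eq:ele} with $\alpha=p\gamma$ (or by a dyadic annular decomposition when $\gamma\ne0$): $\int_{\{V(x,y)>\lambda^{1/(p|\gamma|(1-s))}\}\cap E}V^{p\gamma-1}\,d\mu(y)\lesssim \lambda^{\gamma/(|\gamma|(1-s))}$. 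This is integrable after raising to the power $1-s$, yielding a bound like $(1-\gamma)$-type constants as in Subcase~2a. For $\gamma>0$ the leftover factor $V^{-p\gamma(1-s)}$ is unbounded near the diagonal. Here we instead use the splitting of Case~1: take $a$ so that the exponent of $V$ in the leftover becomes $0$ and the whole weight is absorbed in the finite measure $\iint_{E\times E}V^{p\gamma-1}\lesssim\int_{B(x_0,R)}V(x,2R)^{p\gamma}\,d\mu(x)<\infty$. This finiteness follows from \eqref{eq:ele}-type annular sums, since $\gamma>0$. The leftover is then $\mathbf 1_{E\times E}|F|^{p(1-a)}\rho^{p(a-s)}$. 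With $a\ge s$ the factor $\rho^{p(a-s)}\le(2R)^{p(a-s)}$ is bounded, and the norm of an indicator is the measure to the power $1-a\to0$.

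The main obstacle is the case $\gamma>0$. We need $pa\cdot$(exponent bookkeeping) to make the power of $\rho$ nonnegative, while $V$ and $\rho$ are not related by any power law. Since $V$ cannot be converted into $\rho$, the trick is to choose $a$ depending only on $s$ so that the $V$-exponents cancel exactly. This forces the $\rho$-exponent to be $p(a-s)$, and we must check that $a\ge s$ can be arranged with $(1-s)/(1-a)$ bounded. If the exact cancellation fails, we would fall back on reverse doubling–free bounds $V(x,y)\lesssim V(x,2R)$ on $E\times E$ together with the lower bound $V(x,y)\gtrsim(\rho/R)^{D}V(x,R)$ from doubling (upper dimension $D$). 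This would let us trade a small negative power of $V$ for a small negative power of $\rho$, whose contribution $\rho^{-\epsilon(1-s)}$ is again harmless in the limit by the same Subcase~2a computation.
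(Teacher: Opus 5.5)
\textbf{There is a genuine gap.} Your framework is the same as the paper's: Lorentz duality with $q=1/a$ applied to $G^{pa}$, with boundedness of $E$ making constants raised to powers $\sim 1-s$ tend to $1$. But the choices of $a$ you propose do not close. The ingredient that makes both signs of $\gamma$ work, namely converting between $\rho$ and $V$ on $E\times E$ via the upper dimension $D$, is missing exactly where it is needed.

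\emph{The case $\gamma<0$.} The choice $a=s$ fails. Your own estimate gives $\iint_{E\times E}\mathbf 1_{\{V>\lambda^{1/\tau}\}}V^{p\gamma-1}\lesssim V(x_0,R)\,\lambda^{-1/(1-s)}$ with $\tau=p|\gamma|(1-s)$. Raising this to the power $1-s$ gives $\lambda^{-1}$, which is not integrable at $\lambda=0$. This is not a lossy step. The $\frac{1}{1-s}$-th power of $\mathbf 1_{E\times E}V^{p|\gamma|(1-s)}$ times the weight $V^{p\gamma-1}$ equals $\mathbf 1_{E\times E}V^{-1}$, whose integral diverges logarithmically at the diagonal (already in $\mathbb R^d$). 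So the $L^{\frac1{1-s},1}(V^{p\gamma-1})$ norm is infinite and the duality bound is vacuous. The same borderline divergence occurs at $a=s$ for every $\gamma$.

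What is needed is $a-s\sim 1-s$, e.g.\ $a=\frac{1+s}{2}$ as in the paper, which leaves an extra factor $\rho^{p(1-s)/2}$. Since the weight and \eqref{eq:ele} only see $V$, this factor must be converted using the upper dimension $D$, i.e.\ $V(x,r_2)/V(x,r_1)\lesssim(r_2/r_1)^D$ for $r_1\le r_2$. For $x,y\in E\subset B(x_0,R)$ this gives $\rho(x,y)\lesssim R\,V(x_0,R)^{-1/D}V(x,y)^{1/D}$. Hence the $F$-free part of the leftover is at most $(cR\,V(x_0,R)^{-1/D})^{p(1-s)/2}V^{t}$ with $t=\frac{p(1-\gamma D)(1-s)}{2D}>0$. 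Then \eqref{eq:ele} yields the integrand $\lambda^{\gamma D/(1-\gamma D)}$, whose exponent lies in $(-1,0)$, so it is integrable.

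\emph{The case $\gamma>0$.} The $V$-exponent of your leftover is $-p\gamma(1-a)$, which vanishes only at $a=1$, where the factor $\frac{1}{1-a}$ blows up. So no admissible $a$ gives exact cancellation, and the leftover is never $\rho^{p(a-s)}$ alone.

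Your fallback points the right way, but it must be carried out so that the net power of $\rho$ is nonnegative. Use $V(x,y)\gtrsim(\rho(x,y)/R)^D V(x_0,R)$ on $E\times E$ and choose $a=\frac{s+D\gamma}{1+D\gamma}$, so that $1-a=\frac{1-s}{1+D\gamma}$ and $p(a-s)=pD\gamma(1-a)$. This turns the leftover into $(\rho^{D\gamma}V^{-\gamma})^{p(1-a)}\le (CM)^{p(1-a)}$ with $M:=R^{D\gamma}V(x_0,R)^{-\gamma}$. After that, only the finiteness $\iint_{E\times E}V^{p\gamma-1}\lesssim V(x_0,R)^{p\gamma+1}$ is needed; this is the paper's Case~1.

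Leaving a net negative power $\rho^{-\epsilon(1-s)}$, as you suggest, is not harmless. At $a=s$ it is the same logarithmic divergence as above. In general it would require quantitative decay of $\iint_{(E\times E)\cap\{\rho<\delta\}}V^{p\gamma-1}$ as $\delta\to0$, i.e.\ a lower-dimension (reverse doubling) bound that the lemma does not assume. The Subcase~2a computation you cite handles positive powers of $\rho$ and does not apply here.

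Finally, $\gamma=0$ is absent from your case split; it follows directly from Lemma~\ref{lem:BSVY963}.
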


\begin{proof}
The proof of the present lemma is a technical modification of Lemma \ref{lem:BSVY963}. For
completeness, we give the details. As before, we consider different cases of
$\gamma\in\mathbb{R}$.

\emph{Case 0: $\gamma=0$}. For $\gamma=0$, the statement of Lemma \ref{lem:BSVY105} reduces
to the corresponding case of Lemma \ref{lem:BSVY963}, which we have already proved.

\emph{Case 1: $\gamma\in(0,\infty)$}. Let the symbol $D$ denote the upper dimension
of $X$, i.e., a positive constant such that, for all $x\in X$ and $0<r\le R<\infty$,
\begin{align}\label{eq:db}
\frac{V(x,R)}{V(x,r)}\lesssim \Big(\frac{R}{r}\Big)^D
\end{align}
with the implicit positive constant independent of $x,r$, and $R$.
Given $s\in(0,1)$, using the Lorentz space duality \eqref{eq:BSVY4-2} with $q:=
\frac{1+\gamma D}{s+\gamma D}\in (1,\infty)$, we obtain
\begin{equation}\label{eq:BSVY-v2}
\begin{split}
&\iint_{E\times E}\frac{|F|^p}{\rho^{ps}V}
=\iint_{E\times E}\Big(\frac{|F|}{\rho^{s}V^\gamma}\Big)^pV^{p\gamma-1}    \\
&\quad=\iint_{X\times X}\Big(\frac{|F|}{\rho V^\gamma}\Big)^{\frac{p(s+D\gamma)}{1+D\gamma}}
\mathbf{1}_{E\times E}\Big(\frac{|F|
\rho^{D\gamma}}{V^\gamma}\Big)^{\frac{p(1-s)}{1+D\gamma}}
V^{p\gamma-1}\\
&\quad\le \frac{1+D\gamma}{1-s}
\BNorm{\Big(\frac{|F|}{\rho V^\gamma}\Big)^{\frac{p(s+D\gamma)}{1+D\gamma}}}{
L^{\frac{1+D\gamma}{s+D\gamma},\infty} (V^{p\gamma-1})} 
\BNorm{\mathbf{1}_{E\times E}\Big(\frac{|F|
\rho^{D\gamma}}{V^\gamma}\Big)^{\frac{p(1-s)}{1+D\gamma}}}{
L^{\frac{1+D\gamma}{1-s},1} (V^{p\gamma-1})}\\
&\quad\le \frac{1+D\gamma}{1-s}
\BNorm{\frac{F}{\rho V^\gamma}}{
L^{p,\infty}
(V^{p\gamma-1})}^{\frac{p(s+D\gamma)}{1+D\gamma}}\Norm{F}{L^\infty(\mu\times\mu)}
^{\frac{p(1-s)}{1+D\gamma}} 
\BNorm{\mathbf{1}_{E\times
E}\Big(\frac{\rho^{D\gamma}}{V^\gamma}\Big)^{\frac{p(1-s)}{1+D\gamma}}}{
L^{\frac{1+D\gamma}{1-s},1} (V^{p\gamma-1})}.
\end{split}
\end{equation}
Now, we estimate the last $L^{\frac{1+D\gamma}{1-s},1}$-norm.
Since $E$ is bounded, we infer that $E\subset B(x_0,R)$ for some $x_0\in X$
and $R\in(0,\infty)$. Clearly, for any $x,y\in E$, $B(x,\rho(x,y))\subset
B(x_0,3R)$. Applying this and \eqref{eq:db}, we find that, for any $x,y\in E$,
\begin{align}\label{eq:pvm}
\frac{\rho(x,y)^{D\gamma}}{V(x,y)^\gamma}\lesssim
\frac{R^{D\gamma}}{V(x_0,R)^\gamma}=:M.
\end{align}
Then we obtain
\begin{equation}\label{eq:lot1}
\begin{split}
&\BNorm{\mathbf{1}_{E\times
E}\Big(\frac{\rho^{D\gamma}}{V^\gamma}\Big)^{\frac{p(1-s)}{1+D\gamma}}}{
L^{\frac{1+D\gamma}{1-s},1} (V^{p\gamma-1})}\\
&\quad=\int_{0}^{\infty}\Big[\iint_{E\times E}
\mathbf{1}_{\{(\frac{\rho^{D\gamma}}{V^\gamma})^{\frac{p(1-s)}{1+D\gamma}}>\lambda
\}}V^{p\gamma-1}\Big]^{\frac{1-s}{1+D\gamma}}\,d\lambda\\
&\quad\le \int_{0}^{M^{\frac{p(1-s)}{1+D\gamma}}}
\Big[\iint_{E\times
E}V(x,y)^{p\gamma-1}\,d\mu(y)\,d\mu(x)\Big]^{\frac{1-s}{1+D\gamma}}\,d\lambda,
\end{split}
\end{equation}
where the last step used \eqref{eq:pvm}.
Note that, for any $x,y\in E$, $V(x,y)\le V(x_0,3R)$,
which, combined with \eqref{eq:ele} with $\alpha:=p\gamma$ and the
doubling condition of $\mu$, further implies that
\begin{align*}
\iint_{E\times E}V^{p\gamma-1}\le
\int_E \int_{\{y\in X:V(x,y)\le V(x_0,3R)\}}V(x,y)^{p\gamma-1}\,d\mu(y)\,d\mu(x)
\lesssim V(x_0,R)^{p\gamma+1}.
\end{align*}
This, together with \eqref{eq:lot1}, yields
\begin{align*}
\BNorm{\mathbf{1}_{E\times
E}\Big(\frac{\rho^{D\gamma}}{V^\gamma}\Big)^{\frac{p(1-s)}{1+D\gamma}}}{
L^{\frac{1+D\gamma}{1-s},1} (V^{p\gamma-1})}\le
M^{\frac{p(1-s)}{1+D\gamma}}\Big[cV(x_0,R)^{p\gamma+1}\Big]^{\frac{1-s}{1+D\gamma}},
\end{align*}
which clearly converges to $1$ as $s\nearrow 1$.
Substituting this estimate into \eqref{eq:BSVY-v2}, we obtain
\begin{equation*}
  \limsup_{s\nearrow 1}(1-s)\iint_{E\times E}
     \frac{|F|^p}{\rho^{ps}V}
   \leq(1+D\gamma)\BNorm{\frac{F}{\rho V^\gamma}}{
L^{p,\infty} (V^{p\gamma-1})}^p,
\end{equation*}
which completes the proof in this case.

\emph{Case 2: $\gamma\in(-\infty,0)$}.
Given $s\in(0,1)$, applying \eqref{eq:BSVY4-2} with
$q:=\frac{2}{1+s}\in(1,\infty)$,
we conclude that
\begin{equation}
\begin{split}
&\iint_{E\times E}\frac{|F|^p}{\rho^{ps}V}
=\iint_{X\times X}\Big(\frac{|F|}{\rho V^\gamma}\Big)^{\frac{p(1+s)}{2}}
\mathbf{1}_{E\times E}\Big(\frac{|F| \rho}{V^\gamma}\Big)^{\frac{p(1-s)}{2}}
V^{p\gamma-1}\\
&\quad\le \frac{2}{1-s}
\BNorm{\Big(\frac{|F|}{\rho V^\gamma}\Big)^{\frac{p(1+s)}{2}}}{
L^{\frac{2}{1+s},\infty} (V^{p\gamma-1})}
\BNorm{\mathbf{1}_{E\times E}\Big(\frac{|F| \rho}{V^\gamma}\Big)^{\frac{p(1-s)}{2}}}{
L^{\frac{2}{1-s},1} (V^{p\gamma-1})}\\
&\quad\le \frac{2}{1-s}
\BNorm{\frac{F}{\rho V^\gamma}}{
L^{p,\infty} (V^{p\gamma-1})}^{\frac{p(1+s)}{2}}\Norm{F}{L^\infty(\mu\times\mu)}
^{\frac{p(1-s)}{2}}
\BNorm{\mathbf{1}_{E\times E}\Big(\frac{\rho}{V^\gamma}\Big)^{\frac{p(1-s)}{2}}}{
L^{\frac{2}{1-s},1} (V^{p\gamma-1})}.
\end{split}
\end{equation}
Since $\gamma\in(-\infty,0)$, for all $x,y\in E\subset B(x_0,R)$,
it follows from the doubling condition that
\begin{align*}
\frac{\rho(x,y)}{V(x,y)^\gamma}
\lesssim RV_0^{-\frac1D}V(x,y)^{\frac1D-\gamma},
\qquad
V_0:=V(x_0,R),
\end{align*}
where the implicit positive constants depend only on the doubling constant and $\gamma$.
Denoting
\begin{equation*}
  t:=\Big(\frac 1D-\gamma\Big)\frac{p(1-s)}{2}=\frac{p(1-\gamma D)(1-s)}{2D}>0,
\end{equation*}
it follows that
\begin{align*}
  &\BNorm{\mathbf{1}_{E\times E}\Big(\frac{\rho}{V^\gamma}\Big)^{\frac{p(1-s)}{2}}}{
  L^{\frac{2}{1-s},1} (V^{p\gamma-1})}
  \leq (cRV_0^{-\frac{1}{D}})^{\frac{p(1-s)}{2}}
  \BNorm{\mathbf{1}_{E\times E} V^t}{L^{\frac{2}{1-s},1}(V^{p\gamma-1})} \\
  &\quad=(cRV_0^{-\frac{1}{D}})^{\frac{p(1-s)}{2}}\int_{0}^{(cV_0)^t}
  \Big(\iint_{E\times E}\mathbf{1}_{\{V>\lambda^{\frac 1t}\}}
  V^{p\gamma-1}\Big)^{\frac{1-s}{2}}\,d\lambda.
\end{align*}
Recalling that $\gamma\in(-\infty,0)$ and using \eqref{eq:ele}, it follows that
\begin{equation*}
\begin{split}
  \int_0^{(cV_0)^t} &\Big(\iint_{E\times E} \mathbf{1}_{\{V>\lambda^{\frac 1t}\}}
  V^{p\gamma-1}\Big)^{\frac{1-s}{2}}\,d\lambda   \\
  &\leq \int_0^{(cV_0)^t} \Big[\int_{B(x_0,R)}\int_{\{y:V(x,y)>\lambda^{\frac 1t}\}}
  V(x,y)^{p\gamma-1}d\mu(y)\,d\mu(x)
  \Big]^{\frac{1-s}{2}}\,d\lambda   \\
  &\leq \int_0^{(cV_0)^t} \big( cV_0
  \lambda^{\frac{p\gamma}{t}}\big)^{\frac{1-s}{2}}d\lambda
  = (cV_0)^{\frac{1-s}{2}} \int_0^{(cV_0)^t} \lambda^{\frac{\gamma D}{1-\gamma D}}d\lambda
  \\
  &=(cV_0)^{\frac{1-s}{2}}(1-\gamma D)(cV_0)^{\frac{t}{1-\gamma D}}
 =(1-\gamma D)(cV_0)^{\frac{1-s}{2}(1+\frac{p}{D})}.
\end{split}
\end{equation*}
Substituting back, it follows that
\begin{align*}
\limsup_{s\nearrow 1}(1-s)\iint_{E\times E}
     \frac{|F|^p}{\rho^{ps}V}
   \leq2(1-D\gamma)\BNorm{\frac{F}{\rho V^\gamma}}{
L^{p,\infty} (V^{p\gamma-1})}^p,
\end{align*}
since all other factors above are some finite quantities raised to the power $1-s\to 0$.
This completes the proof of Lemma \ref{lem:BSVY105}.
\end{proof}

Combining Lemmas \ref{lem:BSVY963} and \ref{lem:BSVY105}, we obtain the following:

\begin{corollary}\label{cor:DMS}
Let $(X,\rho,\mu)$ be a doubling metric measure space.
Let $p\in[1,\infty)$, $\gamma\in\mathbb R$, and $\phi\in\{\rho,V\}$. Then, for all functions
$f\in L^1_{\mathrm{loc}}(\mu)$, truncation levels $N\in\mathbb N$, and bounded measurable
sets $E\subset X$, we have
\begin{equation*}
  \limsup_{s\nearrow 1}(1-s)^{\frac1p}\Norm{\mathbf{1}_{E\times E}\Delta
  f_N}{L^p(\rho^{-ps}V^{-1})}
  \lesssim\Norm{\rho^{-1}\phi^{-\gamma}\Delta f}{L^{p,\infty}(\phi^{p\gamma}V^{-1})},
\end{equation*}
where $f_N$ is the usual truncation \eqref{eq:fN-def} and the implicit
positive constant is independent of $f$, $N$, and $E$.
\end{corollary}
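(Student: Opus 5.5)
We apply Lemma \ref{lem:BSVY963} (for $\phi=\rho$) or Lemma \ref{lem:BSVY105} (for $\phi=V$) with the choice $F:=\Delta f_N$. The plan has three steps: verify the hypotheses of those lemmas, apply them, and replace $\Delta f_N$ by $\Delta f$ on the right-hand side using monotonicity of the weak-type quasinorm.

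\emph{Hypotheses.} The truncation $f_N$ from \eqref{eq:fN-def} takes values in $[-N,N]$. Hence $F=\Delta f_N$ satisfies $0\le F\le 2N$ everywhere, so $F\in L^\infty(\mu\times\mu)$. It is also measurable on $X\times X$, since $f$ is measurable. The set $E$ is bounded and measurable, and $p\in[1,\infty)$ and $\gamma\in\mathbb R$ lie in the ranges of both lemmas. Note that the size $\Norm{F}{L^\infty}\le 2N$ only appears in those lemmas raised to a power tending to $0$ as $s\nearrow1$. That is why the resulting constant is independent of $N$ and of $E$; it depends only on $p$, $\gamma$ and the doubling constant.

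\emph{Application.} For $\phi=\rho$, Lemma \ref{lem:BSVY963} gives
\begin{equation*}
  \limsup_{s\nearrow 1}(1-s)^{\frac1p}\Norm{\mathbf{1}_{E\times E}\Delta
  f_N}{L^p(\rho^{-ps}V^{-1})}
  \lesssim\Norm{\rho^{-1-\gamma}\Delta f_N}{L^{p,\infty}(\rho^{p\gamma}V^{-1})}.
\end{equation*}
This is the claimed quantity with $\phi=\rho$, because $\rho^{-1}\phi^{-\gamma}=\rho^{-1-\gamma}$ and $\phi^{p\gamma}V^{-1}=\rho^{p\gamma}V^{-1}$. For $\phi=V$, Lemma \ref{lem:BSVY105} gives the same left-hand side bounded by $\Norm{\rho^{-1}V^{-\gamma}\Delta f_N}{L^{p,\infty}(V^{p\gamma-1})}$, which is again the claimed form since $\phi^{p\gamma}V^{-1}=V^{p\gamma-1}$.

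\emph{Replacing $\Delta f_N$ by $\Delta f$.} By \eqref{eq:fN-key}, $\Delta f_N\le\Delta f$ pointwise on $X\times X$. Multiplying by the positive weight $\rho^{-1}\phi^{-\gamma}$ preserves this inequality. Therefore, for every $\lambda>0$,
\[
\{\rho^{-1}\phi^{-\gamma}\Delta f_N>\lambda\}\subset\{\rho^{-1}\phi^{-\gamma}\Delta f>\lambda\}.
\]
Taking measures with respect to $\phi^{p\gamma}V^{-1}$, multiplying by $\lambda^p$ and taking the supremum over $\lambda$ yields
\[
\Norm{\rho^{-1}\phi^{-\gamma}\Delta f_N}{L^{p,\infty}(\phi^{p\gamma}V^{-1})}
\le\Norm{\rho^{-1}\phi^{-\gamma}\Delta f}{L^{p,\infty}(\phi^{p\gamma}V^{-1})}.
\]
Combined with the previous step, this proves the corollary.

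The step needing the most care is checking that the constants from the two lemmas are uniform in $N$ and $E$. This holds because in both lemmas the $L^\infty$ norm of $F$ and the $E$-dependent factors enter only through powers that tend to $1$ as $s\nearrow1$, so they disappear in the $\limsup$. If the right-hand side is infinite there is nothing to prove, and the argument needs no finiteness assumption on $f$.
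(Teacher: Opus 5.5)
Your proposal is correct and follows essentially the same route as the paper: apply Lemma~\ref{lem:BSVY963} (for $\phi=\rho$) or Lemma~\ref{lem:BSVY105} (for $\phi=V$) with $F:=\Delta f_N\in L^\infty(\mu\times\mu)$, then pass from $\Delta f_N$ to $\Delta f$ on the right-hand side via the pointwise bound \eqref{eq:fN-key}. Your remark on why the constant is uniform in $N$ and $E$ is a correct reading of those lemmas: the $L^\infty$ norm of $F$ and the $E$-dependent factors appear with exponents tending to $0$, so these factors tend to $1$ and drop out of the $\limsup$.
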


\begin{proof}
It is evident that $f_N\in L^\infty(\mu)$, and hence $\Delta f_N\in L^\infty(\mu\times\mu)$.
By Lemma \ref{lem:BSVY963} if $\phi=\rho$ or Lemma \ref{lem:BSVY105} if $\phi=V$, applied to
$F:=\Delta f_N$ in each case, we obtain a bound like the claimed one but with $f_N$ in place
of $f$ on the right. The proof is completed by \eqref{eq:fN-key}, which shows that $\Delta
f_N\leq\Delta f$.
\end{proof}

We are now ready to complete the proofs of Proposition \ref{prop:DMS} and Theorems
\ref{thm:Nguyen-type} and \ref{thm-Nguyen-type-v2}:

\begin{proof}[Proof of \eqref{eq:DMS-upper-v2}, \eqref{eq:Nguyen-lower}, and
\eqref{eq:Nguyen-lower-v2}]
Denoting by $\sup_{E,N}$ the supremum over all bounded measurable subsets $E\subset X$ and
$N\in\mathbb N$, we have, for both choices of $\phi\in\{\rho,V\}$,
\begin{equation*}
\begin{split}
  \Norm{f}{\dot W^{1,p}(\mu)}
  &\lesssim\sup_{E,N}\liminf_{s\nearrow 1} (1-s)^{\frac1p}\Norm{\mathbf{1}_{E\times E}\Delta
  f_N}{L^p(\rho^{-ps}V^{-1})}
  \quad\text{by \eqref{eq:DMS-lower-v2}} \\
  &\leq\sup_{E,N}\limsup_{s\nearrow 1} (1-s)^{\frac1p}\Norm{\mathbf{1}_{E\times E}\Delta
  f_N}{L^p(\rho^{-ps}V^{-1})}
  \quad\text{trivially} \\
  &\lesssim\Norm{\rho^{-1}\phi^{-\gamma}\Delta f}{L^{p,\infty}(\phi^{p\gamma}V^{-1})}
  \quad\text{by Corollary \ref{cor:DMS}} \\
  &\lesssim\Norm{f}{\dot M^{1,p}(\mu)}\quad\text{by \eqref{eq:Nguyen-upper} (if $\phi=\rho$)
  or
    \eqref{eq:Nguyen-upper-v2} (if $\phi=V$)} \\
  &\sim\Norm{f}{\dot W^{1,p}(\mu)}\quad\text{by Proposition \ref{prop:MWE}}.
\end{split}
\end{equation*}
The first three lines give a proof of both \eqref{eq:Nguyen-lower} (taking $\phi=\rho$)
and \eqref{eq:Nguyen-lower-v2} (taking $\phi=V$). Under the additional assumption that $X$
is a $p$-Poincar\'e space, the last four lines give a proof of \eqref{eq:DMS-upper-v2}.
This completes the proof of all three inequalities, and hence of Proposition
\ref{prop:DMS} and Theorems \ref{thm:Nguyen-type} and \ref{thm-Nguyen-type-v2}.
\end{proof}

\section{Further variants of Sobolev characterizations}\label{sec:further-variants}

The characterizations established above involve either pointwise differences
of $f$ or scale-dependent mean oscillations. In this section, we present
further variants that combine these two viewpoints: to a pair $(x,y)$ we
associate the ball $B(x,\rho(x,y))$ and measure the oscillation of $f$ on this
ball. This leads to characterizations of Sobolev seminorms in terms of the
following weak-type norm:
\begin{align}\label{eq:wn-mf}
\Big\|\frac{m_f}{\rho \cdot\phi^\gamma}\Big\|_{L^{p,\infty}(\phi^{\gamma p}V^{-1})},
\end{align}
where $\phi\in\{\rho,V\}$ and, for any $x,y\in X$,
$m_f(x,y)$ is as in \eqref{eq-def:mf}.
Indeed, if $\phi=\rho$, then the weak-type norm \eqref{eq:wn-mf}
reduces to the one in the right-hand side of \eqref{eq:Frank-lower}
by a relatively standard argument.

Recall that a metric measure space $(X,\rho,\mu)$
is said to satisfy the \emph{reverse doubling} condition if
there exist constants $a,C_0\in (1,\infty)$ such that, for any $x\in X$ and
$r\in(0,\frac{{\rm diam}(X)}{a})$,
\begin{align}\label{rdb}
V(x,ar)\ge C_0V(x,r);
\end{align}
see, for instance, \cite[Definition 1.1(ii)]{YZ:11}.

\begin{remark}\label{rem:Poincare>RD}
Every doubling $p$-Poincar\'e space is reverse doubling. Indeed, a $p$-Poincar\'e space
is connected by \cite[Proposition 4.2]{BB:book}, and a connected doubling space is reverse
doubling by \cite[Corollary 3.8]{BB:book}.
\end{remark}

We first record a localized estimate that will be used below for both choices
of $\phi$.  The localization is important when $X$ has finite diameter: the
reverse doubling condition is then only available below a fixed scale, whereas
the remaining large scales make no contribution to the BBM limit.

\begin{lemma}\label{lem:BSVY161}
Let $(X,\rho,\mu)$ be a doubling and reverse doubling metric measure space,
let $p\in[1,\infty)$, $\gamma\in\mathbb R$, and $\phi\in\{\rho,V\}$.
If $f\in L^\infty(\mu)$ and $E\subset X$ is a bounded measurable set, then
\begin{equation}\label{eq:mean-to-difference-BBM}
 \limsup_{s\nearrow 1}(1-s)^{\frac1p}
 \Norm{\mathbf{1}_{E\times E}\Delta f}{L^p(\rho^{-ps}V^{-1})}
 \lesssim
 \Big\|\frac{m_f}{\rho\phi^\gamma}\Big\|_
 {L^{p,\infty}(\phi^{\gamma p}V^{-1})},
\end{equation}
where the implicit positive constant is independent of $f$ and $E$.
\end{lemma}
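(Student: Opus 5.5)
The plan is to reduce \eqref{eq:mean-to-difference-BBM} to Lemmas \ref{lem:BSVY963} and \ref{lem:BSVY105}, but with the oscillation function in place of $\Delta f$. The BBM functional of $\Delta f$ can be rewritten through mean oscillations at comparable scales. For bounded $E$, the quantity $(1-s)\Norm{\mathbf{1}_{E\times E}\Delta f}{L^p(\rho^{-ps}V^{-1})}^p$ only sees small scales in the limit $s\nearrow1$. Indeed, if $E\subset B(x_0,R)$, the part with $\rho(x,y)>\delta$ is at most $\delta^{-ps}(2\Norm{f}{L^\infty(\mu)})^p\mu(E)\sup_x\int_{\rho(x,y)<2R}V(x,y)^{-1}d\mu(y)$, where the last integral is finite by doubling. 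This finite quantity is killed by the factor $1-s$. So we may insert $\mathbf{1}_{\{\rho<\delta\}}$ with $\delta$ small, for instance below $\operatorname{diam}(X)/a$, so that reverse doubling \eqref{rdb} is available at all relevant scales.

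Next we bound $\Delta f$ by mean oscillations. For Lebesgue points $x,y$ with $r:=\rho(x,y)<\delta$, a telescoping chain along the balls $B(x,2^{-j}r)$ and $B(y,2^{-j}r)$, linked through $B(x,2r)\supset B(y,r)$, gives
\[
\Delta f(x,y)\lesssim\sum_{j\ge -1}\bigl[m_f(x,2^{-j}r)+m_f(y,2^{-j}r)\bigr].
\]
Here we use doubling to compare averages over nested balls. The right-hand side is not directly of the form $m_f(x,y)$, so I would instead work with the averaged form from the proof of Lemma \ref{lem:gnt}. By \cite[Lemma 3.1]{DMS}, the BBM integral restricted to $\rho<\delta$ is $\sim ps\int_0^{\delta}\mathcal K_t\,t^{-ps-1}dt$. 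The inner quantity $\mathcal K_t$ is in turn controlled by $\int_X V(z,t)^{-2}\iint_{B(z,t)^2}\mathbf{1}_{E\times E}(\Delta f)^p\,d\mu\,d\mu\,d\mu(z)$. In each such average we replace $\Delta f(x,y)$ by $|f(x)-\langle f\rangle_{B(z,t)}|+|f(y)-\langle f\rangle_{B(z,t)}|$, and we use $|f-\langle f\rangle|\le2\Norm{f}{L^\infty(\mu)}$ to interpolate the $p$-th power. I expect this to produce a bound of the form
\[
(1-s)\iint_{E\times E,\ \rho<\delta}\frac{|\Delta f|^p}{\rho^{ps}V}\lesssim(1-s)\iint_{E'\times E',\ \rho<C\delta}\frac{G^p}{\rho^{ps}V}+o(1),
\]
where $G(x,y):=\min\{m_f(x,C\rho(x,y)),\ 2\Norm{f}{L^\infty(\mu)}\}$-type quantities (possibly summed over the dyadic shifts $2^{-j}$) enter and $E'$ is a bounded enlargement of $E$. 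To get this, one must also pass from $(\Delta f)^p$ to a power of the oscillation. I would keep the first power of $|f-\langle f\rangle_B|$ and bound the remaining $p-1$ powers by $\Norm{f}{L^\infty(\mu)}$. This preserves the $L^\infty$ hypothesis needed in Lemmas \ref{lem:BSVY963} and \ref{lem:BSVY105}, though at the cost of exponents. A cleaner route, which I would try first, is to use the self-improvement $\fint_B|f-\langle f\rangle_B|^p\lesssim$ (John--Nirenberg for bounded $f$ is not needed). We only need the sharp pointwise fact that after integrating over $x,y\in B(z,t)$ the $L^p$ average can be controlled via Hölder with $p$ replaced by interpolation between $L^1$ (giving $m_f$) and $L^\infty$. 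The key point is that the weak norm at exponent $p$ still governs the limit, because the Lorentz duality argument in Lemma \ref{lem:BSVY963} already absorbs an $L^\infty$ factor raised to the power $1-s$.

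Finally, apply Lemma \ref{lem:BSVY963} (if $\phi=\rho$) or Lemma \ref{lem:BSVY105} (if $\phi=V$) to $F:=\mathbf{1}_{\{\rho<C\delta\}}m_f(\cdot,C\rho)$, or to its dyadic variants; these lie in $L^\infty(\mu\times\mu)$ since $m_f\le2\Norm{f}{L^\infty(\mu)}$. The dilation $C\rho$ and the dyadic shifts $2^{-j}$ are then removed by a change of variables in the weak norm, as in \eqref{eq:rid-of-swap}. There, $m_f(x,2^{j}\rho(x,y))$ is rewritten via points $y'$ with $\rho(x,y')\sim2^{j}\rho(x,y)$. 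Reverse doubling guarantees that such annuli carry measure comparable to $V(x,2^{j}\rho)$, so the weights $\rho^{\gamma p}V^{-1}$ and $V^{\gamma p-1}$ transform with factors that are geometric in $j$, and a summation as in \eqref{eq:ar} closes the argument. I expect this last step to be the main obstacle: for the sum over $j$ to converge with either sign of $\gamma$, the factors $2^{-j}$ coming from telescoping must beat the weight distortion, and the chain must be organized only over scales below $\delta$, where reverse doubling is valid.
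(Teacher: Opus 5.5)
Your proposal has a genuine gap at its central step, the passage from $\Delta f$ to $m_f$.

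\textbf{The averaged route of your second paragraph fails.} Keeping one power of $|f-\langle f\rangle_{B(z,t)}|$ and bounding the other $p-1$ by $2\|f\|_{L^\infty(\mu)}$ replaces the BBM functional by $\|f\|_{L^\infty(\mu)}^{p-1}\int_0^\delta\int_{E'}m_f(z,t)\,d\mu(z)\,t^{-ps-1}\,dt$, which has the wrong scaling. Already in $\mathbb R^d$, for smooth compactly supported nonconstant $f$ one has $m_f(z,t)\sim t|\nabla f(z)|$ for small $t$. So this quantity is $+\infty$ as soon as $ps\ge 1$, i.e.\ for all $s$ near $1$ when $p>1$, and the factor $1-s$ cannot help. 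The alternative you allude to, controlling $\fint_B|f-\langle f\rangle_B|^p$ by $m_f(B)^p$, is false. For $f=\mathbf 1_A$ with $A\subset B$ and $\mu(A)=\varepsilon\mu(B)$, the left side is $\approx\varepsilon$ while $m_f(B)^p\approx(2\varepsilon)^p$. The pointwise chain bound $\Delta f(x,y)\lesssim\sum_j[m_f(x,2^{-j}r)+m_f(y,2^{-j}r)]$ that you derived and then set aside is the right tool. It is exactly the estimate \eqref{eq:swap2} from which the paper starts.

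\textbf{What is then missing is a way to absorb the scale shifts.} You propose to remove them inside the weak-type norm, but this is not a change of variables. For $\phi=V$ the ratio $V(x,r)/V(x,2^{-j}r)$ may vary anywhere in $[c2^{jd},C2^{jD}]$, so the weights are not distorted by a fixed factor. The decay in $j$ would have to come from the normalization $\rho^{-1}$, since the chain bound itself has all coefficients equal to $1$, and that requires a genuine level-set argument you do not supply. Moreover, Lemmas \ref{lem:BSVY963} and \ref{lem:BSVY105} require $F\in L^\infty(\mu\times\mu)$, which the infinite sum of shifted oscillations need not satisfy. Applying them termwise instead means exchanging $\limsup_{s\nearrow1}$ with an infinite sum.

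\textbf{How the paper avoids this.} It sums in $j$ \emph{before} passing to weak norms, inside the strong norm $L^p(\rho^{-ps}V^{-1})$. There each annulus $\{a^k\le\rho(x,\cdot)<a^{k+1}\}$ carries weight $\sim a^{-kps}$, independently of $V$ and $\gamma$. Set $F_j(x,y):=m_f(x,a^{1-j}\rho(x,y))$ and combine:
an annular decomposition and the reindexing $k\mapsto k-j+1$;
doubling, which gives $m_f(x,a^k)\lesssim m_f(x,r)$ for $r\in[a^k,a^{k+1})$;
reverse doubling, which gives $\mu(B(x,a^{k+1})\setminus B(x,a^k))\gtrsim V(x,a^k)$.
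Together these yield
\[
\iint_{E\times E}\frac{F_j^p}{\rho^{ps}V}\lesssim a^{-jps}\iint_{E\times E'}\frac{m_f^p}{\rho^{ps}V},
\]
hence $\|\mathbf 1_{E\times E}\Delta f\|_{L^p(\rho^{-ps}V^{-1})}\lesssim\|\mathbf 1_{E'\times E'}m_f\|_{L^p(\rho^{-ps}V^{-1})}$ uniformly in $s\in[\frac12,1)$, with $E'$ a fixed enlargement of a ball containing $E$. A single application of Lemma \ref{lem:BSVY963} or \ref{lem:BSVY105} to $F=m_f\le 2\|f\|_{L^\infty(\mu)}$ then concludes.

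When $\operatorname{diam}(X)<\infty$, reverse doubling is available only below $\operatorname{diam}(X)/a$. So the paper does this on $\{\rho<\operatorname{diam}(X)/a^3\}$ and discards the rest via $1-s\to 0$, as in your first step. In that step you need the lower cutoff $\rho\ge\delta$, since $\int_{\rho(x,y)<2R}V(x,y)^{-1}\,d\mu(y)$ itself diverges logarithmically.
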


\begin{proof}
From \cite[Lemma 2.2]{DLYYZ} (see also the proof of
\cite[Lemma 2.1]{DLYYZ}),
\begin{equation}\label{eq:swap2}
 |\Delta f|\lesssim\sum_{j=0}^\infty
 (F_j+F_j\circ\operatorname{swap}),
 \qquad \operatorname{swap}(x,y):=(y,x),
\end{equation}
where, with $a$ as in \eqref{rdb},
\begin{equation*}
 F_j(x,t):=m_f(x,a^{-j+1}t),\qquad
 F_j(x,y):=F_j(x,\rho(x,y)),\qquad j\in\mathbb Z_+.
\end{equation*}
Suppose first that $\operatorname{diam}(X)=\infty$.  If
$E\subset B(x_0,R)=:B_0$, then $y\in B(x,2R)$ for any $x,y\in E$.
Let $k_R:=\lceil\log_a(2R)\rceil$,
where $\lceil u\rceil$ for any $u\in\mathbb R$
denotes the smallest integer greater than or equal to $u$.
For any $j\in\mathbb Z_+$ and
$s\in(0,1)$, decomposition into the annuli
$B(x,a^k)\setminus B(x,a^{k-1})$, followed by a change of indices, gives
\begin{equation}\label{eq:annular-decomposition-mf}
\begin{aligned}
&\iint_{E\times E}\frac{F_j(x,y)^p}{\rho(x,y)^{ps}V(x,y)}
 \,d\mu(y)\,d\mu(x)\\
&\quad\lesssim
 \int_E\sum_{k=-\infty}^{k_R}
 \int_{B(x,a^k)\setminus B(x,a^{k-1})}
 \frac{F_j(x,y)^p}{\rho(x,y)^{ps}V(x,y)}
 \,d\mu(y)\,d\mu(x)\\
&\quad\lesssim
 \int_E\sum_{k=-\infty}^{k_R}
 \frac{F_j(x,a^k)^p}{a^{kps}}\,d\mu(x)\\
&\quad\lesssim
 a^{-jps}\int_E\sum_{k=-\infty}^{k_R+1}
 \frac{m_f(x,a^k)^p}{a^{kps}}\,d\mu(x)\\
&\quad\lesssim
 a^{-jps}\int_E\sum_{k=-\infty}^{k_R+1}
 \int_{B(x,a^{k+1})\setminus B(x,a^k)}
 \frac{m_f(x,a^k)^p}{a^{kps}}
 \frac{d\mu(y)}{V(x,a^k)}\,d\mu(x)\\
&\quad\lesssim
 a^{-jps}\int_E\int_{3a^2B_0}
 \frac{m_f(x,y)^p}{\rho(x,y)^{ps}V(x,y)}
 \,d\mu(y)\,d\mu(x).
\end{aligned}
\end{equation}
Here the penultimate estimate follows from \eqref{rdb}; in particular,
\begin{equation*}
 \mu(B(x,a^{k+1})\setminus B(x,a^k))
 \gtrsim V(x,a^k).
\end{equation*}
Taking the $p$-th root in \eqref{eq:annular-decomposition-mf}, using
\eqref{eq:swap2}, Minkowski's inequality, and $V(x,y)\sim V(y,x)$,
and then summing in $j$, we obtain
\begin{align}\label{eq:s3}
 \Norm{\mathbf{1}_{E\times E}\Delta f}{L^p(\rho^{-ps}V^{-1})}
 \lesssim
 \Norm{\mathbf{1}_{3a^2B_0\times3a^2B_0}m_f}
 {L^p(\rho^{-ps}V^{-1})},
\end{align}
where we also used that $\sum_{j=0}^\infty a^{-js}$ is bounded uniformly
for $s\in[\frac{1}{2},1)$.

We next suppose that $D:=\operatorname{diam}(X)\in(0,\infty)$ and let
$\delta:=D/a^3$.  On the set $\{(x,y):\rho(x,y)<\delta\}$, all the radii
in \eqref{eq:annular-decomposition-mf} are less than $D/a$.
Consequently, restricting the annular sums in
\eqref{eq:annular-decomposition-mf} to $\rho(x,y)<\delta$ yields
\begin{align}\label{eq:s3-local}
 &\Norm{\mathbf{1}_{E\times E}\mathbf{1}_{\{\rho<\delta\}}\Delta f}
 {L^p(\rho^{-ps}V^{-1})}
 \lesssim \Norm{m_f}{L^p(\rho^{-ps}V^{-1})}.
\end{align}
Here $X\times X$ may be used on the right because $X$ is bounded.

For the complementary set, doubling and
$X\subset B(x,2D)$ imply that
\begin{equation*}
 V(x,\delta)\gtrsim V(x,2D)=\mu(X).
\end{equation*}
Since $|\Delta f|\leq2\|f\|_{L^\infty(\mu)}$, we obtain
\begin{align*}
 \iint_{E\times E}\mathbf{1}_{\{\rho\geq\delta\}}
 \frac{|\Delta f|^p}{\rho^{ps}V}\,d\mu(y)\,d\mu(x)\lesssim
 \|f\|_{L^\infty(\mu)}^p\delta^{-ps}\mu(E).
\end{align*}
It follows that
\begin{align}\label{eq:large-scales-vanish}
 \limsup_{s\nearrow 1}(1-s)^{\frac1p}
 \Norm{\mathbf{1}_{E\times E}\mathbf{1}_{\{\rho\geq\delta\}}\Delta f}
 {L^p(\rho^{-ps}V^{-1})}=0.
\end{align}
The case $D=0$ is trivial.

If $\phi=\rho$, we apply Lemma \ref{lem:BSVY963} with $F=m_f$;
if $\phi=V$, we apply Lemma \ref{lem:BSVY105} with $F=m_f$.
For infinite diameter we apply the appropriate lemma on
$3a^2B_0\times3a^2B_0$ and use \eqref{eq:s3}.  For finite diameter we
apply it on $X\times X$ and use \eqref{eq:s3-local} and
\eqref{eq:large-scales-vanish}.  These estimates give
\eqref{eq:mean-to-difference-BBM} in both cases.
This completes the proof of Lemma \ref{lem:BSVY161}.
\end{proof}

\begin{corollary}\label{cor:mf-reverse-doubling}
Let $p\in(1,\infty)$, $\gamma\in\mathbb R\setminus\{0\}$, and let
$(X,\rho,\mu)$ be a complete doubling and reverse doubling metric measure
space. If $f\in L^1_{\mathrm{loc}}(\mu)$, then, for any
$\phi\in\{\rho,V\}$,
\begin{equation}\label{eq:mf-reverse-doubling}
 \Norm{f}{\dot W^{1,p}(\mu)}
 \lesssim
 \Big\|\frac{m_f}{\rho\phi^\gamma}\Big\|_
 {L^{p,\infty}(\phi^{\gamma p}V^{-1})}
 \lesssim
 \Norm{f}{\dot M^{1,p}(\mu)}.
\end{equation}
\end{corollary}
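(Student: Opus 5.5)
The plan is to prove the two inequalities in \eqref{eq:mf-reverse-doubling} separately. For the lower bound we run the same chain as in the proof of \eqref{eq:Nguyen-lower}, with Corollary~\ref{cor:DMS} replaced by Lemma~\ref{lem:BSVY161}. For the upper bound we compare $m_f$ pointwise with $f^*_\theta$ and then reuse the argument for \eqref{eq:Nguyen-upper} and \eqref{eq:Nguyen-upper-v2}.

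\emph{Lower bound.} By \eqref{eq:DMS-lower-v2}, which needs only completeness and doubling,
\begin{equation*}
\Norm{f}{\dot W^{1,p}(\mu)}\lesssim\sup_{E,N}\limsup_{s\nearrow1}(1-s)^{\frac1p}\Norm{\mathbf 1_{E\times E}\Delta f_N}{L^p(\rho^{-ps}V^{-1})}.
\end{equation*}
Each truncation $f_N$ lies in $L^\infty(\mu)$, so Lemma~\ref{lem:BSVY161} applies to $f_N$ and gives the bound $\|m_{f_N}/(\rho\phi^\gamma)\|_{L^{p,\infty}(\phi^{\gamma p}V^{-1})}$, with a constant independent of $E$ and $N$. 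It remains to pass from $f_N$ back to $f$. Since truncation is $1$-Lipschitz, for any ball $B$ and any constant $c$ we have
\begin{equation*}
\fint_B|f_N-\ave{f_N}_B|\le 2\fint_B|f_N-c_N|\le 2\fint_B|f-c|,
\end{equation*}
where $c_N$ denotes the truncation of $c$. Taking $c=\ave{f}_B$ gives $m_{f_N}\le 2m_f$ pointwise, and the weak-type norm is monotone. This is the only point where care is needed: unlike the situation for $\Delta f_N$, the inequality $m_{f_N}\le m_f$ is not immediate, and the factor $2$ is exactly what the argument above produces.

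\emph{Upper bound.} Let $h$ be a Haj\l{}asz upper gradient of $f$. Since $m_f(x,y)=m_f(B(x,\rho(x,y)))$, the first estimate of Lemma~\ref{lem:Msharpf<Mh} gives
\begin{equation*}
\frac{m_f(x,y)}{\rho(x,y)}\lesssim\fint_{B(x,\rho(x,y))}h\le Mh(x).
\end{equation*}
With this pointwise bound in place of $f^*_\theta\lesssim Mh$, the level-set computations from the proofs of \eqref{eq:Nguyen-upper} (for $\phi=\rho$, by dyadic annuli) and \eqref{eq:Nguyen-upper-v2} (for $\phi=V$, by \eqref{eq:ele} with $\alpha=\gamma p$) apply verbatim. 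For $\phi=\rho$ they yield
\begin{equation*}
\lambda^p\iint_{\{m_f/(\rho\phi^\gamma)>\lambda\}}\phi^{\gamma p}V^{-1}\lesssim\Norm{Mh}{L^p(\mu)}^p,
\end{equation*}
and likewise for $\phi=V$. The Hardy--Littlewood maximal theorem then bounds the right side by $\Norm{h}{L^p(\mu)}^p$, and taking the infimum over $h$ finishes the proof. This direction uses only doubling and $\gamma\neq0$, the latter being needed for the integrability of the weights in both computations.
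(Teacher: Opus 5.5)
Your proposal is correct and follows essentially the same route as the paper. For the lower bound you use \eqref{eq:DMS-lower-v2} with $\liminf\le\limsup$, Lemma~\ref{lem:BSVY161} applied to the truncations $f_N$, and the pointwise bound $m_{f_N}\le 2m_f$; for the upper bound you use the estimate $m_f(x,y)/\rho(x,y)\lesssim Mh(x)$ from Lemma~\ref{lem:Msharpf<Mh}, the level-set computations (dyadic annuli for $\phi=\rho$, \eqref{eq:ele} for $\phi=V$), and the Hardy--Littlewood maximal theorem. Your justification of $m_{f_N}\le 2m_f$, via $\fint_B|g-\ave{g}_B|\le 2\fint_B|g-c|$ and the $1$-Lipschitz property of truncation, fills in a step the paper attributes to \eqref{eq:fN-key} without further detail.
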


\begin{proof}
The second inequality in \eqref{eq:mf-reverse-doubling} follows directly from
Lemma \ref{lem:Msharpf<Mh}, inequality \eqref{eq:ele} if $\phi=V$ or its
analogue if $\phi=\rho$, and the Hardy--Littlewood maximal theorem.

To prove the first inequality, let $f_N$ be the standard truncation
\eqref{eq:fN-def}. By \eqref{eq:fN-key}, for any ball $B$, we have
$ m_{f_N}(B)\le 2m_f(B).$
Applying Lemma \ref{lem:BSVY161} to $f_N\in L^\infty(\mu)$, we obtain, for
any bounded measurable set $E\subset X$,
\begin{align*}
 &\limsup_{s\nearrow 1}(1-s)^{\frac1p}
 \Norm{\mathbf{1}_{E\times E}\Delta f_N}{L^p(\rho^{-ps}V^{-1})} \\
 &\quad\lesssim
 \Big\|\frac{m_{f_N}}{\rho\phi^\gamma}\Big\|_
 {L^{p,\infty}(\phi^{\gamma p}V^{-1})}
 \lesssim
 \Big\|\frac{m_f}{\rho\phi^\gamma}\Big\|_
 {L^{p,\infty}(\phi^{\gamma p}V^{-1})}.
\end{align*}
The right-hand side is independent of $E$ and $N$. Proposition
\ref{prop:DMS}, together with $\liminf\leq\limsup$, therefore yields that
\begin{align*}
 \Norm{f}{\dot W^{1,p}(\mu)}
 \lesssim\sup_{E,N}\liminf_{s\nearrow 1}(1-s)^{\frac1p}
 \Norm{\mathbf{1}_{E\times E}\Delta f_N}{L^p(\rho^{-ps}V^{-1})}
 \lesssim
 \Big\|\frac{m_f}{\rho\phi^\gamma}\Big\|_
 {L^{p,\infty}(\phi^{\gamma p}V^{-1})}.
\end{align*}
This proves the first inequality in \eqref{eq:mf-reverse-doubling} and
completes the proof.
\end{proof}

\begin{theorem}\label{thm:pd-Nguyen-type}
Let $p\in(1,\infty)$, $\gamma\in\mathbb{R}\setminus\{0\}$,
and $(X,\rho,\mu)$ be a complete doubling $p$-Poincar\'e space. Let
$\phi\in\{V,\rho\}$. If $f\in L^1_{\mathrm{loc}}(\mu)$, then
\begin{align}\label{eq:pd-Nguyen-type-Poincare}
 \Norm{f}{\dot W^{1,p}(\mu)}
 \sim\Big\|\frac{m_f}{\rho \phi^\gamma}\Big\|_{L^{p,\infty}(\phi^{\gamma p}V^{-1})}
 \sim\Norm{f}{\dot M^{1,p}(\mu)}.
\end{align}
In particular, $f\in\dot W^{1,p}(\mu)$ if and only if the middle
quantity in \eqref{eq:pd-Nguyen-type-Poincare} is finite.
\end{theorem}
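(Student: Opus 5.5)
This theorem follows by combining Corollary \ref{cor:mf-reverse-doubling} with the equivalence of the two Sobolev seminorms. The plan is to verify that the hypotheses of the corollary hold and then chain the estimates.

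First, a complete doubling $p$-Poincar\'e space is reverse doubling by Remark \ref{rem:Poincare>RD}, since it is connected. Hence $(X,\rho,\mu)$ satisfies all assumptions of Corollary \ref{cor:mf-reverse-doubling}. For any $f\in L^1_{\mathrm{loc}}(\mu)$ and either $\phi\in\{\rho,V\}$, the corollary then gives
\begin{equation*}
 \Norm{f}{\dot W^{1,p}(\mu)}
 \lesssim
 \Big\|\frac{m_f}{\rho\phi^\gamma}\Big\|_{L^{p,\infty}(\phi^{\gamma p}V^{-1})}
 \lesssim
 \Norm{f}{\dot M^{1,p}(\mu)}.
\end{equation*}

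Second, Proposition \ref{prop:MWE} gives $\Norm{f}{\dot M^{1,p}(\mu)}\sim\Norm{f}{\dot W^{1,p}(\mu)}$ for every $f\in L^1_{\mathrm{loc}}(\mu)$, with both sides allowed to be infinite. The proof of that proposition treats the case where either side is infinite consistently, because each direction produces an admissible gradient from the other. Inserting this into the chain closes the loop, so all three quantities in \eqref{eq:pd-Nguyen-type-Poincare} are comparable. The final claim is then immediate: the middle quantity is finite if and only if $\Norm{f}{\dot W^{1,p}(\mu)}<\infty$, which by Definition \ref{def:dotW1p} means exactly $f\in\dot W^{1,p}(\mu)$.

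There is essentially no obstacle here; all the work sits in Lemma \ref{lem:BSVY161} and Proposition \ref{prop:DMS}. The only point needing care is that the upper bound in Corollary \ref{cor:mf-reverse-doubling} for $\phi=\rho$ uses the $\rho$-analogue of \eqref{eq:ele}. That analogue is the same dyadic-annulus computation already carried out in the proof of \eqref{eq:Nguyen-upper}, together with Lemma \ref{lem:Msharpf<Mh} applied to $m_f(x,\rho(x,y))/\rho(x,y)\lesssim Mh(x)$. If desired, I would spell this out briefly for completeness.
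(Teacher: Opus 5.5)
Your proposal is correct and matches the paper's proof: Remark \ref{rem:Poincare>RD} gives reverse doubling, Corollary \ref{cor:mf-reverse-doubling} supplies the two-sided sandwich, and Proposition \ref{prop:MWE} closes the loop. Your side remark on the $\rho$-analogue of \eqref{eq:ele} via Lemma \ref{lem:Msharpf<Mh} correctly describes how the corollary's upper bound is obtained.
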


\begin{proof}
By Remark \ref{rem:Poincare>RD}, the space $X$ is reverse doubling, so Corollary
\ref{cor:mf-reverse-doubling} applies. Combining
\eqref{eq:mf-reverse-doubling} with Proposition \ref{prop:MWE} completes the
proof of \eqref{eq:pd-Nguyen-type-Poincare} and hence the theorem.
\end{proof}

\section{BV characterizations}\label{sec:BV}

In this section, we work with the characterizations of the homogeneous $\dot{\rm BV}$ space.
Recall that $\dot{\rm BV}(\mu)$ is defined by relaxation in Definition
\ref{def:BV-Mir}. Although Proposition \ref{p-eq} identifies this space
with the two curve-based homogeneous BV spaces, the proof below only uses
the one-sided comparison \eqref{eq:BV-one-sided-comparison}.

It is known that the reverse doubling property
\eqref{rdb} implies that there exist a
positive constant $s\in (0,\infty)$
and a positive $C\in(0,1]$ such that,
for any $x\in X$, $r\in(0,\frac{{\rm diam}( X)}{3})$,
and $\lambda\in[1,\frac{{\rm diam}( X)}{3r})$,
\begin{align}\label{RD}
V(x,\lambda r)\ge C\lambda^{s}V(x,r).
\end{align}
In what follows, we denote by $d\in(0,\infty)$ the \emph{lower homogeneous dimension} of
$\mu$,
which is the maximal value of the quantity $s$ such that \eqref{RD} holds
(cf.\ \cite[(1.14)]{DLYYZ}).

\begin{theorem}\label{thm:BV-characterizations}
Let $(X,\rho,\mu)$ be a complete doubling $1$-Poincar\'e space and
$f\in L^1_{\mathrm{loc}}(\mu)$.
For both $F\in\{m_f,\Delta f\}$ on $X\times X$,
the following statements hold.
\begin{enumerate}[\rm(i)]
\item If $\gamma\in(-\infty,-1)\cup(0,\infty)$, then
\begin{equation}\label{eq:cBV-rho}
\begin{split}
|f|_{\dot{\rm{BV}}(\mu)}
&\lesssim\Norm{\rho^{-1-\gamma}F}{L^{1,\infty}(\rho^\gamma V^{-1})}\\
&\lesssim\sup_{\theta\in(0,1]}
\Norm{t^{-\gamma}\mathcal M_\theta^{\#}f}
{L^{1,\infty}(t^{\gamma-1}dt\,d\mu)}
\lesssim|f|_{\dot{\rm{BV}}(\mu)}.
\end{split}
\end{equation}
\item If $\gamma\in(-\infty,-\frac1d)\cup(0,\infty)$, then
\begin{equation}\label{eq:cBV-volume}
\begin{split}
|f|_{\dot{\rm{BV}}(\mu)}
&\lesssim\Norm{\rho^{-1}V^{-\gamma}F}{L^{1,\infty}(V^{\gamma-1})}\\
&\lesssim\sup_{\theta\in(0,1]}
\Norm{V^{-\gamma}f^*_{\theta}}{L^{1,\infty}(V^{\gamma-1})}
\lesssim|f|_{\dot{\rm{BV}}(\mu)}.
\end{split}
\end{equation}
\end{enumerate}
Consequently, in either part, $f\in\dot{\rm{BV}}(\mu)$ if and only if
any one of the corresponding intermediate quantities is finite.
\end{theorem}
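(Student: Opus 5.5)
The proof splits into the upper chain, which rests on known a priori bounds for Lipschitz functions, and the lower bound, which goes through a $p=1$ analogue of Proposition~\ref{prop:DMS}. The rightmost inequalities in \eqref{eq:cBV-rho} and \eqref{eq:cBV-volume} do not follow from a Haj\l{}asz gradient, since none exists at $p=1$. Instead I would import the a priori estimates of \cite[Corollaries 1.13 and 1.14]{DLYYZ}, valid for Lipschitz functions in exactly these ranges of $\gamma$, and transfer them to $f$ by relaxation. Fix $\lambda$ and take $f_n\in\operatorname{Lip}_{\mathrm{loc}}$ with $f_n\to f$ in $L^1_{\mathrm{loc}}$ and $\int\operatorname{lip}f_n\to|Df|(X)$. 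After localizing (truncating and cutting off if necessary), $m_{f_n}(B)\to m_f(B)$ for every ball, so $\mathcal M^\#_\theta f_n\to\mathcal M^\#_\theta f$ pointwise from below in the liminf sense. Fatou's lemma applied to the level sets then gives the bound for $f$.

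The middle inequalities are pointwise comparisons. For $F=m_f$ I use $t^{-1}m_f\le\mathcal M^\#_1 f$, exactly as in Theorem~\ref{thm:Frank-type}, together with the change of variables $t\leftrightarrow\rho(x,y)$ coming from the annular decomposition, in which $V$ is frozen. For $F=\Delta f$ I use \eqref{eq:swap} and sum over $j$ with the $r$-triangle inequality, as in \eqref{eq:ar}; at $p=1$ the space $L^{1,\infty}$ is only quasi-normed, so Aoki--Rolewicz with $r<1$ is needed. The quantity for $F=m_f$ with $\phi=\rho$ is related to the $t$-integral form by the same annulus argument used to pass from \eqref{eq:wn-mf} to \eqref{eq:Frank-lower}.

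The lower bound is the main obstacle. Its first step is a $p=1$ version of \eqref{eq:DMS-lower-v2}:
\[
|Df|(X)\lesssim\sup_{E,N}\liminf_{s\nearrow1}(1-s)\Norm{\mathbf 1_{E\times E}\Delta f_N}{L^1(\rho^{-s}V^{-1})}.
\]
I would follow Section~\ref{sec:DMS}: Lemma~\ref{lem:gnt} and Lemma~\ref{lem:2liminf} hold for $p=1$ and yield $\liminf_t\Norm{g^n_t}{L^1(\mu_n)}\lesssim L$. Reflexivity fails, so instead I use Lemma~\ref{lem:DMS1869}: $f^n_{t/2}\to f^n$ in $L^1(\mu_n)$ and $\operatorname{lip}f^n_{t/2}\lesssim g^n_t$ up to scale, after smoothing the averages to locally Lipschitz functions as in $\Phi_s$. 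This bounds $|Df^n|(X_n)$ directly through the relaxation in Definition~\ref{def:BV-Mir}. Proposition~\ref{p-local} then glues the local bounds, and the BV analogue of Proposition~\ref{prop:truncW1p}, which follows from lower semicontinuity and coarea, removes the truncation.

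The second step bounds the BBM quantity by the weak-type norm, via Lemmas~\ref{lem:BSVY963} and \ref{lem:BSVY105} with $p=1$. Here the $\gamma$-ranges enter: the duality \eqref{eq:BSVY4-2} needs $q=(1+\gamma)/(s+\gamma)>1$, or $q=2/(1+s)>1$ in the negative case, and the resulting exponents must stay admissible. In the region $\gamma\in[-1,0)$, respectively $[-1/d,0)$, the splitting fails, which I expect reflects the known optimality. For $\gamma<-1$ I would redo Case~2 with exponents chosen so that the complementary factor $\rho^{t}$ has $t>0$; this is automatic there. Finally, for $F=m_f$ I invoke Lemma~\ref{lem:BSVY161}, which gives reverse doubling by Remark~\ref{rem:Poincare>RD}, together with $m_{f_N}\le2m_f$. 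Chaining these estimates gives $|f|_{\dot{\rm BV}(\mu)}$ bounded by each intermediate quantity.
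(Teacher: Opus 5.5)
Your proposal is correct. Most of it follows the paper, up to minor variations: the rightmost inequalities via the a priori Lipschitz estimates of \cite{DLYYZ}, relaxation and Fatou (using $\liminf_n\mathcal M^\#_\theta f_n\ge\mathcal M^\#_\theta f$, which comes from $m_{f_n}(B)\to m_f(B)$); the middle pointwise comparisons with a quasi-triangle inequality in $L^{1,\infty}$; the second step of the lower bound via Corollary \ref{cor:DMS} or Lemma \ref{lem:BSVY161} with $m_{f_N}\le 2m_f$; and removal of the truncation by Lemma \ref{lem:lsc} (no coarea formula is needed).

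The genuine difference is how you turn $\liminf_{t\searrow0}\Norm{g^n_t}{L^1(\mu_n)}\lesssim L$ into a bound on $|Df^n|(X_n)$. The paper proceeds as follows:
\begin{enumerate}
\item It extracts a weak$^*$ limit $g^n\in\mathscr M(X_n)$ (Lemma \ref{lem:weakL1>liminf}).
\item It applies the $p=1$ semicontinuity lemma of \cite{DMS} (Lemma \ref{prop:semics}) to the upper gradients up to scale from Lemma \ref{lem:DMS1869}, obtaining $|Df^n|_*\le c\,g^n$ for the bary-plan variation.
\item It returns to the relaxed variation through $|Df^n|(X_n)\le|Df^n|_w(X_n)\le|Df^n|_*(X_n)$; the first inequality rests on \cite[Theorem 1.1]{AD:14}.
\end{enumerate}
You instead plug an explicit competitor into Definition \ref{def:BV-Mir}. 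This works, but the estimate you need is not Lemma \ref{lem:DMS1869}(ii): an upper gradient only for curves of length $>\frac14 t$ says nothing about $\operatorname{lip}$. What you need is the pointwise bound for a discrete convolution $\Phi^{(n)}_s$ built on the doubling space $X_n$:
\[
\operatorname{lip}\bigl(\Phi^{(n)}_sf^n\bigr)(x)\lesssim g^n_{Cs}(x),\qquad x\in X_n .
\]
This follows from the identity
\[
\Phi^{(n)}_sf^n(x)-\Phi^{(n)}_sf^n(y)=\sum_i[\phi_i(x)-\phi_i(y)]\bigl[\ave{f}_{B_n(x_i,s)}-\ave{f}_{B_n(x,Cs)}\bigr],
\]
together with $\operatorname{Lip}(\phi_i)\lesssim s^{-1}$, bounded overlap, and the uniform doubling of $X_n$. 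You should state and prove it. Taking $Cs_k=t_k$ and using $\Phi^{(n)}_{s_k}f^n\to f^n$ in $L^1(\mu_n)$ then gives $|Df^n|(X_n)\lesssim L$ directly, and Proposition \ref{p-local} finishes.

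Your route is more elementary: no compactness in $\mathscr M(X_n)$, no test plans, and no appeal to \cite{AD:14}. It also makes clear that losing reflexivity at $p=1$ is harmless, because the relaxed variation is itself a $\liminf$. The paper's route yields the finer measure inequality $|Df^n|_*\le c\,g^n$ and mirrors the $p>1$ argument through Theorem \ref{thm:ACDM27}.

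Two side remarks in your proposal are inaccurate, though they do not break the proof.

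First, Lemmas \ref{lem:BSVY963} and \ref{lem:BSVY105} hold for every $\gamma\in\mathbb R$ and $p\in[1,\infty)$. In Case 2 the exponent $t=\frac{p(1-s)(1-\gamma)}{2}$ (respectively $\frac{p(1-\gamma D)(1-s)}{2D}$) is positive for all $\gamma\le0$ (respectively $\gamma<0$). So nothing fails on $[-1,0)$ or $[-\frac1d,0)$, and nothing must be redone for $\gamma<-1$. The lower bound holds for all $\gamma$; the restricted ranges, and their optimality, enter only through the upper estimates imported from \cite{DLYYZ}.

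Second, no localization is needed to get $m_{f_n}(B)\to m_f(B)$, since that is immediate from $L^1_{\mathrm{loc}}$ convergence. What matters is that the a priori bounds hold for $\operatorname{Lip}_{\mathrm{loc}}$ functions, which the paper takes from an inspection of \cite{DLYYZ}. A plain cutoff would create an error $R^{-1}\int_{B(x_0,2R)\setminus B(x_0,R)}|f_n|\,d\mu$ that need not be small for homogeneous $f$. If you do cut off, cut off $f_n-\ave{f_n}_{B(x_0,2R)}$ and use the $1$-Poincar\'e inequality.
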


As observed at the beginning of the preliminaries,
the underlying metric space $(X,\rho)$ of a
complete doubling metric measure space $(X,\rho,\mu)$ is proper and hence
locally compact and Hausdorff.
We denote by $\mathscr{M}(X)$
the set of all complex Radon measures on $X$, which is known as the dual space of
$C_0(X)$, the closure of $C_{\rm c}(X)$
(the set of all continuous functions with compact support on $X$) in the uniform norm. The
norm of $\nu\in\mathscr M(X)$ is given by
\begin{equation*}
  \Norm{\nu}{\mathscr M(X)}:=\abs{\nu}(X),
\end{equation*}
where $\abs{\nu}$ is the total variation measure of the complex measure $\nu$.

\begin{lemma}\label{lem:weakL1>liminf}
Let $\gamma\in\mathbb R$, and let $(X,\rho,\mu)$ be a complete doubling
and reverse doubling metric measure space. Let $f\in L^\infty(\mu)$ be such that
\begin{equation}\label{eq:Gammaf}
  \Gamma_f:=\min\bigg\{\Big\|\frac{F}{\rho
  \cdot\phi^\gamma}\Big\|_{L^{1,\infty}(\phi^{\gamma}V^{-1})}:
  F\in\{m_f,\Delta f\}\ \text{and}\ \phi\in\{\rho,V\}\bigg\}<\infty.
\end{equation}
Let the assumptions and notation of Lemma \ref{lem:gnt} be in force.
Then, for every $n\geq n_0$, there are a sequence $t_k\searrow 0$ and $g^n\in
\mathscr{M}(X_n)$
such that the functions \eqref{eq:gnt} satisfy
$g^n_{t_k}\mu_n\overset{*}{\rightharpoonup} g^n$ in the weak$^*$ topology of
$\mathscr{M}(X_n)$
and
\begin{equation*}
  \Norm{g^n}{\mathscr{M}(X_n)}\lesssim\Gamma_f,
\end{equation*}
where the implicit constant is independent of $n$ and $f$.
\end{lemma}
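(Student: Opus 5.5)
This is the $p=1$ analogue of Lemma \ref{lem:weakLp>liminf}. Reflexivity of $L^p$ is replaced by weak$^*$ compactness of bounded sets in $\mathscr M(X_n)=C_0(X_n)^*$. Since $X_n$ is compact (closed and bounded in a proper space), $C_0(X_n)=C(X_n)$ is separable, so bounded sets in its dual are sequentially weak$^*$ compact by Banach--Alaoglu. It therefore suffices to find $t_k\searrow0$ with $\Norm{g^n_{t_k}}{L^1(\mu_n)}\lesssim\Gamma_f$ uniformly in $n$. Weak$^*$ lower semicontinuity of the total variation norm, applied along a subsequence, then gives $\Norm{g^n}{\mathscr M(X_n)}\le\liminf_k\Norm{g^n_{t_k}\mu_n}{\mathscr M(X_n)}\lesssim\Gamma_f$.

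To get this bound I will run the same chain as in Lemma \ref{lem:weakLp>liminf} with $p=1$. Lemma \ref{lem:2liminf} with $h(t)=\Norm{g^n_t}{L^1(\mu_n)}$ gives
\begin{equation*}
\liminf_{t\searrow0}\Norm{g^n_t}{L^1(\mu_n)}\le\liminf_{s\nearrow1}(1-s)s\int_0^1\frac{\Norm{g^n_t}{L^1(\mu_n)}}{t^{s}}\,dt .
\end{equation*}
Lemma \ref{lem:gnt} with $p=1$ (valid for $p\in[1,\infty)$) bounds the right-hand side by a constant times
\begin{equation*}
\liminf_{s\nearrow1}(1-s)\Norm{\mathbf 1_{X_n\times X_n}\Delta f}{L^1(\rho^{-s}V^{-1})}.
\end{equation*}
Note that $g^n_t$ in Lemma \ref{lem:gnt} involves $\Delta f$ restricted to $X_n$, consistently with \eqref{eq:gnt}. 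It remains to bound this $\limsup$ by $\Gamma_f$ for the minimizing pair $(F,\phi)$ in \eqref{eq:Gammaf}.

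For $F=\Delta f$ this is Lemma \ref{lem:BSVY963} (if $\phi=\rho$) or Lemma \ref{lem:BSVY105} (if $\phi=V$), both with $p=1$, $E=X_n$ bounded, and $F=\Delta f\in L^\infty(\mu\times\mu)$ since $f\in L^\infty(\mu)$. For $F=m_f$ I use Lemma \ref{lem:BSVY161} with $p=1$, $E=X_n$. This is where the reverse doubling hypothesis and $f\in L^\infty$ enter, and both are assumed. Each of these lemmas holds for arbitrary $\gamma\in\mathbb R$ with constants independent of $E$ and $f$, which gives uniformity in $n$. Finally, choose $t_k\searrow0$ realizing the $\liminf$ up to a factor of $2$, and extract a weak$^*$ convergent subsequence.

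The main point to check is that the $p=1$ cases of the earlier lemmas truly apply. Lemma \ref{lem:BSVY963} Case 1 uses Lorentz duality \eqref{eq:BSVY4-2} with $q=\frac{1+\gamma}{s+\gamma}$. This is fine for $\gamma>0$. For $\gamma<0$ the paper uses Case 2, whose splitting with exponents $\frac{1\pm s}{2}$ is also fine for $p=1$. Also, the constants in Lemma \ref{lem:gnt} must not depend on $n$; they do not, since they come only from the uniform constants in Proposition \ref{prop:bdSubsets}. No restriction on $\gamma$ is needed at this stage; the optimal ranges only enter later, through the upper bounds.
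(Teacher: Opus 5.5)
Your proposal is correct and follows the paper's proof essentially step for step. The chain is the same: Lemma~\ref{lem:2liminf} and \eqref{eq:weakLp>limsup} with $p=1$, then Lemma~\ref{lem:BSVY161} for $F=m_f$, or Lemma~\ref{lem:BSVY963}/Lemma~\ref{lem:BSVY105} for $F=\Delta f$, to reach $\Gamma_f$, followed by weak$^*$ compactness in $\mathscr M(X_n)=C(X_n)^*$ and weak$^*$ lower semicontinuity of the norm. One cosmetic point: choose $t_k$ so that it realizes the $\liminf$ exactly, rather than only up to a factor $2$, which also covers the degenerate case where that $\liminf$ is zero.
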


\begin{proof}
Combining several results from above, we find that
\begin{equation*}
\begin{split}
  \liminf_{t\searrow 0}\Norm{g^n_t}{L^1(\mu)}
  &\leq\liminf_{s\nearrow 1}(1-s)s\int_0^1\frac{\Norm{g^n_t}{L^1(\mu)}\, dt}{t^{s}}
  \quad\text{by Lemma \ref{lem:2liminf}} \\
&  \lesssim \limsup_{s\nearrow 1}(1-s)
   \Norm{ \mathbf{1}_{X_n\times X_n}\Delta f }{ L^1(\rho^{-s}V^{-1})}
  \quad\text{by \eqref{eq:weakLp>limsup}} \\
&\lesssim
\begin{cases} \Norm{\rho^{-1}\phi^{-\gamma}m_f}{L^{1,\infty}(\phi^\gamma V^{-1})} &
\text{by Lemma \ref{lem:BSVY161}}, \\
  \Norm{\rho^{-1}\phi^{-\gamma}\Delta f}{L^{1,\infty}(\phi^\gamma V^{-1})} &
  \text{by }
  \begin{cases} \text{Lemma \ref{lem:BSVY963} if $\phi=\rho$, or} \\
  \text{Lemma \ref{lem:BSVY105} if $\phi=V$}.\end{cases}
  \end{cases}
\end{split}
\end{equation*}
Taking the minimum of these upper bounds, the left-hand side is bounded by $\Gamma_f$.

From the definition of the $\liminf$, we deduce the existence of a sequence $t_k\searrow 0$ with
\begin{equation*}
   \Norm{g^n_{t_k}}{L^1(\mu)}
 \lesssim\Gamma_f.
\end{equation*}
It is well known that $L^1(\mu_n)\hookrightarrow \mathscr{M}(X_n)$ and
the bounded sequence in the dual space of a separable
Banach space $C_0(X_n)$ has a weak$^*$ convergent subsequence
$g^n_{t_k}\overset{*}{\rightharpoonup} g^n$, which we continue to denote by the same symbol.
The upper bound on $\Norm{g^n}{\mathscr{M}(X_n)}$ is then a standard property of weak
limits.
This completes the proof of Lemma \ref{lem:weakL1>liminf}.
\end{proof}

The following proposition is the $p=1$ case of the semicontinuity result \cite[Lemma
2.6]{DMS}. It refers to the notion $\abs{Df}_*$ from Definition \ref{def:BVboth2}.

\begin{lemma}\label{prop:semics}
Let $(X,\rho,\mu)$ be a complete doubling metric measure space.
Let $f_n,f\in L^1_{\mathrm{loc}}(\mu)$, $g_n\in L^1_{\mathrm{loc}}(\mu)$, and let
$g$ be a nonnegative Radon measure such that
\begin{enumerate}[\rm(i)]
  \item $g_n$ is an upper gradient of $f_n$ up to scale $\delta_n$, where $\delta_n\searrow
  0$; and
  \item $f_n\to f$ in $L^1_{\mathrm{loc}}(\mu)$ and
  $g_n\mu\overset{*}{\rightharpoonup}g$ in the sense of Radon measures, namely
  $$
  \int_X\varphi g_n\,d\mu\longrightarrow\int_X\varphi\,dg
  \quad\text{for every }\varphi\in C_{\rm c}(X).
  $$
\end{enumerate}
Then $|Df|_*\le g$.
\end{lemma}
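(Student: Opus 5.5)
The plan follows the proof of \cite[Lemma 2.6]{DMS}, specialised to $p=1$ and to bary-plans. By Definition~\ref{def:BVboth}\eqref{it:BV*} and Definition~\ref{def:BVboth2}, two things must be shown. First, $f\circ\gamma\in\dot{\rm BV}(0,1)$, including the boundary condition \eqref{eq:s9}, for all $\gamma$ outside a bary-negligible set. Second, for every bary-plan $\pi$ and every Borel $A\subset X$,
\begin{equation*}
  \int\abs{D(f\circ\gamma)}(\gamma^{-1}(A))\,d\pi(\gamma)\leq\Norm{b_\pi}{L^\infty(\mu)}\,g(A).
\end{equation*}
Minimality of $\abs{Df}_*$ then gives $\abs{Df}_*\leq g$. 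I would fix a Borel representative of $f$ and define the exceptional set intrinsically, as $\Gamma:=\{\gamma: f\circ\gamma\notin\dot{\rm BV}(0,1)\}$. Then it suffices to prove $\pi(\Gamma)=0$ for each bary-plan $\pi$ separately, and $\pi$-dependent subsequences are allowed. I would only check Borel measurability of $\Gamma$ (or replace it by a Borel superset), not dwell on it.

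\emph{Step 1 (localisation and a.e.\ convergence along curves).} Fix a bary-plan $\pi$ and $R>0$. Restrict $\pi$ to the curves with image in $B(x_0,R)$; this is still a bary-plan up to normalisation, with barycenter bounded by that of $\pi$. These families exhaust $AC([0,1];X)$ as $R\to\infty$, so this restriction loses nothing. The bounded compression property \eqref{it:C(pi)} and Fubini give
\begin{equation*}
  \int\int_0^1\abs{f_n-f}(\gamma(t))\,dt\,d\pi+\int\abs{f_n-f}(\gamma(0))+\abs{f_n-f}(\gamma(1))\,d\pi\leq 3C(\pi)\int_{B(x_0,R)}\abs{f_n-f}\,d\mu\to0.
\end{equation*}
Passing to a subsequence (diagonal in $R$), for $\pi$-a.e.\ $\gamma$ we get $f_n\circ\gamma\to f\circ\gamma$ in $L^1(0,1)$ and at the endpoints.

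\emph{Step 2 (weighted lower semicontinuity on a fixed curve).} Fix such a $\gamma$, which I may take with $\int_\gamma g_n<\infty$ (this holds for $\pi$-a.e.\ $\gamma$ by the barycenter identity and monotone approximation). Fix also $0\leq\psi\in C_{\rm c}(X)$. Apply the upper-gradient-up-to-scale-$\delta_n$ inequality to subarcs of length in $(\delta_n,2\delta_n]$ of the arc-length reparametrisation. This bounds the discrete variation of $f_n\circ\gamma$ over partitions of mesh $\gtrsim\delta_n$, weighted by $\psi$ up to its modulus of continuity at scale $2\delta_n$, by $\int_\gamma\psi g_n+o(1)$. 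Since $\delta_n\to0$ and $f_n\circ\gamma\to f\circ\gamma$ in $L^1$, lower semicontinuity of the total variation yields
\begin{equation*}
  \int_{(0,1)}\psi\circ\gamma\,d\abs{D(f\circ\gamma)}\leq\liminf_n\int_\gamma\psi g_n.
\end{equation*}
The endpoint convergence, together with the inequality for the whole curve (valid once $\ell(\gamma)>\delta_n$; constant curves are trivial), gives \eqref{eq:s9}. I expect this step to be the main obstacle. The discretisation at scale $\delta_n$ must be made compatible with the $L^1$ limit, which requires choosing partition points as Lebesgue points where $f_n\circ\gamma\to f\circ\gamma$, as in \cite{DMS}.

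\emph{Step 3 (integration against $\pi$).} Fatou's lemma and the barycenter identity \eqref{eq:baryc} give
\begin{equation*}
  \int\int\psi\circ\gamma\,d\abs{D(f\circ\gamma)}\,d\pi\leq\liminf_n\int\psi g_nb_\pi\,d\mu\leq\Norm{b_\pi}{L^\infty(\mu)}\int\psi\,dg.
\end{equation*}
The last inequality uses $g_n\geq0$ and $g_n\mu\overset{*}{\rightharpoonup}g$. Applied with $\psi$ equal to $1$ on $B(x_0,R)$, the inner integral is finite for $\pi$-a.e.\ $\gamma$, so $\pi(\Gamma)=0$. For open $A$, take the supremum over $0\leq\psi\leq\mathbf 1_A$ with $\psi\in C_{\rm c}(X)$ to get the required bound on $A$. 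Outer regularity, as in Remark~\ref{rem:BV*}, extends this to all Borel $A$, which completes the proof.
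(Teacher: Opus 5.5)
\paragraph{The gap: the boundary condition \eqref{eq:s9}.} The paper gives no argument of its own for this lemma. It quotes the case $p=1$ of \cite[Lemma 2.6]{DMS}, reading \eqref{eq:s9} into the definition used there (Remark~\ref{rem:BV*}). Judged as a self-contained proof, your proposal has a genuine gap exactly at \eqref{eq:s9}.

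From the endpoint convergence and $\abs{f_n(\gamma(1))-f_n(\gamma(0))}\le\int_\gamma g_n$ you only get $\abs{f(\gamma(1))-f(\gamma(0))}\le\liminf_n\int_\gamma g_n$. But \eqref{eq:s9} asks for the bound $\abs{D(f\circ\gamma)}(0,1)$, which is the variation of the \emph{limit} on the \emph{open} interval. Your Step~2 bounds that quantity from above by the same $\liminf$, so the two inequalities point the same way and cannot be chained. For example, a curve with $u:=f\circ\gamma$ equal to $1$ at $t=0$ and to $0$ on $(0,1]$ has $\abs{Du}(0,1)=0$. It is consistent with everything you derive on that curve, provided $g_n$ concentrates near $\gamma(0)$.

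What is needed is that $t=0$ and $t=1$ are (weak) Lebesgue points of $f\circ\gamma$ for bary-a.e.\ $\gamma$, as in Lemmas~\ref{lem:ADM5.8aux} and~\ref{lem:ADM5.8}. Your partition argument gives, for good $t$, $\abs{f(\gamma(t))-f(\gamma(0))}\le\liminf_n\int_0^t g_n(\gamma(r))\abs{\gamma'}(r)\,dr$. Nothing forces the right-hand side to vanish as $t\searrow0$ on a given curve. The device of Lemma~\ref{lem:ADM5.8} (let $t\to0$ with $n$ fixed) is unavailable here, because $g_n$ controls $f_n$ only along curves of length $>\delta_n$.

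If $\pi$ is concentrated on $L$-Lipschitz curves with images in a compact $K$, you can close the gap by integrating. Let $\overline u$ denote the good representative of $u=f\circ\gamma$. Fatou and bounded compression give $\int\abs{f(\gamma(0))-\overline u(0^+)}\,d\pi(\gamma)\le LC(\pi)\tau\sup_n\int_K g_n\,d\mu$ for every $\tau>0$, and the supremum is finite by the weak$^*$ convergence. This yields $\abs{Df}_w\le g$. That is in fact all the proof of Theorem~\ref{thm:BV-characterizations} uses, through Lemma~\ref{lem:tv-w-lower}.

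For a general bary-plan, however, this does not work. There $\abs{\gamma'}$ is merely integrable, and weak$^*$ convergence of $g_n\mu$ gives no control of $\int g_n\beta\,d\mu$ when $\beta$ is merely bounded, such as the barycenter of the initial segments. So the stated bound $\abs{Df}_*\le g$ needs an idea that your proposal does not contain.

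\paragraph{A secondary, repairable point in Step~2.} Freezing $\psi$ at the partition points costs $\omega_\psi(2\delta_n)\int_\gamma g_n$. This is not $o(1)$ on a fixed curve, since only $\int(\int_\gamma g_n)\,d\pi$ is bounded in $n$. Passing to a $\gamma$-dependent subsequence would break the Fatou step in Step~3. Also, partitions chosen at the $n$-dependent scale $\delta_n$ cannot exploit pointwise convergence $f_n(\gamma(t))\to f(\gamma(t))$, which holds only at fixed $t$.

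Reverse the order instead:
\begin{enumerate}[\rm(a)]
  \item Fix points $t_0<\dots<t_k$ in an open $U\subset(0,1)$ at which $f_n\circ\gamma\to f\circ\gamma$ along your subsequence.
  \item Discard the pieces on which $\gamma$ is constant; they contribute nothing.
  \item Let $n\to\infty$, so that every remaining piece eventually has length $>\delta_n$.
\end{enumerate}
The essential variation is dominated by the pointwise variation on any set of full measure. Hence this gives $\abs{D(f\circ\gamma)}(U)\le\liminf_n\int_U g_n(\gamma(r))\abs{\gamma'}(r)\,dr$. The $\psi$-weighted inequality then follows from the layer-cake formula and Fatou. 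Avoid the arc-length reparametrization, which does not preserve $L^1$ convergence in the time variable.

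With these repairs, Steps~1--3 do give $f\circ\gamma\in{\rm BV}(0,1)$ and the measure bound \eqref{eq:BVboth} for every bary-plan. Only \eqref{eq:s9} remains unproved.
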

We also recall the following lower semicontinuity of the relaxed total
variation; see \cite[Proposition 3.6]{M:03}.
\begin{lemma}\label{lem:lsc}
Let $(X,\rho,\mu)$ be a complete doubling metric measure space, and let
$A\subset X$ be open. If $f_j,f\in L^1_{\mathrm{loc}}(\mu)$ and
$f_j\to f$ in $L^1_{\mathrm{loc}}(\mu)$, then
\begin{equation}\label{eq:lsc-BV}
  |Df|(A)\leq \liminf_{j\to\infty}|Df_j|(A),
\end{equation}
where the total variations are defined by \eqref{eq:Df(A)}.
\end{lemma}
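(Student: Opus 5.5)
The plan is to deduce the claimed inequality from the definition \eqref{eq:Df(A)} by a diagonal argument, using only elementary properties of $L^1_{\mathrm{loc}}$ convergence. Fix the open set $A$. We may assume the right-hand side $\liminf_j |Df_j|(A)$ is finite, and we pass to a subsequence, still denoted $f_j$, along which $|Df_j|(A)$ converges to this $\liminf$. Restricting to $A$, the convergence $f_j\to f$ in $L^1_{\mathrm{loc}}(\mu)$ gives $f_j\to f$ in $L^1_{\mathrm{loc}}(A)$. Here we read $L^1_{\mathrm{loc}}(A)$ as convergence in $L^1(K)$ for every compact $K\subset A$, which is the natural meaning on the open set $A$; since $X$ is proper, compact subsets of $A$ are exactly the closed bounded sets lying inside $A$.

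Next we approximate each $f_j$ by locally Lipschitz functions. For every $j$, the definition \eqref{eq:Df(A)} provides a sequence $\{u_{j,k}\}_k\subset\operatorname{Lip}_{\mathrm{loc}}(A)$ with $u_{j,k}\to f_j$ in $L^1_{\mathrm{loc}}(A)$ and
$$\liminf_{k}\int_A\operatorname{lip}u_{j,k}\,d\mu\le |Df_j|(A)+2^{-j}.$$
To control the local convergence with a single parameter, we fix an exhaustion of $A$ by compact sets $K_1\subset K_2\subset\cdots$ with $\bigcup_m K_m=A$, chosen so that every compact subset of $A$ lies in some $K_m$. For instance, take $K_m:=\{x\in \overline{B(x_0,m)}:\operatorname{dist}(x,X\setminus A)\ge 1/m\}$, which is compact because $X$ is proper. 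For each $j$ we then choose an index $k_j$ such that
$$\int_{K_j}|u_{j,k_j}-f_j|\,d\mu<2^{-j}\quad\text{and}\quad \int_A\operatorname{lip}u_{j,k_j}\,d\mu\le |Df_j|(A)+2^{1-j}.$$
Such a $k_j$ exists because the first condition holds for all large $k$, while the second holds for infinitely many $k$ by the definition of $\liminf$.

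With these choices, set $v_j:=u_{j,k_j}\in\operatorname{Lip}_{\mathrm{loc}}(A)$. For a compact $K\subset A$ we have $K\subset K_j$ for all large $j$, so
$$\int_K|v_j-f|\,d\mu\le 2^{-j}+\int_K|f_j-f|\,d\mu\to0,$$
that is, $v_j\to f$ in $L^1_{\mathrm{loc}}(A)$. Hence $\{v_j\}$ is an admissible sequence in \eqref{eq:Df(A)} for $f$, and therefore
$$|Df|(A)\le\liminf_j\int_A\operatorname{lip}v_j\,d\mu\le\lim_j\big(|Df_j|(A)+2^{1-j}\big)=\liminf_j|Df_j|(A).$$

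The main obstacle is the topology of $L^1_{\mathrm{loc}}(A)$ used in \eqref{eq:Df(A)}. If it instead means $L^1$ convergence on every ball $B\subset A$, the same argument applies once the compact exhaustion is replaced by a countable family of balls whose finite unions cover every such ball. Properness of $X$ is what makes either version work.
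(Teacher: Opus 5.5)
Your argument is correct, but it takes a different route from the paper. The paper does not prove the lower semicontinuity itself. It passes to a subsequence realizing the $\liminf$, discards the finitely many $j$ with $|Df_j|(A)=\infty$ (so that each $f_j$ has locally finite variation on $A$), and then quotes \cite[Proposition 3.6]{M:03}.

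You instead derive the inequality directly from the relaxation definition \eqref{eq:Df(A)} by a diagonal choice of recovery sequences. The only structural input is that convergence in $L^1_{\mathrm{loc}}(A)$ is controlled by the countable compact exhaustion $K_m$ of $A$, which exists because $X$ is proper. You should discard the $j$ with $|Df_j|(A)=\infty$ before picking $u_{j,k}$ with $\liminf_k\int_A\operatorname{lip}u_{j,k}\le|Df_j|(A)+2^{-j}$. This is harmless, since along your subsequence only finitely many such $j$ occur.

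The paper's route is shorter and keeps the notion tied to Miranda's framework. Yours is self-contained and shows that the statement is a formal consequence of the definition once the topology is metrizable.

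Your closing remark about an alternative reading is wrong as stated. Suppose $L^1_{\mathrm{loc}}(A)$ meant $L^1$ convergence on every ball of $X$ contained in $A$. Then in general there is no countable family of such balls whose finite unions cover every ball contained in $A$. In the open unit square of $\mathbb R^2$, the closure of any ball inside the square meets the bottom side in at most one point. Yet balls inside the square are tangent to that side at uncountably many points, and covering one of them by finitely many family balls forces its tangency point into the closure of one of those balls. So that reading would need a genuinely different argument. If instead you apply the convention \eqref{eLoc} to the subspace $A$, meaning convergence on the bounded sets $A\cap B$, then the exhaustion $A\cap B(x_0,m)$ makes your diagonal argument go through verbatim.
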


\begin{proof}
If the right-hand side of \eqref{eq:lsc-BV} is infinite, there is nothing
to prove. Otherwise, we pass to a subsequence along which the lower limit is
attained. After discarding finitely many terms, $|Df_j|(A)<\infty$, and
hence $f_j|_A$ has locally finite total variation on $A$. Applying
\cite[Proposition 3.6]{M:03} on the open set $A$ gives
\eqref{eq:lsc-BV}.
This completes the proof of Lemma \ref{lem:lsc}.
\end{proof}

Now, we are in the position of proving Theorem \ref{thm:BV-characterizations}.

\begin{proof}[Proof of Theorem \ref{thm:BV-characterizations}]
The upper bound in \eqref{eq:cBV-rho} for $\gamma\in(-\infty,-1)$
has been proved in \cite[(6.3)]{DLYYZ}. For $\gamma\in(0,\infty)$,
it is indicated in \cite[Section 4.1]{DLYYZ} that, for any $f\in
\operatorname{Lip}_{\mathrm{loc}}(X)$,
\begin{align}\label{eq:s4}
\sup_{\theta\in(0,1]}\Norm{t^{-\gamma}\mathcal
M_\theta^{\#}f}{L^{1,\infty}(t^{\gamma-1}dt\,d\mu)}
   \lesssim\Norm{{\operatorname{lip}}f}{L^1(\mu)};
\end{align}
recall that $\operatorname{lip}f$ is defined in \eqref{eq:lipu}.
Given $f\in \dot{\rm{BV}}(\mu)$ and any sequence $\{f_n\}_{n\in\mathbb{N}}
$ in  $\operatorname{Lip}_{\mathrm{loc}}(X)$ such that $f_n\to f$ in
$L^1_{\mathrm{loc}}(\mu)$,
by \eqref{eq:s4} and Fatou's lemma, we find that
\begin{align*}
&\liminf_{n\to\infty}\Norm{{\operatorname{lip}}f_n}{L^1(\mu)}\\
&\quad\gtrsim\liminf_{n\to\infty}\sup_{\theta\in(0,1]}\Norm{t^{-\gamma}
\mathcal M_\theta^{\#}f_n}{L^{1,\infty}(t^{\gamma-1}dt\,d\mu)}\\
&\quad\ge
\sup_{\theta\in(0,1],\lambda\in(0,\infty)}\lambda\int_X\int_{0}^{\infty}\liminf_{n\to\infty}
{\bf 1}_{\{t^{-\gamma}
\mathcal M_\theta^{\#}f_n(x,t)>\lambda\}}\,t^{\gamma-1}\,dt\,d\mu(x)\\
&\quad\ge \sup_{\theta\in(0,1],\lambda\in(0,\infty)}\lambda\int_X\int_{0}^{\infty}
{\bf 1}_{\{t^{-\gamma}\liminf_{n\to\infty}
\mathcal M_\theta^{\#}f_n(x,t)>\lambda\}}\,t^{\gamma-1}\,dt\,d\mu(x).
\end{align*}
Combined with the fact that $f_n\to f$ in $L^1_{\mathrm{loc}}(\mu)$, this is
further bounded from below by
\begin{align*}
\ge\sup_{\theta\in(0,1]}\Norm{t^{-\gamma}\mathcal
M_\theta^{\#}f}{L^{1,\infty}(t^{\gamma-1}dt\,d\mu)}.
\end{align*}
Taking the infimum over all such sequences $\{f_n\}_n$, and recalling from \eqref{eq:Df(A)}
and \eqref{BV} that
\begin{equation*}
  \abs{f}_{\dot{\rm BV}(\mu)}:=\inf_{\{f_n\}_{n\in\mathbb
  N}}\liminf_{n\to\infty}\Norm{\operatorname{lip}f_n}{L^1(\mu)},
\end{equation*}
where the infimum is taken precisely over the said sequences, we obtain the desired upper
bound in this case.

The upper bound in \eqref{eq:cBV-volume} for
$\gamma\in(-\infty,-\frac1d)$ has been proved in \cite[(6.1)]{DLYYZ}.
For $\gamma\in(0,\infty)$, the following a priori estimate for test functions
$f\in \operatorname{Lip}_{\mathrm{loc}}(X)$ is contained in \cite[Theorem 1.8]{DLYYZ}:
\begin{equation}\label{eq:s5}
  \sup_{\theta\in(0,1]}\Norm{V^{-\gamma}f^{*}_\theta}{L^{1,\infty}(V^{\gamma-1})}
  \lesssim\Norm{{\operatorname{lip}}f}{L^1(\mu)}.
\end{equation}
(Strictly speaking, \cite[Theorem 1.8]{DLYYZ} is stated for $f\in \operatorname{Lip}(X)$
only,
but an inspection of the proof of \eqref{eq:s5} in \cite[Section 4.2]{DLYYZ} shows that
$f\in \operatorname{Lip}_{\mathrm{loc}}(X)$ is enough; indeed, one only needs that
$\operatorname{lip}f$
is an upper gradient of $f$ so as to have access to the Poincar\'e inequality.)
The proof of \eqref{eq:cBV-volume} for $\gamma\in(0,\infty)$ is now concluded in the
same way as the proof of \eqref{eq:cBV-rho} in the same range, using \eqref{eq:s5} in place
of
\eqref{eq:s4}.

Next, we prove the lower bounds in \eqref{eq:cBV-rho} and
\eqref{eq:cBV-volume} for both choices of $F$.
We first consider $f\in L^\infty(\mu)$ as we did in the proof of \eqref{eq:Nguyen-lower}.
Let the assumptions and notation of Lemma \ref{lem:gnt} be in force.
For any $n\in\mathbb{N}$ and $t\in(0,\infty)$, let $f^n:=f|_{X_n}$ and let $f^n_t,
g^n_t$ be as in \eqref{eq:fnt-gnt}.
By Remark \ref{rem:Poincare>RD}, the space $X$ is reverse doubling. Using Lemmas
\ref{lem:weakL1>liminf} and \ref{lem:DMS1869}, we conclude that,
for each $n\geq n_0$, there are a sequence $t_k\searrow 0$ and a
measure $g^n\in\mathscr M (X_n)$ such that
\begin{enumerate}[\rm(i)]
  \item $c\cdot g_{t_k}^n$ is an upper gradient of $f^n_{\frac12 t_k}$ up to scale $\frac14
  t_k$, where $\frac14 t_k\searrow 0$,
  \item $f^n_{\frac12 t_k}\to f^n$ pointwise $\mu_n$-a.e., and
  $g^n_{t_k}\mu_n\overset{*}{\rightharpoonup} g^n$ in the weak$^*$ topology of
  $\mathscr{M}(X_n)$,
\end{enumerate}
and, with $\Gamma_f$ as in \eqref{eq:Gammaf},
\begin{equation}\label{eq:gn<Gamma}
    \Norm{g^n}{\mathscr M(X_n)}
    \lesssim\Gamma_f.
\end{equation}
By its definition in \eqref{eq:fnt-gnt}, each function $f^n_t$ is an average of
$f^n=f|_{X_n}$,
and hence
$$
\Norm{f^n_{\frac12t_k}}{L^\infty(\mu_n)}\leq  \|f^n\|_{L^\infty(\mu_n)}\leq
\|f\|_{L^\infty(\mu)}.
$$
Since $\mu_n(X_n)<\infty$, the pointwise convergence in (ii) and the dominated convergence
theorem
give
$f^n_{\frac12t_k}\longrightarrow f^n$
in $L^1(\mu_n).$
Moreover, since $X_n$ is closed, bounded, and compact, it follows that
$C_{\rm c}(X_n) = C_0(X_n)$ and hence
$c g^n_{t_k}\mu_n\overset{*}{\rightharpoonup}c g^n$
in the sense of Radon measures.
Consequently, the assumptions of Lemma \ref{prop:semics} hold
on $(X_n,\rho|_{X_n},\mu_n)$, and hence
\begin{equation}\label{eq:Dfn<gn}
  |Df^n|_*\le c g^n\quad\text{for every }n\ge n_0.
\end{equation}
It then follows that
\begin{equation*}
\begin{split}
    |Df^n|(X_n) &\leq |Df^n|_w(X_n)\quad\text{by Lemma \ref{lem:tv-w-lower}} \\
  &\leq |Df^n|_*(X_n)\quad\text{by Lemma \ref{lem:tv-c}} \\
  &\le c\|g^n\|_{\mathscr M(X_n)}\quad\text{by \eqref{eq:Dfn<gn}} \\
  &\lesssim\Gamma_f\quad\text{by \eqref{eq:gn<Gamma}}.
\end{split}
\end{equation*}
Hence, Proposition \ref{p-local} implies that $f\in\dot{\rm BV}(\mu)$
and gives
\begin{align*}
|Df|(X)
=\sup_{n\ge n_0}|Df^n|(X_n)
\lesssim\Gamma_f.
\end{align*}
This completes the proof under the assumption that $f\in L^\infty(\mu)$.

Let now $f\in L^1_{\mathrm{loc}}(\mu)$, and consider the standard truncations
$f_N\in L^\infty(\mu)$ defined in \eqref{eq:fN-def}. Applying the bounded
case proved above to $f_N$, we obtain
$|f_N|_{\dot{\rm BV}(\mu)}
\lesssim\Gamma_{f_N}.$
For the quantities involving $\Delta f$, the estimate \eqref{eq:fN-key} gives
$\Delta f_N\le \Delta f$. For the quantities involving $m_f$,  we have, for
every ball $B$,
$m_{f_N}(B)\le2m_f(B)$.
Consequently, in all four cases,
$\Gamma_{f_N}\lesssim\Gamma_f,$
and hence
$|Df_N|(X)\lesssim\Gamma_f.$

Since $f_N\to f$ in $L^1_{\mathrm{loc}}(\mu)$, Lemma \ref{lem:lsc} gives
\begin{align*}
|Df|(X)
&\le \liminf_{N\to\infty}|Df_N|(X)
\lesssim\Gamma_f.
\end{align*}
This completes the proof of Theorem \ref{thm:BV-characterizations}.
\end{proof}

\appendix

\section{Equivalence of homogeneous BV spaces}\label{app:homogeneous-BV-equivalence}

This appendix proves Proposition \ref{p-eq}.  The argument also supplies
the boundary-regularity detail needed to compare the bary-plan definition
with the relaxed total variation.  As explained after Proposition
\ref{p-eq}, the applications in the main text only use the one-sided
comparison \eqref{eq:BV-one-sided-comparison}.

For checking the boundary regularity \eqref{eq:s9}, we first record a
one-dimensional sufficient condition motivated by
\cite[p.~4169]{AD:14}.

\begin{lemma}\label{lem:ADM5.8aux}
Suppose that $g\in L^1(0,1)$ has a finite measure $Dg$ as its
distributional derivative. Suppose further that $0$ and $1$ are weak
Lebesgue points of $g$ in the sense that
\begin{equation}\label{eq:weakLeb}
 \liminf_{t\searrow 0}\frac{1}{t}\int_0^t\abs{g(s)-g(0)}\,ds
 =0
 =\liminf_{t\searrow 0}\frac{1}{t}\int_0^t
 \abs{g(1-s)-g(1)}\,ds.
\end{equation}
Then $g$ satisfies the boundary regularity \eqref{eq:s9}.
\end{lemma}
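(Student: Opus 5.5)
Since $g\in L^1(0,1)$ has a finite measure $Dg$ as distributional derivative, I will pass to the good representative and compare it with the given pointwise values $g(0)$ and $g(1)$. The function $\tilde g(t):=c+Dg((0,t])$ is of bounded variation on $(0,1)$ and, for a suitable constant $c$, agrees with $g$ a.e.\ on $(0,1)$. It has one-sided limits $\tilde g(0^+)=c$ and $\tilde g(1^-)=c+Dg((0,1))$. From these,
\begin{equation*}
  \abs{\tilde g(1^-)-\tilde g(0^+)}=\abs{Dg((0,1))}\leq\abs{Dg}(0,1).
\end{equation*}
It therefore suffices to show that $g(0)=\tilde g(0^+)$ and $g(1)=\tilde g(1^-)$.

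For the endpoint $0$, I use that $g=\tilde g$ a.e.\ and write, for $t\in(0,1)$,
\begin{equation*}
\abs{g(0)-\tilde g(0^+)}\leq\frac1t\int_0^t\abs{g(s)-g(0)}\,ds+\frac1t\int_0^t\abs{\tilde g(s)-\tilde g(0^+)}\,ds.
\end{equation*}
The second average is at most $\sup_{s\in(0,t)}\abs{\tilde g(s)-\tilde g(0^+)}\leq\abs{Dg}((0,t))$. This tends to $0$ as $t\searrow0$ because $\abs{Dg}$ is a finite measure and $\bigcap_t(0,t)=\varnothing$. Along a sequence $t_k\searrow0$ realizing the $\liminf$ in \eqref{eq:weakLeb}, the first average also tends to $0$, so $g(0)=\tilde g(0^+)$. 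The endpoint $1$ is handled the same way using the second identity in \eqref{eq:weakLeb} and $\abs{Dg}((1-t,1))\to0$.

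Combining the two endpoint identities with the inequality for $\tilde g$ gives $\abs{g(1)-g(0)}\leq\abs{Dg}(0,1)<\infty$, which is \eqref{eq:s9}.

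The only delicate point is the existence of the representative $\tilde g$ with one-sided limits controlled by $\abs{Dg}$ of small intervals. I will take this from the standard one-dimensional BV theory, namely that $g-Dg((0,\cdot])$ has zero distributional derivative and is therefore a.e.\ constant. Note that only a $\liminf$, not a full limit, is needed in \eqref{eq:weakLeb}, because the $\tilde g$-term converges along every sequence.
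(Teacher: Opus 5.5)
Your proof is correct, but it follows a different route from the paper's.

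\textbf{Your route.} You invoke the structure theory of one-dimensional BV functions. You take the representative $\tilde g(t)=c+Dg((0,t])$; its a.e.\ agreement with $g$ rests on the fact that a distribution with zero derivative on an interval is constant. You then identify the prescribed values $g(0)$ and $g(1)$ with the one-sided limits $\tilde g(0^+)$ and $\tilde g(1^-)$.

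\textbf{The paper's route.} The paper never builds a representative. It tests $g$ against normalized bumps $\psi^0_\varepsilon$ and $\psi^1_\delta$ concentrated near $0$ and $1$. It splits $|g(0)-g(1)|$ into two boundary terms, each bounded by $\|\phi\|_\infty$ times an average from \eqref{eq:weakLeb}, plus the term $|\int_0^1(\psi^0_\varepsilon-\psi^1_\delta)g|$. This last term is bounded by $|Dg|(0,1)$: one writes $\psi^0_\varepsilon-\psi^1_\delta=\Psi_{\varepsilon,\delta}'$ with $\Psi_{\varepsilon,\delta}\in C^\infty_{\rm c}(0,1)$ and $\|\Psi_{\varepsilon,\delta}\|_\infty\le 1$, and uses only the definition of the distributional derivative. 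Since $\Psi_{\varepsilon,\delta}$ smoothly approximates $\mathbf{1}_{(0,1)}$, this is essentially a mollified form of your identity $\tilde g(1^-)-\tilde g(0^+)=Dg((0,1))$.

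\textbf{What each buys.} The paper's argument is self-contained and needs nothing beyond the definition of $Dg$. Yours relies on a standard external representation theorem, but it yields more:
\begin{enumerate}
\item the sharper statement $|g(1)-g(0)|=|Dg((0,1))|$;
\item an upgrade of the hypothesis: since $g(0)=\tilde g(0^+)$ gives $\frac1t\int_0^t|g(s)-g(0)|\,ds\le|Dg|((0,t))\to0$ (and similarly at $1$), once $Dg$ is known to be a finite measure the $\liminf$ in \eqref{eq:weakLeb} automatically becomes a full limit.
\end{enumerate}
So in that situation the $\liminf$-versus-$\lim$ distinction discussed in Remark~\ref{rem:ADM5.8aux} is immaterial.
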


\begin{proof}
Let $\phi\in C_{\rm c}^\infty(0,1)$ be nonnegative with
$\int_0^1\phi=1$. For $\varepsilon,\delta\in(0,1)$, let
\begin{equation*}
 \psi^0_\varepsilon(t)
 :=\frac{1}{\varepsilon}\phi\left(\frac{t}{\varepsilon}\right),
 \qquad
 \psi^1_\delta(t)
 :=\frac{1}{\delta}\phi\left(\frac{1-t}{\delta}\right).
\end{equation*}
Both functions have integral one. Hence,
\begin{align*}
 \abs{g(0)-g(1)}
 &\leq \Babs{\int_0^1\psi^0_\varepsilon(t)
   \bigl[g(t)-g(0)\bigr]\,dt}\\
 &\quad+\Babs{\int_0^1\psi^1_\delta(t)
   \bigl[g(t)-g(1)\bigr]\,dt}\\
 &\quad+\Babs{\int_0^1
   \bigl[\psi^0_\varepsilon(t)-\psi^1_\delta(t)\bigr]g(t)\,dt}\\
 &=:I(\varepsilon)+II(\delta)+III(\varepsilon,\delta).
\end{align*}
Define
\begin{equation*}
 \Psi_{\varepsilon,\delta}(t)
 :=\int_0^t\bigl[\psi^0_\varepsilon(s)
                 -\psi^1_\delta(s)\bigr]\,ds.
\end{equation*}
Since the two functions have the same integral,
$\Psi_{\varepsilon,\delta}\in C_{\rm c}^\infty(0,1)$ and
$\Norm{\Psi_{\varepsilon,\delta}}{\infty}\leq1$. Integration by parts
therefore gives
\begin{equation*}
 III(\varepsilon,\delta)
 =\Babs{\int_0^1\Psi_{\varepsilon,\delta}'g}
 =\Babs{\int_{(0,1)}\Psi_{\varepsilon,\delta}\,dDg}
 \leq\abs{Dg}(0,1).
\end{equation*}
Moreover,
\begin{align*}
 I(\varepsilon)
 &\leq\Norm{\phi}{\infty}\frac{1}{\varepsilon}
 \int_0^\varepsilon\abs{g(t)-g(0)}\,dt,\\
 II(\delta)
 &\leq\Norm{\phi}{\infty}\frac{1}{\delta}
 \int_0^\delta\abs{g(1-t)-g(1)}\,dt.
\end{align*}
By \eqref{eq:weakLeb}, there are sequences
$\varepsilon_k\searrow 0$ and $\delta_k\searrow 0$ along which the respective
right-hand sides tend to zero. Letting $k\to\infty$ yields
\begin{equation*}
 \abs{g(0)-g(1)}\leq\abs{Dg}(0,1),
\end{equation*}
which is \eqref{eq:s9} and hence completes the proof of Lemma \ref{lem:ADM5.8aux}.
\end{proof}

\begin{remark}\label{rem:ADM5.8aux}
Lemma \ref{lem:ADM5.8} below
can be used to fix an oversight around \cite[Lemma 5.8]{AD:14}. That is, under some
assumptions on a function $f$ and curves $\gamma$, the said lemma asserts that $0$ (and
implicitly $1$, by symmetry) is a Lebesgue point of each $g=f\circ\gamma$; this is a
condition like \eqref{eq:weakLeb} but with $\lim$ in place of $\liminf$. However, an
inspection of their proof shows that only the weaker version \eqref{eq:weakLeb} is
established there. On the other hand, the actual purpose of \cite[Lemma 5.8]{AD:14} is to
verify the boundary regularity property \eqref{eq:s9}, and Lemma \ref{lem:ADM5.8aux} shows
that the obtained lower limits \eqref{eq:weakLeb} are enough for this.
\end{remark}

We first complete the boundary argument suggested by
\cite[Lemma 5.8]{AD:14}.  In contrast to an $\infty$-test plan, a
bary-plan need not have an essentially bounded Lipschitz constant.  The
metric-speed factor is therefore retained and its integrability is
obtained from the barycenter.

\begin{lemma}\label{lem:ADM5.8}
Let $f_n,g_n\in L^1_{\mathrm{loc}}(\mu)$, where $f_n\to f$ in
$L^1_{\mathrm{loc}}(\mu)$, each $g_n$ is nonnegative, and $g_n$ is an upper
gradient of $f_n$.  Then $0$ and $1$ are weak Lebesgue points, in the
sense of \eqref{eq:weakLeb}, of $f\circ\gamma$ for bary-a.e. curve
$\gamma\in AC([0,1];X)$.
\end{lemma}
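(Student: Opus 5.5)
We show that the set $\Gamma_0$ of curves $\gamma$ for which $0$ fails to be a weak Lebesgue point of $f\circ\gamma$ is bary-negligible; the endpoint $1$ is handled the same way after reversing $\gamma$, which preserves bary-plans and their barycenters. The idea is to integrate the upper-gradient inequality for $f_n$ against a bary-plan $\pi$ and pass to the limit $n\to\infty$.

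Fix a bary-plan $\pi$ with barycenter $b_\pi\in L^\infty(\mu)$ and compression constant $C(\pi)$. Since $\pi$ is a probability measure, we may first localize. Write $C([0,1];X)$ as a countable union of the sets $\{\gamma:\gamma([0,1])\subset B(x_0,R)\}$ intersected with $\{\gamma:\int_0^1|\gamma'|\le R\}$, and restrict $\pi$ to each such set (normalized when nonzero). This yields a bary-plan, because restriction to a subset only decreases both the compression bound and the barycenter. We may therefore assume that all curves lie in a fixed ball $B$.

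For each $n$, $s\in(0,1]$ and each curve, the upper-gradient property of $g_n$ gives
\[
|f_n(\gamma(s))-f_n(\gamma(0))|\le\int_0^s g_n(\gamma(u))|\gamma'|(u)\,du .
\]
We then average over $s\in(0,t)$ and integrate in $\pi$. The left-hand side term $\int\frac1t\int_0^t|f_n(\gamma(s))-f(\gamma(s))|\,ds\,d\pi$ is bounded by $C(\pi)\|f_n-f\|_{L^1(B)}/1$ via the compression property. The term at time $0$ is $\int|f_n(\gamma(0))-f(\gamma(0))|\,d\pi\le C(\pi)\|f_n-f\|_{L^1(B)}$. Both tend to $0$ as $n\to\infty$ for fixed $t$. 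Passing to a subsequence with $\sum_n\|f_n-f\|_{L^1(B)}<\infty$, we obtain $\pi$-a.e. convergence of $f_n(\gamma(0))\to f(\gamma(0))$ and of $\int_0^1|f_n-f|(\gamma(s))\,ds\to0$.

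The main obstacle is controlling $\int_0^t g_n(\gamma)|\gamma'|$ uniformly in $n$, since no bound on $\|g_n\|_{L^1}$ is assumed. The key fact is that only the defect $\frac1t\int_0^t|f(\gamma(s))-f(\gamma(0))|\,ds$ matters, and we need only its $\liminf$ to vanish. So we do not use the $g_n$ in the limit. Instead, fix $\varepsilon>0$ and combine the two convergences above with Fatou's lemma in $\pi$:
\[
\int\liminf_{t\searrow0}\frac1t\int_0^t|f(\gamma(s))-f(\gamma(0))|\,ds\,d\pi
\le\liminf_{t\searrow0}\int\frac1t\int_0^t|f(\gamma(s))-f(\gamma(0))|\,ds\,d\pi .
\]
We then approximate $f$ in $L^1(B)$ by continuous functions $\varphi$, for which the inner quantity tends to $0$. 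Here the compression property gives $\int\frac1t\int_0^t|f-\varphi|(\gamma(s))\,ds\,d\pi\le C(\pi)\|f-\varphi\|_{L^1(B)}$, uniformly in $t$, and similarly at time $0$. Hence the right-hand side is at most $2C(\pi)\|f-\varphi\|_{L^1(B)}$, which can be made arbitrarily small. So the $\liminf$ vanishes $\pi$-a.e.

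This last argument in fact uses only $f\in L^1_{\mathrm{loc}}$ and the bounded compression property; the hypotheses on $f_n$ and $g_n$ enter at most to ensure measurability of $\Gamma_0$, which is Borel as a countable $\liminf$ of Borel functionals. If the paper's notion of bary-negligibility requires the exceptional set to be Borel, we would enclose $\Gamma_0$ in the Borel set where the Borel $\liminf$ functional is positive. Since $\pi$ was an arbitrary bary-plan, $\Gamma_0$ is bary-negligible, which proves the statement.
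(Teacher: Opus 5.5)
Your proof is correct, but it takes a different route from the paper's. The two arguments share the same Fatou-plus-compression skeleton; they differ in what is used to approximate $f$.

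The paper fixes a bary-plan $\pi$ and a bounded open $U$, and compares the localized defect $H_U(t,\gamma)$ with its analogue $H_U^n$ for $f_n$, at a cost of $2C(\pi)\int_U|f_n-f|\,d\mu$. For fixed $n$, it makes $\int H_U^n(t,\cdot)\,d\pi\to0$ as $t\searrow0$ using three ingredients:
\begin{enumerate}[\rm(i)]
\item the upper-gradient bound $H_U^n(t,\gamma)\le\int_0^t\mathbf 1_U g_n(\gamma(r))|\gamma'|(r)\,dr$;
\item the barycenter bound $\int\bigl(\int_\gamma\mathbf 1_Ug_n\bigr)\,d\pi\le\|b_\pi\|_{L^\infty(\mu)}\int_Ug_n\,d\mu<\infty$;
\item dominated convergence.
\end{enumerate}
Fatou's lemma and $n\to\infty$ then finish the proof.

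You instead replace $f_n$ by a bounded continuous $\varphi$ close to $f$ in $L^1(\mu|_B)$. For such $\varphi$, $\frac1t\int_0^t|\varphi(\gamma(s))-\varphi(\gamma(0))|\,ds\to0$ along every curve, so you need neither the upper gradients nor the barycenter.

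Your route buys a strictly more general statement. For every $f\in L^1_{\mathrm{loc}}(\mu)$ and every probability on $C([0,1];X)$ with the bounded compression property, $0$ and $1$ are weak Lebesgue points of $f\circ\gamma$ almost surely. In particular, the appeal to a global recovery sequence $h_n$ with $g_n=\operatorname{lip}h_n$ in the proof of Lemma \ref{lem:tv-star} becomes unnecessary. The paper's route mirrors \cite[Lemma 5.8]{AD:14} and works only with data already at hand.

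Some smaller remarks:
\begin{enumerate}[\rm(i)]
\item The ``main obstacle'' you identify is not really one. The paper never needs control of $\int_0^t g_n(\gamma)|\gamma'|$ uniform in $n$: it lets $t\searrow0$ at fixed $n$ before sending $n\to\infty$. At fixed $n$, $\pi$-a.e.\ finiteness of $\int_\gamma\mathbf 1_Ug_n$ suffices.
\item You should take $\varphi$ bounded, for instance bounded Lipschitz; these are dense in $L^1$ of the finite measure $\mu|_B$. Boundedness is what lets dominated convergence handle the $\varphi$-term as $t\searrow0$.
\item In your localization, the length constraint $\int_0^1|\gamma'|\le R$ and the normalization are unnecessary. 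Compression of $\pi$ restricted to the open sets $\{\gamma([0,1])\subset B(x_0,R)\}$ is enough, and these sets already exhaust $C([0,1];X)$.
\end{enumerate}
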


\begin{proof}
By symmetry, it is enough to consider the point $0$.  Fix a bary-plan
$\pi$ and a bounded open set $U\subset X$.  For $t\in(0,1)$, let
\begin{equation*}
 H_U(t,\gamma)
 :=\mathbf{1}_{\{\gamma([0,t])\subset U\}}\frac{1}{t}
   \int_0^t\abs{f(\gamma(s))-f(\gamma(0))}\,ds,
\end{equation*}
and let $H_U^n$ be defined in the same way with $f_n$ in place of $f$.
The triangle inequality, Tonelli's theorem, and the bounded compression
property give
\begin{equation}\label{eq:appendix-H-error}
 \int H_U(t,\gamma)\,d\pi(\gamma)
 \leq \int H_U^n(t,\gamma)\,d\pi(\gamma)
 +2C(\pi)\int_U\abs{f_n-f}\,d\mu.
\end{equation}
Indeed, the two error terms are estimated using
$(e_s)_\sharp\pi\leq C(\pi)\mu$ at $s\in[0,t]$ and at $s=0$,
respectively.

The upper-gradient property yields
\begin{align*}
 H_U^n(t,\gamma)
 &\leq \mathbf{1}_{\{\gamma([0,t])\subset U\}}\frac{1}{t}
 \int_0^t\int_0^s g_n(\gamma(r))\abs{\gamma'}(r)\,dr\,ds\\
 &\leq \int_0^t\mathbf{1}_U(\gamma(r))g_n(\gamma(r))
 \abs{\gamma'}(r)\,dr.
\end{align*}
The barycenter identity, extended from bounded continuous
functions to nonnegative Borel functions, gives
\begin{align*}
 \int_{AC([0,1];X)}
 \Big(\int_\gamma \mathbf{1}_U g_n\Big)\,d\pi(\gamma)
 =\int_U g_n b_\pi\,d\mu
 \leq \Norm{b_\pi}{L^\infty(\mu)}\int_Ug_n\,d\mu<\infty.
\end{align*}
Thus, for fixed $n$,
$\int_\gamma\mathbf{1}_Ug_n<\infty$ for $\pi$-a.e.\ $\gamma$.
For any such $\gamma$, the preceding estimate and the absolute
continuity of the integral imply that
\begin{equation*}
  \lim_{t\searrow 0}H_U^n(t,\gamma)=0,
  \qquad
  H_U^n(t,\gamma)\leq\int_\gamma\mathbf{1}_Ug_n.
\end{equation*}
Hence, with $n$ fixed, the dominated convergence theorem implies
\begin{equation}\label{eq:appendix-Hn-zero}
 \lim_{t\searrow 0}\int H_U^n(t,\gamma)\,d\pi(\gamma)=0.
\end{equation}
Combining Fatou's lemma with \eqref{eq:appendix-H-error} and
\eqref{eq:appendix-Hn-zero}, we obtain
\begin{align*}
 \int\liminf_{t\searrow 0}H_U(t,\gamma)\,d\pi(\gamma)
 &\leq 2C(\pi)\int_U\abs{f_n-f}\,d\mu.
\end{align*}
Letting $n\to\infty$ shows that the lower limit vanishes for
$\pi$-a.e. curve under consideration.  If $\gamma(0)\in U$, the continuity
of $\gamma$ gives $\gamma([0,t])\subset U$ when $t$ is sufficiently
small.  A countable exhaustion of $X$ by bounded open sets therefore
shows that $0$ is a weak Lebesgue point of $f\circ\gamma$ for
$\pi$-a.e. $\gamma$.  The point $1$ follows by reversing the
parameterization.  Since $\pi$ was arbitrary, the exceptional set is
bary-negligible. This completes the proof of Lemma \ref{lem:ADM5.8}.
\end{proof}

As a final auxiliary result for comparing $\abs{Df}$ and $\abs{Df}_*$, we record the
following
basic lemma. The case of curves $\gamma$ parametrized by arc-length is
contained e.g.\ in the proof of \cite[Lemma 6.2.6]{HKST:book}; in the lack of direct
reference for the case at hand, we give the proof for completeness.

\begin{lemma}\label{lem:chain-rule}
If $f\in\operatorname{Lip}_{\mathrm{loc}}(X)$ and $\gamma\in AC([0,1];X)$, then
$f\circ\gamma\in AC([0,1];\mathbb R)$ and
\begin{equation*}
  \abs{(f\circ\gamma)'}(t)\leq
  (\operatorname{lip}f\circ\gamma)(t)\abs{\gamma'}(t)\quad\text{at a.e. }t\in[0,1].
\end{equation*}
\end{lemma}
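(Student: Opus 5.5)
The plan is to reduce the claim to a local statement on compact pieces of the curve. First, $f\circ\gamma$ is absolutely continuous. Since $\gamma([0,1])$ is compact, a standard covering argument shows that $f$ is Lipschitz with some constant $L$ on a neighbourhood $U$ of $\gamma([0,1])$. Concretely, cover the image by finitely many balls on which $f$ is Lipschitz and use a Lebesgue number $\eta$ of this cover. For $s<t$ with $\rho(\gamma(s),\gamma(t))<\eta$ we then get
\begin{equation*}
  \abs{f(\gamma(t))-f(\gamma(s))}\leq L\rho(\gamma(s),\gamma(t))\leq L\int_s^t\abs{\gamma'}(u)\,du.
\end{equation*}
Uniform continuity of $\gamma$ together with a subdivision of $[s,t]$ extends this inequality to all $s<t$. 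Hence $f\circ\gamma\in AC([0,1];\mathbb R)$, with $\abs{(f\circ\gamma)'}\leq L\abs{\gamma'}$ a.e.

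Next comes the pointwise bound. Fix $t\in(0,1)$ such that three things hold: $(f\circ\gamma)'(t)$ exists, the limit \eqref{eq:metric-speed} defining $\abs{\gamma'}(t)$ exists, and $\abs{\gamma'}(t)<\infty$. Almost every $t$ qualifies. If $\abs{\gamma'}(t)=0$, then $\rho(\gamma(t+h),\gamma(t))=o(\abs{h})$, and local Lipschitzness gives $\abs{f(\gamma(t+h))-f(\gamma(t))}\leq L\,o(\abs{h})$, so $(f\circ\gamma)'(t)=0$ and the claim holds. If $\abs{\gamma'}(t)>0$, set $x:=\gamma(t)$. By the definition \eqref{eq:lipu} of $\operatorname{lip}f(x)$, there are radii $r_k\searrow0$ with
\begin{equation*}
  \frac1{r_k}\sup_{y\in\bar B(x,r_k)}\abs{f(y)-f(x)}\to\operatorname{lip}f(x).
\end{equation*}
Given $\varepsilon>0$, put $h_k:=r_k/(\abs{\gamma'}(t)+\varepsilon)$. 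For large $k$ the metric-speed limit gives $\rho(\gamma(t+h_k),x)\leq(\abs{\gamma'}(t)+\varepsilon)h_k=r_k$, so $\gamma(t+h_k)\in\bar B(x,r_k)$. Therefore
\begin{equation*}
  \frac{\abs{f(\gamma(t+h_k))-f(x)}}{h_k}\leq(\abs{\gamma'}(t)+\varepsilon)\frac1{r_k}\sup_{\bar B(x,r_k)}\abs{f-f(x)}.
\end{equation*}
Because the derivative of $f\circ\gamma$ exists at $t$, the left side tends to $\abs{(f\circ\gamma)'(t)}$ along the sequence $h_k$. The right side tends to $(\abs{\gamma'}(t)+\varepsilon)\operatorname{lip}f(x)$. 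Letting $\varepsilon\to0$ gives the claim.

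The main subtlety is exactly this use of $\liminf$ in the definition of $\operatorname{lip}f$. Since $\operatorname{lip}f$ is only a $\liminf$, we cannot bound every difference quotient of $f\circ\gamma$ by it. We get around this by evaluating the already-existing derivative along the special sequence $h_k$ adapted to the optimal radii $r_k$. This is why the existence of both $(f\circ\gamma)'(t)$ and the metric-speed limit at $t$ is needed, and why the absolute continuity step has to come first.
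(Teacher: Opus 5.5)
Your proof is correct and follows the paper's route. Absolute continuity comes from a Lipschitz bound for $f$ along the compact image $\gamma([0,1])$. Then, at a.e.\ $t$ where both $(f\circ\gamma)'(t)$ and the metric speed exist, you evaluate the already-existing derivative along a sequence adapted to the $\liminf$ defining $\operatorname{lip}f(\gamma(t))$.

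Your choice $h_k=r_k/(\abs{\gamma'}(t)+\varepsilon)$ is a cleaner justification of a step the paper states ``by definition''. The paper claims that the lower limit over the radii $r=\rho(\gamma(s),\gamma(t))$ realized by the curve equals $\operatorname{lip}f(\gamma(t))$, which tacitly needs these radii to fill an interval $(0,c)$ when $\abs{\gamma'}(t)>0$. Your separate treatment of $\abs{\gamma'}(t)=0$ also covers the degenerate case that the paper's factorization $I\cdot II$ passes over.
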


\begin{proof}
Let $L:=\Norm{f|_{\gamma([0,1])}}{\operatorname{Lip}}<\infty$ by the assumption that
$f\in\operatorname{Lip}_{\mathrm{loc}}(X)$. Let $h\in L^1(0,1)$ be such that $\gamma$
satisfies \eqref{eq:gamma-AC}. Then, for all $0\leq s<t\leq 1$,
\begin{equation*}
  \abs{(f\circ\gamma)(s)-(f\circ\gamma)(t)}
  \leq L\rho(\gamma(s),\gamma(t))
  \leq L\int_s^t h(u)du,
\end{equation*}
and hence $f\circ\gamma$ satisfies \eqref{eq:gamma-AC} with $Lh\in L^1(0,1)$ in place of
$h$.

For $s,t\in[0,1]$ with $s\neq t$, we have
\begin{equation*}
  \frac{\abs{(f\circ\gamma)(s)-(f\circ\gamma)(t) }}{\abs{s-t}}
  =\frac{\abs{f(\gamma(s))-f(\gamma(t)) }}{\rho(\gamma(s),\gamma(t))}
  \frac{\rho(\gamma(s),\gamma(t))}{\abs{s-t}}
  =:I\cdot II.
\end{equation*}
For a.e.\ $t\in[0,1]$, as $s\to t$, we have $II\to\abs{\gamma'}(t)$ by
\eqref{eq:metric-speed}. On the other hand,
\begin{equation*}
  I\leq\frac{1}{r}\sup_{y\in\bar
  B(\gamma(t),r)}\abs{f(y)-f(\gamma(t))}\Big|_{r=\rho(\gamma(s),\gamma(t))},
\end{equation*}
and the lower limit of this as $s\to t$ (and hence $r=\rho(\gamma(s),\gamma(t))\to 0$) is
equal to $\operatorname{lip}f(\gamma(t))$ by definition \eqref{eq:lipu}. It follows that
\begin{equation*}
  \liminf_{s\to t}I\cdot II
  =\liminf_{s\to t}I\cdot \lim_{s\to t} II
  \leq\operatorname{lip}f(\gamma(t))\cdot\abs{\gamma'}(t).
\end{equation*}
Since we already checked that $f\circ\gamma$ is absolutely continuous, we deduce from
\cite[Theorem 1.1.2]{AGS:book} that the metric speed $\abs{(f\circ\gamma)'}(t)=\lim_{s\to
t}I\cdot II$ exists at a.e.\ $t\in[0,1]$, and hence must agree with the lower limit above.
This completes the proof.
\end{proof}

We next establish the comparison that is not needed in the main
applications but completes the homogeneous equivalence.

\begin{lemma}\label{lem:tv-star}
Let $(X,\rho,\mu)$ be a complete doubling metric measure space.  If
$f\in\dot{\rm BV}(\mu)$, then $f\in\dot{\rm BV}_*(\mu)$ and
$|Df|_*\leq |Df|$ as Borel measures.
\end{lemma}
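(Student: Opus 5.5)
Given $f\in\dot{\rm BV}(\mu)$, the natural starting point is the definition \eqref{eq:Df(A)} with $A=X$. I would fix a sequence $\{f_n\}\subset\operatorname{Lip}_{\mathrm{loc}}(X)$ such that $f_n\to f$ in $L^1_{\mathrm{loc}}(\mu)$ and $\int_X\operatorname{lip}f_n\,d\mu\to|Df|(X)<\infty$. By Remark \ref{rem:lipu}, each $g_n:=\operatorname{lip}f_n$ is an upper gradient of $f_n$ at every scale. After passing to a subsequence, $g_n\mu\overset{*}{\rightharpoonup}g$ for some finite Radon measure $g$, since the total masses are bounded and $X$ is proper and separable.

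Lemma \ref{prop:semics} (with $\delta_n\to0$ trivially) then gives $f\in\dot{\rm BV}_*(\mu)$ and $|Df|_*\le g$. I expect this lemma, in the bary-plan formulation, to hide the boundary regularity \eqref{eq:s9}. If it does not deliver \eqref{eq:s9} directly, I would supply it as follows. Along bary-a.e. curve, the curve-wise lower semicontinuity gives $|D(f\circ\gamma)|(0,1)<\infty$. Lemma \ref{lem:ADM5.8} shows that $0$ and $1$ are weak Lebesgue points of $f\circ\gamma$. Lemma \ref{lem:ADM5.8aux} then yields \eqref{eq:s9}. The chain rule, Lemma \ref{lem:chain-rule}, together with the barycenter identity \eqref{eq:baryc}, gives
\[
\int|D(f_n\circ\gamma)|(\gamma^{-1}(A))\,d\pi\le\int_A g_n b_\pi\,d\mu
\]
for open $A$. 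Passing to the limit in $n$, using lower semicontinuity of one-dimensional variation under $L^1$ convergence of $f_n\circ\gamma$ (which holds for $\pi$-a.e. $\gamma$ along a subsequence by bounded compression) together with Fatou's lemma, I obtain \eqref{eq:BVboth} with $\nu=g$ and $K(\pi)=\|b_\pi\|_\infty$.

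It remains to show $g\le|Df|$ as Borel measures, and this step is the main obstacle: a single recovery sequence only controls the total mass, $g(X)\le|Df|(X)$. Here I would localize. For an open $A$, I would use that $|Df|$ is a Borel measure (Remark \ref{rem:BV-Mir}) and, following Miranda's construction, choose the recovery sequence so that $\operatorname{lip}f_n\,\mu\overset{*}{\rightharpoonup}|Df|$. This holds because mass is conserved, $\int\operatorname{lip}f_n\to|Df|(X)$, while on every open set $\liminf\int_A\operatorname{lip}f_n\ge|Df|(A)$; together these force weak$^*$ convergence of the measures to $|Df|$. With this choice $g=|Df|$, and minimality of $|Df|_*$ (Lemma \ref{lem:smallest-nu}) gives $|Df|_*\le|Df|$. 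If the global weak$^*$ convergence is delicate in the homogeneous, locally-integrable setting, I would instead run the argument on each compact $X_n$ from Proposition \ref{prop:bdSubsets}, where Miranda's finite-measure result applies, and glue the pieces using the bounded exhaustion property and the locality of \eqref{eq:BVboth} for curves contained in $X_n$.
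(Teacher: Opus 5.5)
Your argument is correct, but it localizes differently from the paper.

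\textbf{The paper's route.} The paper never identifies a weak$^*$ limit. For each open $A$ with $|Df|(A)<\infty$ it picks a recovery sequence in $\operatorname{Lip}_{\mathrm{loc}}(A)$ adapted to $A$ itself. It then runs the curve-wise argument (bounded compression, Lemma~\ref{lem:chain-rule}, the barycenter identity, one-dimensional lower semicontinuity, Fatou) on bounded open $U$ with $\overline U\subset A$. After $U\nearrow A$ and $\varepsilon\searrow0$ this gives $\int|D(f\circ\gamma)|(\gamma^{-1}(A))\,d\pi\le\|b_\pi\|_{L^\infty(\mu)}|Df|(A)$ directly. A global recovery sequence enters only for \eqref{eq:s9}, via Lemmas~\ref{lem:ADM5.8} and~\ref{lem:ADM5.8aux}, exactly as in your plan.

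\textbf{Your route.} You use one global sequence and localize through $\operatorname{lip}f_n\,\mu\overset{*}{\rightharpoonup}|Df|$. Your justification is a valid portmanteau argument: the masses converge to the finite value $|Df|(X)$, and $|Df|(A)\le\liminf_n\int_A\operatorname{lip}f_n\,d\mu$ for every open $A$ because $f_n|_A$ is admissible in \eqref{eq:Df(A)}. This shows that \emph{every} global recovery sequence has this property. So no special choice of sequence is needed, and the fallback to the sets $X_n$ is unnecessary.

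\textbf{What each buys.} Your route is shorter: with $g=|Df|$, Lemma~\ref{prop:semics} gives $|Df|_*\le|Df|$ at once. The paper's route avoids the weak$^*$ identification altogether. It is also independent of the cited lemma of \cite{DMS}, whose BV notion may omit \eqref{eq:s9} (Remark~\ref{rem:BV*}).

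\textbf{One detail to fix.} If you run the curve-wise argument yourself, passing to the limit only gives the bound $\|b_\pi\|_{L^\infty(\mu)}\liminf_n\int_A g_n\,d\mu$ on open $A$. Weak$^*$ convergence bounds this $\liminf$ from \emph{below} by $g(A)$, which is the wrong direction. To conclude \eqref{eq:BVboth} with $\nu=g$, you need the same exhaustion by bounded open $U$ with $\overline U\subset A$, using $\limsup_n\int_{\overline U}g_n\,d\mu\le g(\overline U)$ and then letting $U\nearrow A$.
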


\begin{proof}
Fix an open set $A\subset X$ with $|Df|(A)<\infty$ and
fix $\varepsilon\in(0,\infty)$.  By the definition of $|Df|(A)$, there is
a sequence $\{f_n\}_{n\in\mathbb N}\subset\operatorname{Lip}_{\mathrm{loc}}(A)$
such that $f_n\to f$ in $L^1_{\mathrm{loc}}(A)$ and, after passing to a
subsequence,
\begin{equation}\label{eq:bv-recovery}
 \lim_{n\to\infty}\int_A\operatorname{lip}f_n\,d\mu
 \leq |Df|(A)+\varepsilon.
\end{equation}
Let $\pi$ be a bary-plan and let $U\subset A$ be open and bounded with
$\overline U\subset A$.  The bounded compression property gives
\begin{align*}
 &\int_{AC([0,1];X)}\int_{\gamma^{-1}(U)}
 \abs{f_n(\gamma(t))-f(\gamma(t))}\,dt\,d\pi(\gamma)\\
 &\qquad\leq C(\pi)\int_U\abs{f_n-f}\,d\mu\longrightarrow0\quad\text{as}\ n\to\infty.
\end{align*}
After passing to a further subsequence,
$f_n\circ\gamma\to f\circ\gamma$ in
$L^1(\gamma^{-1}(U))$ for $\pi$-a.e. $\gamma$ as $n\to\infty$.
For any such curve $\gamma$, we regard $f_n\circ\gamma$ as a function on the open set
$\gamma^{-1}(A)$. Applying Lemma \ref{lem:chain-rule}, with $A$ in place of $X$, to compact
subintervals of the components of $\gamma^{-1}(U)$ and then exhausting these components
gives
\begin{equation*}
  |D(f_n\circ\gamma)|(\gamma^{-1}(U))
  \leq\int_{\gamma^{-1}(U)}
  \operatorname{lip}f_n(\gamma(t))\abs{\gamma'}(t)\,dt.
\end{equation*}
The lower semicontinuity of the one-dimensional total variation and the preceding estimate
imply
\begin{align*}
 &\int_{AC([0,1];X)}
 |D(f\circ\gamma)|(\gamma^{-1}(U))\,d\pi(\gamma)\\
 &\quad\leq\liminf_{n\to\infty}
 \int_{AC([0,1];X)} |D(f_n\circ\gamma)|(\gamma^{-1}(U))\,d\pi(\gamma)
 \quad\text{by lower semicontinuity}\\
 &\quad\leq\liminf_{n\to\infty}
 \int_{AC([0,1];X)}\int_{\gamma^{-1}(U)}
  \operatorname{lip}f_n(\gamma(t))\abs{\gamma'}(t)\,dt\,d\pi(\gamma)
 \quad\text{by the preceding estimate}\\
 &\quad=\liminf_{n\to\infty}\int_U
 \operatorname{lip}f_n\,b_\pi\,d\mu\leq\Norm{b_\pi}{L^\infty(\mu)}
 \bigl[|Df|(A)+\varepsilon\bigr].
\end{align*}
Letting $U\nearrow A$ and then $\varepsilon\searrow 0$ gives
\begin{equation}\label{eq:appendix-bary-variation}
 \int_{AC([0,1];X)}
 |D(f\circ\gamma)|(\gamma^{-1}(A))\,d\pi(\gamma)
 \leq\Norm{b_\pi}{L^\infty(\mu)}|Df|(A).
\end{equation}
In particular, the left-hand side is finite when $A=X$, so
$f\circ\gamma$ has finite one-dimensional variation for $\pi$-a.e.
$\gamma$.

It remains to verify the boundary regularity in \eqref{eq:s9}.
Choose a global recovery sequence
$\{h_n\}_{n\in\mathbb N}\subset\operatorname{Lip}_{\mathrm{loc}}(X)$ for
$|Df|(X)$.  By Remark \ref{rem:lipu}, the function
$g_n:=\operatorname{lip}h_n$ is an upper gradient of $h_n$.  Lemma
\ref{lem:ADM5.8}, applied to $h_n$, $g_n$, and $f$, shows that both
endpoints are weak Lebesgue points of $f\circ\gamma$ for bary-a.e.
$\gamma$.  Lemma \ref{lem:ADM5.8aux} therefore gives
\begin{equation*}
 \abs{f(\gamma(1))-f(\gamma(0))}
 \leq |D(f\circ\gamma)|(0,1)
\end{equation*}
for bary-a.e. $\gamma$.
Consequently, $f$ satisfies both parts of case \eqref{it:BV*} of Definition \ref{def:BVboth}
with $\nu=|Df|$.  Hence $f\in\dot{\rm BV}_*(\mu)$, and
\eqref{eq:appendix-bary-variation} yields $|Df|_*\leq|Df|$.
This completes the proof of Lemma \ref{lem:tv-star}.
\end{proof}

\begin{proof}[Proof of Proposition \ref{p-eq}]
Lemmas \ref{lem:tv-c} and \ref{lem:tv-star} give
$|Df|_w\leq |Df|_*\leq |Df|.$
On the other hand, Lemma \ref{lem:tv-w-lower} gives
$|Df|(X)\leq |Df|_w(X).$
The three total masses are therefore equal.  The same inequalities,
together with the convention that the relevant seminorm is infinite
outside its domain, identify the three homogeneous BV spaces and
then complete the proof of the proposition.
\end{proof}

\end{document}